\documentclass[11pt]{article}
\usepackage[a4paper,margin=1in]{geometry}
\usepackage{amsmath,amsfonts,amssymb,amsthm,mathtools}
\usepackage[T1]{fontenc}
\usepackage[utf8]{inputenc}
\usepackage{lmodern}
\usepackage{microtype}
\usepackage{mathrsfs}
\usepackage{graphicx}
\usepackage{color}
\usepackage{url}
\usepackage{enumitem}
\usepackage{cases}
\usepackage{appendix}
\usepackage[hypertexnames=false]{hyperref}
\hypersetup{colorlinks=true,linkcolor=red,citecolor=blue,urlcolor=blue}

\makeatletter
\renewcommand\section{\@startsection{section}{1}{\z@}%
  {3.5ex \@plus 1ex \@minus .2ex}%
  {2.3ex \@plus .2ex}%
  {\normalfont\Large\bfseries}}
\renewcommand\subsection{\@startsection{subsection}{2}{\z@}%
  {3.25ex \@plus 1ex \@minus .2ex}%
  {1.5ex \@plus .2ex}%
  {\normalfont\large\bfseries}}
\makeatother

\numberwithin{equation}{section}
\numberwithin{figure}{section}
\numberwithin{table}{section}

\newtheorem{theorem}{Theorem}[section]
\newtheorem{definition}[theorem]{Definition}
\newtheorem{proposition}[theorem]{Proposition}
\newtheorem{lemma}[theorem]{Lemma}
\newtheorem{remark}[theorem]{Remark}
\newtheorem{corollary}[theorem]{Corollary}

\newtheorem{theoremletter}{Theorem}

\newcommand{\R}{\mathbb{R}}

\newcommand{\norm}[1]{\left\lVert #1\right\rVert}

\newcommand{\one}{\mathbf 1}
\newcommand{\loc}{\mathrm{loc}}

\title{Morse index, Leray--Schauder degree and local uniqueness for multi-peak concentrating solutions of a fractional Schr\"odinger equation}
\author{Yinbin Deng
\thanks{School of Mathematics and Statistics and Key Laboratory of Nonlinear Analysis and Applications (Ministry of Education), Central China Normal University, Wuhan 430079, People's Republic of China. E-mail: \texttt{ybdeng@ccnu.edu.cn}.}
\and  Baiping Feng 
\thanks{School of Mathematics and Statistics, Central China Normal University, Wuhan 430079, People's Republic of China. E-mail: \texttt{bpfeng@mails.ccnu.edu.cn}.}
\and  Qing Guo
\thanks{College of Science, Minzu University of China, Beijing 100081, People's Republic of China. E-mail: \texttt{guoqing0117@163.com}.}
\and  Huafei Xie
\thanks{Department of Mathematics, Shantou University, Shantou 515063, People's Republic of China. E-mail: \texttt{hfxie@stu.edu.cn}.}
}
\date{}
\begin{document}
\maketitle

\begin{abstract}
We study positive \(k\)-peak solutions of the semiclassical fractional Schr\"odinger equation
\[
\varepsilon^{2s}(-\Delta)^s u+V(x)u=u^p
\quad\text{in }\mathbb R^N,
\]
concentrating at different nondegenerate critical points \(\xi_1^0,\ldots,\xi_k^0\) of \(V\). For every such family satisfying the natural energy quantization, we determine the complete low spectrum of the linearized operator. The first \(k\) eigenvalues remain uniformly negative, the next \(kN\) eigenvalues are of order \(\varepsilon^2\) and are governed by the Hessians \(D^2V(\xi_j^0)\), while the remaining spectrum is uniformly separated from zero. Consequently, the Morse index equals \(k\) plus the total number of negative eigenvalues of these Hessians, and every such solution is nondegenerate. Combining a unique modulation parametrization with a Leray--Schauder degree computation, we further prove that, for all sufficiently small \(\varepsilon\), the prescribed concentrating class contains exactly one positive solution. The result applies to the whole energy-quantized class, not only to a particular solution.
\end{abstract}

\noindent\textbf{2020 Mathematics Subject Classification.}
Primary 35R11; 35J60. Secondary 35B25; 35B40; 35P15; 47H11.\par

\noindent\textbf{Keywords.}
Fractional Schr\"odinger equation; multi-peak concentration; Morse index; nondegeneracy; Leray--Schauder degree; local uniqueness.

\section{Introduction}
\subsection{Background}
In this paper, we consider the fractional nonlinear Schr\"odinger equation
\begin{equation}\label{eq:fractional-schrodinger-problem}
\begin{cases}
\varepsilon^{2s}(-\Delta)^s u + V(x)u = u^p, & \text{in } \mathbb{R}^N,\\
u>0, & \text{in } \mathbb{R}^N,\\
u\in H^{s}(\mathbb{R}^N),
\end{cases}
\end{equation}
where $\varepsilon>0$ is a small parameter, $0<s<1$, $N>2s$, and
$1<p<2_s^*-1$, with
\[
2_s^*:=\frac{2N}{N-2s}
\]
denoting the fractional critical Sobolev exponent.
Throughout the paper we assume that $V$ satisfies 
\[
V\in C^{2}_b (\mathbb{R}^N),\quad\inf_{\mathbb{R}^N}V(x)>0.
\]
We shall use the notation
\[
V_{\min}:=\inf_{\mathbb{R}^N}V(x)>0,
\qquad
V_{\max}:=\|V\|_{L^\infty(\mathbb{R}^N)}.
\]
For $u\in\mathcal{S}(\mathbb{R}^N)$, the fractional Laplacian $(-\Delta)^s$ may be defined by the singular integral
\[
(-\Delta)^s u(x)=C_{N,s}\,\mathrm{P.V.}\int_{\mathbb{R}^N}\frac{u(x)-u(y)}{|x-y|^{N+2s}}\,dy,\qquad x\in\mathbb{R}^N,
\]
where $\mathrm{P.V.}$ denotes the principal value and
$C_{N,s}=\frac{2^{2s}s\,\Gamma\!\left(\frac{N}{2}+s\right)}{\pi^{\frac{N}{2}}\Gamma(1-s)}$. This operator is well defined on $\mathcal{S}$, the Schwartz space of rapidly decreasing $C^{\infty}$ functions in $\mathbb{R}^N$, and can equivalently be defined through the Fourier transform: 
\[\mathcal{F}\left((-\Delta)^s u\right)(\eta)=\left|\eta\right|^{2s}\mathcal{F}(u)(\eta),\]
where $\mathcal{F}(u)$ denotes the Fourier transform of $u$; see, for example, \cite{cabre2014nonlinear,Caffarelli08082007}. The weak formulation of the fractional Laplacian naturally leads to the study of the fractional Sobolev spaces
\[H^s(\mathbb R^N):=\left\{u\in L^2(\mathbb R^N):\int_{\mathbb{R}^N}|\eta|^{2s}|\widehat{u}(\eta)|^2\,\mathrm{d}\eta<\infty\right\},\]
endowed with the norm
\[\|u\|_{H^{s}}^{2}:=\|u\|_{L^{2}}^{2}+\|u\|_{\dot H^{s}}^{2},\]
where $\|u\|_{\dot H^{s}}^{2}:=\int_{\mathbb{R}^{N}}|\eta|^{2s}|\widehat{u}|^{2}\mathrm{d}\eta$. By these definitions,
\[
\langle u,v\rangle_{\dot H^{s}(\mathbb{R}^N)}
=\int_{\mathbb{R}^N}(-\Delta)^{\frac{s}{2}}u\,(-\Delta)^{\frac{s}{2}}v\,dx
=\bigl\langle(-\Delta)^su,v\bigr\rangle_{H^{-s},H^s},
\qquad u,v\in H^s(\mathbb{R}^N),
\] where the last duality pairing is understood as the usual integral \(\int_{\mathbb R^N}(-\Delta)^s u\,v\,dx\). For further details on the fractional Laplacian operator, we refer to \cite{chen2020fractional,MR2944369,kwasnicki2017ten,radulescu2016variational} and the references therein.

Problem \eqref{eq:fractional-schrodinger-problem} arises as the stationary equation for semiclassical standing waves of a fractional nonlinear Schr\"odinger equation
\begin{equation}\label{eq:time-fractional-schrodinger}
i\varepsilon\frac{\partial\psi}{\partial t}
= \varepsilon^{2s}(-\Delta)^s\psi+(V(x)+E)\psi-|\psi|^{p-1}\psi,
\qquad (x,t)\in\mathbb{R}^N\times\mathbb{R}_+,
\end{equation}
where $E\in\mathbb{R}$ is the frequency.
Indeed, a solution of the form $\psi(x,t)=e^{-iEt/\varepsilon}u(x)$ satisfies \eqref{eq:time-fractional-schrodinger} if and only if $u$ solves \eqref{eq:fractional-schrodinger-problem}. The fractional Schr\"odinger equation was introduced by Laskin \cite{laskin2002fractional}. Since then, the fractional Laplacian has found important applications in many scientific fields, including biological modeling, physics and mathematical finance, and it is closely related to stable L\'evy processes; see \cite{applebaum2009levy}. Existence, multiplicity and regularity for fractional Schr\"odinger equations have been widely investigated; see, for example, \cite{barrios2012some,brandle2013concave,cabre2014nonlinear,dipierro2012existence,MR3070568,frank2016uniqueness,silvestre2007regularity} and the references therein. 

The construction of semiclassical concentrating solutions has been extensively studied. For fractional nonlinear Schr\"odinger equations, D\'avila, del~Pino and Wei \cite{MR3121716} constructed multi-peak solutions by Lyapunov--Schmidt reduction, while Deng, Peng and Yang \cite{YSX} investigated existence, nonexistence, and decay under different assumptions on the potential. These works mainly concern the existence, construction, and decay of concentrating solutions.

The present paper concerns a different, genuinely qualitative question. Once a \(k\)-peak pattern has been prescribed, how many positive solutions realize that pattern, and what is the local variational structure of each of them? A construction of one solution does not by itself determine the Morse index, exclude a kernel of the linearized operator, or rule out other solutions with the same concentration profile. These issues require a class-wide analysis rather than an expansion along one preselected Lyapunov--Schmidt solution. We consider arbitrary positive \(k\)-peak concentrating solutions \(u_\varepsilon\) satisfying Definition~\ref{def:kpeak-class} below and compute their Morse index and prove their nondegeneracy. We also prove uniqueness within the \(k\)-peak concentrating class for all sufficiently small \(\varepsilon\).

The Morse index is the number of negative eigenvalues of the linearized operator, counted with multiplicity. Morse index encodes substantial qualitative information about solutions and plays an important role in the analysis of their symmetry properties, singular behavior, and nodal sets; see, for instance, \cite{aftalion2004qualitative,bahri1992solutions,dancer2004stableII,dancer2005stable,farina2007classification,pacella2002symmetry,pacella2007symmetry,yang1998nodal}. In singular perturbation problems it also reveals how the infinite-dimensional spectrum remembers the finite-dimensional geometry selecting the peaks. Moreover, when a solution is nondegenerate, its Morse index determines the sign of its local Leray--Schauder degree. This last fact provides the bridge from spectral information to an exact count of solutions; see, for example, \cite{2002Existence,DeMarchisGrossiIanniPacella2019JMPA,ianni2025morse}.

Let us recall that the Morse index of a solution \(u_\varepsilon\) to \eqref{eq:fractional-schrodinger-problem} can be defined as follows.
\begin{definition}\label{def:morse-index}
  The Morse index of a solution $u_\varepsilon$ of problem \eqref{eq:fractional-schrodinger-problem} is the number of negative eigenvalues, counted with multiplicity, of the linearized eigenvalue problem
 \begin{equation*}
   \begin{cases}
    L_\varepsilon v:=\varepsilon^{2s}(-\Delta)^s v + V(x)v - p\,u_\varepsilon^{p-1}v = \mu v & \text{in }\mathbb{R}^N,\\
    v\in H^s(\mathbb{R}^N).
   \end{cases}
   \end{equation*}
\end{definition}

\begin{remark}
Let
\[
H_{\varepsilon,0}:=\varepsilon^{2s}(-\Delta)^s+V(x),
\qquad
L_\varepsilon=H_{\varepsilon,0}-p\,u_\varepsilon^{p-1}.
\]
Since $V, u_\varepsilon^{p-1}\in L^{\infty}(\mathbb{R}^N)$, $L_\varepsilon$ is a bounded perturbation of $\varepsilon^{2s}(-\Delta)^s$ and hence is self-adjoint on $L^2(\mathbb{R}^N)$. Moreover,
 \[
\langle H_{\varepsilon,0}v,v\rangle
=\varepsilon^{2s}\bigl\|(-\Delta)^{s/2}v\bigr\|_{L^2}^2+\int_{\mathbb{R}^N}V(x)|v|^2\,dx
\ge V_{\min}\|v\|_{L^2}^2,
\]
so $\sigma(H_{\varepsilon,0})\subset [V_{\min},\infty)$. Moreover, Lemma~\ref{lem:u-polynomial-decay} gives \(u_\varepsilon(x)\to0\) as \(|x|\to\infty\). Hence multiplication by \(u_\varepsilon^{p-1}\) is relatively compact with respect to \(H_{\varepsilon,0}\) (see \cite[Theorem XIII.14]{reed1978iv}). Therefore, by Weyl's theorem (see \cite[Theorem XIII.14, Corollary 2]{reed1978iv}),
\[
\sigma_{\mathrm{ess}}(L_\varepsilon)=\sigma_{\mathrm{ess}}(H_{\varepsilon,0})\subset [V_{\min},\infty).
\]
Thus, whenever \(\mu_\varepsilon<\inf\sigma_{\mathrm{ess}}(L_\varepsilon)\), it is a discrete eigenvalue of finite multiplicity (see \cite[Theorem VII.10]{reed1980methods}).
\end{remark}

For any $\varepsilon>0$ and $l\in\mathbb N$, we consider the linearized eigenvalue problem associated with \eqref{eq:fractional-schrodinger-problem}, namely,
\begin{equation}\label{eq:linearized-eigenvalue-problem}
\begin{cases}
\varepsilon^{2s}(-\Delta)^s v_\varepsilon^{(l)} + V(x)v_\varepsilon^{(l)} - p\,u_\varepsilon^{p-1} v_\varepsilon^{(l)}
= \mu_{\varepsilon}^{(l)} v_\varepsilon^{(l)} & \text{in }\mathbb{R}^N,\\[1mm]
\displaystyle \int_{\mathbb{R}^N}\Bigl(\varepsilon^{2s}\bigl|(-\Delta)^{s/2} v_\varepsilon^{(l)}\bigr|^2 + V(x)\bigl|v_\varepsilon^{(l)}\bigr|^2\Bigr)\,dx = \varepsilon^N.
\end{cases}
\end{equation}
   Here  \(v_\varepsilon^{(l)}\) is an eigenfunction corresponding to the \(l\)-th  eigenvalue \(\mu_{\varepsilon}^{(l)}\) of \eqref{eq:linearized-eigenvalue-problem}. For each $j\in\{1,\dots,k\}$, set $v_{\varepsilon,j}^{(l)}(x):=v_\varepsilon^{(l)}(\varepsilon x+\xi_{\varepsilon,j})$. A change of variables shows that $v_{\varepsilon,j}^{(l)}$ solves
\begin{equation*}
  \begin{cases}
(-\Delta)^s v_{\varepsilon,j}^{(l)} + V(\varepsilon x+\xi_{\varepsilon,j})v_{\varepsilon,j}^{(l)} - p u_\varepsilon^{p-1}(\varepsilon x+\xi_{\varepsilon,j})v_{\varepsilon,j}^{(l)} = \mu_{\varepsilon}^{(l)} v_{\varepsilon,j}^{(l)} &\text{ in } \mathbb{R}^N, \\
\int_{\mathbb{R}^N} (|(-\Delta)^{s/2} v_{\varepsilon,j}^{(l)}|^2 + V(\varepsilon x+\xi_{\varepsilon,j})|v_{\varepsilon,j}^{(l)}|^2) dx = 1,
\end{cases}
\end{equation*}
for any fixed $j\in\{1,\dots,k\}$. We order the eigenvalues of problem \eqref{eq:linearized-eigenvalue-problem} as
\[
\mu_{\varepsilon}^{(1)} < \mu_{\varepsilon}^{(2)} \leq \cdots \leq \mu_{\varepsilon}^{(j)} \leq \cdots.
\]
Recall that the \(l\)-th eigenvalue \(\mu_{\varepsilon}^{(l)}\) of problem \eqref{eq:linearized-eigenvalue-problem} admits the variational characterization:
\begin{equation*}
  \mu_{\varepsilon}^{(l)}
  = \inf_{\substack{W \subseteq H^s(\mathbb{R}^N) \\ \text{dim } W = l}}
  \max_{v \in W \setminus \{0\}}
  \frac{\varepsilon^{2s}\int_{\mathbb{R}^N}|(-\Delta)^{s/2} v|^2\,dx
  +\int_{\mathbb{R}^N} V(x)v^2\,dx
  -p\int_{\mathbb{R}^N}u_\varepsilon^{p-1}v^2\,dx}{\int_{\mathbb{R}^N} v^2\,dx}
\end{equation*}
for every $l \in \mathbb{N}$.

For a positive \(k\)-peak concentrating family, the expected low-spectrum structure has a natural geometric origin. The linearized operator at a single limiting ground state has one negative direction and an \(N\)-dimensional kernel generated by translations. Hence \(k\) peaks give \(k\) negative modes of order one and \(kN\) approximate translation modes near zero. The potential \(V\) breaks translation invariance, and the leading terms of these \(kN\) small eigenvalues are determined by the Hessian matrices at the concentration points. If the concentration points are nondegenerate, these small eigenvalues are nonzero for all sufficiently small \(\varepsilon\), while the remaining spectrum is uniformly separated from zero. This gives the Morse-index formula and nondegeneracy.

This spectral structure is well developed for local singularly perturbed and blow-up problems. In such problems, the small eigenvalues are often related to the Hessian of a reduced finite-dimensional functional, boundary curvature, the Robin function, or an interaction matrix associated with the blow-up configuration; see \cite{2002Existence,grossi2005eigenvalue,choi2016qualitative,gladiali2014morse,ianni2025morse}. For local nonlinear Schr\"odinger equations, Morse-index formulas for concentrating solutions were obtained in \cite{grossi2007morse,LuoPanPeng2024}. These works show that the Morse index is a precise spectral encoding of the geometry of concentration.

Nondegeneracy alone does not imply uniqueness. It shows that each solution is isolated, but it does not exclude several isolated solutions in the same concentrating class. Similarly, the construction of one family by Lyapunov--Schmidt reduction does not by itself exclude other families with the same concentration behavior. For local equations, uniqueness of single- and multi-peak solutions has been studied by degree arguments, refined asymptotic expansions, and localized Pohozaev identities; see \cite{CaoHeinz2003,CaoLiLuo2015,CaoNoussairYan1998,DLY,guo2017local}. The relation between the Morse index, degree, and uniqueness has also been used in blow-up problems; see \cite{DeMarchisGrossiIanniPacella2019JMPA,ianni2025morse}. For fractional equations, local uniqueness and nondegeneracy have been proved for several related perturbative or nearly critical problems; see \cite{CassaniWang2023,GuoLiLiuYang2026,WuZhang2024}. However, the perturbations and the classes of solutions considered in those works differ from the \(k\)-peak energy quantization condition here.

The central counting principle of this paper is therefore
\[
\text{total degree of the concentrating class}
=
\sum_u \text{local degree at }u
=
\sum_u(-1)^{m(u)},
\]
where the sums are taken over all solutions in the prescribed class. Our spectral theorem shows that all these solutions have the same Morse index and hence the same local-degree sign. If the class contains \(q\) solutions, the sum of their local degrees is therefore
\[
q(-1)^{m(u)}.
\]
The independently computed total degree is \((-1)^{m(u)}\), hence \(q=1\). This is stronger than uniqueness of a correction along a constructed solution: it gives uniqueness among all positive solutions satisfying the \(k\)-peak energy quantization.

We next recall the ground state of the limiting equation. This profile will be used to describe the peaks in our concentrating class; see \cite{MR3070568,frank2016uniqueness}.
\begin{theoremletter}\label{th:ground-state}
Consider the equation
\begin{equation}\label{eq:ground-state-equation}
(-\Delta)^s w+w=w^p,\quad w>0,\quad \text{in } \mathbb{R}^N,\quad w(0)=\max_{x\in \mathbb{R}^N}w(x),
\end{equation}
where \(N>2s\), \(s\in(0,1)\), and \(1<p<2_s^*-1\). The following statements hold.
 \begin{enumerate}
\item[(i)] (Uniqueness) The    positive ground state is unique up to translations; with the normalization in \eqref{eq:ground-state-equation}, the ground state \(w\in H^s(\mathbb R^N)\)  is unique.
\item[(ii)] (Symmetry, regularity and decay) The ground state \(w\) is smooth, radial, positive and strictly decreasing in \(|x|\).  Moreover, it satisfies the algebraic decay estimates
\begin{equation*}
\frac{C_1}{1 + |x|^{N + 2s}}\leqslant w(x)\leqslant \frac{C_2}{1 + |x|^{N + 2s}}
\qquad \text{for all }x\in\mathbb{R}^N,
\end{equation*}
with some constants \(C_2\geqslant C_1 > 0\).
\item[(iii)] (Nondegeneracy) The linearized operator \(L_0 = (-\Delta)^s + 1 - pw^{p - 1}\) is nondegenerate, i.e., its kernel is given by
\[
\ker L_0 = \operatorname{span}\left\{\frac{\partial w}{\partial x_1},\frac{\partial w}{\partial x_2},\ldots,\frac{\partial w}{\partial x_N}\right\}.
\]
Moreover, by \cite[Lemma C.2]{frank2016uniqueness}, for \(j = 1,\ldots,N\), \(\frac{\partial w}{\partial x_j}\) satisfies the decay estimate
\[
\left|\frac{\partial w}{\partial x_j}\right|\leqslant \frac{C}{1 + |x|^{N + 2s}}.
\]
\end{enumerate}
\end{theoremletter}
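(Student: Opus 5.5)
This theorem collects known facts about the limiting problem, so I will assemble it from the literature rather than prove it from scratch, organizing the argument around the uniqueness–nondegeneracy results of Frank--Lenzmann--Silvestre \cite{frank2016uniqueness} and the regularity and decay theory of Felmer--Quaas--Tan \cite{MR3070568}.

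\textbf{Existence and symmetry.} First I produce a ground state by minimizing the Weinstein-type quotient
\[
\frac{\bigl(\|(-\Delta)^{s/2}u\|_{L^2}^2+\|u\|_{L^2}^2\bigr)^{1/2}}{\|u\|_{L^{p+1}}}
\]
over $H^s(\mathbb R^N)\setminus\{0\}$. Since $1<p<2_s^*-1$, compactness can be recovered by concentration-compactness, or alternatively by symmetric decreasing rearrangement combined with compactness of the radial embedding. Replacing $u$ by $|u|$ does not increase the Gagliardo seminorm, and the fractional Pólya--Szeg\H{o} inequality then gives a radial, nonincreasing minimizer. Scaling turns it into a solution of \eqref{eq:ground-state-equation}. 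Positivity follows from the strong maximum principle for $(-\Delta)^s+1$: its Green function (the Bessel-type kernel) is strictly positive, and $w=\bigl((-\Delta)^s+1\bigr)^{-1}w^p$.

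\textbf{Regularity and decay, part (ii).} Smoothness comes from bootstrapping through this resolvent representation: $L^q\to L^\infty$ bounds, then Hölder regularity, then Schauder estimates for $(-\Delta)^s$. Strict monotonicity in $|x|$ follows from the moving plane method in the fractional setting (Chen--Li--Li). For the two-sided bound $\asymp|x|^{-N-2s}$, I use that the kernel $\mathcal K$ of $\bigl((-\Delta)^s+1\bigr)^{-1}$ satisfies $\mathcal K(x)\asymp|x|^{-N-2s}$ at infinity. Writing $w=\mathcal K*w^p$ with $w^p\in L^1$ positive and decaying faster than $\mathcal K$ gives both the upper and the lower bound. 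Differentiating, $\partial_j w=\mathcal K*(p\,w^{p-1}\partial_jw)$, and the same convolution estimate gives the derivative bound, exactly as in \cite[Lemma C.2]{frank2016uniqueness}.

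\textbf{Uniqueness and nondegeneracy, parts (i) and (iii).} These are the deep parts and the main obstacle; I would cite \cite{frank2016uniqueness} rather than reprove them. The strategy there is as follows.
\begin{enumerate}
\item Prove nondegeneracy first, in the radial sector. The radial kernel of $L_0$ is trivial: one uses the Caffarelli--Silvestre extension to get a Sturm-type oscillation bound, namely that the second radial eigenfunction changes sign only once. One then tests against the explicit identity $L_0\bigl(\tfrac{1}{p-1}w+\tfrac{1}{2s}x\cdot\nabla w\bigr)=-w$.
\item Handle the nonradial sector by decomposing into spherical harmonics. In the $\ell=1$ sector the kernel is exactly $\partial_jw$; for $\ell\ge2$ the operator is strictly positive by a Perron--Frobenius comparison.
\item Deduce uniqueness by continuation in $s$ starting from the local case $s=1$. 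Nondegeneracy feeds the implicit function theorem along the branch $s\mapsto w_s$, and a priori bounds prevent branches from merging or escaping.
\end{enumerate}

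Combined with the normalization $w(0)=\max w$, which pins down the translation, this yields (i).
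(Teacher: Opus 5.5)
Your proposal is correct and follows the same route as the paper, which gives no argument of its own here and simply quotes \cite{MR3070568,frank2016uniqueness}. Your summary of the Frank--Lenzmann--Silvestre proof is accurate, with one omission in the radial step. There you also need $L_0w=(1-p)w^p$ and the fact that $w$ has Morse index one. Then a radial kernel element $v$ is the second radial eigenfunction and changes sign once, at some $r_*$. It is also $L^2$-orthogonal to both $w$ and $w^p$, hence to $w^p-w(r_*)^{p-1}w$, which changes sign exactly at $r_*$ too, and this forces $v=0$. Note also that in this bibliography \cite{MR3070568} is Frank--Lenzmann (the case $N=1$), not Felmer--Quaas--Tan, so the regularity and decay facts are better credited to Appendix~C of \cite{frank2016uniqueness}.
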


By scaling,
\[
w_\lambda(x):=\lambda^{\frac{1}{p-1}}w\left(\lambda^{\frac{1}{2s}}x\right)
\]
solves
\[
(-\Delta)^sw_\lambda+\lambda w_\lambda=w_\lambda^p
\quad\text{in }\mathbb{R}^N.
\]
Thus, for any \(\xi\in\mathbb R^N\), the profile
\[
u(x)=w_{V(\xi)}\left(\frac{x-\xi}{\varepsilon}\right)
\]
solves the corresponding equation
\[
\varepsilon^{2s}(-\Delta)^su+V(\xi)u=u^p
\quad\text{in }\mathbb R^N.
\]

For later use, set
\[
\|u\|_{\varepsilon,V}^2
:=\varepsilon^{2s}\int_{\mathbb R^N}|(-\Delta)^{s/2}u|^2\,dx+
\int_{\mathbb R^N}V(x)u^2\,dx,
\]
and define the semiclassical energy
\begin{equation*}
\mathcal E_\varepsilon(u)
:=\frac{1}{2}\|u\|_{\varepsilon,V}^2-\frac{1}{p+1}\int_{\R^N}|u|^{p+1}\,dx.
\end{equation*}
For the problem
\[
(-\Delta)^s v+\lambda v=v^p\quad\hbox{in }\R^N,
\]
set
\begin{equation*}
\mathcal I_\lambda(v):=\frac{1}{2}\int_{\R^N}\bigl(|(-\Delta)^{s/2}v|^2+\lambda v^2\bigr)\,dy
-\frac{1}{p+1}\int_{\R^N}|v|^{p+1}\,dy.
\end{equation*}
Finally, set
\[
a_0:=\int_{\R^N}w_1^{p+1}\,dy,
\qquad
c_*:=\frac{p-1}{2(p+1)}a_0,
\qquad
\theta:=\frac{p+1}{p-1}-\frac{N}{2s},
\]
and
\begin{equation*}
c_\lambda:=\mathcal I_\lambda(w_\lambda)
=\frac{p-1}{2(p+1)}\int_{\R^N}w_\lambda^{p+1}\,dy
=c_*\lambda^\theta.
\end{equation*}

We now specify the class of concentrating families in present paper.
\begin{definition}\label{def:kpeak-class}
Let $\xi_1^0,\ldots,\xi_k^0$ be different critical points of $V$.  A family $u_\varepsilon$ of positive solutions of \eqref{eq:fractional-schrodinger-problem} is called a positive $k$-peak concentrating family satisfying the energy quantization condition at $\xi^0=(\xi_1^0,\ldots,\xi_k^0)$ if there exists $r>0$ such that the balls $B_{2r}(\xi_i^0)$ are pairwise disjoint and, for all sufficiently small \(\varepsilon\), each $B_r(\xi_j^0)$ contains a unique local maximum point $\xi_{\varepsilon,j}$ of $u_\varepsilon$ with
\[
\xi_{\varepsilon,j}\to \xi_j^0,
\]
while $u_\varepsilon\to0$ uniformly on $\mathbb R^N\setminus\bigcup_{j=1}^k B_r(\xi_j^0)$.
Moreover, the peaks are nonvanishing,
\begin{equation}\label{eq:peak-nonvanishing}
\liminf_{\varepsilon\to0}u_\varepsilon(\xi_{\varepsilon,j})>0,
\qquad j=1,\ldots,k,
\end{equation}
and the family satisfies the $k$-peak energy quantization condition
\begin{equation}\label{eq:sharp-energy-quantization}
\varepsilon^{-N}\mathcal E_\varepsilon(u_\varepsilon)
\longrightarrow
\sum_{j=1}^k c_{V(\xi_j^0)}.
\end{equation}
\end{definition}
In the sequel we write
\[
\xi_\varepsilon:=(\xi_{\varepsilon,1},\ldots,\xi_{\varepsilon,k}),
\qquad
q_{\varepsilon,j}:=\frac{\xi_{\varepsilon,j}}{\varepsilon},
\qquad
q_\varepsilon:=(q_{\varepsilon,1},\ldots,q_{\varepsilon,k}).
\]

\subsection{Main results}
We now state the main results. The first theorem computes the Morse index for every positive \(k\)-peak family in the sense of Definition~\ref{def:kpeak-class}.

\begin{theorem}\label{th:morse-index}
Let $\xi_1^0,\dots,\xi_k^0$ be different nondegenerate critical points of $V$, namely
\begin{equation}\label{eq:nondegenerate-critical-points}
  \nabla V\left(\xi_i^0\right)=0,\quad D^2V\left(\xi_i^0\right)\text{~is~invertible~for~all~}i=1,\ldots,k.
\end{equation}
Let $u_\varepsilon$ be a positive $k$-peak family of solutions to \eqref{eq:fractional-schrodinger-problem} in the sense of Definition~\ref{def:kpeak-class}, concentrating at $\xi_1^0,\dots,\xi_k^0$. Then, for all sufficiently small $\varepsilon$, the Morse index of $u_\varepsilon$ is
\[
m(u_\varepsilon)
=k+\sum_{j=1}^k m(\xi_j^0,V)
=k+\#\bigl\{\Lambda _{j,i}<0:\ \Lambda_{j,i}\text{ is an eigenvalue of }D^2V(\xi_j^0)\bigr\}.
\]
\end{theorem}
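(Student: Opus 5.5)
The plan is to split the spectrum of $L_\varepsilon$ into three groups and handle each by blow-up at the peaks.

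\textbf{Step 1: local profile.} From Definition~\ref{def:kpeak-class}, the energy quantization \eqref{eq:sharp-energy-quantization}, uniqueness of $w$ (Theorem~\ref{th:ground-state}(i)) and the decay from Lemma~\ref{lem:u-polynomial-decay}, I would show that
\[
u_\varepsilon(\varepsilon x+\xi_{\varepsilon,j})\to w_{V(\xi_j^0)}(x)\quad\text{in }H^s_{\loc}\cap L^\infty,\qquad j=1,\dots,k,
\]
with a uniform polynomial tail bound. I would then refine this to the expansion $u_\varepsilon=\sum_j w_{V(\xi_{\varepsilon,j})}\bigl((\cdot-\xi_{\varepsilon,j})/\varepsilon\bigr)+\varphi_\varepsilon$ with $\|\varphi_\varepsilon\|$ small, together with the location estimate $|\xi_{\varepsilon,j}-\xi_j^0|=o(\varepsilon)$. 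The location estimate should follow from the local Pohozaev identity obtained by testing the equation against $\partial_{x_i}u_\varepsilon$ near $\xi_j^0$; this gives $\int \partial_i V\,u_\varepsilon^2=$ (small interaction and tail terms).

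\textbf{Step 2: first $k$ and beyond $k+kN$.} I would take eigenpairs normalized as in \eqref{eq:linearized-eigenvalue-problem} and pass to the limit in each blow-up $v^{(l)}_{\varepsilon,j}$. A bounded eigenvalue sequence $\mu^{(l)}_\varepsilon\to\mu<V_{\min}$ produces limits $v_j$ solving $(-\Delta)^s v_j+V(\xi_j^0)v_j-pw_{V(\xi_j^0)}^{p-1}v_j=\mu v_j$. Outside the balls, $u_\varepsilon^{p-1}$ is small, so no mass escapes there, and at least one $v_j$ is nonzero.

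For the upper bounds I would use test functions $w_{V(\xi_{\varepsilon,j})}((\cdot-\xi_{\varepsilon,j})/\varepsilon)$, which have disjoint essential supports. The min-max characterization then gives $\mu^{(k)}_\varepsilon\le -c<0$. For the next group, the $kN$ functions $\partial_i u_\varepsilon$, suitably cut off near each peak, give $\mu^{(k+kN)}_\varepsilon\le o(1)$.

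For the lower bounds: the single-profile operator has exactly one negative eigenvalue, and its kernel is spanned by translations (Theorem~\ref{th:ground-state}(iii)). Limits of eigenfunctions belonging to distinct eigenvalues are orthogonal. From these two facts I would deduce $\mu^{(k+1)}_\varepsilon\to0$ for the next $kN$ eigenvalues, and $\liminf\mu^{(k+kN+1)}_\varepsilon>0$. Hence the Morse index equals $k$ plus the number of negative eigenvalues among $\mu^{(k+1)}_\varepsilon,\dots,\mu^{(k+kN)}_\varepsilon$.

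\textbf{Step 3: the small eigenvalues (main obstacle).} For $l\in\{k+1,\dots,k+kN\}$, the blow-up limits satisfy $v_j=\sum_i a_{j,i}\partial_i w_{V(\xi_j^0)}$. To get the sign of $\mu^{(l)}_\varepsilon$, I would differentiate the equation for $u_\varepsilon$ to obtain
\[
L_\varepsilon(\partial_i u_\varepsilon)=-\partial_iV\,u_\varepsilon .
\]
Testing this against $v^{(l)}_\varepsilon$ on $B_r(\xi_j^0)$ and using self-adjointness yields a localized identity
\[
\mu^{(l)}_\varepsilon\int \partial_i u_\varepsilon\, v^{(l)}_\varepsilon=-\int \partial_i V\,u_\varepsilon v^{(l)}_\varepsilon+\text{(boundary and nonlocal cutoff terms)}.
\]
I would expand $\partial_iV(x)=\sum_m\partial_{im}V(\xi_j^0)(x-\xi_j^0)_m+o(|x-\xi_j^0|)$, use $|\xi_{\varepsilon,j}-\xi_j^0|=o(\varepsilon)$, and divide by the appropriate power of $\varepsilon$. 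In the limit this gives
\[
\frac{\mu^{(l)}_\varepsilon}{\varepsilon^2}\,a_j\to c\,D^2V(\xi_j^0)\,a_j ,\qquad c>0,
\]
where $c$ comes from $\int w\,\partial_m w\,y_i$ after an integration by parts. So $\mu^{(l)}_\varepsilon/\varepsilon^2$ converges to an eigenvalue of $D^2V(\xi_j^0)$ times $c$. An orthogonality and counting argument shows that all $kN$ Hessian eigenvalues are attained with multiplicity. Invertibility \eqref{eq:nondegenerate-critical-points} then fixes the sign of each $\mu^{(l)}_\varepsilon$ for small $\varepsilon$, which gives the formula in Theorem~\ref{th:morse-index}.

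The hard part is controlling the error terms at order $o(\varepsilon^2)$ given only the polynomial decay $|x|^{-N-2s}$. The nonlocal commutator of $(-\Delta)^s$ with the cutoff, and the cross-peak interaction, must be shown to be $o(\varepsilon^2)$ relative to the main term. Here the condition $N+2s>2$ and the decay of $\partial_i w$ are essential; if needed I would use the Caffarelli--Silvestre extension to localize the boundary terms.
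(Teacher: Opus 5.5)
Your plan follows the paper's proof closely. It uses localized copies of $u_\varepsilon$ and $\partial_iu_\varepsilon$ as min--max test functions, and blow-up limits of eigenfunctions together with $L^2$-orthogonality and the spectral structure of $L_\lambda$ for the counting. For the $kN$ small eigenvalues it tests $L_\varepsilon(\partial_iu_\varepsilon)=-\partial_iV\,u_\varepsilon$ against $v_\varepsilon^{(l)}$ near one peak; this is the paper's local Pohozaev identity (Lemma~\ref{lem:pohozaev-halfball}), and it yields $\varepsilon^{-2}\mu_\varepsilon^{(l)}a_j^{(l)}\to\frac{M_j}{2C_j}D^2V(\xi_j^0)a_j^{(l)}$. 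Proving $\liminf\mu_\varepsilon^{(k+1)}\ge0$ by blow-up and orthogonality, instead of the paper's IMS localization, is fine once you have uniform tail decay of the eigenfunctions. You also do not need $o(\varepsilon)$ localization of the peaks: $|\xi_{\varepsilon,j}-\xi_j^0|=O(\varepsilon)$ suffices, because the term it multiplies contains $\int u_{\varepsilon,j}v^{(l)}_{\varepsilon,j}\to\sum_n a^{(l)}_{j,n}\int w\,\partial_nw=0$.

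The gap is in the error control you defer to the end. You say $N+2s>2$ is essential, but the theorem covers $N=1$, where $N>2s$ forces $s<\tfrac12$ and hence $N+2s<2$. This matters. Suppose you localize with a cutoff in $\mathbb R^N$ and bound the nonlocal commutator by absolute values. The part coupling peak $j$ to another peak $i$ is then of size $\varepsilon^{2s}\|\partial_iu_\varepsilon\|_{L^1(B_r(\xi_{\varepsilon,j}))}\|v_\varepsilon^{(l)}\|_{L^1(B_r(\xi_{\varepsilon,i}))}\sim\varepsilon^{2N+2s-1}$. This is $o(\varepsilon^{N+1})$ exactly when $N+2s>2$, so for $N=1$ it swamps the Hessian term and Step 3 does not close.

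The paper avoids this by making the extension the key device, not an optional fallback. The Poisson-kernel bounds $|\bar u_\varepsilon|,|\bar v^{(l)}_\varepsilon|\le C\varepsilon^N$ hold at macroscopic distance from all peaks. Combined with a Caccioppoli estimate they give $\int_{A^{+,j}_{r,2r}}t^{1-2s}(|\nabla\bar u_\varepsilon|^2+|\nabla\bar v_\varepsilon^{(l)}|^2)\le C\varepsilon^{2N}$. Averaging then produces a radius $\rho_\varepsilon\in(r,2r)$ on whose half-sphere the same bound holds. All boundary terms of the Pohozaev identity on $\mathcal B^+_{\rho_\varepsilon}(\xi_{\varepsilon,j})$ are therefore $O(\varepsilon^{2N+2s})=o(\varepsilon^{N+1})$, which uses only $N+2s>1$. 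The reason is that the $x_i$-derivative lands on the extension far from every peak, where it costs no factor $\varepsilon^{-1}$. If you insist on a cutoff formulation, you must recover that factor $\varepsilon$ by integrating by parts, exploiting that $\partial_iu_\varepsilon$ is a derivative.
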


The spectral estimates also imply the nondegeneracy of $u_\varepsilon$ for problem \eqref{eq:fractional-schrodinger-problem}.
\begin{corollary}\label{th:nondegeneracy}
Under the assumptions of Theorem~\ref{th:morse-index}, the solution $u_\varepsilon$ is nondegenerate. More precisely, for the linearized operator at $u_\varepsilon$,
\[
L_\varepsilon \phi
:=\varepsilon^{2s}(-\Delta)^s\phi + V(x)\phi - p\,u_\varepsilon^{p-1}\phi,
\qquad \phi\in H^s(\mathbb{R}^N),
\]
we have 
\[
\ker(L_\varepsilon)=\{0\}\quad\text{in }H^s(\mathbb{R}^N).
\]
\end{corollary}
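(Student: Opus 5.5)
The plan is to read Corollary~\ref{th:nondegeneracy} off from the spectral description behind Theorem~\ref{th:morse-index}. That theorem is proved by establishing three facts for all sufficiently small $\varepsilon$: first, $\mu_\varepsilon^{(l)}\le -c<0$ for $l=1,\dots,k$; second, for $l=k+1,\dots,k+kN$, after relabelling, $\mu_\varepsilon^{(k+(j-1)N+i)}=c_0\,\Lambda_{j,i}\,\varepsilon^2+o(\varepsilon^2)$ with some constant $c_0>0$; third, $\mu_\varepsilon^{(k+kN+1)}\ge c>0$. Granting these, $0$ is not an eigenvalue. The first $k$ eigenvalues are negative and all eigenvalues beyond index $k+kN$ are positive. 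Each of the $kN$ middle eigenvalues is nonzero for small $\varepsilon$, because every $\Lambda_{j,i}\neq0$ by \eqref{eq:nondegenerate-critical-points}.

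It remains to check that a kernel element would in fact be one of these eigenvalues. Suppose $\phi\in H^s$ with $L_\varepsilon\phi=0$. Since $0<V_{\min}\le\inf\sigma_{\mathrm{ess}}(L_\varepsilon)$ by the Remark after Definition~\ref{def:morse-index}, $0$ lies below the essential spectrum. So if $\phi\neq0$, then $0$ is a discrete eigenvalue and equals some $\mu_\varepsilon^{(l)}$ in the min--max list. This contradicts the trichotomy above, hence $\ker L_\varepsilon=\{0\}$.

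If the Morse-index proof only records counts, I would argue by contradiction using the same blow-up machinery. Take $\varepsilon_n\to0$ and $\phi_n\in\ker L_{\varepsilon_n}$ normalized as in \eqref{eq:linearized-eigenvalue-problem}. Rescale around each $\xi_{\varepsilon_n,j}$. Using the uniform decay and the local convergence $u_\varepsilon(\varepsilon x+\xi_{\varepsilon,j})\to w_{V(\xi_j^0)}$, show that the rescaled functions converge to $\sum_i b_{j,i}\partial_i w_{V(\xi_j^0)}$, which lies in the kernel described in Theorem~\ref{th:ground-state}(iii). Away from the peaks, $L_\varepsilon$ is coercive because $V\ge V_{\min}$ and $u_\varepsilon\to0$, so no mass escapes there. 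The main obstacle is to show $b_{j,i}=0$. For this I would test the equation against $\partial_{x_i}u_\varepsilon$ localized on $B_r(\xi_j^0)$ and use the localized Pohozaev-type identity. Expanding $V$ to second order gives $\sum_i b_{j,i}\,\partial_{il}V(\xi_j^0)\int w^2=o(1)$; the nonlocal tails contribute only $O(\varepsilon^{2s+N})$-type polynomial errors, which must be shown to be $o(\varepsilon^2)$ relative to the main term, or handled as in the Morse-index proof. Invertibility of $D^2V(\xi_j^0)$ then forces $b=0$. This contradicts the normalization, since the limit must be nontrivial on at least one peak.
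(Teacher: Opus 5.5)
Your main argument is the paper's own proof: the corollary is read off directly from Propositions~\ref{prop:mu-zero}, \ref{prop:hessian-eigenvalue-asymptotics} and \ref{prop:kn-plus-one}. Your observation that a kernel element would be a discrete eigenvalue below $\inf\sigma_{\mathrm{ess}}(L_\varepsilon)\ge V_{\min}$, and hence one of the $\mu_\varepsilon^{(l)}$, just spells out a step the paper leaves implicit. Two minor points: the middle-block factor is $M_j/(2C_j)$, which depends on $j$, and the ordered eigenvalues are matched to the ordered eigenvalues $\eta_\ell$ of $\mathcal A$; since every $\eta_\ell\neq0$, this changes nothing, and your blow-up fallback only re-runs the proof of Proposition~\ref{prop:hessian-eigenvalue-asymptotics} with $\mu=0$, so you do not need it.
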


To state the degree results, we introduce the approximate multi-peak profiles and the orthogonality conditions for the remainder.
For \(\xi=(\xi_1,\ldots,\xi_k)\) and \(\alpha=(\alpha_1,\ldots,\alpha_k)\), set
\[
W_{\varepsilon,j}(x;\xi_j):=w_{V(\xi_j)}\!\left(\frac{x-\xi_j}{\varepsilon}\right),
\qquad
W_{\varepsilon,\alpha,\xi}(x):=
\sum_{j=1}^k\alpha_j W_{\varepsilon,j}(x;\xi_j),
\]
and
\[
Y_{\varepsilon,j,h}(x;\xi_j):=\partial_{\xi_{j,h}}
w_{V(\xi_j)}\left(\frac{x-\xi_j}{\varepsilon}\right),
\qquad j=1,\ldots,k,\quad h=1,\ldots,N.
\]
Define the orthogonal complement:
\[
\begin{aligned}
E_{\varepsilon,\xi}:=
\{\omega\in H^s(\mathbb R^N):&
\ \langle\omega,W_{\varepsilon,j}(x;\xi_j)\rangle_{\varepsilon,V}=0,\\
&\ \langle\omega,Y_{\varepsilon,j,h}(x;\xi_j)\rangle_{\varepsilon,V}=0,
\quad j=1,\ldots,k,\ h=1,\ldots,N\}.
\end{aligned}
\]

We use the following weighted norm to control the solutions of \eqref{eq:fractional-schrodinger-problem}.
For \(0<\sigma <s\), set
\[
\beta_\sigma :=N+2s-\sigma.
\]
For \(q=(q_1,\ldots,q_k)\in(\mathbb R^N)^k\), define the weighted norm by
\begin{equation}\label{eq:rescaled-weight}
\rho_{\sigma ,q}(y)
:=\sum_{j=1}^k(1+|y-q_j|)^{-\beta_\sigma },
\qquad
\|f\|_{\sigma ,q}
:=\|\rho_{\sigma ,q}^{-1}f\|_{L^\infty(\mathbb R^N)},
\end{equation}
and
\begin{equation}\label{eq:physical-weight}
\rho_{\sigma ,\varepsilon,q}(x)
:=\rho_{\sigma ,q/\varepsilon}(x/\varepsilon)
=\sum_{j=1}^k
\left(1+\frac{|x-q_j|}{\varepsilon}\right)^{-\beta_\sigma },
\qquad
\|\varphi\|_{\sigma ,\varepsilon,q}
:=\|\rho_{\sigma ,\varepsilon,q}^{-1}\varphi\|_{L^\infty(\mathbb R^N)}.
\end{equation}
Choose \(0<\delta_0<\sigma <\tau<s \) and a small constant \(\tau_0>0\), and define
\begin{equation}\label{eq:parameter-domain}
\mathcal D_\varepsilon=\{(\alpha,\xi): |\alpha_j-1|<\varepsilon^{\delta_0},\; |\xi_j-\xi_j^0|<\tau_0,\ j=1,\ldots,k\}.
\end{equation}
Set
\[
\mathcal W_{\varepsilon,\xi}:=
E_{\varepsilon,\xi}\cap
\{\omega:\|\omega\|_{\tau,\varepsilon,\xi}<\infty\},
\]
and
\begin{equation}\label{eq:local-degree-set}
\begin{aligned}
S_\varepsilon:=\{(\alpha,\xi,\omega):{}&(\alpha,\xi)\in \mathcal D_\varepsilon,\quad
\omega\in \mathcal W_{\varepsilon,\xi},\\
&\|\omega\|_{\varepsilon,V}<\varepsilon^{N/2+\delta_0},\quad
\|\omega\|_{\tau,\varepsilon,\xi}<\varepsilon^{\delta_0}\}.
\end{aligned}
\end{equation}

\begin{theorem}\label{th:parametrization-class}
Let \(u_\varepsilon\) be any positive \(k\)-peak concentrating family in the sense of Definition~\ref{def:kpeak-class}.  Then, for all sufficiently small \(\varepsilon\), there exist
\((\alpha_\varepsilon,\widehat{\xi}_\varepsilon,\omega_\varepsilon)\in S_\varepsilon\) such that
\[
u_\varepsilon=W_{\varepsilon,\alpha_\varepsilon,\widehat{\xi}_\varepsilon}+\omega_\varepsilon.
\]
This representation is unique among triples in \(S_\varepsilon\).
\end{theorem}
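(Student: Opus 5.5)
The proof splits into three parts: a blow-up analysis showing that $u_\varepsilon$ is close to a sum of $k$ rescaled ground states, a finite-dimensional implicit function argument for existence of the triple, and a uniqueness argument based on the nondegenerate Gram matrix and the smallness built into $S_\varepsilon$.

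\emph{Step 1: profile convergence.} Consider the rescaled functions $\tilde u_{\varepsilon,j}(y):=u_\varepsilon(\varepsilon y+\xi_{\varepsilon,j})$. They are uniformly bounded: a blow-up argument at the global maximum, using the subcritical exponent and the Liouville theorem for $(-\Delta)^s v=v^p$, gives this. They are also uniformly H\"older by fractional regularity. Hence, along subsequences, $\tilde u_{\varepsilon,j}\to U_j$ locally uniformly, where $U_j$ solves $(-\Delta)^sU+V(\xi_j^0)U=U^p$ with $U_j(0)=\max U_j>0$ by \eqref{eq:peak-nonvanishing}.

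Lower semicontinuity together with \eqref{eq:sharp-energy-quantization} forces each $U_j$ to carry energy exactly $c_{V(\xi_j^0)}$, with no energy lost elsewhere. Since any positive solution has energy at least the ground state level, $U_j$ is a ground state. By Theorem~\ref{th:ground-state}(i), $U_j=w_{V(\xi_j^0)}$, so the whole family converges, and in $H^s$ norm after rescaling.

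Next comes uniform weighted decay. Outside the balls, $u_\varepsilon^{p-1}$ is small, so $u_\varepsilon$ is a subsolution of $\varepsilon^{2s}(-\Delta)^s+\tfrac12V_{\min}$ there. Comparison with the fundamental-solution barrier $\sum_j(1+|x-\xi_{\varepsilon,j}|/\varepsilon)^{-(N+2s)}$ (this is what Lemma~\ref{lem:u-polynomial-decay} should supply) gives $u_\varepsilon\le C\rho_{0,\varepsilon,\xi_\varepsilon}$. Combining with local convergence then yields
\[
\bigl\|u_\varepsilon-\textstyle\sum_j W_{\varepsilon,j}(\cdot;\xi_{\varepsilon,j})\bigr\|_{\tau,\varepsilon,\xi_\varepsilon}=o(1),
\qquad
\bigl\|u_\varepsilon-\textstyle\sum_j W_{\varepsilon,j}(\cdot;\xi_{\varepsilon,j})\bigr\|_{\varepsilon,V}=o(\varepsilon^{N/2}).
\]
For the weighted estimate, the difference is small on $|y|\le R$ and bounded by $C(1+|y|)^{-\beta_0}\le (1+|y|)^{-\beta_\tau}R^{-\tau}$ outside. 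This is exactly why $\tau>0$ is needed.

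\emph{Step 2: existence of the parameters.} Consider the map
\[
F(\alpha,\xi):=\Bigl(\langle u_\varepsilon-W_{\varepsilon,\alpha,\xi},W_{\varepsilon,j}\rangle_{\varepsilon,V},\ \langle u_\varepsilon-W_{\varepsilon,\alpha,\xi},Y_{\varepsilon,j,h}\rangle_{\varepsilon,V}\Bigr)_{j,h}.
\]
After the scalings $\varepsilon^{-N}$ for the $W$-components and $\varepsilon^{-N+1}$ for the $Y$-components, its Jacobian at $(1,\xi_\varepsilon)$ is close to a block-diagonal matrix with blocks $-\|w_{V(\xi_j^0)}\|^2$ and $-\tfrac1{\varepsilon^2}\bigl(\int|\nabla w|^2$-type Gram matrix$\bigr)$. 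This matrix is invertible, because cross-peak interactions are $O(\varepsilon^{N+2s})$ by the algebraic decay and the $\partial_x w$ are linearly independent. The value of $F$ there is small by Step 1.

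A quantitative inverse function theorem (or a Brouwer degree argument on the finite-dimensional box) then produces a zero $(\alpha_\varepsilon,\widehat\xi_\varepsilon)$ with $|\alpha_\varepsilon-1|$ and $|\widehat\xi_\varepsilon-\xi_\varepsilon|/\varepsilon$ small. This is where one must verify that the smallness is at least a power $\varepsilon^{\delta_0}$, not merely $o(1)$. To get a rate I would use the estimate $\|u_\varepsilon-\sum W\|=O(\varepsilon^{\gamma})$ obtained by applying the linear invertibility of $L_0$ (Theorem~\ref{th:ground-state}(iii)) to the error equation, whose source $(V(\varepsilon y+\xi_{\varepsilon,j})-V(\xi_{\varepsilon,j}))w$ is $O(\varepsilon^{\gamma})$ in weighted norm for some $\gamma<\min\{1,s-\tau\}$ once the translation modes are fixed by $\xi$. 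Choosing $\delta_0<\gamma$ then places the triple in $S_\varepsilon$. The translation from $\xi_\varepsilon$ to $\widehat\xi_\varepsilon$ changes the weights only by bounded factors, and $\omega_\varepsilon:=u_\varepsilon-W_{\varepsilon,\alpha_\varepsilon,\widehat\xi_\varepsilon}$ lies in $\mathcal W_{\varepsilon,\widehat\xi_\varepsilon}$ by construction.

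\emph{Step 3: uniqueness.} Suppose two triples in $S_\varepsilon$ represent $u_\varepsilon$. Then
\[
W_{\varepsilon,\alpha,\xi}-W_{\varepsilon,\alpha',\xi'}=\omega'-\omega,
\]
and both $\omega,\omega'$ are $o(\varepsilon^{N/2})$ in $\|\cdot\|_{\varepsilon,V}$. First, $|\xi_j-\xi_j'|$ must be $O(\varepsilon)$. Otherwise the peak of $W_{\varepsilon,j}(\cdot;\xi_j)$ would be seen as a definite bump in $\omega'$, contradicting $\|\omega'\|_{\tau,\varepsilon,\xi'}<\varepsilon^{\delta_0}$. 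Given $|\xi_j-\xi_j'|=O(\varepsilon)$, both $(\alpha,\xi)$ and $(\alpha',\xi')$ are zeros of $F$ in the region where $F$ is injective, by the invertibility of the Jacobian from Step 2. Hence they coincide, and so do the $\omega$'s.

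The main obstacle is Step 1's quantitative part. Energy quantization yields only $o(1)$ closeness, while membership in $S_\varepsilon$ demands the power rate $\varepsilon^{\delta_0}$ in both norms. This requires the weighted linear estimate for $L_0$ and careful barrier arguments for the nonlocal tails.
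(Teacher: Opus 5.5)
Your overall architecture matches the paper's: profile convergence and weighted decay, a quantitative closeness estimate to $W_\varepsilon^*$, finite-dimensional modulation, and local uniqueness. Two of your choices differ from the paper but are harmless variants. You find the zero of the orthogonality map by a quantitative inverse function theorem, where the paper minimizes $\|u_\varepsilon-W_{\varepsilon,\alpha,\xi}\|_{\varepsilon,V}^2$ over $K_\varepsilon$ (Proposition~\ref{prop:minimization}). You separate the centres by a sup-norm bump argument, where the paper uses \eqref{eq:separation-estimate}. One small correction in Step~3: you need $|\xi_j-\xi_j'|=o(\varepsilon)$, not $O(\varepsilon)$, to land in the injectivity neighbourhood of fixed scaled radius. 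The same bump argument gives this, using the nondegenerate maximum of $w$.

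The gap is the step you yourself call the main obstacle. Membership in $S_\varepsilon$ needs power rates, and your sketch does not supply them. Your source term $(V(\varepsilon y+\xi_{\varepsilon,j})-V(\xi_{\varepsilon,j}))w$ shows that you linearize around profiles centred at the local maxima. There the error $\Phi_\varepsilon=\widetilde u_\varepsilon-W_\varepsilon^*(\varepsilon\cdot)$ satisfies no orthogonality condition, while the linearized operator has a $kN$-dimensional approximate kernel. So invertibility of $L_0$ tells you nothing about the component of $\Phi_\varepsilon$ along the $\partial_hw$: shifting the centres by $\varepsilon\eta$ changes the error by roughly $\eta\cdot\nabla w$, regardless of how small the source is. The paper supplies an extra input exactly here (Proposition~\ref{prop:local-expansion}, Step~4):
\begin{itemize}
\item since $\xi_{\varepsilon,j}$ is a critical point of $u_\varepsilon$, one has $\nabla\Phi_\varepsilon(q_{\varepsilon,j})=O(\varepsilon^{N+2s})$;
\item the nondegenerate Hessian \eqref{eq:hessian-nondeg-max} converts this into $|a|\le C|\nabla\Phi_\varepsilon^\perp(q_{\varepsilon,j})|+C\varepsilon^\tau$;
\item $\nabla\Phi_\varepsilon^\perp(q_{\varepsilon,j})$ is then bounded by local Schauder estimates, and $\Phi_\varepsilon^\perp$ by the projected weighted inverse of Lemma~\ref{lem:projected-linear-inverse}.
\end{itemize}
None of this appears in your plan.

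The rate you announce also does not fit the statement. You cannot choose $\delta_0$ after the fact: $S_\varepsilon$ is defined for a fixed $\delta_0$ subject only to $\delta_0<\sigma<\tau<s$. So ``$\gamma<\min\{1,s-\tau\}$, then choose $\delta_0<\gamma$'' fails whenever $s-\tau\le\delta_0$. In fact the source has size $\varepsilon^\tau$ in $\|\cdot\|_{\tau,q_\varepsilon}$, since $\min\{1,\varepsilon|y|\}(1+|y|)^{-\tau}\le\varepsilon^\tau$, and $\varepsilon^\tau$ is the rate you must reach.

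If by ``translation modes fixed by $\xi$'' you meant the modulated centres, there is a cleaner repair inside your framework. Run the inverse function theorem using only the $o(1)$ information from Step~1. Then bootstrap, using that $u_\varepsilon$ solves the equation exactly, so no Lagrange multipliers appear:
\begin{itemize}
\item testing with $\omega$, coercivity on $E_{\varepsilon,\widehat\xi}$ (Proposition~\ref{prop:coercivity}) together with \eqref{eq:projected-residual-estimate} gives $\|\omega\|_{\varepsilon,V}\le C\varepsilon^{N/2+\tau}$;
\item the amplitude expansion of Lemma~\ref{lem:alpha-equation} then gives $|\alpha-1|\le C\varepsilon^\tau$;
\item Lemma~\ref{lem:weighted-norm-estimate} with $t=1$, after absorbing the nonlinear term, gives $\|\omega\|_{\tau,\varepsilon,\widehat\xi}\le C\varepsilon^\tau$.
\end{itemize}
These proofs use only $|\alpha-1|=o(1)$. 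One of these two mechanisms must be written out. As it stands, your proposal establishes the decomposition only with $o(1)$ smallness, which does not give membership in $S_\varepsilon$.
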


The next theorem gives the Leray--Schauder degree formula corresponding to the Morse index formula.  Let
\begin{equation}\label{eq:reduced-potential}
\mathcal V(\xi):=\sum_{j=1}^k V(\xi_j)^\theta,
\qquad
\theta:=\frac{p+1}{p-1}-\frac{N}{2s}>0,
\end{equation}
where the positivity of \(\theta\) follows from the subcritical assumption.
\begin{theorem}\label{th:degree-counting}
Let \(k\in\mathbb N\) and let \(\xi^0:=(\xi_1^0,\dots,\xi_k^0)\in(\mathbb R^N)^k\), with \(\xi_i^0\ne\xi_j^0\) for \(i\ne j\), be a critical point of the reduced potential \(\mathcal V\).  If \(\xi^0\) is nondegenerate, then the total Leray--Schauder degree of all positive \(k\)-peak concentrating solutions of problem \eqref{eq:fractional-schrodinger-problem} satisfying Definition~\ref{def:kpeak-class} and concentrating at \(\xi^0\) is given by
\[
(-1)^{k+m(\xi^0,\mathcal V)}.
\]
Here \(m(\xi^0,\mathcal V)\) denotes the Morse index of the reduced potential
\(\mathcal V\) at \(\xi^0\); equivalently,
\(m(\xi^0,\mathcal V)=\sum_{j=1}^k m(\xi_j^0,V)\).
\end{theorem}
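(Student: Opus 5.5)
We reduce the total degree computation to a finite-dimensional degree by a Lyapunov--Schmidt homotopy on the set \(S_\varepsilon\) from \eqref{eq:local-degree-set}. Write problem \eqref{eq:fractional-schrodinger-problem} as a compact perturbation of the identity in \(H^s(\mathbb R^N)\):
\[
F_\varepsilon(u)=u-H_{\varepsilon,0}^{-1}\bigl((u^+)^p\bigr)=0 .
\]
Pull it back through the map \(\Phi(\alpha,\xi,\omega)=W_{\varepsilon,\alpha,\xi}+\omega\). By Theorem~\ref{th:parametrization-class}, every positive \(k\)-peak concentrating solution concentrating at \(\xi^0\) lies in \(\Phi(S_\varepsilon)\). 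Conversely, after shrinking \(\tau_0\) and using the a priori estimates on \(S_\varepsilon\), every zero of \(F_\varepsilon\) in \(\Phi(S_\varepsilon)\) is positive and belongs to the class. The degree of \(F_\varepsilon\) on \(\Phi(S_\varepsilon)\), which is open in \(H^s\) since \(\Phi\) is a local diffeomorphism onto a neighbourhood by the implicit function theorem, is therefore exactly the total degree in the statement. It remains well defined only if no zeros lie on \(\partial\Phi(S_\varepsilon)\). This holds because any zero has \(|\alpha_j-1|=O(\varepsilon^{\gamma})\) with \(\gamma>\delta_0\), and \(\|\omega\|\) is strictly below the thresholds \(\varepsilon^{N/2+\delta_0}\) and \(\varepsilon^{\delta_0}\) (weighted). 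These sharper bounds come from the energy quantization and the weighted \(L^\infty\) estimates used for Theorem~\ref{th:parametrization-class}. The estimate \(|\xi_j-\xi_j^0|<\tau_0\) on the boundary in \(\xi\) then follows from the nondegeneracy of \(\xi^0\) and the reduced equation below.

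Next we decompose \(F_\varepsilon\) into its projection onto \(E_{\varepsilon,\xi}\) and the projection onto the \(k(N+1)\)-dimensional span of \(W_{\varepsilon,j}\) and \(Y_{\varepsilon,j,h}\). We then homotope in two steps. First, we deform the \(E_{\varepsilon,\xi}\)-component through
\[
t\,\omega+(1-t)\,\Pi_{E}F_\varepsilon(W+\omega),
\]
which is admissible because the linearized operator \(\Pi_E L_\varepsilon\) is uniformly invertible on \(E_{\varepsilon,\xi}\) with weighted bounds. The error \(\Pi_E F_\varepsilon(W)\) has size \(O(\varepsilon^{N/2+1})\), so zeros stay inside \(S_\varepsilon\). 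Since the deformed \(\omega\)-part equals the identity, the product formula shows that the degree equals the Brouwer degree of the finite-dimensional map
\[
(\alpha,\xi)\mapsto\Bigl(\langle F_\varepsilon(W),W_{\varepsilon,j}\rangle_{\varepsilon,V},\ \langle F_\varepsilon(W),Y_{\varepsilon,j,h}\rangle_{\varepsilon,V}\Bigr)
\]
on \(\mathcal D_\varepsilon\). Alternatively, we keep the solved \(\omega(\alpha,\xi)\) and argue the same way.

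Finally, we expand these components. In the \(\alpha_j\)-direction the expansion is
\[
\varepsilon^{N}\bigl(\alpha_j-\alpha_j^p\bigr)a_j+o\bigl(\varepsilon^N|\alpha_j-1|\bigr),
\]
which is decreasing through \(\alpha_j=1\) and contributes a factor \((-1)\) for each peak, giving \((-1)^k\). In the \(\xi_j\)-direction, a Pohozaev-type identity gives
\[
c\,\varepsilon^{N-1}\nabla_{\xi_j}\bigl(V(\xi_j)^\theta\bigr)+O(\varepsilon^{N})
\]
with \(c>0\), up to lower-order coupling with \(\alpha\). A linear homotopy to
\[
\bigl(1-\alpha_j,\ \nabla_{\xi_j}\mathcal V(\xi)\bigr)
\]
is admissible on \(\partial\mathcal D_\varepsilon\) by nondegeneracy of \(\xi^0\). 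Its Brouwer degree is
\[
(-1)^k\operatorname{sign}\det D^2\mathcal V(\xi^0)=(-1)^{k+m(\xi^0,\mathcal V)} .
\]
Since \(\nabla V(\xi_j^0)=0\), we have \(D^2\bigl(V^\theta\bigr)(\xi_j^0)=\theta V(\xi_j^0)^{\theta-1}D^2V(\xi_j^0)\), which gives the identification \(m(\xi^0,\mathcal V)=\sum_j m(\xi_j^0,V)\).

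We expect the main obstacle to be the admissibility on the boundary, especially in the \(\alpha\)-direction. The window \(|\alpha_j-1|<\varepsilon^{\delta_0}\) is small, so the \(\alpha\)-component at the boundary has size \(\varepsilon^{N+\delta_0}\). This must dominate all errors and interaction terms, including the algebraic tails \(|\xi_i-\xi_j|^{-(N+2s)}\) and the contribution of \(\omega\), which is quadratic in the weighted norm. We plan to control these errors using the weighted norm \(\|\cdot\|_{\tau,\varepsilon,\xi}\) together with the decay in Theorem~\ref{th:ground-state}(ii).
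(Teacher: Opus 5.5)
Your architecture matches the paper's: reduce to a finite-dimensional degree on $\mathcal D_\varepsilon$, take $(-1)^k$ from $\alpha_j\mapsto\alpha_j-\alpha_j^p$, and take $\operatorname{sign}\det D^2\mathcal V(\xi^0)$ from the $\xi$-equations. Working with $F_\varepsilon$ itself would even spare you the multiplier argument of Proposition~\ref{prop:fractional-a-equivalence}, and it makes the local index at a nondegenerate zero directly $(-1)^{m(u)}$.

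\textbf{The $\xi$-component.} This is where the gap lies, although you treat it as routine.

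\emph{Scale.} We have $\partial_{\xi_{j,h}}W_{\varepsilon,\alpha,\xi}=\alpha_jY_{\varepsilon,j,h}$ and $\mathcal E_\varepsilon(W_{\varepsilon,\alpha,\xi})=\varepsilon^N\sum_j\bigl(\frac{\alpha_j^2}{2}-\frac{\alpha_j^{p+1}}{p+1}\bigr)a_0V(\xi_j)^\theta+o(\varepsilon^N)$, with a remainder that stays $o(\varepsilon^N)$ after $\xi$-differentiation. Hence $\langle F_\varepsilon(W),Y_{\varepsilon,j,h}\rangle_{\varepsilon,V}$ has leading term $c\,\varepsilon^{N}\partial_h(V^\theta)(\xi_j)$, as in Lemma~\ref{lem:xi-equation}, not $\varepsilon^{N-1}$. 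Your $O(\varepsilon^N)$ error would swamp it.

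\emph{Consequence.} To keep zeros of your first homotopy off $\{|\xi_j-\xi_j^0|=\tau_0\}$, every $\omega$-dependent term of $\langle F_\varepsilon(W+\omega),Y_{\varepsilon,j,h}\rangle_{\varepsilon,V}$ must be $o(\varepsilon^N)$. The nonlinear remainder $\int N_{\varepsilon,\alpha,\xi}(\omega)Y_{\varepsilon,j,h}$ blocks this. Since $|Y_{\varepsilon,j,h}|$ is of size $\varepsilon^{-1}$ on the peak, this term is only bounded by $C\varepsilon^{-1}\|\omega\|_{L^2}^2$. The bound available at zeros, $\|\omega\|_{\varepsilon,V}\le C\varepsilon^{N/2+\tau}$ with $\tau<s$, then gives $O(\varepsilon^{N-1+2\tau})$. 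This is not $o(\varepsilon^N)$ once $s\le 1/2$.

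\emph{What your route would need.} It would need $\|\omega\|_{L^2}=o(\varepsilon^{(N+1)/2})$ at zeros, i.e.\ a sharp residual estimate uniform on $\overline{\mathcal D}_\varepsilon$. Your claim $\|\Pi_EF_\varepsilon(W)\|=O(\varepsilon^{N/2+1})$ is asserted without proof. The algebraic tails also make it false in general. For $N=1$, $s<1/4$ and $\nabla V(\xi_j)\ne0$, the part of $(V(x)-V(\xi_j))W_{\varepsilon,j}$ at distance of order one from $\xi_j$ already has $L^2$ norm of order $\varepsilon^{N/2+1/2+2s}$. This is precisely the obstruction described in Remark~\ref{rem:pohozaev-xi-component}.

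\emph{The paper's device.} You use a Pohozaev identity only to evaluate the $\xi$-component at $\omega=0$. The paper instead takes the localized Pohozaev functional $\mathcal P_{j,h}(W_{\varepsilon,\alpha,\xi}+\omega)$ as the $\xi$-component itself. After integrating by parts in the extension, the derivative falls on the cut-off $\eta_j$, whose gradient lives in an annulus where $W_{\varepsilon,\alpha,\xi}$ is tiny. So the $\omega$-dependence is $O(\varepsilon^{N+\tau})$ with no $\varepsilon^{-1}$ loss (Lemma~\ref{lem:degree-stability-estimates}). The price is that zeros of the resulting map are no longer automatically solutions, which Proposition~\ref{prop:fractional-a-equivalence} repairs.

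By contrast, the $\alpha$-direction you single out as the main obstacle is harmless. There the errors are $O(\varepsilon^{N+\tau})$ against $\varepsilon^{N+\delta_0}$ on the boundary, with $\delta_0<\tau$ (Lemma~\ref{lem:alpha-equation}).

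\textbf{The functional setting.} As written, it does not support a Leray--Schauder degree.
\begin{itemize}
\item The map $u\mapsto H_{\varepsilon,0}^{-1}((u^+)^p)$ is not compact on $H^s(\mathbb R^N)$: translate a fixed bump to infinity.
\item $\Phi(S_\varepsilon)$ is not open in $H^s$, since $S_\varepsilon$ imposes $\|\omega\|_{\tau,\varepsilon,\xi}<\varepsilon^{\delta_0}$, and arbitrarily small $H^s$ perturbations destroy this.
\end{itemize}
The degree must be set up in the space normed by $\|\cdot\|_{\varepsilon,V}+\|\cdot\|_{\tau,\varepsilon,\xi}$. There the decay gain $|u|^p\lesssim\rho_{\tau,\varepsilon,\xi}^p\ll\rho_{\tau,\varepsilon,\xi}$ at infinity supplies compactness, which is why the paper works in $\mathcal W_{\varepsilon,\xi}$.
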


Combining the total degree with the common local-degree sign supplied by the Morse-index theorem gives exact uniqueness.
\begin{theorem}\label{th:local-uniqueness}
Let \(\xi_1^0,\ldots,\xi_k^0\) be different nondegenerate critical points of \(V\), that is \eqref{eq:nondegenerate-critical-points} holds.  Then the $k$-peak solution of \eqref{eq:fractional-schrodinger-problem} concentrating at \(\xi_1^0,\ldots,\xi_k^0\) is unique for sufficiently small \(\varepsilon\).
\end{theorem}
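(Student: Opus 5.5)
The argument is the counting principle from the introduction, applied inside the parametrized neighbourhood $S_\varepsilon$. First, since each $\xi_j^0$ is a nondegenerate critical point of $V$ and $V>0$, the point $\xi^0=(\xi_1^0,\ldots,\xi_k^0)$ is a nondegenerate critical point of the reduced potential $\mathcal V(\xi)=\sum_j V(\xi_j)^\theta$. Indeed, $\nabla_{\xi_j}\mathcal V=\theta V(\xi_j)^{\theta-1}\nabla V(\xi_j)$ vanishes at $\xi^0$. The Hessian there is block diagonal with blocks $\theta V(\xi_j^0)^{\theta-1}D^2V(\xi_j^0)$, because the term involving $\nabla V\otimes\nabla V$ vanishes. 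Since $\theta>0$ and $V(\xi_j^0)>0$, these blocks are invertible. It also follows that $m(\xi^0,\mathcal V)=\sum_j m(\xi_j^0,V)$. Hence Theorem~\ref{th:degree-counting} applies, and the total Leray--Schauder degree of the concentrating class at $\xi^0$ equals $(-1)^{k+\sum_j m(\xi_j^0,V)}$.

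Next I would set up the degree framework concretely. Write the equation as a compact perturbation of the identity, $u=(\varepsilon^{2s}(-\Delta)^s+V)^{-1}(u_+^p)$ on $H^s$ (or on the weighted space), restricted to the open set $\mathcal O_\varepsilon$ of functions of the form $W_{\varepsilon,\alpha,\xi}+\omega$ with $(\alpha,\xi,\omega)\in S_\varepsilon$. Theorem~\ref{th:parametrization-class} guarantees that every positive $k$-peak family enters $\mathcal O_\varepsilon$, with a unique representation. A compactness (blow-up) argument shows that there are no solutions on $\partial\mathcal O_\varepsilon$ for small $\varepsilon$. This argument presumably underlies Theorem~\ref{th:degree-counting} and I would take it from there: any solution near the boundary would produce, along a sequence, a $k$-peak family in the sense of Definition~\ref{def:kpeak-class}, whose parameters then lie strictly inside $S_\varepsilon$ by the a priori estimates. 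The same argument shows there are only finitely many solutions in $\mathcal O_\varepsilon$. Otherwise an accumulation point would be a degenerate solution, or a sequence $\varepsilon_n\to0$ with two distinct solutions $u^{(1)}_n\neq u^{(2)}_n$ would arise; the solution set is compact, and every solution is isolated by the next step.

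By Corollary~\ref{th:nondegeneracy}, every solution $u$ in $\mathcal O_\varepsilon$ is nondegenerate for $\varepsilon$ small, so its local index is $(-1)^{m(u)}$ (the standard Leray--Schauder index formula for $I-K$ with $K$ compact: $(-1)$ raised to the number of eigenvalues of $K'(u)$ greater than $1$, which equals the number of negative eigenvalues of $L_\varepsilon$). By Theorem~\ref{th:morse-index}, $m(u)=k+\sum_j m(\xi_j^0,V)$ for all of them. The smallness threshold must be uniform over all solutions in the class. I would argue by contradiction: if the threshold failed along a sequence of solutions $u_{\varepsilon_n}$, that sequence itself is a $k$-peak family, contradicting the theorem applied to it. Additivity of the degree then gives $q\,(-1)^{k+\sum m}=(-1)^{k+\sum m}$, where $q$ is the number of solutions in $\mathcal O_\varepsilon$. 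Hence $q=1$.

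Finally, uniqueness for the statement: suppose there were two distinct $k$-peak families $u^{(1)}_\varepsilon\ne u^{(2)}_\varepsilon$ along some sequence $\varepsilon_n\to0$. Both lie in $\mathcal O_{\varepsilon_n}$ by Theorem~\ref{th:parametrization-class}, so $q\ge2$, a contradiction. The main obstacle is the uniformity just discussed: Theorems~\ref{th:morse-index} and~\ref{th:parametrization-class} are phrased per family, while degree counting needs statements holding simultaneously for all solutions in $\mathcal O_\varepsilon$ at a fixed $\varepsilon$. The remedy in every case is the sequential contradiction argument: any bad sequence of solutions is itself an admissible family, because energy quantization and peak localization pass to the limit via the boundary a priori estimates.
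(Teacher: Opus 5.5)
Your proposal is correct and uses the same counting argument as the paper. Corollary~\ref{th:nondegeneracy} makes the solutions in the class isolated and finite in number, Theorem~\ref{th:morse-index} gives each the same local index $(-1)^{k+\sum_j m(\xi_j^0,V)}$, and comparing with the total degree $(-1)^{k+m(\xi^0,\mathcal V)}$ from Theorem~\ref{th:degree-counting} forces $q=1$. Your extra steps (checking that $\xi^0$ is a nondegenerate critical point of $\mathcal V$ with $m(\xi^0,\mathcal V)=\sum_j m(\xi_j^0,V)$, and making the threshold uniform over all solutions at fixed $\varepsilon$ by the sequential contradiction) only make explicit what the paper's short proof leaves implicit; note that the compact-perturbation framework must be set in the weighted space, since $u\mapsto(\varepsilon^{2s}(-\Delta)^s+V)^{-1}(u_+^p)$ is not compact on $H^s(\mathbb R^N)$.
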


\subsection{Main difficulties and proof strategy}

The fractional problem cannot be handled by a direct repetition of the local arguments. The first difficulty comes from localization. A cut-off function does not commute with \((-\Delta)^s\), so localizing the equation or an eigenfunction produces commutator terms that couple a neighborhood of a peak with its complement. The second difficulty comes from the algebraic decay of the fractional ground state,
\[
w(x)\asymp |x|^{-N-2s}
\qquad\text{as }|x|\to\infty.
\]
Unlike the exponential decay in the local Schr\"odinger equation, the tails of the peaks and the interactions between different peaks decay only algebraically and must be estimated in weighted norms. The third difficulty is that a family satisfying Definition~\ref{def:kpeak-class} is given only through its concentration properties and energy quantization condition. The amplitudes, modulation centers, and orthogonal remainder needed in the reduction are not given in advance. Finally, the orthogonal space for the remainder depends on the center parameters, whereas the Leray--Schauder degree must be computed after the problem has been written in a fixed Banach space.

We overcome these difficulties in four steps. First, the concentration assumptions and energy quantization show that the limiting profile at each peak is the positive ground state of the corresponding limiting equation. They also yield global weighted estimates for the solution and for the relevant eigenfunctions.

Second, we use the Caffarelli--Silvestre extension and localized Pohozaev identities to determine the negative and near-zero spectrum. The first \(k\) eigenvalues remain uniformly negative. The next \(kN\) eigenvalues are of order \(\varepsilon^2\), and, after division by \(\varepsilon^2\), they converge to the eigenvalues of a block diagonal matrix whose \(j\)-th block is a positive multiple of \(D^2V(\xi_j^0)\). The remaining spectrum is uniformly separated from zero. Under the nondegeneracy assumption on the limiting critical points, this spectral description gives the Morse index formula and the nondegeneracy of every solution in Definition~\ref{def:kpeak-class}.

Third, for every solution in this class, we obtain a modulation decomposition
\[
u_\varepsilon
=
W_{\varepsilon,\alpha_\varepsilon,\widehat{\xi}_\varepsilon}
+\omega_\varepsilon,
\]
where \(\omega_\varepsilon\) satisfies the required orthogonality conditions and is small in both the energy norm and a weighted \(L^\infty\) norm. We then represent the orthogonal spaces, which depend on the center parameters, in a fixed Banach space. This allows us to define and compute the Leray--Schauder degree for the whole concentrating class.

Fourth, we compute the total degree by a homotopy to the leading equations for the amplitudes, the centers, and the remainder. Every solution in Definition~\ref{def:kpeak-class} has the same Morse index and hence the same sign of the local degree. Comparing the total degree with the sum of the local degrees gives uniqueness within the prescribed concentrating class.

The main point of the argument is that both the spectral analysis and the degree computation are carried out for the whole class of positive \(k\)-peak families satisfying Definition~\ref{def:kpeak-class}. The argument is therefore not restricted to one family constructed by Lyapunov--Schmidt reduction.

\subsection{Organization of the paper}

Section~\ref{sec:preliminaries} develops the weighted estimates, extension estimates, and localized Pohozaev identities used throughout the paper. Sections~\ref{sec:eig-estimates} and~\ref{sec:morse-computation} identify the spectrum and prove the Morse-index and nondegeneracy results. Section~\ref{sec:improved-expansion} establishes the modulation decomposition, while Section~\ref{sec:approx-manifold-estimates} proves coercivity on the orthogonal complement and derives the leading amplitude and center equations. Section~\ref{sec:degree-reduction} computes the total Leray--Schauder degree. Finally, Section~\ref{sec:local-uniqueness} compares the total degree with the local Morse signs and proves Theorem~\ref{th:local-uniqueness}.

\subsection{Notations}\label{subsec:notation}

\begin{itemize}[leftmargin=1.1cm,label=--]
\item The \(O\) and \(o\) notations are used to describe the
limit behavior of a quantity as \(\varepsilon\to0\).
\item \(C>0\) denotes a generic constant independent of \(\varepsilon\), whose
value may change from line to line, while constants with subscripts have fixed
positive values.
\item \(B_r(z)\) denotes the open ball in \(\R^N\) with center \(z\) and radius
\(r\), and \(\partial B_r(z)\) denotes its boundary.
\item \(\mathbb S^{N-1}\) denotes the unit sphere in \(\R^N\), and
\(|\mathbb S^{N-1}|\) denotes its surface measure.
\item We write \(\R^{N+1}_+:=\R^N\times(0,\infty)\).  For \(z\in\R^N\),
\[
\mathcal B_r(z):=\{X\in\R^{N+1}:|X-(z,0)|<r\},\qquad
\mathcal B_r^+(z):=\mathcal B_r(z)\cap\R^{N+1}_+ .
\]
We also write
\[
\partial'\mathcal B_r^+(z):=\partial\mathcal B_r^+(z)\cap\R^{N+1}_+ .
\]
\item The variables in \(\R^{N+1}_+\) are denoted by \(X=(x,t)\), where
\(x\in\R^N\) and \(t>0\). We use \(dX=dx\,dt\).
\item The measure \(dS\) denotes the surface measure on boundaries in \(\R^N\),
while \(d\sigma\) denotes the surface measure on boundaries in the extension
space.
\item For a real number \(a\), \(a_+:=\max\{a,0\}\) and
\(a_-:=\max\{-a,0\}\).
\item The notation \(\partial_h\) means differentiation with respect to the
\(h\)-th spatial variable, and \(D^2V\) denotes the Hessian matrix of \(V\).
\end{itemize}

\section{Preliminaries}\label{sec:preliminaries}

\subsection{Estimates}

We first give some consequences of the profile of concentrating solutions.

\begin{lemma}\label{lem:single-bubble-input}
For $\lambda\in[V_{\min},V_{\max}]$ set
\[
L_\lambda\phi:=(-\Delta)^s\phi+\lambda\phi-pw_\lambda^{p-1}\phi.
\]
Then the following facts hold.
\begin{enumerate}[label=(\roman*)]
\item For every $\lambda\in[V_{\min},V_{\max}]$, the operator $L_\lambda$ has exactly one negative eigenvalue. This eigenvalue is simple and its eigenfunction may be chosen positive and radial. Moreover,
\[
\ker L_\lambda=\operatorname{span}\{\partial_1w_\lambda,\ldots,\partial_Nw_\lambda\}.
\]
\item The map $\lambda\mapsto w_\lambda$ is $C^1$ from $[V_{\min},V_{\max}]$ into $H^s(\R^N)$, and
\begin{equation}\label{eq:uniform-decay}
|w_\lambda(y)|+|\nabla w_\lambda(y)|+|\partial_\lambda w_\lambda(y)|
\le C(1+|y|)^{-N-2s}.
\end{equation}
In addition, the second derivatives satisfy the uniform decay estimate
\[
|D^2w_\lambda(y)|\le C(1+|y|)^{-N-2s},
\qquad \lambda\in[V_{\min},V_{\max}],\quad y\in\R^N.
\]
Moreover, $w_\lambda$ is strictly radially decreasing and has a nondegenerate maximum at the origin:
\begin{equation}\label{eq:hessian-nondeg-max}
D^2w_\lambda(0)=-\kappa_\lambda I_N,
\qquad \kappa_\lambda>0,
\end{equation}
uniformly for $\lambda\in[V_{\min},V_{\max}]$.
\end{enumerate}
\end{lemma}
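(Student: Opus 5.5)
The plan is to reduce everything to the case \(\lambda=1\) by the explicit scaling \(w_\lambda(y)=\lambda^{\frac1{p-1}}w(\lambda^{\frac1{2s}}y)\), and then read off each property of \(w_\lambda\) from Theorem~\ref{th:ground-state} for \(w=w_1\). Compactness of \([V_{\min},V_{\max}]\subset(0,\infty)\) will make all constants uniform.

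\emph{Part (i).} Set \(T_\lambda\phi(y):=\phi(\lambda^{\frac1{2s}}y)\). Since \((-\Delta)^s(T_\lambda\phi)=\lambda T_\lambda((-\Delta)^s\phi)\) and \(w_\lambda^{p-1}=\lambda\,T_\lambda(w^{p-1})\), we get \(L_\lambda T_\lambda=\lambda T_\lambda L_0\). Moreover \(T_\lambda\) is an isomorphism of \(H^s\) and of \(L^2\) (up to a constant factor in the \(L^2\) norm). Hence the spectrum of \(L_\lambda\) is \(\lambda\) times that of \(L_0\), with multiplicities preserved, and \(\ker L_\lambda=T_\lambda\ker L_0\). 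This kernel is spanned by \(\partial_jw_\lambda\), by Theorem~\ref{th:ground-state}(iii).

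For \(L_0\) itself I would argue as follows.
\begin{itemize}[leftmargin=1.1cm,label=--]
\item At least one negative eigenvalue exists, since \(\langle L_0w,w\rangle=(1-p)\int w^{p+1}<0\).
\item At most one exists, because \(w\) is a mountain-pass (ground state) critical point. On the Nehari manifold, which has codimension one, the second variation is nonnegative, so the Morse index is at most \(1\).
\item The lowest eigenvalue is simple with a positive eigenfunction. The form \(\iint|\phi(x)-\phi(y)|^2K\) does not increase under \(\phi\mapsto|\phi|\), and the kernel \(K\) is positive, so the ground state does not change sign and is strictly positive.
\item The eigenfunction can be taken radial by rotation invariance combined with simplicity.
\end{itemize}

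\emph{Part (ii).} The derivative formula is \(\partial_\lambda w_\lambda=\frac{1}{(p-1)\lambda}w_\lambda+\frac{1}{2s\lambda}y\cdot\nabla w_\lambda\). The \(C^1\) dependence in \(H^s\) follows from this formula, provided \(y\cdot\nabla w\in H^s\). Alternatively, I would use the implicit function theorem on the radial subspace, where \(L_0\) is invertible.

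For the decay, \(w_\lambda\) and \(\nabla w_\lambda\) inherit the bound \((1+|y|)^{-N-2s}\) from Theorem~\ref{th:ground-state}(ii),(iii) by scaling, uniformly in \(\lambda\). The bound for \(\partial_\lambda w_\lambda\) then needs the sharper estimate \(|y\cdot\nabla w|\le C(1+|y|)^{-N-2s}\), that is, \(|\nabla w|\lesssim|y|^{-N-2s-1}\). I would get this by differentiating the representation \(w=\mathcal K*(w^p)\), where \(\mathcal K\) is the Bessel-type kernel of \((1+(-\Delta)^s)^{-1}\). This kernel satisfies \(|\nabla\mathcal K(y)|\lesssim|y|^{-N-2s-1}\) for large \(|y|\), and \(w^p\) decays like \(|y|^{-p(N+2s)}\), which is faster. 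Splitting the convolution into \(|z|<|y|/2\) and its complement gives the claim. The same splitting with \(D^2\mathcal K\) gives \(|D^2w|\lesssim(1+|y|)^{-N-2s}\). Near \(0\), smoothness of \(w\) handles everything.

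Finally, radial strict monotonicity is preserved by scaling, and \(D^2w_\lambda(0)=\lambda^{\frac1{p-1}+\frac1s}D^2w(0)\). Radial symmetry forces \(D^2w(0)=w''(0)I_N=-\kappa_1I_N\) with \(\kappa_1\ge0\). To show \(\kappa_1>0\), I would use \(\Delta w(0)=-c\int_0^\infty(\cdots)\). More robustly, I would pass to the Caffarelli--Silvestre extension, or use the radial ODE-free identity from \cite{frank2016uniqueness}: \(w(0)-w(r)\sim cr^2\) follows from \(-\Delta w=\mathcal K_1*(\ldots)\). Concretely, \(\Delta w=\Delta\mathcal K*w^p\), and \(\Delta w(0)=\int\Delta\mathcal K(z)w^p(z)dz\). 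Here \(\mathcal K\) is radial and strictly decreasing, and \(w^p\) is radially decreasing. Rewriting the integral as \(\int\mathcal K\,\Delta(w^p)\) and using superharmonic-type rearrangement, I expect it to be strictly negative.

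This last positivity of \(\kappa_1\) is the main obstacle. If the direct argument fails, the fallback is to note that \(w''(0)=0\), combined with \(w'(r)<0\) and the equation \(\Delta w(0)=\frac{1}{N}\cdots\), contradicts the nondegeneracy in Theorem~\ref{th:ground-state}(iii), and to argue through the Hopf lemma for the extension.
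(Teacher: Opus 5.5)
Your part (i) and the $C^1$ dependence via the implicit function theorem on the radial subspace are fine, and more explicit than the paper, which just cites Theorem~\ref{th:ground-state} and \cite{frank2016uniqueness}. Your part (i) uses the conjugation $L_\lambda T_\lambda=\lambda T_\lambda L_0$, the Nehari codimension-one bound on the Morse index, and positivity via $\phi\mapsto|\phi|$. The gaps are in (ii).

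\emph{Decay of $\partial_\lambda w_\lambda$ and $D^2w_\lambda$.} Your single splitting of $\nabla w=\nabla\mathcal K*w^p$ does not give $|\nabla w(y)|\lesssim|y|^{-N-2s-1}$. On $\{|z|\ge|y|/2\}$ it only gives $\sup_{|z|\ge|y|/2}w^p\lesssim|y|^{-p(N+2s)}$. This is weaker than $|y|^{-N-2s-1}$ as soon as $(p-1)(N+2s)<1$.

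There is a second problem near the diagonal. There $\nabla\mathcal K\sim|z|^{2s-N-1}$ is not integrable when $s\le\frac12$, and $D^2\mathcal K\sim|z|^{2s-N-2}$ never is. So you must move derivatives onto $w^p$. But $\nabla(w^p)=pw^{p-1}\nabla w$, and $D^2(w^p)$ contains $pw^{p-1}D^2w$; both require the decay you are trying to prove. This can be organized as a bootstrap, gaining roughly $(p-1)(N+2s)$ per step and using the cancellation $\int_{|z|<R}\nabla(w^p)=\int_{|z|=R}w^p\nu$ in the inner region. As written, though, the step fails for $p$ near $1$, and the same objection applies to your one-line claim for $D^2w$.

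The paper avoids this by keeping $pw^{p-1}$ inside the operator, as a potential that is small at infinity. It proves a comparison lemma: if $z$ is bounded with $z\to0$, and $(-\Delta)^sz+z-az=f$ with $0\le a\to0$ and $|f|\lesssim(1+|x|)^{-N-2s}$, then $|z|\lesssim(1+|x|)^{-N-2s}$. The proof uses Kato's inequality outside a ball where $a\le\frac12$ and the barrier $G_{s,1/2}*|f|+MG_{s,1/2}$. The paper applies this to two functions:
\begin{itemize}
\item $\eta=\partial_\lambda w_\lambda|_{\lambda=1}$, which solves $L_0\eta=-w$;
\item $\partial_{ij}w$, which solves $L_0\partial_{ij}w=p(p-1)w^{p-2}\partial_iw\partial_jw=O((1+|y|)^{-p(N+2s)})$ by the two-sided bound on $w$.
\end{itemize}
The sharper gradient decay you wanted then follows a posteriori from $y\cdot\nabla w=2s(\eta-\frac{w}{p-1})$; it is not needed as input.

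\emph{Nondegeneracy $\kappa_1>0$.} This is not proved; you leave it at ``I expect''.
\begin{itemize}
\item The formula $\Delta w(0)=\int\Delta\mathcal K\,w^p$ is not meaningful, because $\Delta\mathcal K$ is not locally integrable.
\item The fallback does not work. Nondegeneracy of $L_0$ is a statement about $\ker L_0$ and carries no information about $w''(0)$, so $w''(0)=0$ contradicts nothing in Theorem~\ref{th:ground-state}(iii). No concrete Hopf-type argument is given either.
\end{itemize}

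The paper's route is as follows. It writes $w=\int_0^\infty e^{-t}P_t^sF\,dt$ with $F=w^p$, using that the heat kernels $p_s(t,\cdot)$ are smooth and strictly radially decreasing. It decomposes $F=\int_0^{F(0)}\chi_{B_{R_\tau}}\,d\tau$ by the layer-cake formula. The divergence theorem then gives $\int_{B_R}\Delta p_s(t,\cdot)=|\mathbb S^{N-1}|R^{N-1}\partial_rp_s(t,R)<0$. Finally it differentiates under the $t$-integral, using $D^2F\in L^\infty$.

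Your own rewriting $\Delta w(0)=\int\mathcal K\,\Delta F$ would also close, with one radial integration by parts:
\[
\int_{\R^N}\mathcal K\,\Delta F\,dz=|\mathbb S^{N-1}|\int_0^\infty\mathcal K(r)\bigl(r^{N-1}F'(r)\bigr)'\,dr=-|\mathbb S^{N-1}|\int_0^\infty\mathcal K'(r)F'(r)\,r^{N-1}\,dr<0.
\]
The boundary term at $r=0$ vanishes because $\mathcal K(r)\lesssim r^{2s-N}$ and $F'(r)=O(r)$. Strict negativity follows from $\mathcal K'<0$, which comes from subordination, together with $F'\le0$ and $F'\not\equiv0$. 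That computation, not a rearrangement heuristic, is the missing step.
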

\begin{lemma}\label{lem:true-peak-profile-convergence}
Let $u_\varepsilon$ be a $k$-peak family in the sense of Definition~\ref{def:kpeak-class}. Define
\begin{equation*}
u_{\varepsilon,j}(y):=u_\varepsilon(\xi_{\varepsilon,j}+\varepsilon y).
\end{equation*}
Then
\begin{equation}\label{eq:true-peak-profile-convergence}
u_{\varepsilon,j}\to w_{V(\xi_j^0)}
\quad\hbox{in }C^1_{\loc}(\R^N),
\qquad j=1,\ldots,k.
\end{equation}
\end{lemma}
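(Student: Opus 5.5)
We prove Lemma~\ref{lem:true-peak-profile-convergence} by a compactness argument on the rescaled functions. The limit is then identified as a positive solution of the limiting equation with maximum at the origin, and the uniqueness in Theorem~\ref{th:ground-state}(i) gives convergence of the whole family.

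\textbf{Compactness.} Fix \(j\). The function \(u_{\varepsilon,j}\) solves
\[
(-\Delta)^s u_{\varepsilon,j}+V(\xi_{\varepsilon,j}+\varepsilon y)u_{\varepsilon,j}=u_{\varepsilon,j}^p
\quad\text{in }\R^N,
\]
with local maximum at \(y=0\). We first need a uniform \(L^\infty\) bound on \(u_\varepsilon\), which we argue by contradiction with a blow-up. If \(M_\varepsilon:=\|u_\varepsilon\|_\infty\to\infty\), rescale around a near-maximum point at the scale \(\varepsilon M_\varepsilon^{-(p-1)/(2s)}\). The rescaled functions are bounded, and by the regularity below they converge to a nonnegative, nontrivial, bounded solution of \((-\Delta)^s v=v^p\) in \(\R^N\) with \(p<2_s^*-1\). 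This contradicts the fractional Liouville theorem. For that step we accept either the known Liouville result or, alternatively, an energy bound from \eqref{eq:sharp-energy-quantization}, since the energy of such a blow-up would be infinite in the \(\varepsilon^{-N}\) normalization.

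With \(\|u_{\varepsilon,j}\|_\infty\le C\), the potential \(V(\xi_{\varepsilon,j}+\varepsilon\cdot)\) is uniformly \(C^1\). Fractional Schauder estimates (Silvestre) then give uniform \(C^{1,\gamma}_{\loc}\) bounds; these use the global \(L^\infty\) bound to control the nonlocal tail. By Arzel\`a--Ascoli, along a subsequence \(u_{\varepsilon,j}\to v\) in \(C^1_{\loc}\), and passing to the limit in the weak formulation (again using the uniform bound for the tails) shows that \(v\ge0\) solves
\[
(-\Delta)^s v+V(\xi_j^0)v=v^p .
\]
By \eqref{eq:peak-nonvanishing}, \(v(0)>0\), so the strong maximum principle gives \(v>0\). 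Moreover \(0\) is a critical point of \(v\).

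\textbf{Identification of the limit.} This is the main obstacle. A priori \(v\) is only a positive bounded solution, so we must show it belongs to \(H^s\), is a ground state, and is centered at \(0\).

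We use the energy quantization \eqref{eq:sharp-energy-quantization}. By Fatou's lemma and lower semicontinuity on disjoint balls \(B_{R}(q_{\varepsilon,j})\), each limit \(v\) has finite energy, so \(v\in H^s\). Its energy \(\mathcal I_{V(\xi_j^0)}(v)=\frac{p-1}{2(p+1)}\int v^{p+1}\) is at least \(c_{V(\xi_j^0)}\), because every nontrivial solution lies on the Nehari manifold and the ground state minimizes there. Next we need a lower bound for the energy away from the peaks. There \(u_\varepsilon\to0\) uniformly, so the Nehari-type identity \(\|u_\varepsilon\|_{\varepsilon,V}^2=\int u_\varepsilon^{p+1}\) gives \(\mathcal E_\varepsilon=\frac{p-1}{2(p+1)}\int u_\varepsilon^{p+1}\), whose integrand is nonnegative. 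Summing over \(j\) and comparing with \eqref{eq:sharp-energy-quantization} forces equality for each \(j\), so \(v\) is a ground state.

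By Theorem~\ref{th:ground-state}(i), \(v=w_{V(\xi_j^0)}(\cdot-z)\) for some \(z\). Since \(\nabla v(0)=0\) and \(w\) is strictly radially decreasing with its only critical point at the origin, we get \(z=0\). The limit is therefore independent of the subsequence, and the whole family converges, which proves \eqref{eq:true-peak-profile-convergence}.
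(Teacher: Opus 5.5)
Your proposal is correct and follows essentially the same route as the paper. Both arguments obtain a uniform $L^\infty$ bound and use fractional regularity to get $C^1_{\mathrm{loc}}$ compactness. Both then combine Fatou's lemma, the Nehari identity $\mathcal E_\varepsilon(u_\varepsilon)=\frac{p-1}{2(p+1)}\int u_\varepsilon^{p+1}$ and the energy quantization to force each limit to be a ground state, and finish with uniqueness and centering. The differences are minor. The paper gets the $L^\infty$ bound directly from the fractional Brezis--Kato estimate rather than by blow-up. It centers the limit using that $0$ is a maximum point together with strict radial monotonicity. Your centering via $\nabla v(0)=0$ instead needs the slightly stronger fact $\partial_r w<0$ for $r>0$, which is true but does not follow from strict monotonicity alone.
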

\noindent The proofs of Lemmas ~\ref{lem:single-bubble-input} and \ref{lem:true-peak-profile-convergence} are given in Appendix~\ref{app:proof-profile-lemmas}.

We now establish the global weighted estimates for concentrating solutions and for the relevant linearized eigenfunctions.
\begin{lemma}\label{lem:u-polynomial-decay}
Let $u_\varepsilon$ be a $k$-peak family in the sense of Definition~\ref{def:kpeak-class} and set
\[
\tilde u_\varepsilon(y):=u_\varepsilon(\varepsilon y).
\]
Then the following estimates hold.
\begin{enumerate}[label=(\roman*)]
\item For all sufficiently small $\varepsilon$,
\begin{equation}\label{eq:u-weighted-bound}
\|\tilde u_\varepsilon\|_{\sigma,q_\varepsilon}\le C.
\end{equation}

Equivalently,
\begin{equation}\label{eq:u-polynomial-decay}
u_\varepsilon(x)\le C\sum_{j=1}^k
\left(\frac{\varepsilon}{\varepsilon+|x-\xi_{\varepsilon,j}|}\right)^{\beta_\sigma}
\qquad\text{for all }x\in\mathbb R^N.
\end{equation}

\item Let $(\mu_\varepsilon^{(l)},v_\varepsilon^{(l)})$ solve the linearized eigenvalue problem \eqref{eq:linearized-eigenvalue-problem}, for $-C\le\mu_\varepsilon^{(l)}\le V_{\min}/4$.  Set
\[
\tilde v_\varepsilon^{(l)}(y):=v_\varepsilon^{(l)}(\varepsilon y).
\]
Then
\begin{equation}\label{eq:rescaled-eigenfunction-decay}
\|\tilde v_\varepsilon^{(l)}\|_{\sigma,q_\varepsilon}\le C.
\end{equation}

Equivalently,
\begin{equation}\label{eq:eigenfunction-decay}
|v_\varepsilon^{(l)}(x)|\le C\sum_{j=1}^k
\left(\frac{\varepsilon}{\varepsilon+|x-\xi_{\varepsilon,j}|}\right)^{\beta_\sigma}.
\end{equation}

\end{enumerate}

Moreover, for some \(\alpha\in(0,1)\),
\[
\tilde u_\varepsilon(y),\quad
\tilde v_\varepsilon^{(l)}(y) 
\]
are uniformly bounded in $C^{1,\alpha}(\mathbb R^N)$.

\end{lemma}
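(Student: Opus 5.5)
We prove (i) and (ii) simultaneously by a comparison argument for the rescaled equation outside a large compact set. The tool is a supersolution built from the Bessel-type kernel of \((-\Delta)^s+c\).

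\textbf{Step 1: rewrite the equations in linear form.} In rescaled variables, \(\tilde u_\varepsilon\) solves
\[
(-\Delta)^s\tilde u_\varepsilon+\bigl(V(\varepsilon y)-\tilde u_\varepsilon^{p-1}\bigr)\tilde u_\varepsilon=0 .
\]
The rescaled eigenfunction \(\tilde v_\varepsilon^{(l)}\) solves
\[
(-\Delta)^s\tilde v+\bigl(V(\varepsilon y)-\mu_\varepsilon^{(l)}-p\tilde u_\varepsilon^{p-1}\bigr)\tilde v=0 .
\]
Note that \(V-\mu_\varepsilon^{(l)}\ge \tfrac34V_{\min}\).

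\textbf{Step 2: \(L^\infty\) bounds and smallness of the potential away from the peaks.} Uniform \(L^\infty\) bounds for \(\tilde u_\varepsilon\) follow from the energy quantization via a blow-up argument: if \(\|u_\varepsilon\|_\infty\to\infty\), a rescaling produces a nontrivial bounded solution of \((-\Delta)^s U=U^p\) in \(\mathbb R^N\), which is impossible by the fractional Liouville theorem for subcritical \(p\). For \(\tilde v\), note that the normalization in \eqref{eq:linearized-eigenvalue-problem} becomes an \(H^s\) bound of order one. A Moser iteration (or the De Giorgi method through the Caffarelli--Silvestre extension) with bounded coefficients then gives \(\|\tilde v\|_\infty\le C\).

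Next we show that \(\tilde u_\varepsilon^{p-1}\) is small outside \(\bigcup_j B_R(q_{\varepsilon,j})\) once \(R\) is large, uniformly in \(\varepsilon\). On the set \(|x-\xi_j^0|\ge r\) this is given by Definition~\ref{def:kpeak-class}. In the intermediate region \(R\varepsilon\le|x-\xi_{\varepsilon,j}|\le r\) it must be shown separately; this is the main obstacle, and I would argue by contradiction. Suppose points \(y_\varepsilon\) with \(|y_\varepsilon-q_{\varepsilon,j}|\to\infty\) have \(\tilde u_\varepsilon(y_\varepsilon)\ge\eta>0\). The translates around \(y_\varepsilon\) converge in \(C^1_{\loc}\) to a nontrivial nonnegative solution of \((-\Delta)^sU+V(\xi_j^0)U=U^p\). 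This limit carries energy at least \(c_{V(\xi_j^0)}\), because every nontrivial solution has energy at least the ground-state level. The energy far from all the peaks would then add to \(\sum_jc_{V(\xi_j^0)}\), which is already exhausted by the \(C^1_{\loc}\) profile convergence of Lemma~\ref{lem:true-peak-profile-convergence}. This contradicts \eqref{eq:sharp-energy-quantization}. To make the accounting rigorous, I would use a Brezis--Lieb type splitting of \(\mathcal E\) with cutoffs, and control the nonlocal cross terms using the \(H^s\) bounds.

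\textbf{Step 3: comparison with a supersolution.} With \(R\) fixed as above, outside \(\Omega_R:=\bigcup_jB_R(q_{\varepsilon,j})\) the zeroth-order coefficient satisfies \(c(y)\ge c_0:=V_{\min}/2\). Let \(\mathcal K\) be the fundamental solution of \((-\Delta)^s+c_0\). It is known that \(\mathcal K(y)\asymp|y|^{-N-2s}\) at infinity. Set
\[
\Phi(y):=\sum_j\bigl(1+|y-q_{\varepsilon,j}|\bigr)^{-\beta_\sigma}.
\]
Because \(\beta_\sigma<N+2s\), a direct computation of the fractional Laplacian of a power gives
\[
(-\Delta)^s(1+|y|)^{-\beta_\sigma}\ge -C|y|^{-\beta_\sigma-2s}
\]
for large \(|y|\). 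Hence \(((-\Delta)^s+c_0)\Phi\ge \tfrac{c_0}2\Phi>0\) outside \(\Omega_{R'}\), with \(R'\) large.

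Take \(M\Phi\) with \(M\) so large that \(M\Phi\ge\tilde u_\varepsilon\) on \(\Omega_{R'}\), which is possible by the \(L^\infty\) bound. The comparison principle for \((-\Delta)^s+c\) with \(c\ge c_0\) on the exterior domain then gives the bound. To apply it, note that \(\tilde u-M\Phi\in H^s\) tends to \(0\) at infinity, so a positive maximum of \((\tilde u-M\Phi)_+\) would be attained at a point where \((-\Delta)^s(\tilde u-M\Phi)\ge0\), contradicting the strict inequality there. This yields \eqref{eq:u-weighted-bound}. The same argument applied to \(\pm\tilde v\) gives \eqref{eq:rescaled-eigenfunction-decay}, since \(p\tilde u^{p-1}\) is small outside \(\Omega_{R'}\) and the coefficient \(V-\mu\) is at least \(\tfrac34V_{\min}\). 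Rewriting in the original variables gives \eqref{eq:u-polynomial-decay} and \eqref{eq:eigenfunction-decay}.

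\textbf{Step 4: \(C^{1,\alpha}\) bounds.} The right-hand sides \(f=(\tilde u^{p}-V(\varepsilon y)\tilde u)\), and the analogous expression for \(\tilde v\), are uniformly bounded in \(L^\infty\). Silvestre's regularity theory gives uniform \(C^{2s-\delta}\) or \(C^{1,\beta}\) bounds. Iterating this with the \(C^{0,\gamma}\) regularity of \(f\), which holds because \(V\in C^2_b\) and \(\varepsilon\le1\), bootstraps to uniform \(C^{1,\alpha}\) bounds. When \(2s\le1\), this requires several iteration steps.
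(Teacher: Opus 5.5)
Your outline is correct and has the same architecture as the paper's proof. The paper proceeds in three steps: a uniform $L^\infty$ bound; a contradiction argument with \eqref{eq:sharp-energy-quantization} showing that $\tilde u_\varepsilon$ is small outside $\bigcup_jB_R(q_{\varepsilon,j})$; and a comparison in the exterior region. For that last step the paper invokes \cite[Lemma~2.5]{MR3121716}, a comparison lemma of exactly this type, while you construct the barrier by hand. The only substantive variation is your $L^\infty$ bound, which comes from blow-up plus the subcritical fractional Liouville theorem. The paper instead uses the $H^s$ bound supplied by the quantization together with a Brezis--Kato estimate. Applying the comparison to $\pm\tilde v$ directly, as you do, is equivalent to the paper's use of Kato's inequality.

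Two points need fixing.

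\emph{The barrier inequality in Step~3 is false as stated.} Since $\beta_\sigma>N$, the function $(1+|y|)^{-\beta_\sigma}$ is integrable. The mass near the origin then produces the leading term $-c\,\|(1+|\cdot|)^{-\beta_\sigma}\|_{L^1}|y|^{-N-2s}$ in its fractional Laplacian, and this dominates $|y|^{-\beta_\sigma-2s}$. So $(-\Delta)^s(1+|y|)^{-\beta_\sigma}\ge -C|y|^{-\beta_\sigma-2s}$ does not hold. What is true is $(-\Delta)^s(1+|y|)^{-\beta_\sigma}\ge -C|y|^{-N-2s}=-C|y|^{-\sigma}|y|^{-\beta_\sigma}$ for $|y|\ge1$. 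This is precisely where $\sigma>0$, i.e.\ $\beta_\sigma<N+2s$, is used. Applied to each center, it gives $((-\Delta)^s+c_0)\Phi\ge(c_0-CR'^{-\sigma})\Phi\ge\frac{c_0}{2}\Phi$ outside $\Omega_{R'}$, uniformly in $\varepsilon$. Your conclusion therefore survives, but for a different reason than the one you wrote.

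\emph{The step you call the main obstacle is easier than you make it.} Because $u_\varepsilon$ is a solution, $\mathcal E_\varepsilon(u_\varepsilon)=\frac{p-1}{2(p+1)}\int_{\mathbb R^N}u_\varepsilon^{p+1}$, so only local $L^{p+1}$ mass has to be counted.
\begin{itemize}
\item The uniform H\"older bound gives $\int_{B_{r_\delta}(y_\varepsilon)}\tilde u_\varepsilon^{p+1}\ge c_\delta$ on a ball disjoint from the peak balls.
\item Lemma~\ref{lem:true-peak-profile-convergence} accounts for $\sum_j\int w_{V(\xi_j^0)}^{p+1}$ up to small tails.
\end{itemize}
These two facts already contradict the quantization. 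No limit equation, Brezis--Lieb splitting or nonlocal cross terms are needed.

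If you do pass to a limit, its coefficient is $V(x^*)$ with $x^*=\lim\varepsilon y_\varepsilon$. In the intermediate region this need not equal $V(\xi_j^0)$. The bound you actually use is $c_{V(x^*)}\ge c_*V_{\min}^\theta>0$.
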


\begin{proof}

We prove (i).  The rescaled function $\tilde u_\varepsilon$ satisfies
\begin{equation}\label{eq:scaled-u-equation}
(-\Delta)^s\tilde u_\varepsilon+V(\varepsilon y)\tilde u_\varepsilon
=\tilde u_\varepsilon^p\qquad\text{in }\mathbb R^N.
\end{equation}
Testing the equation with \(\tilde u_\varepsilon\) and using \eqref{eq:sharp-energy-quantization} give a uniform \(H^s\) bound for \(\tilde u_\varepsilon\).

Since \(1<p<2_s^*-1\), the function \(\tilde u_\varepsilon^{p-1}\) belongs to \(L^{q}(\mathbb R^N)\), where \(q=\frac{2_s^*}{p-1}>\frac{N}{2s}\). Hence, Brezis--Kato estimate in \cite[Proposition 4.5]{DuarteSouto2019} and Sobolev embedding give
\[
\|\tilde u_\varepsilon\|_{L^\infty(\mathbb R^N)}\le C.
\]
Then, the regularity estimates in \cite{cabre2014nonlinear,silvestre2007regularity} give uniform \(C^{1,\alpha}\) bounds for \(\tilde u_\varepsilon\) on \(\mathbb R^N\).

We claim that 
\begin{equation}\label{eq:no-additional-concentration}
\lim_{R\to\infty}\limsup_{\varepsilon\to0}
\sup_{x\notin\cup_jB_{R\varepsilon}(\xi_{\varepsilon,j})}u_\varepsilon(x)=0.
\end{equation}
Indeed, if \eqref{eq:no-additional-concentration} failed, then for some $\delta>0$ there would be $\varepsilon_n\to0$, $R_n\to\infty$, and $x_n$ such that
\[
\operatorname{dist}(x_n,\{\xi_{\varepsilon_n,j}\}_{j=1}^k)\ge R_n\varepsilon_n,
\qquad u_{\varepsilon_n}(x_n)\ge\delta.
\]
The uniform H\"older estimate then gives $r_\delta,c_\delta>0$ with
\[
\varepsilon_n^{-N}\int_{B_{r_\delta\varepsilon_n}(x_n)}u_{\varepsilon_n}^{p+1}\,dx\ge c_\delta.
\]
For every fixed $R$, this ball is disjoint from all $B_{R\varepsilon_n}(\xi_{\varepsilon_n,j})$ for large $n$. Lemma~\ref{lem:true-peak-profile-convergence} gives the limiting mass in those peak balls. Choosing \(R\) sufficiently large so that $\sum\limits_{j=1}^k\int_{\mathbb R^N\setminus B_R(\xi_j^0).}w_{V(\xi_j^0)}^{p+1}\,dy<c_\delta/2$, we obtain
\[
\liminf_{n\to\infty}\varepsilon_n^{-N}\int_{\mathbb R^N}u_{\varepsilon_n}^{p+1}\,dx
\ge\sum_{j=1}^k\int_{\mathbb R^N}w_{V(\xi_j^0)}^{p+1}\,dy+\frac{c_\delta}{2},
\]
contrary to \eqref{eq:sharp-energy-quantization}.

We next prove the weighted estimate.  By \eqref{eq:no-additional-concentration}, we can choose $R>1$ sufficiently large and set
\[
B:=\bigcup_{j=1}^k B_R(q_{\varepsilon,j}).
\]
Then
\[
p\tilde u_\varepsilon(y)^{p-1}\le \frac{V_{\min}}{2}
\qquad\text{for }y\in\mathbb R^N\setminus B
\]
for all sufficiently small $\varepsilon$. Accordingly, \eqref{eq:scaled-u-equation} can be written as
\begin{equation}\label{eq:u-weighted-equation}
(-\Delta)^s\tilde u_\varepsilon+W_\varepsilon(y)\tilde u_\varepsilon=0,
\qquad
W_\varepsilon(y):=V(\varepsilon y)-\tilde u_\varepsilon(y)^{p-1},
\end{equation}
with $W_\varepsilon\ge V_{\min}/2$ in $\mathbb R^N\setminus B$. Applying \cite[Lemma~2.5]{MR3121716} to \eqref{eq:u-weighted-equation} gives
\[
\|\rho_{\sigma,q_\varepsilon}^{-1}\tilde u_\varepsilon\|_{L^\infty(\mathbb R^N)}
\le C\|\tilde u_\varepsilon\|_{L^\infty(B)}
\le C.
\]
This proves \eqref{eq:u-weighted-bound} and \eqref{eq:u-polynomial-decay}.

For (ii), the normalization in \eqref{eq:linearized-eigenvalue-problem} and the fractional Brezis--Kato estimate give a uniform $L^\infty$ bound.  Outside the same union of rescaled balls,
\[
V(\varepsilon y)-p\tilde u_\varepsilon^{p-1}-\mu_\varepsilon^{(l)}\ge V_{\min}/4.
\]
The fractional Kato inequality and the preceding barrier argument yield \eqref{eq:rescaled-eigenfunction-decay} and \eqref{eq:eigenfunction-decay}.  

By \eqref{eq:u-weighted-bound} and \eqref{eq:rescaled-eigenfunction-decay}, the stated uniform $C^{1,\alpha}$ bounds then follow from the regularity estimates in \cite{cabre2014nonlinear,silvestre2007regularity}.

\end{proof}

\subsection{Extension}
In this subsection, we recall the local extension introduced by Caffarelli and Silvestre \cite{Caffarelli08082007}. Denote $X=(x,t)\in\mathbb{R}^{N+1}$. More precisely, for any $u\in H^s(\mathbb{R}^N)$, set 
\[\bar{u}(x,t):=E(u)=\int_{\mathbb{R}^{N}}P_{s}(x-z,t)u(z)dz,\quad(x,t)\in\mathbb{R}_{+}^{N+1},\]
where
\[P_{s}(x,t)=\beta(N,s)\frac{t^{2s}}{(|x|^2+t^2)^{\frac{N+2s}{2}}}\]
with a constant $\beta(N,s)$ such that $\int_{\mathbb{R}^N}P_{s}(x,t)\,dx=1$. Then $\bar{u}\in L^2(t^{1-2s},K)$ for any compact set $K$ in $\overline{\mathbb{R}_{+}^{N+1}}$, $\nabla\bar{u}\in L^{2}(t^{1-2s},\mathbb{R}_{+}^{N+1})$ and $\bar{u}\in C^{\infty}(\mathbb{R}_{+}^{N+1})$. Moreover, by the work of Caffarelli and Silvestre \cite{Caffarelli08082007}, $\bar{u}$ satisfies
\[
\begin{cases}
\operatorname{div}(t^{1-2s}\nabla\bar{u})=0,\quad  &\text{in }\mathbb{R}_+^{N+1},\\
\bar{u}(x,0)=u(x),\quad &\text{in }\mathbb{R}^N,\\
\lim\limits_{t\to0}t^{1-2s}\frac{\partial \bar{u}}{\partial \nu}(x,t)=\omega_s(-\Delta)^su(x), &\text{in }\mathbb{R}^N
\end{cases}
\]
in the sense of distributions, where $\omega_{s}=2^{1-2s}\frac{\Gamma(1-s)}{\Gamma(s)}$. Moreover, the following identity holds
\[\int_{\mathbb{R}^{N+1}_+}t^{1-2s}\,|\nabla \bar u|^2\,dx\,dt=\omega_s\,\|u\|_{\dot H^{s}(\mathbb{R}^N)}^{2}.\]

For notational simplicity, from now on we divide the weighted energy and the conormal derivative by the fixed positive constant $\omega_s$ and omit this normalization factor in the notation.  With this convention, the nonlocal problem \eqref{eq:fractional-schrodinger-problem} can be reformulated as the following local problem:

\begin{equation}\label{eq:extension-u}
 \left\{
 \begin{aligned}
 \operatorname{div}\!\left(t^{1-2s}\nabla \bar{u}_\varepsilon\right) &= 0
 &&\text{in }\mathbb{R}^{N+1}_+,\\
 \lim_{t\to 0^+}t^{1-2s}\frac{\partial \bar{u}_\varepsilon}{\partial \nu}
 &=\varepsilon^{-2s}\bigl(u_\varepsilon^{\,p}- V(x)\,u_\varepsilon\bigr)
 &&\text{on }\mathbb{R}^N.
 \end{aligned}
 \right.\nonumber
\end{equation}

We next record a useful property of extension functions.
\begin{lemma}\label{lem:minimality-extension}
Let $0<s<1$ and define the energy space
\[
\mathcal D_s^{1,2}(\mathbb R^{N+1}_+)
:=
\left\{
\Psi\in H^1_{\rm loc}(\mathbb R^{N+1}_+,t^{1-2s}):
\int_{\mathbb R^{N+1}_+}t^{1-2s}|\nabla\Psi|^2<\infty
\right\}.
\]
For $\phi\in H^s(\mathbb{R}^N)$, let
$\Phi=E(\phi)\in \mathcal D_s^{1,2}$ denote its $s$-harmonic extension, i.e.,
\begin{equation}\label{eq:extension-problem}
\begin{cases}
\! \operatorname{div}\bigl(t^{1-2s}\nabla \Phi\bigr)=0 &\text{in }\mathbb{R}^{N+1}_+,\\
\Phi(x,0)=\phi(x) &\text{on }\mathbb{R}^N,
\end{cases}
\end{equation}
in the weak sense. Then $\Phi$ minimizes the weighted Dirichlet energy among all
extensions of $\phi$:
\begin{equation}\label{eq:minimality-property}
\int_{\mathbb{R}^{N+1}_+} t^{1-2s}|\nabla \Phi|^2\,dx\,dt
=\min\Bigl\{\int_{\mathbb{R}^{N+1}_+} t^{1-2s}|\nabla \Psi|^2\,dx\,dt \;:\;
\Psi\in \mathcal D_s^{1,2},\ \Psi(x,0)=\phi(x)\Bigr\}.
\end{equation}
Moreover, for every $\Psi\in \mathcal D_s^{1,2}$ with $\Psi(x,0)=\phi(x)$ one has the orthogonal
 decomposition
\begin{equation}\label{eq:pythagoras}
\int_{\mathbb{R}^{N+1}_+} t^{1-2s}|\nabla \Psi|^2\,dx\,dt
=
\int_{\mathbb{R}^{N+1}_+} t^{1-2s}|\nabla \Phi|^2\,dx\,dt
+
\int_{\mathbb{R}^{N+1}_+} t^{1-2s}|\nabla(\Psi-\Phi)|^2\,dx\,dt.
\end{equation}
In particular,
\[
\int_{\mathbb{R}^{N+1}_+} t^{1-2s}|\nabla \Psi|^2\,dx\,dt
\ge
\int_{\mathbb{R}^{N+1}_+} t^{1-2s}|\nabla \Phi|^2\,dx\,dt.
\]
\end{lemma}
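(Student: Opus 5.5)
The proof is a Dirichlet-principle argument in the weighted space \(\mathcal D_s^{1,2}(\mathbb R^{N+1}_+)\). Throughout, fix \(\Psi\in\mathcal D_s^{1,2}\) with \(\Psi(x,0)=\phi(x)\), and set \(\Theta:=\Psi-\Phi\). Then \(\Theta\in\mathcal D_s^{1,2}\) and its trace vanishes, \(\Theta(x,0)=0\), in the trace sense. Expanding the square gives
\[
\int_{\mathbb R^{N+1}_+}t^{1-2s}|\nabla\Psi|^2
=\int_{\mathbb R^{N+1}_+}t^{1-2s}|\nabla\Phi|^2
+2\int_{\mathbb R^{N+1}_+}t^{1-2s}\nabla\Phi\cdot\nabla\Theta
+\int_{\mathbb R^{N+1}_+}t^{1-2s}|\nabla\Theta|^2 .
\]
So \eqref{eq:pythagoras} is equivalent to the vanishing of the cross term. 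The minimality \eqref{eq:minimality-property} and the final inequality then follow at once, since the last integral is nonnegative. Equality in the minimum is attained by \(\Psi=\Phi\) itself, which lies in \(\mathcal D_s^{1,2}\) because its energy equals \(\omega_s\|\phi\|_{\dot H^s}^2<\infty\).

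The cross term vanishes by the weak formulation of \eqref{eq:extension-problem}. We have \(\int t^{1-2s}\nabla\Phi\cdot\nabla\eta=0\) for every \(\eta\in C_c^\infty(\mathbb R^{N+1}_+)\), that is, for compactly supported test functions vanishing near \(\{t=0\}\). I would extend this identity to \(\eta=\Theta\) by density. The key claim is that \(C_c^\infty(\mathbb R^{N+1}_+)\) is dense, in the seminorm \(\bigl(\int t^{1-2s}|\nabla\cdot|^2\bigr)^{1/2}\), in the subspace of \(\mathcal D_s^{1,2}\) with zero trace. Granting this, we pass to the limit in \(\int t^{1-2s}\nabla\Phi\cdot\nabla\eta_n\) by Cauchy--Schwarz, since \(\nabla\Phi\in L^2(t^{1-2s})\), and obtain \(\int t^{1-2s}\nabla\Phi\cdot\nabla\Theta=0\).

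The density step is the main technical point, and I would prove it in three reductions.
\begin{enumerate}
\item \emph{Large scales.} Multiply \(\Theta\) by a cutoff \(\chi(X/R)\). The error is \(\int t^{1-2s}|\Theta|^2|\nabla\chi_R|^2\). It tends to zero by the weighted Hardy/Sobolev inequality for \(\mathcal D_s^{1,2}\) (the weight \(t^{1-2s}\) is \(A_2\)), applied on dyadic annuli.
\item \emph{Near the boundary.} Replace \(\Theta\) by \(\Theta\,\psi(t/\delta)\), where \(\psi\) vanishes on \([0,1]\) and equals \(1\) on \([2,\infty)\). The error is
\[
\delta^{-2}\int_{\delta<t<2\delta}t^{1-2s}|\Theta|^2 .
\]
It is controlled by the one-dimensional weighted Hardy inequality \(\int_0^\infty t^{1-2s}|f|^2t^{-2}\,dt\le C\int_0^\infty t^{1-2s}|f'|^2\,dt\), which holds for \(f(0)=0\) since \(1-2s>-1\). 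That inequality makes the error the tail of an integrable quantity, so it tends to zero as \(\delta\to0\).
\item \emph{Smoothing.} Standard mollification in the interior completes the approximation.
\end{enumerate}

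An alternative I would keep in reserve avoids density altogether. One uses the Fourier representation \(\widehat\Phi(\eta,t)=\hat\phi(\eta)\varphi(|\eta|t)\) and verifies the orthogonality directly by integrating by parts in \(t\) for each frequency. The boundary term then involves \(\widehat\Theta(\eta,0)=0\), and the decay at infinity of the Bessel-type profile \(\varphi\) handles the other end. Either route gives the vanishing of the cross term, which completes the proof.
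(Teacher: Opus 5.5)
Your argument is correct and is the paper's argument: you expand $\int t^{1-2s}|\nabla(\Phi+\Theta)|^2$ with $\Theta=\Psi-\Phi$ and kill the cross term by testing the weak form of the extension equation with $\Theta$. The only difference is that the paper takes the weak formulation to hold directly for all zero-trace test functions in $\mathcal D_s^{1,2}$, whereas you justify this by density from $C_c^\infty(\mathbb R^{N+1}_+)$, which is a sound extra step; note that the one-dimensional Hardy inequality $\int_0^\infty t^{-1-2s}|f|^2\,dt\le C\int_0^\infty t^{1-2s}|f'|^2\,dt$ for $f(0)=0$ needs $1-2s<1$ (i.e.\ $s>0$), not $1-2s>-1$, and that this same inequality already handles your large-scale cutoff, since $R^{-2}\le 4t^{-2}$ on $\{R<|X|<2R\}$.
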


\begin{proof}
Fix $\phi\in H^s(\mathbb{R}^N)$ and let $\Phi=E(\phi)\in \mathcal D_s^{1,2}$.
Take any $\Psi\in \mathcal D_s^{1,2}$ such that $\Psi(x,0)=\phi(x)$ and set $W:=\Psi-\Phi$.
Then $W\in \mathcal D_s^{1,2}$ and $W(x,0)=0$. Expanding the weighted Dirichlet energy gives
\[
\int_{\mathbb{R}^{N+1}_+} t^{1-2s}|\nabla \Psi|^2
=
\int_{\mathbb{R}^{N+1}_+} t^{1-2s}|\nabla \Phi|^2
+
2\int_{\mathbb{R}^{N+1}_+} t^{1-2s}\nabla \Phi\cdot\nabla W
+
\int_{\mathbb{R}^{N+1}_+} t^{1-2s}|\nabla W|^2.
\]
Since $\Phi$ solves
\eqref{eq:extension-problem} in the weak sense, it satisfies
\[
\int_{\mathbb{R}^{N+1}_+} t^{1-2s}\nabla \Phi\cdot\nabla \eta\,dx\,dt=0
\qquad\text{for all }\eta\in\mathcal D_s^{1,2}(\mathbb R^{N+1}_+)
\text{ with }\eta(x,0)=0.
\]
Applying this with $\eta=W$ yields
\(
\int_{\mathbb{R}^{N+1}_+} t^{1-2s}\nabla \Phi\cdot\nabla W=0,
\)
and hence \eqref{eq:pythagoras}. The minimization property \eqref{eq:minimality-property}
follows immediately. The characterization of equality is also immediate from
\eqref{eq:pythagoras}.
\end{proof}

\subsection{Estimates on the extension}\label{subsec:extension-boundary-estimates}
With the notation of Section~\ref{subsec:notation}, set
\[
A_{a,b}^{+,j}:=\mathcal B_b^+(\xi_{\varepsilon,j})\setminus \overline{\mathcal B_a^+(\xi_{\varepsilon,j})},
\qquad
A_{a,b}^{j}:=B_b(\xi_{\varepsilon,j})\setminus \overline{B_a(\xi_{\varepsilon,j})}.
\]

\begin{lemma}\label{lem:extension-u-bound}
Let $u_\varepsilon$ be a $k$-peak family in the sense of Definition~\ref{def:kpeak-class}, and let
\[
\bar u_\varepsilon=E(u_\varepsilon).
\]
Then the following estimates hold.
\begin{enumerate}[label=(\roman*)]
\item For every fixed sufficiently small \(r>0\) and every \(j=1,\ldots,k\), on
\(A_{r/2,3r}^{+,j}\) one has
\begin{equation}\label{eq:extension-u-annulus-pointwise}
|\bar u_{\varepsilon}(x,t)|
\le
C\varepsilon^N\sum_{i=1}^k\frac{1}{(1+|x-\xi_{\varepsilon,i}|)^{\beta_\sigma}}.
\end{equation}
Moreover,
\begin{equation}\label{eq:annular-u-energy}
\int_{A_{r,2r}^{+,j}}t^{1-2s}|\nabla \bar u_\varepsilon|^2dX
\le C\varepsilon^{2N}.
\end{equation}

\item For every \(a=1,\ldots,N\), there exists \(C>0\), independent of
\(\varepsilon\), such that
\begin{equation}\label{eq:rescaled-u-derivative-decay}
  \left\|\rho_{\sigma,q_\varepsilon}^{-1}\partial_{y_a}\widetilde u_\varepsilon
\right\|_{L^\infty(\mathbb R^N)}
\le C.
\end{equation}
Moreover, set
\[
\overline{\partial_{x_a}u_\varepsilon}
:=
E(\partial_{x_a}u_\varepsilon).
\]
Then for every fixed small \(r>0\) and every
\(j=1,\ldots,k\),
\begin{equation}\label{eq:u-extension-derivative-decay}
  \left|
\overline{\partial_{x_a}u_\varepsilon}(x,t)
\right|
\le
C\varepsilon^{N-1}
\sum_{i=1}^k
\frac{1}{(1+|x-\xi_{\varepsilon,i}|)^{\beta_\sigma}}
\end{equation}
for all
\[
(x,t)\in A_{r/2,3r}^{+,j}.
\]

\item Let $v_\varepsilon^{(l)}$ be an eigenfunction satisfying the assumptions of Lemma~\ref{lem:u-polynomial-decay}(ii), and let
\[
\bar v_\varepsilon^{(l)}=E(v_\varepsilon^{(l)}).
\]
For every fixed $r>0$ and every $j=1,\ldots,k$, on $A_{r/2,3r}^{+,j}$ one has
\begin{equation}\label{eq:extension-v-annulus-pointwise}
|\bar v_{\varepsilon}^{(l)}(x,t)|
\le
C\varepsilon^N\sum_{i=1}^k\frac{1}{(1+|x-\xi_{\varepsilon,i}|)^{\beta_\sigma}}.
\end{equation}
Moreover,
\begin{equation}\label{eq:annular-v-energy}
\int_{A_{r,2r}^{+,j}}t^{1-2s}|\nabla \bar v_\varepsilon^{(l)}|^2dX
\le C\varepsilon^{2N},
\end{equation}
\end{enumerate}
\end{lemma}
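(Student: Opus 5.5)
Everything rests on the Poisson-kernel representation $\bar u_\varepsilon(x,t)=\int_{\R^N}P_s(x-z,t)u_\varepsilon(z)\,dz$, combined with the global pointwise decay from Lemma~\ref{lem:u-polynomial-decay}. The annular estimates then come from a Caccioppoli inequality for the degenerate operator $\operatorname{div}(t^{1-2s}\nabla\cdot)$ with a Neumann-type boundary term.

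\textbf{Pointwise bounds.} For (i), fix $(x,t)\in A^{+,j}_{r/2,3r}$. Then $|x-\xi_{\varepsilon,j}|+t\ge r/2$, while for $i\neq j$ the distance $|x-\xi_{\varepsilon,i}|$ stays bounded below because the balls $B_{2r}(\xi_i^0)$ are disjoint. Insert \eqref{eq:u-polynomial-decay}, i.e.\ $u_\varepsilon(z)\le C\sum_i\varepsilon^{\beta_\sigma}(\varepsilon+|z-\xi_{\varepsilon,i}|)^{-\beta_\sigma}$, into the Poisson integral and split the $z$-integration into three regions:
\begin{itemize}
\item on $B_{r/4}(\xi_{\varepsilon,i})$, one has $P_s(x-z,t)\le Ct^{2s}(|x-\xi_{\varepsilon,i}|+t)^{-N-2s}$ and $\int u_\varepsilon\le C\varepsilon^N$;
\item on $\{|z-x|<\tfrac12|x-\xi_{\varepsilon,i}|\}$, the function $u_\varepsilon$ is $O(\varepsilon^{\beta_\sigma})$ and $\int P_s=1$;
\item on the remaining region, both factors are bounded by tails.
\end{itemize}
Since $\beta_\sigma>N$, this gives $|\bar u_\varepsilon|\le C\varepsilon^N\sum_i(1+|x-\xi_{\varepsilon,i}|)^{-\beta_\sigma}$. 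The factor $(1+\cdot)^{-\beta_\sigma}$ is comparable to a constant there, so it is harmless. The same argument with \eqref{eq:eigenfunction-decay} gives \eqref{eq:extension-v-annulus-pointwise}.

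For (ii), first differentiate the rescaled equation \eqref{eq:scaled-u-equation}: $\partial_a\tilde u_\varepsilon$ solves $(-\Delta)^s\phi+(V(\varepsilon y)-p\tilde u_\varepsilon^{p-1})\phi=-\varepsilon(\partial_aV)(\varepsilon y)\tilde u_\varepsilon$. The right-hand side is bounded by $C\varepsilon\rho_{\sigma,q_\varepsilon}$, and the $C^{1,\alpha}$ bound gives $L^\infty$ control on the union of balls $B$. The barrier lemma \cite[Lemma~2.5]{MR3121716}, in its inhomogeneous form and used exactly as in the proof of Lemma~\ref{lem:u-polynomial-decay}, then yields \eqref{eq:rescaled-u-derivative-decay}. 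Scaling back gives $|\partial_{x_a}u_\varepsilon|\le C\varepsilon^{-1}\sum_i(\varepsilon/(\varepsilon+|x-\xi_{\varepsilon,i}|))^{\beta_\sigma}$, and the Poisson splitting above produces $\varepsilon^{N-1}$, which is \eqref{eq:u-extension-derivative-decay}.

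\textbf{Energy bounds.} Take a cutoff $\eta\in C_c^\infty$ equal to $1$ on $A^{+,j}_{r,2r}$ and supported in $A^{+,j}_{r/2,3r}$ together with a neighbourhood of it in $\overline{\R^{N+1}_+}$. Test the extended equation \eqref{eq:extension-u} with $\eta^2\bar u_\varepsilon$ to obtain
\[
\int t^{1-2s}\eta^2|\nabla\bar u_\varepsilon|^2\le C\int t^{1-2s}|\nabla\eta|^2\bar u_\varepsilon^2+\varepsilon^{-2s}\int_{\R^N}\eta^2|u_\varepsilon^{p+1}-Vu_\varepsilon^2|\,dx .
\]
The first term is $O(\varepsilon^{2N})$ by the pointwise bound, since $t^{1-2s}$ is integrable near $t=0$. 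The boundary term is $O(\varepsilon^{-2s}\varepsilon^{2\beta_\sigma})=O(\varepsilon^{2N+2s-2\sigma})\le C\varepsilon^{2N}$, because $u_\varepsilon=O(\varepsilon^{\beta_\sigma})$ on the annulus. This proves \eqref{eq:annular-u-energy}. For $\bar v^{(l)}_\varepsilon$ the boundary flux is $\varepsilon^{-2s}(\mu-V+pu_\varepsilon^{p-1})v_\varepsilon$, which is handled in the same way and gives \eqref{eq:annular-v-energy}.

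\textbf{Main obstacle.} I expect the hardest part to be the derivative decay \eqref{eq:rescaled-u-derivative-decay}, specifically justifying that the comparison lemma applies to the differentiated equation and gives the full weight $\rho_{\sigma,q_\varepsilon}$ uniformly in $\varepsilon$. If the barrier argument does not go through directly, I would instead write $\partial_a u_\varepsilon=\varepsilon^{-2s}\mathcal G*(\dots)$ using the kernel of $(-\Delta)^s+V_{\min}/2$, and estimate that convolution against the weight.
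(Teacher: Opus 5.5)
Your proposal is correct and follows essentially the same route as the paper: the pointwise bounds come from the Poisson representation combined with the decay \eqref{eq:u-polynomial-decay}, where the paper packages your region splitting into Lemma~\ref{lem:weighted-convolution-estimate}, and the annular energy bounds come from testing with $\eta^2\bar u_\varepsilon$ and using $u_\varepsilon=O(\varepsilon^{\beta_\sigma})$ on the trace of the cutoff. Your treatment of (ii) applies the barrier lemma to the differentiated rescaled equation, and your treatment of (iii) uses the same Caccioppoli argument with flux $\varepsilon^{-2s}(\mu-V+pu_\varepsilon^{p-1})v_\varepsilon$; these supply exactly the details the paper omits.
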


\begin{proof}
We first prove (i). By Lemma~\ref{lem:u-polynomial-decay},
\[
|u_\varepsilon(z)|\le C\sum_{i=1}^k
\left(1+\frac{|z-\xi_{\varepsilon,i}|}{\varepsilon}\right)^{-\beta_\sigma},
\qquad \beta_\sigma>N.
\]
Using the Poisson representation of the extension and the bound
\[
P_s(x-z,t)\le C\frac{t^{2s}}{(|x-z|+t)^{N+2s}},
\]
after the change of variables $\widetilde z=x-z$ and an application of
Lemma~\ref{lem:weighted-convolution-estimate}, we obtain
\[
\begin{aligned}
|\bar u_\varepsilon(x,t)|
&\le
C\sum_{i=1}^k\int_{\mathbb R^N}
\frac{t^{2s}}{(|x-z|+t)^{N+2s}}
\left(1+\frac{|z-\xi_{\varepsilon,i}|}{\varepsilon}\right)^{-\beta_\sigma}\,dz \\
&=
C\sum_{i=1}^k t^{2s}\int_{\mathbb R^N}
\frac{1}{(t+|\widetilde z|)^{N+2s}}
\left(1+\frac{|x-\xi_{\varepsilon,i}-\widetilde z|}{\varepsilon}\right)^{-\beta_\sigma}
\,d\widetilde z \\
&\le
C\varepsilon^N\sum_{i=1}^k
\frac{1}{(1+|x-\xi_{\varepsilon,i}|)^{\beta_\sigma}}.
\end{aligned}
\]
This proves \eqref{eq:extension-u-annulus-pointwise}.  In particular, \eqref{eq:extension-u-annulus-pointwise} implies
\begin{equation}\label{eq:ubar-annulus-pointwise}
|\bar u_\varepsilon(x,t)|\le C\varepsilon^N,
\qquad (x,t)\in A_{r/2,3r}^{+,j}.
\end{equation}

Choose $\eta\in C_c^\infty(\overline{\mathbb R^{N+1}_+})$ such that
\[
0\le\eta\le1,
\qquad
\eta\equiv1\quad\text{on }A_{r,2r}^{+,j},
\qquad
\operatorname{supp}\eta\subset \overline{\mathcal B_{3r}^+(\xi_{\varepsilon,j})}\setminus \mathcal B_{r/2}^+(\xi_{\varepsilon,j}),
\qquad
|\nabla\eta|\le C.
\]
Testing \eqref{eq:extension-u} with $\eta^2\bar u_\varepsilon$ and applying Young's inequality, we obtain
\begin{align*}
\int_{\mathbb R^{N+1}_+}t^{1-2s}\eta^2|\nabla\bar u_\varepsilon|^2dX
&\le
C\int_{\mathbb R^{N+1}_+}t^{1-2s}|\nabla\eta|^2\bar u_\varepsilon^2dX \\
&\quad +C\varepsilon^{-2s}\int_{A_{r/2,3r}^{j}}
\bigl(u_\varepsilon^{p+1}+u_\varepsilon^2\bigr)dx.
\end{align*}
By \eqref{eq:ubar-annulus-pointwise},
\[
\int_{\mathbb R^{N+1}_+}t^{1-2s}|\nabla\eta|^2\bar u_\varepsilon^2dX
\le C\varepsilon^{2N}\int_{A_{r/2,3r}^{+,j}}t^{1-2s}dX
\le C\varepsilon^{2N}.
\]
On $A_{r/2,3r}^{j}$, by Lemma~\ref{lem:u-polynomial-decay}, we have
\[
\varepsilon^{-2s}\int_{A_{r/2,3r}^{j}}
\bigl(u_\varepsilon^{p+1}+u_\varepsilon^2\bigr)dx
\le C\varepsilon^{-2s}\bigl(\varepsilon^{(p+1)\beta_\sigma}+\varepsilon^{2\beta_\sigma}\bigr)
\le C\varepsilon^{2\beta_\sigma-2s}.
\]
Thus, \eqref{eq:annular-u-energy} follows.

The proofs of (ii) and (iii) follow from arguments similar to those used in Lemma~\ref{lem:u-polynomial-decay} and part (i), so we omit the details.
\end{proof}

We next estimate the peak locations \(\xi_{\varepsilon,j}\).
\begin{lemma}\label{lem:pohozaev-center-estimate}
  Let $u_\varepsilon$ be a $k$-peak family in the sense of Definition~\ref{def:kpeak-class}. The local maximum points satisfy
\begin{equation}\label{eq:pohozaev-center-estimate}
|\nabla V(\xi_{\varepsilon,j})|\le C\varepsilon,
\qquad j=1,\ldots,k.
\end{equation}
\end{lemma}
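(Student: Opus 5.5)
I would prove the estimate with a localized Pohozaev identity in the extension, tested against the translation vector field \(\partial_{x_h}\) on the half-ball \(\mathcal B_r^+(\xi_{\varepsilon,j})\). Write \(\bar u_\varepsilon=E(u_\varepsilon)\). Multiply \(\operatorname{div}(t^{1-2s}\nabla\bar u_\varepsilon)=0\) by \(\partial_{x_h}\bar u_\varepsilon\) and integrate over \(\mathcal B_r^+(\xi_{\varepsilon,j})\). Since \(\partial_{x_h}\) is tangential to \(\{t=0\}\), the standard divergence computation gives
\[
\int_{\partial'\mathcal B_r^+}t^{1-2s}\Bigl(\frac{\partial\bar u_\varepsilon}{\partial\nu}\partial_{x_h}\bar u_\varepsilon-\frac12|\nabla\bar u_\varepsilon|^2\nu_h\Bigr)d\sigma
+\varepsilon^{-2s}\int_{B_r(\xi_{\varepsilon,j})}\bigl(u_\varepsilon^p-V u_\varepsilon\bigr)\partial_h u_\varepsilon\,dx=0 .
\]
On the flat part, the boundary condition in \eqref{eq:extension-u} converts the conormal term into the integral above. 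Integrating that flat-part integral by parts in \(x\) gives
\[
\int_{B_r}\bigl(u_\varepsilon^p-Vu_\varepsilon\bigr)\partial_hu_\varepsilon
=\frac12\int_{B_r}\partial_hV\,u_\varepsilon^2\,dx+\int_{\partial B_r}\Bigl(\frac{u_\varepsilon^{p+1}}{p+1}-\frac{V u_\varepsilon^2}{2}\Bigr)\nu_h\,dS .
\]
To make this rigorous given the limited regularity at \(t=0\), I would work on \(\{t>\delta\}\) and let \(\delta\to0\), using the uniform \(C^{1,\alpha}\) bounds from Lemma~\ref{lem:u-polynomial-decay}. Alternatively one may cite a known fractional Pohozaev identity. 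This yields
\[
\frac{1}{2}\int_{B_r(\xi_{\varepsilon,j})}\partial_hV(x)\,u_\varepsilon^2\,dx=\text{boundary terms}.
\]

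\textbf{Step 1: boundary terms are negligible.} The boundary integrals on \(\partial B_r\) are \(O(\varepsilon^{2\beta_\sigma})\) by \eqref{eq:u-polynomial-decay}. The curved-boundary integral is bounded by the weighted energy on \(\partial'\mathcal B_r^+\). To control it, I would first average in \(r\) over \([r,2r]\). Then \eqref{eq:annular-u-energy}, together with \eqref{eq:u-extension-derivative-decay} for \(\partial_{x_h}\bar u_\varepsilon\), gives a bound \(O(\varepsilon^{2N-2})\) after multiplying by \(\varepsilon^{2s}\). Hence, for a suitable \(r\) (or in averaged form),
\[
\int_{B_r(\xi_{\varepsilon,j})}\partial_hV(x)u_\varepsilon^2\,dx=O(\varepsilon^{2N-2+2s})+O(\varepsilon^{2\beta_\sigma-2s}).
\]
This is \(o(\varepsilon^{N+1})\) provided \(N-2+2s>1\). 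If \(N=1\) is allowed this is more delicate, and I would instead track the sharper decay in \eqref{eq:extension-u-annulus-pointwise}, which has an extra factor \(\varepsilon^{N}\) in both \(\bar u\) and the gradient.

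\textbf{Step 2: Taylor expansion of the left side.} Write \(\partial_hV(x)=\partial_hV(\xi_{\varepsilon,j})+O(|x-\xi_{\varepsilon,j}|)\). Rescale by \(x=\xi_{\varepsilon,j}+\varepsilon y\). The left side becomes
\[
\varepsilon^N\Bigl(\partial_hV(\xi_{\varepsilon,j})\int_{B_{r/\varepsilon}}u_{\varepsilon,j}^2\,dy+O\Bigl(\varepsilon\int|y|\,u_{\varepsilon,j}^2\,dy\Bigr)\Bigr).
\]
By \eqref{eq:u-polynomial-decay}, \(u_{\varepsilon,j}^2|y|\) is integrable because \(2\beta_\sigma-1>N\). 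By Lemma~\ref{lem:true-peak-profile-convergence} and dominated convergence, \(\int u_{\varepsilon,j}^2\to\int w_{V(\xi_j^0)}^2>0\). Dividing by \(\varepsilon^N\) then gives \(|\partial_hV(\xi_{\varepsilon,j})|\le C\varepsilon\) for each \(h\).

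\textbf{Main obstacle.} The hard part is the curved-boundary term in Step 1. Weighted gradient energy of size \(\varepsilon^{2N}\) on the annulus must be combined with the prefactor \(\varepsilon^{2s}\) and compared against \(\varepsilon^{N+1}\). I would handle it exactly through \eqref{eq:annular-u-energy} plus a Fubini choice of the radius; if the dimension count is tight, I would use the pointwise bound \eqref{eq:extension-u-annulus-pointwise} and interior gradient estimates for the degenerate equation instead.
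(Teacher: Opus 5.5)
Your proof is the paper's proof. It uses the same localized Pohozaev identity, obtained by testing the extended equation with $\partial_{x_h}\bar u_\varepsilon$ on a half-ball, and the same Fubini choice of a good radius $\rho\in(r,2r)$ from \eqref{eq:annular-u-energy}. It then closes with the same Taylor expansion of $\partial_hV$ at $\xi_{\varepsilon,j}$ combined with Lemma~\ref{lem:true-peak-profile-convergence}.

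The one thing to fix is Step~1, which is needlessly lossy. Since $\partial_{x_h}\bar u_\varepsilon$ is a component of $\nabla\bar u_\varepsilon$, use the good-radius bound $\int_{\partial'\mathcal B_\rho^+(\xi_{\varepsilon,j})}t^{1-2s}|\nabla\bar u_\varepsilon|^2\,d\sigma\le C\varepsilon^{2N}$ together with Cauchy--Schwarz. This bounds both curved-boundary terms by $C\varepsilon^{2N+2s}$. That is $o(\varepsilon^{N+1})$ for every $N\ge 1$, because $N+2s>1$, and you only need $O(\varepsilon^{N+1})$ anyway.

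Bringing in the pointwise bound \eqref{eq:u-extension-derivative-decay}, which is only of order $\varepsilon^{N-1}$, throws away a factor of $\varepsilon$. That is what manufactures your condition $N-2+2s>1$. As written, this condition fails not only for $N=1$ but also for $N=2$ with $s<1/2$, and both cases are admissible under $N>2s$. With the direct bound, your ``main obstacle'' disappears and no interior gradient estimates are needed.

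Your fallback remark points the right way, but the gradient control comes from the annular energy estimate \eqref{eq:annular-u-energy}, not from \eqref{eq:extension-u-annulus-pointwise}, which bounds $\bar u_\varepsilon$ only. Finally, the exponents you record in Step~1 are inconsistent, though harmlessly: $\varepsilon^{2N-2}$ versus $\varepsilon^{2N-2+2s}$, and $\varepsilon^{2\beta_\sigma}$ versus $\varepsilon^{2\beta_\sigma-2s}$.
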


\begin{proof}
Fix a small number $r_0>0$ such that the balls $B_{4r_0}(\xi_j^0)$ are pairwise disjoint. By \eqref{eq:annular-u-energy}, for every $j$ there exists
\[
\rho_{\varepsilon,j}\in(r_0,2r_0)
\]
such that
\begin{equation}\label{eq:good-radius-u-energy}
  \int_{\partial'\mathcal B_{\rho_{\varepsilon,j}}^+(\xi_{\varepsilon,j})}
 t^{1-2s}|\nabla \bar u_\varepsilon|^2\,d\sigma
\le C\varepsilon^{2N}.
\end{equation}
For fixed $i\in\{1,\ldots,N\}$, the following local Pohozaev identity holds
\begin{equation}\label{eq:local-Pohozaev-identity-v}
  \begin{aligned}
\int_{B_{\rho_{\varepsilon,j}}(\xi_{\varepsilon,j})}
\partial_iV(x)u_\varepsilon^2\,dx
&=
\varepsilon^{2s}
\int_{\partial'\mathcal B_{\rho_{\varepsilon,j}}^+(\xi_{\varepsilon,j})}
t^{1-2s}|\nabla\bar u_\varepsilon|^2\nu_i\,d\sigma \\
&\quad
-2\varepsilon^{2s}
\int_{\partial'\mathcal B_{\rho_{\varepsilon,j}}^+(\xi_{\varepsilon,j})}
t^{1-2s}
\frac{\partial\bar u_\varepsilon}{\partial\nu}
\frac{\partial\bar u_\varepsilon}{\partial x_i}\,d\sigma \\
&\quad
+\int_{\partial B_{\rho_{\varepsilon,j}}(\xi_{\varepsilon,j})}
\left(V(x)u_\varepsilon^2-\frac{2}{p+1}u_\varepsilon^{p+1}\right)\nu_i\,dS .
\end{aligned}
\end{equation}
By \eqref{eq:good-radius-u-energy} and Lemma~\ref{lem:u-polynomial-decay}, we have
\begin{equation}\label{eq:RHS-local}
  \text{RHS of \eqref{eq:local-Pohozaev-identity-v}}=O(\varepsilon^{N+1}).
\end{equation}
Moreover, by Lemma~\ref{lem:true-peak-profile-convergence} and Lemma~\ref{lem:u-polynomial-decay}, we have
\begin{equation}\label{eq:pohozaev-center-estimate-1}
\begin{aligned}
&\int_{B_{\rho_{\varepsilon,j}}(\xi_{\varepsilon,j})}
\partial_iV(x)u_\varepsilon^2\,dx        \\
&\quad=
\varepsilon^N\partial_iV(\xi_{\varepsilon,j})
\int_{B_{{\rho_{\varepsilon,j}}/ {\varepsilon}}(0)}u_{\varepsilon,j}^2(y)\,dy
+O(\varepsilon^{N+1})\\
&\quad=\varepsilon^N\partial_iV(\xi_{\varepsilon,j})\left(\int_{\R^N}w_{V(\xi_j^0)}^2(y)+o(1)\right)+O(\varepsilon^{N+1}).
\end{aligned}
\end{equation}
Combining \eqref{eq:RHS-local} and \eqref{eq:pohozaev-center-estimate-1}, we obtain
\[
\varepsilon^N\partial_iV(\xi_{\varepsilon,j})\left(\int_{\R^N}w_{V(\xi_j^0)}^2(y)+o(1)\right)=O(\varepsilon^{N+1}).
\]
Hence
\[
|\partial_iV(\xi_{\varepsilon,j})|\le C\varepsilon,
\qquad i=1,\ldots,N.
\]
Summing over $i$ gives \eqref{eq:pohozaev-center-estimate}.
\end{proof}

\section{Spectral analysis}\label{sec:eig-estimates}
To compute the Morse index of the solution $u_\varepsilon$ of problem \eqref{eq:fractional-schrodinger-problem}, we first establish several key estimates for the eigenvalues and eigenfunctions of the linearized problem \eqref{eq:linearized-eigenvalue-problem}. For the concentration points $\xi_{1}^{0},\ldots,\xi_{k}^{0}$, we choose a fixed small constant $r>0$ such that 
\[B_{4r}(\xi_i^0)\cap B_{4r}(\xi_j^0)=\emptyset,\quad\text{for any }1\le i\ne j\le k.\] 

Let $\phi \in C_c^\infty(\mathbb{R}^N)$ be a cut-off function such that
\[
0\le \phi \le 1,\qquad
\phi \equiv 1 \ \text{in }B_r(0),\qquad
\operatorname{supp}\,\phi \subset B_{2r}(0),
\]
and set $\phi_l=\phi(x-\xi_{\varepsilon,l})$, so that $\operatorname{supp}\,\phi_l \cap \operatorname{supp}\,\phi_m=\varnothing$ for $l \neq m$.
Define
\[
U_{\varepsilon,l}:=\phi_l u_\varepsilon,
\qquad
\psi_{\varepsilon,l,j}:=\phi_l\,\frac{\partial u_\varepsilon}{\partial x_j},
\qquad j=1,\dots,N.
\]

In the extension space, let us also define a cut-off function $\Phi \in C_c^\infty(\overline{\mathbb{R}^{N+1}_+})$ such that
\[0\le \Phi\le 1,
\quad\Phi \equiv 1 \ \text{in } \mathcal B_{r}^+(0),\quad
\operatorname{supp}\,\Phi \subset \mathcal B_{2r}^+(0),
\]
and set $\Phi_l(x,t)=\Phi(x-\xi_{\varepsilon,l},t)$, so that $\Phi_l(x,0)=\phi_l(x)$ on $\mathbb{R}^N$.

Then we have the following linear independence result.
\begin{lemma}\label{lem:fractional-linear-independence}
Let $u_\varepsilon$ be a $k$-peak family in the sense of Definition~\ref{def:kpeak-class}. Then, for sufficiently small $\varepsilon$, the following set of functions 
\[
\Big\{U_{\varepsilon,l},\ \psi_{\varepsilon,l,1},\dots,\psi_{\varepsilon,l,N}\ :\  l =1,\dots,k\Big\},
\]
is linearly independent.
\end{lemma}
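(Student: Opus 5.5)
Because the cut-offs \(\phi_l\) have pairwise disjoint supports, the linear independence splits into \(k\) independent problems, one for each peak. Suppose
\[
\sum_{l=1}^k\Bigl(a_l U_{\varepsilon,l}+\sum_{j=1}^N b_{l,j}\psi_{\varepsilon,l,j}\Bigr)=0 .
\]
Restrict to \(B_r(\xi_{\varepsilon,l})\). There \(\phi_l\equiv1\) and every other \(\phi_m\) vanishes, so we get
\[
a_l u_\varepsilon+\sum_j b_{l,j}\partial_j u_\varepsilon=0\quad\text{on }B_r(\xi_{\varepsilon,l}).
\]
It therefore suffices to show that \(u_\varepsilon,\partial_1u_\varepsilon,\dots,\partial_Nu_\varepsilon\) are linearly independent on \(B_r(\xi_{\varepsilon,l})\) for \(\varepsilon\) small. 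We argue by contradiction and normalize \(|a_l|+|b_l|=1\) along a sequence \(\varepsilon_n\to0\).

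Next we rescale. Put \(y=(x-\xi_{\varepsilon,l})/\varepsilon\) and use \(u_{\varepsilon,l}(y)=u_\varepsilon(\xi_{\varepsilon,l}+\varepsilon y)\), so that \(\partial_j u_\varepsilon=\varepsilon^{-1}\partial_{y_j}u_{\varepsilon,l}\). The relation becomes
\[
a_l u_{\varepsilon,l}+\varepsilon^{-1}\sum_j b_{l,j}\partial_{y_j}u_{\varepsilon,l}=0\quad\text{on }B_{r/\varepsilon}(0).
\]
Set \(\tilde b_{l,j}=\varepsilon^{-1}b_{l,j}\) and normalize again so that \(|a_l|+|\tilde b_l|=1\). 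Passing to a subsequence, \(a_l\to a\) and \(\tilde b_l\to b\) with \(|a|+|b|=1\). Lemma~\ref{lem:true-peak-profile-convergence} gives \(C^1_{\loc}\) convergence \(u_{\varepsilon,l}\to w_\lambda\) with \(\lambda=V(\xi_l^0)\). In the limit,
\[
a\,w_\lambda+b\cdot\nabla w_\lambda=0\quad\text{on }\mathbb R^N.
\]

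Evaluating at \(y=0\), where \(\nabla w_\lambda(0)=0\) and \(w_\lambda(0)>0\), gives \(a=0\). Then \(b\ne0\) and \(b\cdot\nabla w_\lambda\equiv0\). This is impossible, because \(w_\lambda\) is radial and strictly decreasing, so \(b\cdot\nabla w_\lambda(y)=w_\lambda'(|y|)\,b\cdot y/|y|\ne0\) at \(y=b\) (Lemma~\ref{lem:single-bubble-input}). Equivalently, the \(\partial_j w_\lambda\) are linearly independent, since they span an \(N\)-dimensional kernel. This contradiction proves the claim for each \(l\), hence \(a_l=b_{l,j}=0\) for all \(l,j\).

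The only delicate step is the normalization. The coefficients of the derivative terms carry a factor \(\varepsilon^{-1}\) after scaling, and the argument must not secretly require \(b_{l,j}\) to be comparable with \(a_l\). Renormalizing the rescaled coefficients handles every relative size of \(a_l\) and \(b_l\). The limit identity only needs local \(C^1\) convergence on a fixed ball around the origin, so no decay estimates are required.
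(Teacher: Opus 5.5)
Your proof is correct, but it follows a different route from the paper's. After the same localization to $B_r(\xi_{\varepsilon,l})$, the paper works at fixed $\varepsilon$, with no rescaling or limit. Evaluating $a\,u_\varepsilon+b\cdot\nabla u_\varepsilon=0$ at the peak gives $a=0$, since $\nabla u_\varepsilon(\xi_{\varepsilon,l})=0$ and $u_\varepsilon(\xi_{\varepsilon,l})>0$. If $b\neq0$, then $u_\varepsilon$ is constant on a short segment through $\xi_{\varepsilon,l}$ in the direction $b/|b|$. Every point of that segment is then a (non-strict) local maximum, which contradicts the uniqueness of the local maximum required in Definition~\ref{def:kpeak-class}.

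You instead run a blow-up/compactness argument: you renormalize $(a_l,\varepsilon^{-1}b_l)$, use the $C^1_{\loc}$ convergence of Lemma~\ref{lem:true-peak-profile-convergence}, and exploit the strict radial monotonicity of $w_\lambda$. The renormalization correctly handles every relative size of the coefficients.

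The two routes buy different things:
\begin{itemize}
\item The paper's argument is elementary. It gives independence at every $\varepsilon$ for which the qualitative properties of the definition hold, and it uses only the unique-maximum hypothesis and $C^1$ regularity.
\item Yours never uses uniqueness of the local maximum. It imports instead the full profile convergence, whose proof relies on the energy quantization, and it yields only a non-explicit threshold $\varepsilon_0$. That threshold is all the lemma claims, so this is not a defect.
\end{itemize}

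One small point needs tightening. At $y=b$ you need $w_\lambda'(1)\neq0$, and strict monotonicity of $w_\lambda$ in $|y|$ does not by itself give nonvanishing of the radial derivative at a prescribed point. It is cleaner to argue as follows: $t\mapsto w_\lambda(tb)$ is strictly decreasing on $(0,\infty)$, so its derivative $b\cdot\nabla w_\lambda(tb)$ cannot vanish identically on any interval. Alternatively, you can rely on your remark that the $\partial_jw_\lambda$ are linearly independent, which uses the standard fact that the kernel is $N$-dimensional.
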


\begin{proof}
Fix $l\in\{1,\dots,k\}$. Assume that there exist $a\in\mathbb{R}$ and $b=(b_1,\dots,b_N)\in\mathbb{R}^N$ such that
\begin{equation}\label{eq:local-relation}
a\,U_{\varepsilon,l} + \sum_{j=1}^N b_j\,\psi_{\varepsilon,l,j} \equiv 0
\qquad\text{in }\mathbb{R}^N.
\end{equation}
Since $\phi_l\equiv 1$ in $B_r(\xi_{\varepsilon,l})$, the identity \eqref{eq:local-relation} implies
\begin{equation*}
a\,u_\varepsilon(x) + b\cdot \nabla u_\varepsilon(x) = 0
\qquad \text{for all }x\in B_r(\xi_{\varepsilon,l}).
\end{equation*}
Evaluating this identity at the peak point $\xi_{\varepsilon,l}$ gives $a=0$, because
\[
\nabla u_\varepsilon(\xi_{\varepsilon,l})=0,
\qquad u_\varepsilon(\xi_{\varepsilon,l})>0.
\]
Thus
\begin{equation}\label{eq:local-gradient-relation}
b\cdot \nabla u_\varepsilon(x)=0
\qquad\text{for all }x\in B_r(\xi_{\varepsilon,l}).
\end{equation}
If \(b\ne0\), let \(e=b/|b|\). Equation~\eqref{eq:local-gradient-relation} shows that
\(t\mapsto u_\varepsilon(\xi_{\varepsilon,l}+te)\) is constant for all sufficiently small \(|t|\). Since \(\xi_{\varepsilon,l}\) is a local maximum, every point of this short segment is then a local maximum, contradicting the uniqueness of the local maximum in \(B_r(\xi_l^0)\). Hence \(b=0\). This proves that \(\{U_{\varepsilon,l},\psi_{\varepsilon,l,1},\dots,\psi_{\varepsilon,l,N}\}\)
is linearly independent.

Assume that for some coefficients $\alpha_ l \in\mathbb{R}$ and $\beta_{l,j}\in\mathbb{R}$ we have
\begin{equation}\label{eq:global-relation}
\sum_{l=1}^k \alpha_ l \,U_{\varepsilon,l}
+\sum_{l=1}^k\sum_{j=1}^N \beta_{l,j}\,\psi_{\varepsilon,l,j}\equiv 0
\qquad\text{in }\mathbb{R}^N.
\end{equation}
Fix $ l _0\in\{1,\dots,k\}$. Restricting \eqref{eq:global-relation} to $B_r(\xi_{\varepsilon,l_0})$,
we use $\phi_{l_0}\equiv 1$ there and $\phi_ l \equiv 0$ for $ l \neq  l _0$ to obtain
\[
\alpha_{l_0}u_\varepsilon + \sum_{j=1}^N \beta_{l_0,j}\,\frac{\partial u_\varepsilon}{\partial x_j} \equiv 0
\qquad \text{in }B_r(\xi_{\varepsilon,l_0}).
\]

The preceding local argument gives $\alpha_{l_0}=0$ and $\beta_{l_0,j}=0$ for every $j$.

Since $ l _0$ is arbitrary, all coefficients in \eqref{eq:global-relation} vanish, and the full family is linearly independent.
\end{proof}

Arguing as in \cite[Lemma 3.2]{LuoPanPeng2024}, we obtain the following result.
\begin{lemma}\label{lem:local-identities}
  Let $u_\varepsilon$ be a $k$-peak family in the sense of Definition~\ref{def:kpeak-class}. For $(\alpha_1,\ldots,\alpha_N)\in\mathbb{R}^N$ set $f:=\sum_{i=1}^N\alpha_i \frac{\partial u_\varepsilon}{\partial x_i}$. Then the following identities hold:
\begin{equation}\label{eq:localized-u-energy-identity}
 \int_{\mathbb{R}^N} u_\varepsilon^{p+1}\phi_l^2\,dx+\varepsilon^{2s}\int_{\mathbb{R}^{N+1}_+} t^{1-2s}|\nabla\Phi_l|^2\bar{u}_\varepsilon^{\,2}dX=\varepsilon^{2s}\int_{\mathbb{R}^{N+1}_+} t^{1-2s}|\nabla(\Phi_l \bar{u}_\varepsilon)|^2 \,dX+\int_{\mathbb{R}^N} V(x)\phi_l^2 u_\varepsilon^2\,dx,
\end{equation}
  \begin{equation}\label{eq:localized-f-energy-identity}
\begin{aligned}
&\varepsilon^{2s}\int_{\mathbb{R}^{N+1}_+} t^{1-2s}
|\nabla(\Phi_l \bar{f})|^2\,dX+\int_{\mathbb{R}^N}V(x)\phi_l^2 f^2\,dx\\
&\quad= \varepsilon^{2s}\int_{\mathbb{R}^{N+1}_+} t^{1-2s}|\nabla \Phi_l|^2 \bar{f}^{\,2}\,dX+ p\int_{\mathbb{R}^N}u_\varepsilon^{p-1}\phi_l^2 f^2\,dx
-\sum_{i=1}^N\alpha_i\int_{\mathbb{R}^N}\frac{\partial V(x)}{\partial x_i}\,u_\varepsilon\phi_l^2 f\,dx,
\end{aligned}
\end{equation}
  and 
\begin{equation}\label{eq:localized-u-f-cross-identity}
  \begin{aligned}
    &2\, \varepsilon^{2s}\int_{\mathbb{R}^{N+1}_+}t^{1-2s}\nabla (\Phi_l \bar{u}_\varepsilon)\cdot\nabla (\Phi_l \bar{f})dX+2 \int_{\mathbb{R}^N}V(x)f u_\varepsilon \phi_l^2 \,dx\\
    &\quad =2\, \varepsilon^{2s}\int_{\mathbb{R}^{N+1}_+}t^{1-2s}|\nabla \Phi_l|^2 \bar{u}_\varepsilon \bar{f}\,dX+(p+1)\int_{\mathbb{R}^{N}} u_\varepsilon^{p}\phi_{l}^{2} f\,dx-\sum_{i=1}^{N}\alpha_{i}\int_{\mathbb{R}^{N}}\frac{\partial V(x)}{\partial x_{i}}u_\varepsilon^{2}\phi_{l}^{2}\,dx
  \end{aligned}
\end{equation}
\end{lemma}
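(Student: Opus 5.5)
These three identities come from testing the extension problem with localized test functions and using the Caffarelli--Silvestre correspondence. For a function \(g\in H^s\) with extension \(\bar g\) and a cutoff \(\Phi_l\) with \(\Phi_l(x,0)=\phi_l(x)\), I will use the elementary identity
\[
\int t^{1-2s}|\nabla(\Phi_l\bar g)|^2=\int t^{1-2s}\nabla\bar g\cdot\nabla(\Phi_l^2\bar g)+\int t^{1-2s}|\nabla\Phi_l|^2\bar g^{\,2}.
\]
It follows by expanding \(\nabla(\Phi_l\bar g)=\Phi_l\nabla\bar g+\bar g\nabla\Phi_l\) and comparing with \(\nabla(\Phi_l^2\bar g)=\Phi_l^2\nabla\bar g+2\Phi_l\bar g\nabla\Phi_l\). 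The polarized version, for \(\bar g,\bar h\), reads
\[
\int t^{1-2s}\nabla(\Phi_l\bar g)\cdot\nabla(\Phi_l\bar h)=\tfrac12\int t^{1-2s}\bigl[\nabla\bar g\cdot\nabla(\Phi_l^2\bar h)+\nabla\bar h\cdot\nabla(\Phi_l^2\bar g)\bigr]+\int t^{1-2s}|\nabla\Phi_l|^2\bar g\bar h.
\]

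\textbf{Identity \eqref{eq:localized-u-energy-identity}.} I test \eqref{eq:extension-u} with \(\Phi_l^2\bar u_\varepsilon\), which is compactly supported in \(\overline{\mathbb R^{N+1}_+}\) and has trace \(\phi_l^2u_\varepsilon\). The weak form gives
\[
\varepsilon^{2s}\int t^{1-2s}\nabla\bar u_\varepsilon\cdot\nabla(\Phi_l^2\bar u_\varepsilon)=\int(u_\varepsilon^p-Vu_\varepsilon)\phi_l^2u_\varepsilon .
\]
Here I use the normalization by \(\omega_s\) and the sign convention in which the weak form of the Neumann data gives exactly this. Inserting the elementary identity yields \eqref{eq:localized-u-energy-identity}.

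\textbf{Identity \eqref{eq:localized-f-energy-identity}.} Differentiating the equation in \(x_i\) shows that \(\partial_iu_\varepsilon\) satisfies
\[
\varepsilon^{2s}(-\Delta)^s\partial_iu_\varepsilon+V\partial_iu_\varepsilon-pu_\varepsilon^{p-1}\partial_iu_\varepsilon=-\partial_iV\,u_\varepsilon .
\]
Its extension is \(\partial_{x_i}\bar u_\varepsilon\), since \(E\) commutes with \(x\)-derivatives, and this is legitimate thanks to the \(C^{1,\alpha}\) bounds of Lemma~\ref{lem:u-polynomial-decay} together with Lemma~\ref{lem:extension-u-bound}(ii). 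Hence \(f\) solves \(\varepsilon^{2s}(-\Delta)^sf+Vf-pu_\varepsilon^{p-1}f=-\sum_i\alpha_i\partial_iV\,u_\varepsilon\), with extension \(\bar f\). Testing with \(\Phi_l^2\bar f\) and applying the elementary identity gives \eqref{eq:localized-f-energy-identity}.

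\textbf{Identity \eqref{eq:localized-u-f-cross-identity}.} I test the \(u\)-equation with \(\Phi_l^2\bar f\) and the \(f\)-equation with \(\Phi_l^2\bar u_\varepsilon\), add the two results, and use the polarized identity. This produces the left side
\[
2\varepsilon^{2s}\int\nabla(\Phi_l\bar u_\varepsilon)\cdot\nabla(\Phi_l\bar f)-2\varepsilon^{2s}\int|\nabla\Phi_l|^2\bar u_\varepsilon\bar f .
\]
On the right side the potential terms give \(-2\int Vfu_\varepsilon\phi_l^2\), the nonlinear terms give \(\int u_\varepsilon^pf\phi_l^2+p\int u_\varepsilon^{p-1}fu_\varepsilon\phi_l^2=(p+1)\int u_\varepsilon^p\phi_l^2f\), and the forcing gives \(-\sum_i\alpha_i\int\partial_iV\,u_\varepsilon^2\phi_l^2\). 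This is exactly \eqref{eq:localized-u-f-cross-identity}.

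\textbf{Main obstacle.} The real issue is justification rather than algebra. One must check that \(\Phi_l^2\bar u_\varepsilon\) and \(\Phi_l^2\bar f\) are admissible weak test functions, which holds since they are compactly supported and lie in the weighted \(H^1\) space. One must also check that the weak Neumann formulation holds for \(\partial_iu_\varepsilon\), which requires the \(C^{1,\alpha}\) regularity and the decay of \(\partial u_\varepsilon\) (Lemma~\ref{lem:extension-u-bound}(ii)) to pass the derivative through the Poisson kernel. I would handle the latter by difference quotients: the difference quotient of \(\bar u_\varepsilon\) in \(x_i\) is exactly the extension of the difference quotient of \(u_\varepsilon\), so one can pass to the limit in the weak formulations.
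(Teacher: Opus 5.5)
Your proposal is correct and follows the paper's route. You test the extended equations for $\bar u_\varepsilon$ and $\bar f$ with $\Phi_l^2\bar u_\varepsilon$ and $\Phi_l^2\bar f$, cross-test and add for the third identity, and convert with $|\nabla(\Phi_l\bar g)|^2=\nabla\bar g\cdot\nabla(\Phi_l^2\bar g)+|\nabla\Phi_l|^2\bar g^{\,2}$ and its polarization; all signs and constants check out. The paper just says ``multiplying \ldots\ yields'' without your admissibility and difference-quotient justification, so your version is, if anything, more complete.
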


\begin{proof}
  Note that $\bar{u}_\varepsilon $ and $\bar{f}$ satisfy
\begin{equation}\label{eq:u-extension-equation}
 \left\{
 \begin{aligned}
 \operatorname{div}\!\left(t^{1-2s}\nabla \bar{u}_\varepsilon\right) &= 0
 &&\text{in }\mathbb{R}^{N+1}_+,\\
 \lim_{t\to 0^+}t^{1-2s}\frac{\partial \bar{u}_\varepsilon}{\partial \nu}
 &=\varepsilon^{-2s}\bigl(u_\varepsilon^{\,p}- V(x)\,u_\varepsilon\bigr)
 &&\text{on }\mathbb{R}^N.
 \end{aligned}
 \right.
\end{equation}
and 
\begin{equation}\label{eq:f-extension-equation}
\left\{
\begin{aligned}
\operatorname{div}\!\left(t^{1-2s}\nabla \bar{f}\right) &= 0
&&\text{in }\mathbb{R}^{N+1}_+,\\
\lim_{t\to 0^+}t^{1-2s}\frac{\partial \bar{f}}{\partial \nu}
&=\varepsilon^{-2s}\Bigl(p u_\varepsilon^{\,p-1}f- V(x)\,f
-\sum_{i=1}^N\alpha_i\,\frac{\partial V(x)}{\partial x_i}\,u_\varepsilon\Bigr)
&&\text{on }\mathbb{R}^N.
\end{aligned}
\right.
\end{equation}
Multiplying \eqref{eq:u-extension-equation} by $\Phi_l^2 \bar{u}_\varepsilon $ yields \eqref{eq:localized-u-energy-identity}. Similarly, multiplying \eqref{eq:f-extension-equation} by $\Phi_l^2 \bar{f}$ yields \eqref{eq:localized-f-energy-identity}. Then, multiplying \eqref{eq:u-extension-equation} and \eqref{eq:f-extension-equation} by $\Phi_l^2 \bar{f}$ and $\Phi_l^2 \bar{u}_\varepsilon $ respectively, and adding the resulting identities, we obtain \eqref{eq:localized-u-f-cross-identity}.
\end{proof}
In the next two propositions, we establish estimates for the eigenvalues.

\begin{proposition}\label{prop:mu-zero}
Let $u_\varepsilon$ be a $k$-peak family in the sense of Definition~\ref{def:kpeak-class}. Then there exists $c_0>0$, independent of $\varepsilon$, such that, for all sufficiently small $\varepsilon$,
\[
\mu_{\varepsilon}^{(l)}\le \mu_{\varepsilon}^{(k)}\le -c_0<0,
\qquad l=1,\dots,k.
\]
\end{proposition}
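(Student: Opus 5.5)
We use the min-max characterization of $\mu_\varepsilon^{(k)}$ with the $k$-dimensional test space
\[
W:=\operatorname{span}\{U_{\varepsilon,1},\ldots,U_{\varepsilon,k}\},
\]
where $U_{\varepsilon,l}=\phi_l u_\varepsilon$. By Lemma~\ref{lem:fractional-linear-independence} these functions are linearly independent, so $\dim W=k$. It therefore suffices to show that the Rayleigh quotient
\[
Q_\varepsilon(v):=\frac{\varepsilon^{2s}\|(-\Delta)^{s/2}v\|_{L^2}^2+\int V v^2-p\int u_\varepsilon^{p-1}v^2}{\int v^2}
\]
satisfies $Q_\varepsilon(v)\le -c_0$ uniformly for all $v=\sum_l a_lU_{\varepsilon,l}\in W\setminus\{0\}$. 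The bound $\mu_\varepsilon^{(l)}\le\mu_\varepsilon^{(k)}$ is just the ordering of the eigenvalues.

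\emph{Step 1: the diagonal terms.} Take $v=U_{\varepsilon,l}$ and pass to the extension. By Lemma~\ref{lem:minimality-extension}, $\varepsilon^{2s}\|(-\Delta)^{s/2}(\phi_lu_\varepsilon)\|^2\le\varepsilon^{2s}\int t^{1-2s}|\nabla(\Phi_l\bar u_\varepsilon)|^2$, since $\Phi_l\bar u_\varepsilon$ is an extension of $\phi_lu_\varepsilon$. Identity \eqref{eq:localized-u-energy-identity} then gives
\[
\varepsilon^{2s}\|(-\Delta)^{s/2}U_{\varepsilon,l}\|^2+\int V U_{\varepsilon,l}^2-p\int u_\varepsilon^{p-1}U_{\varepsilon,l}^2
\le \int u_\varepsilon^{p+1}(\phi_l^2-p\phi_l^2)+\varepsilon^{2s}\int t^{1-2s}|\nabla\Phi_l|^2\bar u_\varepsilon^2 .
\]
The last term is supported where $\nabla\Phi_l\neq0$, which lies in an annulus of the form $A^{+,l}_{r,2r}$. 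There the bound \eqref{eq:extension-u-annulus-pointwise} gives $|\bar u_\varepsilon|\le C\varepsilon^N$ on the bounded part. The unbounded $t$-direction does not arise because $\Phi$ has compact support, so this term is $O(\varepsilon^{2N+2s})$.

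By Lemma~\ref{lem:true-peak-profile-convergence} together with the decay \eqref{eq:u-polynomial-decay} (which yields uniform integrability), we have
\[
\varepsilon^{-N}\int u_\varepsilon^{p+1}\phi_l^2\to\int w_{V(\xi_l^0)}^{p+1},
\qquad
\varepsilon^{-N}\int U_{\varepsilon,l}^2\to\int w_{V(\xi_l^0)}^2 .
\]
Consequently the diagonal quadratic form is at most
\[
-\varepsilon^N\bigl((p-1)\textstyle\int w_{V(\xi_l^0)}^{p+1}+o(1)\bigr)
\]
and the denominator equals $\varepsilon^N(\int w_{V(\xi_l^0)}^2+o(1))$. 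This gives a negative ratio bounded away from zero.

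\emph{Step 2: the cross terms.} For $l\ne m$ the supports of $\phi_l$ and $\phi_m$ are disjoint, so the $L^2$ and potential cross terms vanish. The only cross term left is the nonlocal one,
\[
\varepsilon^{2s}\langle U_{\varepsilon,l},U_{\varepsilon,m}\rangle_{\dot H^s}
=-\varepsilon^{2s}C_{N,s}\iint\frac{U_{\varepsilon,l}(x)U_{\varepsilon,m}(y)}{|x-y|^{N+2s}}\,dx\,dy .
\]
Since $|x-y|\ge 2r$ on the supports and $\|U_{\varepsilon,l}\|_{L^1}\le C\varepsilon^N$ by \eqref{eq:u-polynomial-decay}, this term is $O(\varepsilon^{2N+2s})=o(\varepsilon^N)$. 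Moreover it is nonpositive, so it would even help. For a general $v=\sum a_lU_{\varepsilon,l}$ the numerator is then at most
\[
-\varepsilon^N\sum_l a_l^2(c+o(1)),
\]
while the denominator equals $\varepsilon^N\sum_l a_l^2(\int w_{V(\xi_l^0)}^2+o(1))$. Hence $Q_\varepsilon\le -c_0$, where
\[
c_0=\tfrac12\min_l\frac{(p-1)\int w_{V(\xi_l^0)}^{p+1}}{\int w_{V(\xi_l^0)}^2}.
\]

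\emph{Main obstacle.} The delicate point is justifying the localization in Step 1: the fractional energy of $\phi_lu_\varepsilon$ has to be compared with the extension quantity without producing commutator losses of order $\varepsilon^N$. The minimality in Lemma~\ref{lem:minimality-extension} handles this in the favorable direction, since it gives an upper bound, which is exactly what is needed. What remains is to check that the annular term $\int t^{1-2s}|\nabla\Phi_l|^2\bar u_\varepsilon^2$ is small, using Lemma~\ref{lem:extension-u-bound}(i).
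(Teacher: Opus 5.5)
Your proposal is correct and follows the paper's proof. It uses the same ingredients: the test space $\operatorname{span}\{U_{\varepsilon,1},\ldots,U_{\varepsilon,k}\}$, minimality of the $s$-harmonic extension against the competitor built from $\Phi_j\bar u_\varepsilon$, the localized identity \eqref{eq:localized-u-energy-identity}, the annular bound $|\bar u_\varepsilon|\le C\varepsilon^N$, and the mass limits \eqref{eq:localized-u-p-plus-one-mass}--\eqref{eq:localized-u-l2-mass}. The only difference is the cross terms: the paper applies Lemma~\ref{lem:minimality-extension} once to $\Psi=\sum_j a_j\Phi_j\bar u_\varepsilon$, whose pieces have disjoint supports in $\mathbb R^{N+1}_+$, so no cross terms arise. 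You instead apply it piece by piece and bound $\varepsilon^{2s}\langle U_{\varepsilon,l},U_{\varepsilon,m}\rangle_{\dot H^s}=O(\varepsilon^{2N+2s})$ through the kernel representation, which also works. Drop the remark that this term ``helps'', though: it enters multiplied by $a_la_m$, which can have either sign, and only the size bound is needed.
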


\begin{proof}
For $v\in H^s(\mathbb{R}^N)\setminus\{0\}$ define
\begin{equation*}
J_\varepsilon(v):=
\frac{Q_\varepsilon(v)}{\displaystyle\int_{\mathbb R^N}v^2\,dx},
\end{equation*}
where
\begin{equation}\label{eq:linearized-quadratic-form}
Q_\varepsilon(v)
:=\varepsilon^{2s}\int_{\mathbb R^N}|(-\Delta)^{s/2}v|^2\,dx
+\int_{\mathbb R^N}V(x)v^2\,dx
-p\int_{\mathbb R^N}u_\varepsilon^{p-1}v^2\,dx .
\end{equation}
By the min--max principle, we have
\[
\mu_\varepsilon^{(k)}
=\inf_{\substack{W\subset H^s(\mathbb R^N)\\ \dim W=k}}
\sup_{0\ne v\in W}J_\varepsilon(v).
\]
We use the test space
\[
W_k:=\operatorname{span}\{U_{\varepsilon,1},\ldots,U_{\varepsilon,k}\},
\qquad U_{\varepsilon,j}=\phi_j u_\varepsilon .
\]
By Lemma~\ref{lem:fractional-linear-independence}, $\dim W_k=k$ for sufficiently small $\varepsilon$.

Since $v\in W_k\setminus\{0\}$, we may write
\[
v=\sum_{j=1}^k a_j U_{\varepsilon,j}
=\sum_{j=1}^k a_j \phi_j u_\varepsilon ,
\] 
for some $(a_1,\ldots,a_k) \neq (0,\ldots,0)$. Let $\bar v=E(v)$ be the extension of $v$, and set
\[
\Psi:=\sum_{j=1}^k a_j\Phi_j\bar u_\varepsilon .
\]
Since $\Psi(x,0)=v(x)$, Lemma~\ref{lem:minimality-extension} gives
\begin{equation}\label{eq:mu-zero-extension-minimality}
\int_{\mathbb R^{N+1}_+}t^{1-2s}|\nabla\bar v|^2\,dX
\le
\int_{\mathbb R^{N+1}_+}t^{1-2s}|\nabla\Psi|^2\,dX .
\end{equation}
The supports of $\Phi_1,\ldots,\Phi_k$ are pairwise disjoint.  Hence
\[
\int_{\mathbb R^{N+1}_+}t^{1-2s}|\nabla\Psi|^2\,dX
=
\sum_{j=1}^k a_j^2
\int_{\mathbb R^{N+1}_+}t^{1-2s}|\nabla(\Phi_j\bar u_\varepsilon)|^2\,dX .
\]
Thus, using \eqref{eq:mu-zero-extension-minimality}, we obtain
\begin{equation*}
\begin{aligned}
Q_\varepsilon(v)
&\le \sum_{j=1}^k a_j^2\Bigg[
\varepsilon^{2s}\int_{\mathbb R^{N+1}_+}t^{1-2s}|\nabla(\Phi_j\bar u_\varepsilon)|^2\,dX
+\int_{\mathbb R^N}V(x)\phi_j^2u_\varepsilon^2\,dx \\
&\hspace{36mm}
-p\int_{\mathbb R^N}u_\varepsilon^{p+1}\phi_j^2\,dx\Bigg].
\end{aligned}
\end{equation*}
By the identity \eqref{eq:localized-u-energy-identity}, we have
\begin{equation}\label{eq:mu-zero-quadratic-upper-bound}
Q_\varepsilon(v)
\le
\sum_{j=1}^k a_j^2\left[(1-p)\int_{\mathbb R^N}u_\varepsilon^{p+1}\phi_j^2\,dx
+
\varepsilon^{2s}\int_{\mathbb R^{N+1}_+}t^{1-2s}|\nabla\Phi_j|^2\bar u_\varepsilon^2\,dX\right].
\end{equation}
On the support of $\nabla\Phi_j$, by Lemma~\ref{lem:extension-u-bound}(i),
\[
|\bar u_\varepsilon(x,t)|\le C\varepsilon^N
\quad\text{on }\operatorname{supp}\nabla\Phi_j.
\]
Thus
\begin{equation*}
\varepsilon^{2s}\int_{\mathbb R^{N+1}_+}t^{1-2s}|\nabla\Phi_j|^2\bar u_\varepsilon^2\,dX
\le C\varepsilon^{2s+2N}.
\end{equation*}
Moreover, after the change of variables $x=\xi_{\varepsilon,j}+\varepsilon y$, Lemma~\ref{lem:true-peak-profile-convergence} gives
\begin{align}
\int_{\mathbb R^N}u_\varepsilon^{p+1}\phi_j^2\,dx
&=\varepsilon^N\int_{\mathbb R^N}w_{V(\xi_j^0)}^{p+1}(y)\,dy+o(\varepsilon^N),
\label{eq:localized-u-p-plus-one-mass}\\
\int_{\mathbb R^N}u_\varepsilon^{2}\phi_j^2\,dx
&=\varepsilon^N\int_{\mathbb R^N}w_{V(\xi_j^0)}^{2}(y)\,dy+o(\varepsilon^N).
\label{eq:localized-u-l2-mass}
\end{align}
Since $p>1$ and $\int_{\mathbb R^N}w_{V(\xi_j^0)}^{p+1}>0$, \eqref{eq:mu-zero-quadratic-upper-bound}--\eqref{eq:localized-u-l2-mass} imply
\[
J_\varepsilon(v)
\le
\max_{1\le j\le k}
(1-p)\frac{\int_{\mathbb R^N}w_{V(\xi_j^0)}^{p+1}\,dy}{\int_{\mathbb R^N}w_{V(\xi_j^0)}^{2}\,dy}
+o(1).
\]
Hence there exists $c_0>0$ such that, for all sufficiently small $\varepsilon$,
\[
\sup_{0\ne v\in W_k}J_\varepsilon(v)\le -c_0.
\]
This gives $\mu_\varepsilon^{(k)}\le -c_0$.  Since the eigenvalues are ordered increasingly, we deduce that $\mu_{\varepsilon}^{(l)}\le \mu_{\varepsilon}^{(k)}\le -c_0<0$ for $l=1,\dots,k.$
\end{proof}

\begin{proposition}\label{prop:mu-small}
Let $u_\varepsilon$ be a $k$-peak family in the sense of Definition~\ref{def:kpeak-class}.  Then
\[
\lim_{\varepsilon\to0}\mu_{\varepsilon}^{(l)}=0,
\qquad l=k+1,\ldots,(N+1)k.
\]
\end{proposition}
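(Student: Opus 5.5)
The proof has two halves: an upper bound $\limsup_{\varepsilon\to0}\mu_\varepsilon^{((N+1)k)}\le 0$ by min--max, and a lower bound $\liminf_{\varepsilon\to0}\mu_\varepsilon^{(k+1)}\ge 0$ by a blow-up/contradiction argument. Together with the monotone ordering of the eigenvalues these give the claim for every $l=k+1,\dots,(N+1)k$.

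\emph{Upper bound.} I would use the $(N+1)k$-dimensional test space
\[
W:=\operatorname{span}\{U_{\varepsilon,l},\ \psi_{\varepsilon,l,j}:\ l=1,\dots,k,\ j=1,\dots,N\},
\]
which has full dimension by Lemma~\ref{lem:fractional-linear-independence}. Write $v=\sum_l(a_lU_{\varepsilon,l}+\phi_l f_l)$ with $f_l=\sum_j b_{l,j}\partial_j u_\varepsilon$. As in Proposition~\ref{prop:mu-zero}, I build the competitor $\Psi=\sum_l\Phi_l(a_l\bar u_\varepsilon+\bar f_l)$, apply Lemma~\ref{lem:minimality-extension}, and use that the $\Phi_l$ have disjoint supports to split $Q_\varepsilon(v)$ into peak-wise pieces. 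On each piece, Lemma~\ref{lem:local-identities} evaluates the three terms:
\begin{itemize}
\item the $a_l^2$ term equals $(1-p)\int u_\varepsilon^{p+1}\phi_l^2+O(\varepsilon^{2N+2s})$;
\item the $f_l$ term equals $-\sum_i b_{l,i}\int\partial_iV\,u_\varepsilon\phi_l^2f_l+O(\varepsilon^{2N+2s-2})$, using \eqref{eq:u-extension-derivative-decay} on the cutoff annulus;
\item the cross term equals $(p-1)\int u_\varepsilon^p\phi_l^2f_l-\sum_i b_{l,i}\int\partial_iV u_\varepsilon^2\phi_l^2$ plus an error, after subtracting the $2p\int u_\varepsilon^p\phi_l^2 f_l$ coming from the potential term.
\end{itemize}
Integrating by parts gives $\int u_\varepsilon^p\phi_l^2\partial_iu_\varepsilon=O(\varepsilon^{(p+1)\beta_\sigma})$, since the integrand is a derivative localized away from the peak. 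Also $|\partial_iV|\le C\varepsilon+C|x-\xi_{\varepsilon,l}|$ near the peak by Lemma~\ref{lem:pohozaev-center-estimate}, so the $V$-terms are $O(\varepsilon^{N+1}|b_l|(|a_l|+|b_l|/\varepsilon))$. Normalizing, $\int v^2\ge c\varepsilon^N(a_l^2+\varepsilon^{-2}|b_l|^2)$, because $\int\phi_l^2|\nabla u_\varepsilon|^2\sim\varepsilon^{N-2}$ and the $L^2$ cross term between $u_\varepsilon\phi_l$ and $\partial_iu_\varepsilon\phi_l$ is negligible. Then $Q_\varepsilon(v)\le o(1)\int v^2$, since the $a_l^2$ coefficient is negative, and $\sup_W J_\varepsilon\le o(1)$. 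Hence $\mu_\varepsilon^{((N+1)k)}\le o(1)$.

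\emph{Lower bound.} Suppose instead that $\mu_\varepsilon^{(k+1)}\le -c<0$ along a sequence. Take $L^2$-orthonormal eigenfunctions $v^{(1)},\dots,v^{(k+1)}$; their eigenvalues lie in $[-C,-c]$, so Lemma~\ref{lem:u-polynomial-decay}(ii) gives uniform weighted decay and $C^{1,\alpha}$ bounds. Blowing up around each $\xi_{\varepsilon,j}$ via Lemma~\ref{lem:true-peak-profile-convergence}, the rescaled $\tilde v^{(m)}(\cdot+q_{\varepsilon,j})$ converge, strongly in $L^2$ thanks to the uniform decay, to solutions of $L_{V(\xi_j^0)}\phi=\mu\phi$ with $\mu\le -c$. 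By Lemma~\ref{lem:single-bubble-input}(i) each such limit is a multiple of the single negative eigenfunction $\phi_j$. The weighted decay also shows there is no $L^2$ mass away from the peaks, so the $L^2$ norm is the sum of the peak contributions. Therefore the $k+1$ limiting vectors are orthonormal in the $k$-dimensional span of $\{\phi_j(\cdot-q_j)\}_{j=1}^k$, which is impossible. This gives $\mu_\varepsilon^{(k+1)}\ge o(1)$.

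I expect the main obstacle to be the error bookkeeping in the upper bound. The cutoff-commutator terms must be shown to be $o(\varepsilon^{N-2})$ relative to the $b$-block, which holds since they are $O(\varepsilon^{2N+2s-2})$. The $V$-gradient coupling must be shown to be lower order, which relies on the Pohozaev estimate $|\nabla V(\xi_{\varepsilon,l})|\le C\varepsilon$. The lower bound only needs the compactness supplied by the weighted estimates, so it should be routine.
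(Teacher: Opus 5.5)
Your proof is correct, and it differs from the paper's mainly in the lower bound.

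\textbf{Upper bound.} This follows the paper's route: extension minimality with the competitor $\sum_l\Phi_l(a_l\bar u_\varepsilon+\bar f_l)$, the localized identities of Lemma~\ref{lem:local-identities}, and the annular estimates of Lemma~\ref{lem:extension-u-bound}. The only change is organizational. You test once on the full $(N+1)k$-dimensional space and bound peak by peak. The paper instead uses nested spaces $W_l$ and does the bookkeeping in Lemma~\ref{lem:fractional-test-space-estimate}; by monotonicity, your version is enough. Two small points:
\begin{itemize}
\item The cross term carries the factor $a_l$, and its coefficient is $1-p$, not $p-1$. This is harmless, since $\int u_\varepsilon^p\phi_l^2\partial_iu_\varepsilon=O(\varepsilon^{(p+1)\beta_\sigma})$.
\item For a bound $o(1)$ you do not need Lemma~\ref{lem:pohozaev-center-estimate}. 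With only $|\nabla V|\le C$, the $V$-couplings are already $O(\varepsilon)$ relative to $\varepsilon^Na_l^2+\varepsilon^{N-2}|b_l|^2$.
\end{itemize}

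\textbf{Lower bound.} Here your route is genuinely different. The paper argues with quadratic forms only:
\begin{itemize}
\item it picks $\varphi_\varepsilon$ in the span of the first $k+1$ eigenfunctions satisfying $k$ localized orthogonality conditions against $Z_j\bigl((x-\xi_{\varepsilon,j})/\varepsilon\bigr)$;
\item it splits $Q_\varepsilon(\varphi_\varepsilon)$ with the fractional IMS formula at scale $R\varepsilon$, with error $CR^{-2s}$;
\item it concludes from $\mathfrak q_j\ge0$ on $Z_j^\perp$ and positivity of the form in the outer region.
\end{itemize}
You instead blow up the eigenfunctions themselves and count dimensions. This is the same compactness argument the paper uses later in Lemma~\ref{lem:small-eigenfunction-orthogonality} and Proposition~\ref{prop:kn-plus-one}. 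One normalization detail: with $\mu\in[-C,-c]$ one has $\int v^2\sim\varepsilon^N$, so the $\varepsilon^{-N}$-scaled $L^2$ orthonormalization is compatible with the normalization under which Lemma~\ref{lem:u-polynomial-decay}(ii) is stated.

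\textbf{Comparison.} Your route is shorter and consistent with the later steps of the paper. However, it relies on the uniform weighted decay and $C^{1,\alpha}$ bounds for eigenfunctions in Lemma~\ref{lem:u-polynomial-decay}(ii). These are what keep $L^2$ mass from escaping and let you pass to the limit in the nonlocal eigenvalue equation, and the paper only sketches their proof. The IMS argument uses only the decay of $u_\varepsilon$ and no information about the eigenfunctions, so it is the more robust of the two.
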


\begin{proof}
First, the eigenvalues under consideration are uniformly bounded from below.  Indeed, by \eqref{eq:linearized-quadratic-form} and Lemma~\ref{lem:u-polynomial-decay},
\begin{equation*}
   \begin{aligned}
\mu_{\varepsilon}^{(1)}
&=\inf_{v\in H^s(\mathbb{R}^N)\setminus\{0\}}
\frac{
\varepsilon^{2s}\int_{\mathbb{R}^N}|(-\Delta)^{s/2}v|^2\,dx
+\int_{\mathbb{R}^N}V(x)v^2\,dx
-p\int_{\mathbb{R}^N}u_\varepsilon^{p-1}v^2\,dx}{\int_{\mathbb{R}^N}v^2\,dx}  \\
&\ge
\inf_{v\in H^s(\mathbb{R}^{N})\setminus\{0\}}
\frac{-p\int_{\mathbb{R}^{N}}u_{\varepsilon}^{p-1}v^{2}\,dx}{\int_{\mathbb{R}^{N}}v^{2}\,dx}
\ge -p\|u_{\varepsilon}\|_{L^{\infty}(\mathbb{R}^{N})}^{p-1}
\ge -C .
\end{aligned} 
\end{equation*}
Thus all eigenvalues are bounded from below uniformly in $\varepsilon$.

We next prove
\begin{equation*}
\limsup_{\varepsilon\to0}\mu_\varepsilon^{((N+1)k)}\le0.
\end{equation*}
Fix integers $j\in\{1,\dots,N\}$ and $i\in\{1,\dots,k\}$ and set $l=jk+i$.
Define
\[
W_l:=\mathrm{span}\Big\{U_{\varepsilon,1},\dots,U_{\varepsilon,k},\ 
\psi_{\varepsilon,1,1},\dots,\psi_{\varepsilon,k,1},\dots,\psi_{\varepsilon,i,j}\Big\},
\]
where
\[
U_{\varepsilon,m}:=\phi_m u_\varepsilon,
\qquad
\psi_{\varepsilon,m,q}:=\phi_m\frac{\partial u_\varepsilon}{\partial x_q} .
\]
Let $v\in W_l\setminus\{0\}$ be written as
\[
v=f_\varepsilon+g_\varepsilon+h_\varepsilon,
\]
with
\[
f_\varepsilon:=\sum_{j=1}^k\alpha_{\varepsilon,j}\,U_{\varepsilon,j}
=\sum_{j=1}^k\alpha_{\varepsilon,j}\,\phi_j u_\varepsilon,
\]
\[
g_\varepsilon:=\sum_{m=1}^k\sum_{q=1}^{j-1}\beta_{\varepsilon,m,q}\,\psi_{\varepsilon,m,q}
      =\sum_{m=1}^k\phi_m z_{\varepsilon,m},
\qquad
z_{\varepsilon,m}:=\sum_{q=1}^{j-1}\beta_{\varepsilon,m,q}\,\frac{\partial u_\varepsilon}{\partial x_q},
\]
\[
h_\varepsilon:=\sum_{r=1}^i\beta_{\varepsilon,r,j}\,\psi_{\varepsilon,r,j}
      =\sum_{r=1}^i\phi_r \hat z_{\varepsilon,r},
\qquad
\hat z_{\varepsilon,r}:=\beta_{\varepsilon,r,j}\,\frac{\partial u_\varepsilon}{\partial x_j},
\]
where $(\alpha_{\varepsilon,1},\ldots,\alpha_{\varepsilon,k},\beta_{\varepsilon,1,1},\ldots,\beta_{\varepsilon,k,1},\cdots,\beta_{\varepsilon,i,j})\neq(0,\ldots,0)\in\mathbb{R}^{l}.$

By the variational characterization of $\mu_{\varepsilon}^{(l)}$, we have 
\begin{equation}\label{eq:minmax-small-eigenvalue-bound}
  \begin{aligned}
    \mu_{\varepsilon}^{(l)} =& \inf_{\substack{W \subseteq H^s(\mathbb{R}^{N}) \\ \dim W = l}} \max_{v \in W \setminus \{0\}} J_\varepsilon (v) \leq \max_{v \in W_{l} \setminus \{0\}} J_\varepsilon (v) = \max_{v \in W_{l} \setminus \{0\}} J_\varepsilon (f_{\varepsilon} + g_{\varepsilon} + h_{\varepsilon})\\
    =& \max_{v \in W_{l} \setminus \{0\}} \frac{\varepsilon^{2s} \langle f_{\varepsilon} + g_{\varepsilon} + h_{\varepsilon},f_{\varepsilon} + g_{\varepsilon} + h_{\varepsilon}\rangle_{\dot H^s} +\int_{\mathbb{R}^{N}}   (V(x) - p u_{\varepsilon}^{p-1}) (f_{\varepsilon} + g_{\varepsilon} + h_{\varepsilon})^{2}  dx}{\int_{\mathbb{R}^{N}} (f_{\varepsilon} + g_{\varepsilon} + h_{\varepsilon})^{2} dx}
  \end{aligned}
\end{equation}

Let $\bar u_\varepsilon$, $\bar z_{\varepsilon,m}$ ,  and $\overline{\hat z}_{\varepsilon,r}$ denote the $s$-harmonic extensions of $u_\varepsilon$ ,  $z_{\varepsilon,m}$ ,  and $\hat z_{\varepsilon,r}$, respectively, and define
\[
\bar f_\varepsilon:=\sum_{j=1}^k\alpha_{\varepsilon,j}\,\Phi_j\,\bar u_\varepsilon,
\qquad
\bar g_\varepsilon:=\sum_{m=1}^k\Phi_m\,\bar z_{\varepsilon,m},
\qquad
\bar h_\varepsilon:=\sum_{r=1}^i\Phi_r\,\overline{\hat z}_{\varepsilon,r}.
\]
By Lemma~\ref{lem:minimality-extension}, we have 
\begin{equation}\label{eq:extension-minimality-test-space}
  \langle f_{\varepsilon} + g_{\varepsilon} + h_{\varepsilon},f_{\varepsilon} + g_{\varepsilon} + h_{\varepsilon}\rangle_{\dot H^s}\leq \int_{\mathbb{R}^{N+1}_+}\! t^{1-2s} \Big(|\nabla(\bar f_\varepsilon+ \bar g_\varepsilon+ \bar h_\varepsilon)|^2\Big)\,dX.
\end{equation}
Moreover, direct computation gives
\begin{equation}\label{eq:nonlinear-test-space-expansion}
  \begin{aligned}[b]
    \int_{\mathbb{R}^N}&u_\varepsilon^{p-1}(f_\varepsilon+g_\varepsilon+h_\varepsilon)^2dx\\
    =&\sum_{m=1}^k\alpha_{\varepsilon,m}^2\int_{\mathbb{R}^N}u_\varepsilon^{p+1}\phi_m^2dx
    +2\sum_{m=1}^k\alpha_{\varepsilon,m}\int_{\mathbb{R}^N}u_\varepsilon^p\phi_m^2z_{\varepsilon,m}\,dx
    +2\sum_{r=1}^i\alpha_{\varepsilon,r}\int_{\mathbb{R}^N}u_\varepsilon^p\phi_r^2\hat z_{\varepsilon,r}\,dx\\
    &+\sum_{m=1}^k\int_{\mathbb{R}^N}u_\varepsilon^{p-1}z_{\varepsilon,m}^2\phi_m^2dx
    +\sum_{r=1}^i\int_{\mathbb{R}^N}u_\varepsilon^{p-1}\hat z_{\varepsilon,r}^2\phi_r^2dx
    +2\sum_{r=1}^i\int_{\mathbb{R}^N}u_\varepsilon^{p-1}z_{\varepsilon,r}\hat z_{\varepsilon,r}\phi_r^2dx.
  \end{aligned}
\end{equation}
By \eqref{eq:extension-minimality-test-space}, \eqref{eq:nonlinear-test-space-expansion} and Lemma~\ref{lem:appendix-energy-expansions}, we obtain
\begin{equation}\label{eq:test-space-quadratic-form-bound}
   \begin{aligned}[b]
Q_\varepsilon(v)
&\le
\varepsilon^{2s}\int_{\mathbb{R}^{N+1}_+} t^{1-2s}
|\nabla(\bar f_\varepsilon+\bar g_\varepsilon+\bar h_\varepsilon)|^2\,dX\\
&\qquad\quad +\int_{\mathbb{R}^N}V(x)(f_\varepsilon+g_\varepsilon+h_\varepsilon)^2\,dx
-p\int_{\mathbb{R}^N}u_\varepsilon^{p-1}v^2\,dx;\\
&\qquad\quad =J_{1,\varepsilon }+J_{2,\varepsilon }
\end{aligned} 
\end{equation}
where
\begin{align*}
J_{1,\varepsilon}(v)
&:= (1-p)\sum_{m=1}^k\alpha_{\varepsilon,m}^2\int_{\mathbb{R}^N}u_\varepsilon^{p+1}\phi_m^2\,dx
+(1-p)\sum_{m=1}^k\alpha_{\varepsilon,m}\int_{\mathbb{R}^N}u_\varepsilon^{p}\phi_m^2 z_{\varepsilon,m}\,dx \notag\\
&\quad +(1-p)\sum_{r=1}^i\alpha_{\varepsilon,r}
\int_{\mathbb{R}^N}u_\varepsilon^{p}\phi_r^2 \hat z_{\varepsilon,r}\,dx \notag\\
&\quad +\varepsilon^{2s}\Bigg\{
\sum_{m=1}^k \alpha_{\varepsilon,m}^2
\int_{\mathbb{R}^{N+1}_+} t^{1-2s}|\nabla\Phi_m|^2\bar u_\varepsilon^{\,2}\,dX \notag\\
&\qquad
+\sum_{m=1}^k\int_{\mathbb{R}^{N+1}_+} t^{1-2s}|\nabla\Phi_m|^2\bar z_{\varepsilon,m}^{\,2}\,dX
+\sum_{r=1}^i\int_{\mathbb{R}^{N+1}_+} t^{1-2s}|\nabla\Phi_r|^2\bar{\hat z}_{\varepsilon,r}^{\,2}\,dX \notag\\
&\qquad
+2\sum_{m=1}^k\alpha_{\varepsilon,m}
\int_{\mathbb{R}^{N+1}_+} t^{1-2s}|\nabla\Phi_m|^2\bar u_\varepsilon\,\bar z_{\varepsilon,m}\,dX \notag\\
&\qquad
+2\sum_{r=1}^i\alpha_{\varepsilon,r}
\int_{\mathbb{R}^{N+1}_+} t^{1-2s}|\nabla\Phi_r|^2\bar u_\varepsilon\,\bar{\hat z}_{\varepsilon,r}\,dX \notag\\
&\qquad
+2\sum_{r=1}^i\int_{\mathbb{R}^{N+1}_+} t^{1-2s}|\nabla\Phi_r|^2\bar z_{\varepsilon,r}\,\bar{\hat z}_{\varepsilon,r}\,dX
\Bigg\},
\end{align*}
and 
\begin{align*}
J_{2,\varepsilon}(v)
&:=
-\sum_{m=1}^k\sum_{q=1}^{j-1}\beta_{\varepsilon,m,q}
\int_{\mathbb{R}^N}\frac{\partial V}{\partial x_q}(x)\,u_\varepsilon\,\phi_m^2 z_{\varepsilon,m}\,dx
-\sum_{r=1}^i\beta_{\varepsilon,r,j}
\int_{\mathbb{R}^N}\frac{\partial V}{\partial x_j}(x)\,u_\varepsilon\,\phi_r^2 \hat z_{\varepsilon,r}\,dx
\nonumber\\
&\quad
-\sum_{m=1}^k\sum_{q=1}^{j-1}\alpha_{\varepsilon,m}\beta_{\varepsilon,m,q}
\int_{\mathbb{R}^N}\frac{\partial V}{\partial x_q}(x)\,u_\varepsilon^{2}\phi_m^2\,dx
-\sum_{r=1}^i\alpha_{\varepsilon,r}\beta_{\varepsilon,r,j}
\int_{\mathbb{R}^N}\frac{\partial V}{\partial x_j}(x)\,u_\varepsilon^{2}\phi_r^2\,dx
\nonumber\\
&\quad
-\sum_{r=1}^i\sum_{q=1}^{j-1}\beta_{\varepsilon,r,q}
\int_{\mathbb{R}^N}\frac{\partial V}{\partial x_q}(x)\,u_\varepsilon\,\phi_r^2 \hat z_{\varepsilon,r}\,dx
-\sum_{r=1}^i\beta_{\varepsilon,r,j}
\int_{\mathbb{R}^N}\frac{\partial V}{\partial x_j}(x)\,u_\varepsilon\,\phi_r^2 z_{\varepsilon,r}\,dx,
\end{align*}

Set
\[
J_{3,\varepsilon}(v):=\int_{\mathbb{R}^N}v^2\,dx.
\]

As in \eqref{eq:nonlinear-test-space-expansion},
\begin{align}\label{eq:test-space-l2-expansion}
J_{3,\varepsilon}(v)
&=
\sum_{j=1}^k\alpha_{\varepsilon,j}^2\int_{\mathbb{R}^N}\phi_j^2 u_\varepsilon^2\,dx
+2\sum_{j=1}^k\alpha_{\varepsilon,j}\int_{\mathbb{R}^N}\phi_j^2 u_\varepsilon z_{\varepsilon,j}\,dx
+2\sum_{r=1}^i\alpha_{\varepsilon,r}\int_{\mathbb{R}^N}\phi_r^2 u_\varepsilon \hat z_{\varepsilon,r}\,dx
\nonumber\\
&\quad
+\sum_{m=1}^k\int_{\mathbb{R}^N}\phi_m^2 z_{\varepsilon,m}^2\,dx
+\sum_{r=1}^i\int_{\mathbb{R}^N}\phi_r^2 \hat z_{\varepsilon,r}^2\,dx
+2\sum_{r=1}^i\int_{\mathbb{R}^N}\phi_r^2 z_{\varepsilon,r}\hat z_{\varepsilon,r}\,dx.
\end{align}
Then, \eqref{eq:test-space-quadratic-form-bound} and \eqref{eq:test-space-l2-expansion} imply
\begin{equation*}
  J_\varepsilon(v):=\frac{Q_\varepsilon(v)}{\int_{\mathbb{R}^N}v^2\,dx}\leq\frac{J_{1,\varepsilon}+J_{2,\varepsilon}}{J_{3,\varepsilon}}.
\end{equation*}
By \eqref{eq:minmax-small-eigenvalue-bound} and Lemma~\ref{lem:fractional-test-space-estimate}, we have
\begin{equation}\label{eq:small-eigenvalue-upper-bound}
  \mu_{\varepsilon}^{(l)} \leq
  \max_{v\in W_{l}\setminus\{0\}}
  \frac{J_{1,\varepsilon}+J_{2,\varepsilon}}{J_{3,\varepsilon}}
  =o(\varepsilon ).
\end{equation}
Letting \(\varepsilon\to0\) in \eqref{eq:small-eigenvalue-upper-bound}, we obtain
\begin{equation}\label{eq:limsup-all-small}
\limsup_{\varepsilon\to0}\mu_\varepsilon^{(l)}\le0.
\end{equation}

It remains to prove the lower bound
\begin{equation}\label{eq:liminf-k-plus-one}
\liminf_{\varepsilon\to0}\mu_\varepsilon^{(k+1)}\ge0.
\end{equation}
Suppose for contradiction that $\liminf\limits_{\varepsilon\to0}\mu_\varepsilon^{(k+1)}<0$. Then, along a subsequence still denoted by $\varepsilon$, there exists a constant $\delta>0$ such that
\begin{equation*}
\mu_\varepsilon^{(k+1)}\le -\delta,
\end{equation*}
for sufficiently small $\varepsilon$.

Let \(e_\varepsilon^{(1)},\ldots,e_\varepsilon^{(k+1)}\) be \(L^2\)-orthonormal eigenfunctions corresponding to the first $k+1$ eigenvalues, and define
\[
E_\varepsilon^-:=\operatorname{span}\{e_\varepsilon^{(1)},\ldots,e_\varepsilon^{(k+1)}\}.
\]
Then every $\varphi\in E_\varepsilon^-$ satisfies
\begin{equation}\label{eq:negative-space-bound}
Q_\varepsilon(\varphi)
\le
-\delta\int_{\mathbb R^N}\varphi^2\,dx.
\end{equation}
Indeed, writing $\varphi=\sum\limits_{m=1}^{k+1}c_m e_\varepsilon^{(m)}$ and using \eqref{eq:linearized-quadratic-form}, we obtain
\[
Q_\varepsilon(\varphi)=\sum_{m=1}^{k+1}\mu_\varepsilon^{(m)}c_m^2
\le -\delta\sum_{m=1}^{k+1}c_m^2
=-\delta\int\varphi^2.
\]

Fix $R>1$. Let $\phi \in C_c^\infty(\mathbb{R}^N)$ be a radial cut-off function such that
\[
0\le \phi \le 1,\qquad
\phi \equiv 1 \ \text{in }B_1(0),\qquad
\operatorname{supp}\,\phi \subset B_{2}(0), \qquad |\nabla\phi|\le C,
\]
and such that $(1-\phi^2)^{1/2}$ is smooth. For fixed $R>1$, define
\[\chi_{\varepsilon,j}(x):=\phi\left(\frac{x-\xi_{\varepsilon,j}}{R\varepsilon}\right),\quad j=1,\ldots,k.\]
Finally set
\[\chi_{\varepsilon,0}(x):=\left(1-\sum_{j=1}^k\chi_{\varepsilon,j}(x)^2\right)^{1/2}.\]
Then
\[
\sum_{j=0}^k\chi_{\varepsilon,j}^2=1,
\]
and $\chi_{\varepsilon,j}$ is supported in $B_{2R\varepsilon}(\xi_{\varepsilon,j})$ and equals $1$ in $B_{R\varepsilon}(\xi_{\varepsilon,j})$ for $j=1,\ldots,k$, while $\chi_{\varepsilon,0}$ is supported in $\mathbb R^N\setminus\bigcup_{j=1}^kB_{R\varepsilon}(\xi_{\varepsilon,j})$.  The supports of $\chi_{\varepsilon,i}$ and $\chi_{\varepsilon,j}$ are disjoint for $i\ne j$ and $i,j\ge1$, for sufficiently small $\varepsilon$.

Since $\sum_{j=0}^k\chi_{\varepsilon,j}^2\equiv1$ and
$\|\nabla\chi_{\varepsilon,j}\|_{L^\infty}\le C(R\varepsilon)^{-1}$ for
$j=0,\ldots,k$, we have
\[
\sum_{j=0}^k(\chi_{\varepsilon,j}(x)-\chi_{\varepsilon,j}(y))^2
\le C\frac{|x-y|^2}{(R\varepsilon)^2},
\]
and also
\[
\sum_{j=0}^k(\chi_{\varepsilon,j}(x)-\chi_{\varepsilon,j}(y))^2
\le
2\sum_{j=0}^k\chi_{\varepsilon,j}(x)^2
+
2\sum_{j=0}^k\chi_{\varepsilon,j}(y)^2
=4.
\]
Therefore
\begin{equation}\label{eq:cutoff-difference-ims}
\sum_{j=0}^k(\chi_{\varepsilon,j}(x)-\chi_{\varepsilon,j}(y))^2
\le
C\min\left\{1,\frac{|x-y|^2}{(R\varepsilon)^2}\right\}.
\end{equation}
We claim that, for every $\varphi\in H^s(\mathbb R^N)$,
\begin{equation}\label{eq:ims-localized-form}
\varepsilon^{2s}\int_{\mathbb R^N}|(-\Delta)^{s/2}\varphi|^2\,dx
\ge
\sum_{j=0}^k\varepsilon^{2s}\int_{\mathbb R^N}|(-\Delta)^{s/2}(\chi_{\varepsilon,j}\varphi)|^2\,dx
-CR^{-2s}\int_{\mathbb R^N}\varphi^2\,dx.
\end{equation}
We recall the proof of \eqref{eq:ims-localized-form}.  By the identity $\sum\limits_{j=0}^k\chi_{\varepsilon,j}^2=1$ and Lemma~3.5 of \cite{frank2008hardy}, one has
\begin{equation}\label{eq:ims-identity}
  \sum_{j=0}^k\|(-\Delta)^{s/2}(\chi_{\varepsilon,j}\varphi)\|_2^2
=
\|(-\Delta)^{s/2}\varphi\|_2^2
+c_{N,s}\iint\frac{\varphi(x)\varphi(y)\Gamma_\varepsilon(x,y)}{|x-y|^{N+2s}}\,dxdy,
\end{equation}
where
\[
\Gamma_\varepsilon(x,y)=\sum_{j=0}^k(\chi_{\varepsilon,j}(x)-\chi_{\varepsilon,j}(y))^2.
\]
Using \eqref{eq:cutoff-difference-ims}, we have
\begin{equation}\label{eq:cross}
\begin{aligned}
\iint\frac{\varphi(x)\varphi(y)\Gamma_\varepsilon(x,y)}{|x-y|^{N+2s}}\,dxdy
&\le
\frac{1}{2}
\iint
\frac{(\varphi(x)^2+\varphi(y)^2)\Gamma_\varepsilon(x,y)}{|x-y|^{N+2s}}\,dx\,dy\\
&\le
C\int_{\mathbb R^N}\varphi(x)^2
\left(
\int_{\mathbb R^N}
\frac{
\min\left\{1,\frac{|x-y|^2}{(R\varepsilon)^2}\right\}
}{|x-y|^{N+2s}}\,dy
\right)dx\\
&=
C\int_{\mathbb R^N}\varphi(x)^2\,dx
\int_{\mathbb R^N}
\frac{\min\left\{1,\frac{|h|^2}{a^2}\right\}}{|h|^{N+2s}}\,dh
\qquad
\bigl(h=y-x,\ a=R\varepsilon\bigr)\\
&\le
C a^{-2s}
\int_{\mathbb R^N}\varphi(x)^2\,dx
=
C(R\varepsilon)^{-2s}
\int_{\mathbb R^N}\varphi^2\,dx .
\end{aligned}
\end{equation}
Combining \eqref{eq:ims-identity} and \eqref{eq:cross} gives \eqref{eq:ims-localized-form}. Since $\sum_j\chi_{\varepsilon,j}^2=1$, we obtain
\begin{equation}\label{eq:ims-quadratic-form-bound}
Q_\varepsilon(\varphi)
\ge
\sum_{j=0}^k Q_\varepsilon(\chi_{\varepsilon,j}\varphi)
-CR^{-2s}\int_{\mathbb R^N}\varphi^2\,dx.
\end{equation}

On the support of $\chi_{\varepsilon,0}$ we have $|x-\xi_{\varepsilon,j}|\ge R\varepsilon$ for every $j$.  Lemma~\ref{lem:u-polynomial-decay} gives
\[
u_\varepsilon(x)\le CR^{-\beta_\sigma}\quad\text{on }\operatorname{supp}\chi_{\varepsilon,0}.
\]
Choosing $R$ large enough, independently of $\varepsilon$, we have
\[
p u_\varepsilon^{p-1}(x)\le \frac{1}{2}V_{\min}
\quad\text{on }\operatorname{supp}\chi_{\varepsilon,0}.
\]
Thus
\begin{equation}\label{eq:outer-region-nonnegative}
  \begin{aligned}Q_{\varepsilon}(\chi_{\varepsilon,0}\phi)&=\varepsilon^{2s}\int|(-\Delta)^{s/2}(\chi_{\varepsilon,0}\phi)|^2+\int V(\chi_{\varepsilon,0}\phi)^2-p\int u_\varepsilon^{p-1}(\chi_{\varepsilon,0}\phi)^2\\&\geq\varepsilon^{2s}\int|(-\Delta)^{s/2}(\chi_{\varepsilon,0}\phi)|^2+\frac{1}{2}V_{\min}\int(\chi_{\varepsilon,0}\phi)^2\geq0.\end{aligned}
\end{equation}

For each $j$, let $Z_j$ be the $L^2$-normalized positive eigenfunction corresponding to the unique negative eigenvalue of the operator
\[
\mathcal L_j=(-\Delta)^s+V(\xi_j^0)-pw_{V(\xi_j^0)}^{p-1}.
\]
Let
\[
\mathfrak q_j(\psi):=\int_{\mathbb R^N}|(-\Delta)^{s/2}\psi|^2\,dy
+V(\xi_j^0)\int_{\mathbb R^N}\psi^2\,dy
-p\int_{\mathbb R^N}w_{V(\xi_j^0)}^{p-1}\psi^2\,dy.
\]
Since Lemma~\ref{lem:single-bubble-input} states that $\mathcal L_j$ has exactly one negative eigenvalue, we have
\begin{equation}\label{eq:single-negative-orth}
\mathfrak q_j(\psi)\ge0
\quad\text{whenever}\quad
\int_{\mathbb R^N}\psi Z_j=0.
\end{equation}

For any \(\varphi\in E_\varepsilon^-\),
\[
\varphi=\sum_{m=1}^{k+1}c_m e_\varepsilon^{(m)} .
\]
The $k$ conditions
\begin{equation*}
\int_{\mathbb R^N}\chi_{\varepsilon,j}(x)\varphi(x)
Z_j\left(\frac{x-\xi_{\varepsilon,j}}{\varepsilon}\right)dx=0,
\qquad j=1,\ldots,k,
\end{equation*} are then equivalent to
\[
\sum_{m=1}^{k+1} a_{jm}^\varepsilon c_m=0,
\qquad j=1,\ldots,k,
\]
where
\[
a_{jm}^\varepsilon
:=
\int_{\mathbb R^N}
\chi_{\varepsilon,j}(x)e_\varepsilon^{(m)}(x)
Z_j\left(\frac{x-\xi_{\varepsilon,j}}{\varepsilon}\right)\,dx .
\]
This is a homogeneous linear system with \(k\) equations and \(k+1\) unknowns.
Therefore it admits a nontrivial solution
\[
(c_1^\varepsilon,\ldots,c_{k+1}^\varepsilon)\neq 0 .
\]
Consequently,
\[
\varphi_\varepsilon
:=
\sum_{m=1}^{k+1}c_m^\varepsilon e_\varepsilon^{(m)}
\]
is a nonzero element of \(E_\varepsilon^-\) satisfying
\begin{equation}\label{eq:k-linear-conditions}
\int_{\mathbb R^N}
\chi_{\varepsilon,j}(x)\varphi_\varepsilon(x)
Z_j\left(\frac{x-\xi_{\varepsilon,j}}{\varepsilon}\right)\,dx=0,
\qquad j=1,\ldots,k .
\end{equation}
After rescaling, we may assume 
\begin{equation}\label{eq:l2-normalize-varphi}
  \int_{\mathbb R^N}\varphi_\varepsilon^2\,dx=1 .
\end{equation}

From \eqref{eq:negative-space-bound},
\begin{equation}\label{eq:negative-varphi}
Q_\varepsilon(\varphi_\varepsilon)\le -\delta.
\end{equation}
For $j=1,\ldots,k$, set
\[
\psi_{\varepsilon,j}(y):=
\chi_{\varepsilon,j}(\xi_{\varepsilon,j}+\varepsilon y)
\varphi_\varepsilon(\xi_{\varepsilon,j}+\varepsilon y).
\]
Then \eqref{eq:k-linear-conditions} gives
\[
\int_{\mathbb R^N}\psi_{\varepsilon,j}(y)Z_j(y)\,dy=0,
\]
and therefore, by \eqref{eq:single-negative-orth},
\begin{equation}\label{eq:localized-quadratic-form-nonnegative}
\mathfrak q_j(\psi_{\varepsilon,j})\ge0.
\end{equation}
On the fixed ball $B_{2R}(0)$, by Lemma~\ref{lem:true-peak-profile-convergence}, we have
\[
V(\xi_{\varepsilon,j}+\varepsilon y)\to V(\xi_j^0),
\qquad
u_\varepsilon(\xi_{\varepsilon,j}+\varepsilon y)\to w_{V(\xi_j^0)}(y)
\]
uniformly. Then
\[
\begin{aligned}
Q_\varepsilon(\chi_{\varepsilon,j}\varphi_\varepsilon)
&=
\varepsilon^N
\bigg[
\int_{\mathbb R^N}
\left|(-\Delta)^{s/2}\psi_{\varepsilon,j}\right|^2\,dy
+
\int_{\mathbb R^N}
V(\xi_{\varepsilon,j}+\varepsilon y)\psi_{\varepsilon,j}^2\,dy \\
&\qquad\qquad
-
p\int_{\mathbb R^N}
u_\varepsilon(\xi_{\varepsilon,j}+\varepsilon y)^{p-1}
\psi_{\varepsilon,j}^2\,dy
\bigg] \\
&=
\varepsilon^N \mathfrak q_j(\psi_{\varepsilon,j}) \\
&\quad
+
\varepsilon^N
\int_{\mathbb R^N}
\bigl[
V(\xi_{\varepsilon,j}+\varepsilon y)-V(\xi_j^0)
\bigr]\psi_{\varepsilon,j}^2\,dy \\
&\quad
-
p\varepsilon^N
\int_{\mathbb R^N}
\bigl[
u_\varepsilon(\xi_{\varepsilon,j}+\varepsilon y)^{p-1}
-
w_{V(\xi_j^0)}(y)^{p-1}
\bigr]\psi_{\varepsilon,j}^2\,dy .
\end{aligned}
\]
Since \(k\) is finite, for sufficiently small \(\varepsilon\), the following bound holds for every \(j=1,\ldots,k\),
\[
\sup_{|y|\le 2R}
\left(
\left|V(\xi_{\varepsilon,j}+\varepsilon y)-V(\xi_j^0)\right|
+
p\left|
u_\varepsilon(\xi_{\varepsilon,j}+\varepsilon y)^{p-1}
-
w_{V(\xi_j^0)}(y)^{p-1}
\right|
\right)
\le \frac{\delta}{4}.
\]
Therefore, using \eqref{eq:l2-normalize-varphi} and \eqref{eq:localized-quadratic-form-nonnegative}, we obtain
\begin{equation}\label{eq:local-quadratic-form-nonnegative}
  \begin{aligned}
Q_\varepsilon(\chi_{\varepsilon,j}\varphi_\varepsilon)
&\ge
-\frac{\delta}{4}
\varepsilon^N
\int_{\mathbb R^N}\psi_{\varepsilon,j}^2\,dy  \\
&=
-\frac{\delta}{4}
\int_{\mathbb R^N}
(\chi_{\varepsilon,j}\varphi_\varepsilon)^2\,dx.
\end{aligned}
\end{equation}

Combining \eqref{eq:ims-quadratic-form-bound}, \eqref{eq:outer-region-nonnegative} and \eqref{eq:local-quadratic-form-nonnegative}, we obtain
\[
Q_\varepsilon(\varphi_\varepsilon)
\ge
-\frac{\delta}{4}-CR^{-2s}.
\]
Choose $R$ so large that $CR^{-2s}<\delta/4$. Then
\[
Q_\varepsilon(\varphi_\varepsilon)\ge-\frac{\delta}{2},
\]
contradicting \eqref{eq:negative-varphi}.  Thus \eqref{eq:liminf-k-plus-one} holds.

Finally, by monotonicity of the eigenvalues,
\[
\mu_\varepsilon^{(k+1)}\le\mu_\varepsilon^{(l)}\le\mu_\varepsilon^{((N+1)k)},
\qquad l=k+1,\ldots,(N+1)k.
\]
Together with \eqref{eq:limsup-all-small} and \eqref{eq:liminf-k-plus-one}, this proves
$\mu_\varepsilon^{(l)}\to0$ for $l=k+1,\ldots,(N+1)k$.
\end{proof}

We next prove the asymptotic behavior of the eigenfunctions $v_\varepsilon ^{(l)}$, for $l\in\{k+1,\dots,(N+1)k\}$.

\begin{lemma}\label{lem:small-eigenfunction-orthogonality}
Let $u_\varepsilon$ be a $k$-peak family in the sense of Definition~\ref{def:kpeak-class}, and let
$\{(\mu_{\varepsilon}^{(l)},v_\varepsilon^{(l)})\}_{l\ge1}$ be the eigenpairs of \eqref{eq:linearized-eigenvalue-problem}. For
$l\in\{k+1,\dots,(N+1)k\}$ and $j=1,\dots,k$, the eigenfunctions $v_{\varepsilon,j}^{(l)}$ satisfy 
\[
v_{\varepsilon,j}^{(l)}\to v_j^{(l)}
\quad\text{in }C^1_{\rm loc}(\mathbb R^N),
\]
where
\begin{equation}\label{eq:limit-translation-combination}
v_j^{(l)}(x)=\sum_{i=1}^N a_{j,i}^{(l)}\,\partial_i w_{V(\xi_j^0)}(x).
\end{equation}
The full coefficient vector
\[
a^{(l)}:=\bigl(a_{1,1}^{(l)},\ldots,a_{1,N}^{(l)},\ldots,a_{k,1}^{(l)},\ldots,a_{k,N}^{(l)}\bigr)\in\mathbb R^{kN}
\]
is nonzero.  Moreover, for $k+1\le l\ne l'\le (N+1)k$,
\begin{equation}\label{eq:orth-full-vector}
\sum_{j=1}^k C_j\,a_j^{(l)}\cdot a_j^{(l')}=0,
\qquad
C_j:=\frac{1}N\int_{\mathbb R^N}|\nabla w_{V(\xi_j^0)}|^2>0,
\end{equation}
where $a_j^{(l)}=(a_{j,1}^{(l)},\ldots,a_{j,N}^{(l)})$.
\end{lemma}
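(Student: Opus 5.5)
The plan is a blow-up/compactness argument at each peak, followed by an orthogonality computation in the limit.

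\emph{Step 1: compactness and identification of the limits.} Fix $l\in\{k+1,\dots,(N+1)k\}$. By Lemma~\ref{lem:u-polynomial-decay}(ii), applicable because $\mu_\varepsilon^{(l)}\to0$ by Proposition~\ref{prop:mu-small}, the rescaled functions $v^{(l)}_{\varepsilon,j}$ are bounded in $C^{1,\alpha}(\mathbb R^N)$. They satisfy the weighted bound $|v^{(l)}_{\varepsilon,j}(y)|\le C(1+|y|)^{-\beta_\sigma}$ near the $j$-th peak, and the contributions of the other peaks sit at distance $\sim 1/\varepsilon$. By Arzel\`a--Ascoli we pass to a subsequence with $v^{(l)}_{\varepsilon,j}\to v^{(l)}_j$ in $C^1_{\rm loc}$. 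The normalization in \eqref{eq:linearized-eigenvalue-problem} and Fatou give $v^{(l)}_j\in H^s$. Passing to the limit in the rescaled equation, using Lemma~\ref{lem:true-peak-profile-convergence} and $\mu_\varepsilon^{(l)}\to0$, yields $L_{V(\xi_j^0)}v^{(l)}_j=0$. The nonlocal term passes to the limit in the distributional sense thanks to the uniform weighted decay. Lemma~\ref{lem:single-bubble-input}(i) then gives the form \eqref{eq:limit-translation-combination}. Uniqueness of the limit along the full family is not needed; we interpret the statement along subsequences, or after fixing the choice of eigenbasis.

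\emph{Step 2: $a^{(l)}\neq0$.} I would show that the mass of $v_\varepsilon^{(l)}$ does not escape. Split $\varepsilon^{-N}\|v^{(l)}_\varepsilon\|^2_{\varepsilon,V}=1$ into the parts on $\bigcup_j B_{R\varepsilon}(\xi_{\varepsilon,j})$ and on the complement. The weighted decay \eqref{eq:eigenfunction-decay} makes $\varepsilon^{-N}\int_{\text{outer}}(v^{(l)}_\varepsilon)^2\le CR^{N-2\beta_\sigma}$, which is small. For the $\dot H^s$ part I would use the equation: test it with $v^{(l)}_\varepsilon$ to get
\[
\varepsilon^{-N}\|v^{(l)}_\varepsilon\|^2_{\varepsilon,V}=\varepsilon^{-N}\int\bigl(pu_\varepsilon^{p-1}+\mu^{(l)}_\varepsilon\bigr)(v^{(l)}_\varepsilon)^2 .
\]
The right-hand side converges to $p\sum_j\int w_{V(\xi_j^0)}^{p-1}(v_j^{(l)})^2$ by dominated convergence, using the weighted bounds. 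Hence this sum equals $1$, so some $v^{(l)}_j\neq0$.

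\emph{Step 3: orthogonality \eqref{eq:orth-full-vector}.} This is the main obstacle. Eigenfunctions for distinct eigenvalues are $L^2$-orthogonal, and after choosing an orthonormal basis within eigenspaces we may assume $\int v^{(l)}_\varepsilon v^{(l')}_\varepsilon=0$ in all cases. The same is then true for the energy form, since $\langle v,v'\rangle_{\varepsilon,V}=\int(pu_\varepsilon^{p-1}+\mu)vv'$. I would instead use the cleaner identity
\[
\varepsilon^{-N}\int pu_\varepsilon^{p-1}v^{(l)}_\varepsilon v^{(l')}_\varepsilon=\varepsilon^{-N}\langle v^{(l)}_\varepsilon,v^{(l')}_\varepsilon\rangle_{\varepsilon,V}-\mu^{(l)}_\varepsilon\varepsilon^{-N}\int v^{(l)}_\varepsilon v^{(l')}_\varepsilon .
\]
Both terms on the right vanish: the first by symmetry of the operator together with $\mu^{(l)}\neq\mu^{(l')}$, or by the chosen basis. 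Passing to the limit with the weighted decay gives
\[
\sum_j p\int w_{V(\xi_j^0)}^{p-1}v^{(l)}_jv^{(l')}_j=0 .
\]
By radial symmetry, $\int w^{p-1}\partial_iw\,\partial_mw=\delta_{im}\tfrac1N\int w^{p-1}|\nabla w|^2$. This yields the orthogonality with weights $\frac pN\int w^{p-1}|\nabla w|^2$ in place of $C_j$.

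To obtain exactly the constant $C_j$ as stated, I would instead pass to the limit in $\varepsilon^{-N}\int v^{(l)}_\varepsilon v^{(l')}_\varepsilon=0$ directly. This is legitimate since $\beta_\sigma>N/2$ makes the tails uniformly $L^2$-small, and it gives $\sum_j\int\partial_iw\,\partial_mw$-weighted sums. I note that the paper's $C_j$ uses $\int|\nabla w|^2$, which is precisely this $L^2$ pairing after symmetry, so the direct limit of the $L^2$ orthogonality is the intended route. The delicate point is justifying tail control uniformly, which follows from \eqref{eq:eigenfunction-decay}.
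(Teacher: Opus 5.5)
Your Steps 1 and 2 and the route you finally adopt in Step 3 are the paper's proof. That route passes to the limit in $\varepsilon^{-N}\int v^{(l)}_\varepsilon v^{(l')}_\varepsilon=0$, controls the tails by \eqref{eq:eigenfunction-decay} since $2\beta_\sigma>N$, and then uses $\int\partial_iw\,\partial_mw=\delta_{im}\frac1N\int|\nabla w|^2$. Your Step 2 is just the direct form of the paper's contradiction argument built on $1=\varepsilon^{-N}\int(pu_\varepsilon^{p-1}+\mu^{(l)}_\varepsilon)(v^{(l)}_\varepsilon)^2$.

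The detour at the start of Step 3 is wrong and should be deleted.
\begin{itemize}
\item Symmetry of $L_\varepsilon$ gives only $(\mu^{(l)}_\varepsilon-\mu^{(l')}_\varepsilon)\int v^{(l)}_\varepsilon v^{(l')}_\varepsilon=0$, which is $L^2$-orthogonality and nothing more.
\item Once $L^2$-orthogonality is used, testing the equation gives $\langle v^{(l)}_\varepsilon,v^{(l')}_\varepsilon\rangle_{\varepsilon,V}=p\int u_\varepsilon^{p-1}v^{(l)}_\varepsilon v^{(l')}_\varepsilon$. Nothing forces this to vanish: eigenfunctions of $H_{\varepsilon,0}-pu_\varepsilon^{p-1}$ for different eigenvalues are not $H_{\varepsilon,0}$-orthogonal in general. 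Only inside a single eigenspace can you arrange this by choosing the basis.
\item Hence the relation with weights $D_j:=\frac pN\int w_{V(\xi_j^0)}^{p-1}|\nabla w_{V(\xi_j^0)}|^2$ is unproved.
\item It is also a genuinely different statement from \eqref{eq:orth-full-vector}. By scaling, $D_j=c\,V(\xi_j^0)\,C_j$ with $c$ independent of $j$. For $k=2$ and $V(\xi_1^0)\neq V(\xi_2^0)$, the two relations together would force $a_j^{(l)}\cdot a_j^{(l')}=0$ for each $j$ separately, which nothing guarantees.
\end{itemize}
Keep only the $L^2$ route, which is the one the paper uses.
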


\begin{proof}
For $l\in\{k+1,\dots,(N+1)k\}$, by \eqref{eq:linearized-eigenvalue-problem}, $v_{\varepsilon,j}^{(l)}$ satisfies
\begin{equation}\label{eq:scaled-small-eigenfunction-equation}
(-\Delta)^s v_{\varepsilon,j}^{(l)} + V(\varepsilon x+ \xi_{\varepsilon,j})\,v_{\varepsilon,j}^{(l)}
- p\,u_\varepsilon(\varepsilon x+ \xi_{\varepsilon,j})^{p-1}\,v_{\varepsilon,j}^{(l)}
= \mu_{\varepsilon}^{(l)}\, v_{\varepsilon,j}^{(l)}
\quad\text{in } \mathbb{R}^N .
\end{equation}
By Proposition~\ref{prop:mu-small}, $\mu_{\varepsilon}^{(l)}\to0$. The uniform bound in Lemma~\ref{lem:u-polynomial-decay} gives $v_{\varepsilon,j}^{(l)}\to v_j^{(l)}$ in $C^1_{\rm loc}(\mathbb{R}^N)$.  Passing to the limit in \eqref{eq:scaled-small-eigenfunction-equation}, we obtain
\begin{equation*}
(-\Delta)^s v_j^{(l)} + V(\xi_j^0) v_j^{(l)} - p\,w_{V(\xi_j^0)}^{p-1} v_j^{(l)}=0
\quad\text{in } \mathbb{R}^N.
\end{equation*}
By Lemma~\ref{lem:single-bubble-input}, we have
\[
\ker\bigl(( -\Delta)^s + V(\xi_j^0) - p\,w_{V(\xi_j^0)}^{p-1}\bigr)
=\operatorname{span}\{\partial_1w_{V(\xi_j^0)},\ldots,\partial_Nw_{V(\xi_j^0)}\},
\]
which gives \eqref{eq:limit-translation-combination}.

We now prove that the full vector $a^{(l)}$ is nonzero. Suppose, for contradiction, that
\[
a_j^{(l)}=0\quad\text{for every }j=1,\ldots,k.
\]
Then $v_j^{(l)}\equiv0$ for every $j$.  From the eigenvalue equation and normalization in \eqref{eq:linearized-eigenvalue-problem}, we have
\begin{equation}\label{eq:norm-eigen-identity-nonzero}
1
=p\varepsilon^{-N}\int_{\mathbb R^N}u_\varepsilon^{p-1}|v_\varepsilon^{(l)}|^2\,dx
+\mu_\varepsilon^{(l)}\varepsilon^{-N}\int_{\mathbb R^N}|v_\varepsilon^{(l)}|^2\,dx .
\end{equation}
Since $\mu_\varepsilon^{(l)}\to0$ and
\[V_{\min}\int_{\mathbb{R}^N}|v_\varepsilon^{(l)}|^2dx\leq\int_{\mathbb{R}^N}V(x)|v_\varepsilon^{(l)}|^2dx\leq\int_{\mathbb{R}^N}\left(\varepsilon^{2s}\left|(-\Delta)^{s/2}v_\varepsilon^{(l)}\right|^2+V(x)|v_\varepsilon^{(l)}|^2\right)dx=\varepsilon ^N,\]
we have 
\begin{equation}\label{eq:eigenvalue-l2-term-small}
|\mu_\varepsilon^{(l)}|\varepsilon^{-N}\int_{\mathbb R^N}|v_\varepsilon^{(l)}|^2\,dx<\frac{1}{2},
\end{equation}
for sufficiently small $\varepsilon$.

Set
\[
I_\varepsilon
:=
\varepsilon^{-N}
\int_{\mathbb R^N}
u_\varepsilon^{p-1}
\left|v_\varepsilon^{(l)}\right|^2\,dx .
\]
Fix \(R>1\). For sufficiently small \(\varepsilon\), we have
\[
\begin{aligned}
I_\varepsilon
&=
\sum_{j=1}^k
\varepsilon^{-N}
\int_{B_{R\varepsilon}(\xi_{\varepsilon,j})}
u_\varepsilon^{p-1}
\left|v_\varepsilon^{(l)}\right|^2\,dx
+
\varepsilon^{-N}
\int_{\mathbb R^N\setminus
\bigcup_{j=1}^kB_{R\varepsilon}(\xi_{\varepsilon,j})}
u_\varepsilon^{p-1}
\left|v_\varepsilon^{(l)}\right|^2\,dx\\
&=
\sum_{j=1}^k
\int_{B_R(0)}
u_\varepsilon(\xi_{\varepsilon,j}+\varepsilon y)^{p-1}
\left|v_{\varepsilon,j}^{(l)}(y)\right|^2\,dy
+
\varepsilon^{-N}
\int_{\mathbb R^N\setminus
\bigcup_{j=1}^kB_{R\varepsilon}(\xi_{\varepsilon,j})}
u_\varepsilon^{p-1}
\left|v_\varepsilon^{(l)}\right|^2\,dx .\\
&\le
C\sum_{j=1}^k
\int_{B_R(0)}
\left|v_{\varepsilon,j}^{(l)}(y)\right|^2\,dy
+
C R^{-\beta_\sigma(p-1)}
\varepsilon^{-N}
\int_{\mathbb R^N}
\left|v_\varepsilon^{(l)}\right|^2\,dx\\
&\le
C\sum_{j=1}^k
\left\|v_{\varepsilon,j}^{(l)}\right\|_{L^2(B_R)}^2
+
C R^{-\beta_\sigma(p-1)},
\end{aligned}
\]
where we used Lemma~\ref{lem:u-polynomial-decay}. Since \(a_j^{(l)}=0\) for every \(j\), we have
\[
v_{\varepsilon,j}^{(l)}\to0
\quad\text{in }L^2(B_R)
\qquad
\text{for every fixed }R>1,
\]
and hence, for sufficiently small $\varepsilon$,
\[
I_\varepsilon \le C R^{-\beta_\sigma(p-1)} .
\]
   Choosing  \(R\) sufficiently large, we obtain
 \begin{equation}\label{eq:weighted-eigenfunction-term-small}
  p\varepsilon^{-N}\int_{\mathbb R^N}u_\varepsilon^{p-1}|v_\varepsilon^{(l)}|^2\,dx<\frac{1}{2}.
\end{equation}

Using \eqref{eq:eigenvalue-l2-term-small} and \eqref{eq:weighted-eigenfunction-term-small}, we obtain a contradiction to \eqref{eq:norm-eigen-identity-nonzero}.  Hence $a^{(l)}\ne0$.

Fix $R > 1$. Since the eigenfunctions are chosen \(L^2\)-orthogonal, Lemma~\ref{lem:u-polynomial-decay} implies that, for \(k+1\le l\neq l'\le (N+1)k\),
\begin{equation*}
\begin{aligned}[b]
0
&=
\varepsilon^{-N}
\int_{\mathbb R^N}v_\varepsilon^{(l)}v_\varepsilon^{(l')}\,dx \\
&=
\sum_{j=1}^k
\int_{B_R(0)}
v_{\varepsilon,j}^{(l)}(y)v_{\varepsilon,j}^{(l')}(y)\,dy
+
\varepsilon^{-N}
\int_{\mathbb R^N\setminus\bigcup_{j=1}^kB_{R\varepsilon}(\xi_{\varepsilon,j})}
v_\varepsilon^{(l)}v_\varepsilon^{(l')}\,dx \\
&=
\lim_{R\to\infty}\lim_{\varepsilon\to0}
\left[\sum_{j=1}^k
\int_{B_R(0)}
v_{\varepsilon,j}^{(l)}(y)v_{\varepsilon,j}^{(l')}(y)\,dy+O(R^{-(2\beta_\sigma-N)})\right]\\
&=
\sum_{j=1}^k a_j^{(l)}\cdot a_j^{(l')}\frac{1}N\int_{\mathbb R^N}|\nabla w_{V(\xi_j^0)}|^2,
\end{aligned}
\end{equation*}
where $a_j^{(l)}=(a_{j,1}^{(l)},\ldots,a_{j,N}^{(l)})$. In the last equality, we used   \eqref{eq:limit-translation-combination}. The lemma is proved.

\end{proof}

A direct computation gives the following local Pohozaev identity.

\begin{lemma}\label{lem:pohozaev-halfball}
Let $u_\varepsilon$ be a $k$-peak family in the sense of Definition~\ref{def:kpeak-class}, and let
$(\mu_\varepsilon^{(l)},v_\varepsilon^{(l)})$ be an eigenpair of the linearized problem \eqref{eq:linearized-eigenvalue-problem}. Then, for every $i\in\{1,\dots,N\}$, the following identity holds:
\begin{align}\label{eq:local-pohozaev-identity}
&-\int_{\partial'\mathcal{B}_{r}^{+}(\xi)} t^{1-2s}\Big(
\frac{\partial \bar u_\varepsilon}{\partial \nu}\frac{\partial \bar v_\varepsilon^{(l)}}{\partial x_i}
+\frac{\partial \bar v_\varepsilon^{(l)}}{\partial \nu}\frac{\partial \bar u_\varepsilon}{\partial x_i}
\Big)\,d\sigma
+\int_{\partial'\mathcal{B}_{r}^{+}(\xi)} t^{1-2s}\langle\nabla \bar u_\varepsilon,\nabla \bar v_\varepsilon^{(l)}\rangle\,\nu_i\,d\sigma \nonumber\\
&\qquad
+\varepsilon^{-2s}\int_{\partial B_{r}(\xi)}\big(V(x)u_\varepsilon v_\varepsilon^{(l)}-u_\varepsilon^p v_\varepsilon^{(l)}\big)\nu_i\,dS\nonumber \\
&
=\varepsilon^{-2s}\int_{B_r(\xi)}\frac{\partial V(x)}{\partial x_i}\,u_\varepsilon v_\varepsilon^{(l)}\,dx
+\varepsilon^{-2s}\mu_\varepsilon^{(l)}\int_{B_r(\xi)} v_\varepsilon^{(l)}\frac{\partial u_\varepsilon}{\partial x_i}\,dx.
\end{align}
Here $dX=dx\,dt$, $d\sigma$ is the surface measure on $\partial'\mathcal{B}_{r}^{+}(\xi)$, $dS$ is the
surface measure on $\partial B_{r}(\xi)$, and $\nu=(\nu_1,\dots,\nu_{N+1})$ is the outer unit normal to $\partial'\mathcal{B}_{r}^{+}(\xi)$.
\end{lemma}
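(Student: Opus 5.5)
The identity \eqref{eq:local-pohozaev-identity} is the usual translation-type Pohozaev identity. I would derive it in the Caffarelli--Silvestre extension by testing the equation for \(\bar u_\varepsilon\) against \(\partial_{x_i}\bar v_\varepsilon^{(l)}\), testing the equation for \(\bar v_\varepsilon^{(l)}\) against \(\partial_{x_i}\bar u_\varepsilon\), and adding the two. With the normalization convention of Section~\ref{subsec:extension-boundary-estimates}, the extension \(\bar v:=\bar v_\varepsilon^{(l)}\) satisfies
\[
\operatorname{div}(t^{1-2s}\nabla\bar v)=0 \ \text{in }\mathbb R^{N+1}_+,\qquad
\lim_{t\to0^+}t^{1-2s}\frac{\partial\bar v}{\partial\nu}=\varepsilon^{-2s}\bigl(p u_\varepsilon^{p-1}v-V v+\mu_\varepsilon^{(l)}v\bigr).
\]
The extension \(\bar u_\varepsilon\) satisfies \eqref{eq:u-extension-equation}.

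\textbf{Step 1: integrate over the half-ball.} I would integrate over \(\mathcal B_r^+(\xi)\) the pointwise identity
\[
\operatorname{div}\bigl(t^{1-2s}(\partial_i\bar v\,\nabla\bar u+\partial_i\bar u\,\nabla\bar v-\langle\nabla\bar u,\nabla\bar v\rangle e_i)\bigr)
=\partial_i\bar v\,\operatorname{div}(t^{1-2s}\nabla\bar u)+\partial_i\bar u\,\operatorname{div}(t^{1-2s}\nabla\bar v)=0.
\]
This identity holds because \(e_i\) is a horizontal direction, so \(\partial_i t^{1-2s}=0\), and the second-order terms cancel by symmetry. The divergence theorem then produces two kinds of boundary terms. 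On the curved part \(\partial'\mathcal B_r^+(\xi)\) we get the first two integrals of \eqref{eq:local-pohozaev-identity}, up to an overall sign. On the flat part \(B_r(\xi)\times\{0\}\) the outer normal is \(-e_{N+1}\). There the \(\langle\nabla\bar u,\nabla\bar v\rangle\nu_i\) term vanishes, since \(\nu_i=0\). The remaining flat terms reduce, via the conormal boundary conditions, to
\[
\varepsilon^{-2s}\int_{B_r(\xi)}\Bigl[(u^p-Vu)\partial_i v+(pu^{p-1}v-Vv+\mu v)\partial_i u\Bigr]dx .
\]

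\textbf{Step 2: integrate by parts on the flat ball.} I would use the relation
\[
u^p\partial_i v+pu^{p-1}v\partial_i u=\partial_i(u^pv).
\]
I would also use
\[
-V(u\partial_iv+v\partial_iu)=-\partial_i(Vuv)+\partial_iV\,uv.
\]
Integrating these over \(B_r(\xi)\) yields the boundary integral \(\int_{\partial B_r}(u^pv-Vuv)\nu_i\,dS\), the term \(\int_{B_r}\partial_iV\,uv\), and the remaining term \(\mu\int_{B_r} v\partial_iu\). Rearranging and tracking the signs then gives exactly \eqref{eq:local-pohozaev-identity}.

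\textbf{Step 3: justify the regularity.} The formal computation needs enough regularity near \(t=0\). I expect this to be the main obstacle, since \(t^{1-2s}\nabla\bar u\) is singular or degenerate at \(t=0\). To handle it, I would integrate over \(\mathcal B_r^+(\xi)\cap\{t>\eta\}\) and let \(\eta\to0\). This requires three limits:
\begin{enumerate}[label=(\alph*)]
\item \(t^{1-2s}\partial_t\bar u\to-\varepsilon^{-2s}(u^p-Vu)\) locally uniformly, and similarly for \(\bar v\).
\item \(\partial_{x_i}\bar u\) and \(\partial_{x_i}\bar v\) are continuous up to \(t=0\).
\item The term \(t^{1-2s}\langle\nabla\bar u,\nabla\bar v\rangle\) restricted to \(\{t=\eta\}\) carries the factor \(\nu_i=0\), so it contributes nothing.
\end{enumerate}
The \(C^{1,\alpha}\) bounds of Lemma~\ref{lem:u-polynomial-decay} give (b). Standard boundary regularity for the extension, as in \cite{cabre2014nonlinear}, gives (a), because the Neumann data are \(C^{\alpha}\). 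The contribution of the lateral strip \(\partial'\mathcal B_r^+\cap\{t<\eta\}\) vanishes as \(\eta\to0\), because \(t^{1-2s}|\nabla\bar u||\nabla\bar v|\) is integrable there.
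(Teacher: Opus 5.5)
Your proposal is correct and follows the route the paper intends. The paper only asserts the identity as \emph{a direct computation}; your divergence identity for $t^{1-2s}(\partial_i\bar v\,\nabla\bar u+\partial_i\bar u\,\nabla\bar v-\langle\nabla\bar u,\nabla\bar v\rangle e_i)$, followed by the conormal conditions and the integration by parts on the flat ball, reproduces \eqref{eq:local-pohozaev-identity} with the correct signs, and your truncation at $t=\eta$ supplies the regularity justification the paper leaves out. One small caveat: for eigenpairs outside the range of Lemma~\ref{lem:u-polynomial-decay}(ii), you need the $C^{1,\alpha}$ regularity of $v_\varepsilon^{(l)}$ at fixed $\varepsilon$, which follows from a standard bootstrap.
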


Before stating the next proposition, we introduce the $kN\times kN$ matrices that yield precise estimates for the eigenvalues. For \(j=1,\ldots,k\), recall
\[
C_j:=\frac{1}N\int_{\mathbb R^N}|\nabla w_{V(\xi_j^0)}|^2\,dy>0,
\]
and write
\[
M_j:=\int_{\mathbb R^N}w_{V(\xi_j^0)}^2\,dy>0 .
\]
Define
\[
\mathcal C
:=
\operatorname{diag}
\left(
C_1I_N,\ldots,C_kI_N
\right)
\in\mathbb R^{kN\times kN},
\]
and
\[
\mathcal H
:=
\operatorname{diag}
\left(
\frac{M_1}{2}D^2V(\xi_1^0),
\ldots,
\frac{M_k}{2}D^2V(\xi_k^0)
\right)
\in\mathbb R^{kN\times kN}.
\]
Equivalently, since $\mathcal C$ is positive definite and each block of
$\mathcal C$ is a scalar multiple of the identity, we may write
\[
\mathcal A
:=
\mathcal C^{-1}\mathcal H
=
\operatorname{diag}
\left(
\frac{M_1}{2C_1}D^2V(\xi_1^0),
\ldots,
\frac{M_k}{2C_k}D^2V(\xi_k^0)
\right).
\]
Thus the generalized eigenvalue problem
\[
\mathcal H a=\eta \mathcal C a
\]
is equivalent to the ordinary eigenvalue problem
\[
\mathcal A a=\eta a .
\]

We next derive the precise asymptotic behavior of the eigenvalues $\mu_\varepsilon^{(l)}$ with
$l\in\{k+1,\dots,(N+1)k\}$.

\begin{proposition}\label{prop:hessian-eigenvalue-asymptotics}

Let $u_\varepsilon$ be a $k$--peak family in the sense of Definition~\ref{def:kpeak-class}, and assume \eqref{eq:nondegenerate-critical-points}.  Then, for
$l=k+1,\ldots,(N+1)k$, 
\begin{equation}\label{eq:mu-matrix-asymptotics}
  \frac{\mu_\varepsilon^{(l)}}{\varepsilon^2}
a^{(l)}
=
\mathcal A a^{(l)}+o(1)
\quad
\text{in }\mathbb R^{kN},
\end{equation}
where $a^{(l)}=(a_1^{(l)},\ldots,a_k^{(l)})
\in\mathbb R^{kN}$, as in Lemma~\ref{lem:small-eigenfunction-orthogonality}.
Let $\eta_1\le\cdots\le\eta_{kN}$ denote the ordered eigenvalues of $\mathcal A$, counted with multiplicity. Then
\[
\frac{\mu_\varepsilon^{(k+\ell)}}{\varepsilon^2}=\eta_\ell+o(1),
\qquad \ell=1,\ldots,kN.
\]
Since $M_j/(2C_j)>0$, the signs of these $kN$ eigenvalues are exactly the signs of the eigenvalues of $D^2V(\xi_j^0)$, $j=1,\ldots,k$.

\end{proposition}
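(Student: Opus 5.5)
The plan is to test the local Pohozaev identity of Lemma~\ref{lem:pohozaev-halfball} at each peak and expand both sides to order $\varepsilon^{N+2-2s}$. Fix $l\in\{k+1,\dots,(N+1)k\}$, $j\in\{1,\dots,k\}$, $i\in\{1,\dots,N\}$. Apply \eqref{eq:local-pohozaev-identity} with $\xi=\xi_{\varepsilon,j}$ and a good radius $\rho_{\varepsilon,j}\in(r,2r)$, chosen as in Lemma~\ref{lem:pohozaev-center-estimate}. By \eqref{eq:annular-u-energy} and \eqref{eq:annular-v-energy}, together with the pointwise bounds of Lemma~\ref{lem:extension-u-bound} and Lemma~\ref{lem:u-polynomial-decay}, all boundary terms on $\partial'\mathcal B^+_{\rho}$ and $\partial B_\rho$ are $O(\varepsilon^{2N-2s})$. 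Since $N>2s\ge 2s$ and $N\ge1$, this is $o(\varepsilon^{N+2-2s})$ as soon as $N>2$. For small $N$, I would sharpen this using the decay exponent $\beta_\sigma$: the boundary terms are in fact $O(\varepsilon^{2\beta_\sigma-2s})$, which is $o(\varepsilon^{N+2-2s})$ because $2\beta_\sigma>N+2$. After multiplying by $\varepsilon^{2s}$, the identity becomes
\[
\int_{B_\rho(\xi_{\varepsilon,j})}\partial_iV\,u_\varepsilon v_\varepsilon^{(l)}\,dx+\mu_\varepsilon^{(l)}\int_{B_\rho(\xi_{\varepsilon,j})}v_\varepsilon^{(l)}\partial_iu_\varepsilon\,dx=o(\varepsilon^{N+2}).
\]

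Next I would expand both integrals after the change of variables $x=\xi_{\varepsilon,j}+\varepsilon y$. For the first, Taylor-expand
\[
\partial_iV(\xi_{\varepsilon,j}+\varepsilon y)=\partial_iV(\xi_{\varepsilon,j})+\varepsilon\sum_h\partial_{ih}V(\xi_j^0)y_h+o(\varepsilon)|y|.
\]
The second-order term gives $\varepsilon^{N+1}\sum_h\partial_{ih}V(\xi_j^0)\int y_hw\,\partial_m w\,a_{j,m}^{(l)}$, and since $\int y_h w\partial_m w=-\tfrac12\delta_{hm}M_j$ this equals $-\varepsilon^{N+1}\tfrac{M_j}{2}(D^2V(\xi_j^0)a_j^{(l)})_i+o(\varepsilon^{N+1})$. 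The decay $\beta_\sigma>N+1$ justifies the passage to the limit under the integral with the weight $|y|$. The first Taylor term is the dangerous one: $\partial_iV(\xi_{\varepsilon,j})=O(\varepsilon)$ by Lemma~\ref{lem:pohozaev-center-estimate}, and it multiplies $\varepsilon^N\int u_{\varepsilon,j}v^{(l)}_{\varepsilon,j}$. That integral tends to $\int w\,\partial_m w=0$, but only as $o(1)$. This produces an $o(\varepsilon^{N+1})$ error, which is fine at the order of this expansion. For the second integral, $\mu_\varepsilon^{(l)}\varepsilon^{N}\varepsilon^{-1}\int v_{\varepsilon,j}^{(l)}\partial_iu_{\varepsilon,j}=\mu_\varepsilon^{(l)}\varepsilon^{N-1}\big(C_ja_{j,i}^{(l)}+o(1)\big)$, using $\int\partial_mw\,\partial_iw=C_j\delta_{mi}$. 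Dividing by $\varepsilon^{N+1}$ gives
\[
\frac{\mu_\varepsilon^{(l)}}{\varepsilon^2}\big(C_ja_j^{(l)}+o(1)\big)=\frac{M_j}{2}D^2V(\xi_j^0)a_j^{(l)}+o(1).
\]
There is a sign to check here: the Pohozaev identity has $\partial V$ and $\mu$ on the same side, so I must verify that the final sign comes out as $+\mathcal H$. I would track it via $\int y_hw\partial_mw=-\tfrac12 M_j\delta_{hm}$ combined with the orientation in \eqref{eq:local-pohozaev-identity}.

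The main obstacle is making these expansions rigorous. This needs that $\mu_\varepsilon^{(l)}/\varepsilon^2$ is bounded, and a priori we only know $\mu\to0$. I would argue by contradiction: if $|\mu_\varepsilon^{(l)}|/\varepsilon^2\to\infty$, divide the relation by $\mu_\varepsilon^{(l)}\varepsilon^{N-1}$ to get $C_ja_j^{(l)}=0$ for every $j$. This contradicts $a^{(l)}\neq0$ from Lemma~\ref{lem:small-eigenfunction-orthogonality}. Hence \eqref{eq:mu-matrix-asymptotics} follows, with the error $o(1)$ now uniform.

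Finally, the eigenvalue identification. Along any subsequence, $\mu_\varepsilon^{(l)}/\varepsilon^2\to\lambda_l$ and $a^{(l)}\to a^{(l)}\ne0$, so $\mathcal Aa^{(l)}=\lambda_la^{(l)}$. The orthogonality relation \eqref{eq:orth-full-vector} says that the $kN$ vectors $a^{(l)}$ are $\mathcal C$-orthogonal, hence linearly independent. Therefore $\{\lambda_l\}$ exhausts the spectrum of the $\mathcal C$-self-adjoint matrix $\mathcal A$ with multiplicities. Since the $\mu^{(k+\ell)}_\varepsilon$ are increasingly ordered, the limits are ordered too, so $\lambda_{k+\ell}=\eta_\ell$. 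Independence of the subsequence then gives convergence of the whole family, and the sign statement is immediate because $M_j/(2C_j)>0$.
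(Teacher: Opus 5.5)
Your proposal follows the paper's proof step by step. The common steps are: the Pohozaev identity \eqref{eq:local-pohozaev-identity} at a good radius; the Taylor expansion in which $\partial_iV(\xi_{\varepsilon,j})=O(\varepsilon)$ multiplies $\int u_{\varepsilon,j}v^{(l)}_{\varepsilon,j}\,dy=o(1)$; the bound $\mu_\varepsilon^{(l)}=O(\varepsilon^2)$ from a nonzero component of $a^{(l)}$; and the $\mathcal C$-orthogonal basis argument identifying the limits with $\eta_1,\dots,\eta_{kN}$. One piece of order bookkeeping is wrong, and as written it leaves the case $N=1$ resting on a false estimate.

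The integrals over $\partial'\mathcal B^+_{\rho}$ in \eqref{eq:local-pohozaev-identity} carry no factor $\varepsilon^{-2s}$. By the good-radius choice they are $O(\varepsilon^{2N})$, while the $\partial B_\rho$ terms are $O(\varepsilon^{2\beta_\sigma-2s})$. So after multiplying by $\varepsilon^{2s}$, all boundary terms are $O(\varepsilon^{2N+2s})=o(\varepsilon^{N+1})$ for every $N\ge1$. That is all you need, since you only divide by $\varepsilon^{N+1}$; your target $o(\varepsilon^{N+2})$ is unnecessary.

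Your proposed sharpening to $O(\varepsilon^{2\beta_\sigma-2s})$ for all boundary terms does not hold for the extension integrals. The $\varepsilon^{\beta_\sigma}$ decay is available only on $\{t=0\}$. On $\partial'\mathcal B^+_\rho$ one only has $|\bar u_\varepsilon|\le C\varepsilon^N$ (Lemma~\ref{lem:extension-u-bound}), and this is sharp because the Poisson kernel at height $t>0$ sees the full mass $\int u_\varepsilon\,dx\sim\varepsilon^N$. With your lossy bound $O(\varepsilon^{2N-2s})$ and without this correct accounting, $N=1$ would not be covered.

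Similarly, dominated convergence for the $|y|$-weighted Hessian term needs $2\beta_\sigma>N+1$, which always holds. It does not need $\beta_\sigma>N+1$, which fails when $s<1/2$.
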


\begin{proof}
Fix $j\in\{1,\ldots,k\}$.
By Lemma~\ref{lem:extension-u-bound} and Proposition~\ref{prop:mu-small}, we obtain
\[
\int_{A_{r,2r}^{+,j}}t^{1-2s}
\left(|\nabla\bar u_\varepsilon|^2+
\sum_{\ell=k+1}^{(N+1)k}|\nabla\bar v_\varepsilon^{(\ell)}|^2\right)dX
\le C\varepsilon^{2N}.
\]
Hence, there exists
\[
\rho_{\varepsilon}\in(r,2r)
\]
such that
\begin{equation}\label{eq:good-radius-energy}
\int_{\partial'\mathcal B_{\rho_{\varepsilon}}^+(\xi_{\varepsilon,j})}
t^{1-2s}
\left(
|\nabla \bar u_\varepsilon|^2+
\sum_{\ell=k+1}^{k+kN}
|\nabla \bar v_\varepsilon^{(\ell)}|^2
\right)d\sigma
\le C\varepsilon^{2N},
\end{equation}
uniformly in $j$.

Apply the local Pohozaev identity \eqref{eq:local-pohozaev-identity} with $\xi=\xi_{\varepsilon,j}$ and $r=\rho_{\varepsilon}$, and multiply it by $\varepsilon^{2s}$.  We obtain
\begin{align}
&-\varepsilon^{2s}\int_{\partial'\mathcal B_{\rho_{\varepsilon}}^+(\xi_{\varepsilon,j})}
t^{1-2s}
\left(
\frac{\partial \bar u_\varepsilon}{\partial\nu}
\frac{\partial \bar v_\varepsilon^{(l)}}{\partial x_i}
+
\frac{\partial \bar v_\varepsilon^{(l)}}{\partial\nu}
\frac{\partial \bar u_\varepsilon}{\partial x_i}
\right)d\sigma 
\notag\\
&\quad+
\varepsilon^{2s}\int_{\partial'\mathcal B_{\rho_{\varepsilon}}^+(\xi_{\varepsilon,j})}
t^{1-2s}
\langle\nabla\bar u_\varepsilon,\nabla\bar v_\varepsilon^{(l)}\rangle\nu_i\,d\sigma
\notag\\
&\quad+
\int_{\partial B_{\rho_{\varepsilon}}(\xi_{\varepsilon,j})}
\bigl(V(x)u_\varepsilon v_\varepsilon^{(l)}-u_\varepsilon^p v_\varepsilon^{(l)}\bigr)\nu_i\,dS
\notag\\
&=
\int_{B_{\rho_{\varepsilon}}(\xi_{\varepsilon,j})}
\frac{\partial V}{\partial x_i}(x)u_\varepsilon v_\varepsilon^{(l)}\,dx
+\mu_\varepsilon^{(l)}
\int_{B_{\rho_{\varepsilon}}(\xi_{\varepsilon,j})}
v_\varepsilon^{(l)}\frac{\partial u_\varepsilon}{\partial x_i}\,dx.
\label{eq:pohozaev-good-radius}
\end{align}

By \eqref{eq:good-radius-energy} and Lemma~\ref{lem:u-polynomial-decay}, we have
\begin{equation}\label{eq:boundary-pohozaev-good-radius}
  \begin{aligned}
     &\left|\text{\color{blue}LHS of \eqref{eq:pohozaev-good-radius}}\right|\\
     \le&C\varepsilon^{2s}
\left(
\int_{\partial'\mathcal B_{\rho_{\varepsilon}}^+(\xi_{\varepsilon,j})}t^{1-2s}|\nabla\bar u_\varepsilon|^2d\sigma
\right)^{1/2}
\left(
\int_{\partial'\mathcal B_{\rho_{\varepsilon}}^+(\xi_{\varepsilon,j})}t^{1-2s}|\nabla\bar v_\varepsilon^{(l)}|^2d\sigma
\right)^{1/2}\\
&+O(\varepsilon^{2N+2s})\\
=&O(\varepsilon^{2N+2s}).
  \end{aligned}
\end{equation}

Combining \eqref{eq:pohozaev-good-radius} and \eqref{eq:boundary-pohozaev-good-radius}, we obtain
\begin{equation}\label{eq:pohozaev-balance}
\int_{B_{\rho_{\varepsilon}}(\xi_{\varepsilon,j})}
\frac{\partial V}{\partial x_i}(x)u_\varepsilon v_\varepsilon^{(l)}\,dx
+\mu_\varepsilon^{(l)}
\int_{B_{\rho_{\varepsilon}}(\xi_{\varepsilon,j})}
v_\varepsilon^{(l)}\frac{\partial u_\varepsilon}{\partial x_i}\,dx
=O(\varepsilon^{2N+2s})=o(\varepsilon^{N+1}).
\end{equation}

We first estimate
\[
I_{1,\varepsilon}
:=
\int_{B_{\rho_\varepsilon}(\xi_{\varepsilon,j})}
\partial_i V(x)u_\varepsilon v_\varepsilon^{(l)}\,dx .
\]
After the change of variables
\(
x=\xi_{\varepsilon,j}+\varepsilon y,
\) Lemmas~\ref{lem:true-peak-profile-convergence}, \ref{lem:pohozaev-center-estimate} and \ref{lem:small-eigenfunction-orthogonality}, together with a Taylor expansion of $\partial_i V$ about $\xi_{\varepsilon,j}$, give
 \begin{equation}\label{eq:potential-gradient-expansion}
  \begin{aligned}[b]
I_{1,\varepsilon}
&=
\varepsilon^N\partial_iV(\xi_{\varepsilon,j})
\int_{B_{\rho_\varepsilon /\varepsilon}(0)}
u_{\varepsilon,j}(y)v_{\varepsilon,j}^{(l)}(y)\,dy
\\
&\quad
+
\varepsilon^{N+1}
\sum_{m=1}^N
\partial_{im}^2V(\xi_j^0)
\int_{B_{\rho_\varepsilon /\varepsilon}(0)}
y_m u_{\varepsilon,j}(y)v_{\varepsilon,j}^{(l)}(y)\,dy
+
o(\varepsilon^{N+1})
\\
&=
\varepsilon^N\partial_iV(\xi_{\varepsilon,j})
\left(
\sum_{n=1}^N
a_{j,n}^{(l)}
\int_{\mathbb R^N}w_{V(\xi_j^0)}(y)\partial_n w_{V(\xi_j^0)}(y)\,dy
+
o(1)
\right)
\\
&\quad
+
\varepsilon^{N+1}
\sum_{m,n=1}^N
\partial_{im}^2V(\xi_j^0)a_{j,n}^{(l)}
\int_{\mathbb R^N}
y_m w_{V(\xi_j^0)}(y)\partial_n w_{V(\xi_j^0)}(y)\,dy
+
o(\varepsilon^{N+1})
\\
&=
-\frac{1}{2}\varepsilon^{N+1}
\sum_{m,n=1}^N
\partial_{im}^2V(\xi_j^0)a_{j,n}^{(l)}
\delta_{mn}
\int_{\mathbb R^N}w_{V(\xi_j^0)}^2(y)\,dy
+
o(\varepsilon^{N+1})
\\
&=
-\frac{1}{2}\varepsilon^{N+1}M_j
\sum_{m=1}^N
\partial_{im}^2V(\xi_j^0)a_{j,m}^{(l)}
+
o(\varepsilon^{N+1}).
\end{aligned}
\end{equation}

Next set
\[
I_{2,\varepsilon}:=
\int_{B_{\rho_{\varepsilon}}(\xi_{\varepsilon,j})}
v_\varepsilon^{(l)}\partial_i u_\varepsilon\,dx.
\]
Then Lemma~\ref{lem:true-peak-profile-convergence} and \ref{lem:small-eigenfunction-orthogonality} give
\begin{equation}\label{eq:derivative-overlap-expansion}
  \begin{aligned}[b]
    I_{2,\varepsilon}&=\varepsilon^{N-1}
\int_{B_{\rho_\varepsilon /\varepsilon}(0)}
v_{\varepsilon,j}^{(l)}(y)\partial_{y_i}u_{\varepsilon,j}(y)\,dy\\
&=\varepsilon^{N-1}
\sum_{n=1}^Na_{j,n}^{(l)}
\int_{\mathbb R^N}\partial_n w_{V(\xi_j^0)}\partial_i w_{V(\xi_j^0)}\,dy
+o(\varepsilon^{N-1})\\
&=\varepsilon^{N-1}C_ja_{j,i}^{(l)}+o(\varepsilon^{N-1}).
  \end{aligned}
\end{equation}

Substituting \eqref{eq:potential-gradient-expansion} and \eqref{eq:derivative-overlap-expansion} into \eqref{eq:pohozaev-balance}, we obtain
\begin{align}
&-\frac{1}{2}\varepsilon^{N+1}M_j
\sum_{m=1}^N\partial_{im}^2V(\xi_j^0)a_{j,m}^{(l)}
+\mu_\varepsilon^{(l)}\varepsilon^{N-1}C_ja_{j,i}^{(l)}
\notag\\
&\quad+
\mu_\varepsilon^{(l)}o(\varepsilon^{N-1})+o(\varepsilon^{N+1})
=o(\varepsilon^{N+1}).
\label{eq:mu-system-before-division}
\end{align}
By Lemma~\ref{lem:small-eigenfunction-orthogonality}, there exists at least one component $a_{j_0,i_0}^{(l)}\ne0$. Applying \eqref{eq:mu-system-before-division} to that component gives
\[
\mu_\varepsilon^{(l)}=O(\varepsilon^2).
\]
Dividing \eqref{eq:mu-system-before-division} by $\varepsilon^{N+1}$ yields, for every $j=1,\ldots,k$ and $i=1,\ldots,N$,
\begin{equation}\label{eq:hessian-eigenvalue-system}
  \frac{\mu_\varepsilon^{(l)}}{\varepsilon^2}C_j a_{j,i}^{(l)}
=
\frac{1}{2}M_j\sum_{m=1}^N\partial_{im}^2V(\xi_j^0)a_{j,m}^{(l)}+o(1).
\end{equation}

In block form, \eqref{eq:hessian-eigenvalue-system} is exactly
\[
\frac{\mu_\varepsilon^{(l)}}{\varepsilon^2}
\mathcal C a^{(l)}
=
\mathcal H a^{(l)}
+
o(1).
\]
Multiplying by $\mathcal C^{-1}$ gives
\[
\frac{\mu_\varepsilon^{(l)}}{\varepsilon^2}
a^{(l)}
=
\mathcal C^{-1}\mathcal H a^{(l)}
+
o(1)
=
\mathcal A a^{(l)}
+
o(1).
\]
This proves \eqref{eq:mu-matrix-asymptotics}. 

It remains to identify the limiting finite-dimensional spectrum. By Lemma~\ref{lem:small-eigenfunction-orthogonality}, the $kN$ nonzero vectors
\[
a^{(k+1)},\ldots,a^{(k+kN)}
\]
are mutually orthogonal for the positive definite inner product $\langle a,b\rangle_{\mathcal C}=a^T\mathcal Cb$ and hence form a basis of $\mathbb R^{kN}$.  Passing to the limit in \eqref{eq:mu-matrix-asymptotics}, the corresponding limits exhaust the spectrum of $\mathcal A$, counted with multiplicity.

Finally,
\[
\mathcal A=\operatorname{diag}\left(
\frac{M_1}{2C_1}D^2V(\xi_1^0),\ldots,
\frac{M_k}{2C_k}D^2V(\xi_k^0)\right),
\]
and all factors $M_j/(2C_j)$ are positive.  This proves the sign assertion and completes the proof.
\end{proof}

The following proposition gives the spectral gap after the first $k+kN$ eigenvalues.

\begin{proposition}\label{prop:kn-plus-one}
Let $u_{\varepsilon}$ be a $k$--peak family in the sense of Definition~\ref{def:kpeak-class} and assume \eqref{eq:nondegenerate-critical-points}. Then there exist $C>0$ and $\varepsilon_0>0$ such that
\[
\mu_{\varepsilon}^{(k+kN+1)} \ge C \qquad \text{for every }\varepsilon\in(0,\varepsilon_0].
\]
\end{proposition}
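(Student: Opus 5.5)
We argue by contradiction. Suppose that along a sequence $\varepsilon_n\to0$ we have $\mu_{\varepsilon_n}^{(k+kN+1)}\to\mu_*\le0$; the lower bound $\mu_\varepsilon^{(1)}\ge -C$ from the proof of Proposition~\ref{prop:mu-small} guarantees such a limit exists. We know from Proposition~\ref{prop:mu-zero} and the liminf argument in Proposition~\ref{prop:mu-small} that $\mu_\varepsilon^{(k+1)}\ge o(1)$, so in fact $\mu_*=0$. Hence the $kN+1$ eigenvalues $\mu_\varepsilon^{(k+1)},\ldots,\mu_\varepsilon^{(k+kN+1)}$ all tend to $0$. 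Take the corresponding $L^2$-orthogonal eigenfunctions $v_\varepsilon^{(l)}$, $l=k+1,\ldots,k+kN+1$, normalized as in \eqref{eq:linearized-eigenvalue-problem}. Since these eigenvalues lie in $[-C,V_{\min}/4]$ for small $\varepsilon$, Lemma~\ref{lem:u-polynomial-decay}(ii) applies and gives uniform weighted decay and $C^{1,\alpha}$ bounds.

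Lemma~\ref{lem:small-eigenfunction-orthogonality} used only $\mu_\varepsilon^{(l)}\to0$, the decay estimates, and $L^2$-orthogonality. Its argument therefore applies verbatim to this enlarged family of $kN+1$ eigenfunctions. Each rescaled eigenfunction $v_{\varepsilon,j}^{(l)}$ converges in $C^1_{\rm loc}$ to $\sum_i a_{j,i}^{(l)}\partial_i w_{V(\xi_j^0)}$. The normalization identity \eqref{eq:norm-eigen-identity-nonzero}, together with the tail control, forces each $a^{(l)}\in\mathbb R^{kN}$ to be nonzero. Passing to the limit in the $L^2$-orthogonality relations, using the uniform weighted decay to dispose of the region outside $\bigcup_j B_{R\varepsilon}(\xi_{\varepsilon,j})$ at cost $O(R^{-(2\beta_\sigma-N)})$, gives $\sum_j C_j\,a_j^{(l)}\cdot a_j^{(l')}=0$ for $l\ne l'$. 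This produces $kN+1$ nonzero vectors in $\mathbb R^{kN}$ that are mutually orthogonal for the positive definite inner product $\langle a,b\rangle_{\mathcal C}$. That is impossible, which yields the contradiction.

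In this argument the nondegeneracy assumption \eqref{eq:nondegenerate-critical-points} is not even needed. Only the limit equation $\mu\to0$ and the kernel description in Lemma~\ref{lem:single-bubble-input}(i) enter.

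The main obstacle is the first step: upgrading ``$\limsup\mu^{(k+kN+1)}\le0$'' to ``$\mu^{(k+1)},\ldots,\mu^{(k+kN+1)}\to0$'', and making sure no nonzero negative limit can sneak in. The lower bound $\liminf\mu_\varepsilon^{(k+1)}\ge0$ from Proposition~\ref{prop:mu-small} handles this cleanly, since all intermediate eigenvalues are squeezed between $\mu^{(k+1)}$ and $\mu^{(k+kN+1)}$.

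A secondary technical point is ensuring the eigenfunctions' normalization $\|v\|_{\varepsilon,V}^2=\varepsilon^N$ is compatible with taking an $L^2$-orthogonal basis in the eigenspace when eigenvalues coincide. This is harmless: within a common eigenspace the quadratic form $\|\cdot\|_{\varepsilon,V}^2$ equals $\langle(\mu+pu_\varepsilon^{p-1})v,v\rangle$. So one first chooses an $L^2$-orthogonal basis and then rescales each element, which preserves orthogonality.
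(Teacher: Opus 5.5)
Your proposal is correct and is essentially the paper's argument. Both argue by contradiction, pass to the limiting coefficient vectors $a^{(l)}\in\mathbb R^{kN}$ via the argument of Lemma~\ref{lem:small-eigenfunction-orthogonality}, and conclude that $kN+1$ nonzero vectors that are mutually orthogonal for $\langle\cdot,\cdot\rangle_{\mathcal C}$ cannot exist. Your remark that the nondegeneracy assumption \eqref{eq:nondegenerate-critical-points} is not used also matches the paper's proof, which never invokes it.
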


\begin{proof}
By Proposition~\ref{prop:mu-small},
\[
\mu_\varepsilon^{(k+kN)}\to0,
\]
and the ordering of the eigenvalues gives
\[
\mu_{\varepsilon}^{(k+kN+1)}\ge \mu_{\varepsilon}^{(k+kN)}.
\]
Hence
\[
\liminf_{\varepsilon\to0}\mu_{\varepsilon}^{(k+kN+1)}\ge0.
\]
It remains to prove that this lower limit is strictly positive. Suppose for contradiction that no positive lower bound exists. Then, along a subsequence still denoted by \(\varepsilon\),
 \begin{equation}\label{eq:gap-contr-zero}
\mu_\varepsilon^{(k+kN+1)}\to0.
\end{equation}
Let $v_\varepsilon^{(k+kN+1)}$ be an associated eigenfunction, chosen $L^2$-orthogonal to the previous eigenfunctions. The proof of Lemma~\ref{lem:small-eigenfunction-orthogonality} applies to this eigenfunction.  Thus, after passing to a subsequence, for every $j=1,\ldots,k$,
\[
v_{\varepsilon,j}^{(k+kN+1)}(x):=v_\varepsilon^{(k+kN+1)}(\varepsilon x+\xi_{\varepsilon,j})
\longrightarrow
v_j^{(*)}(x)
=\sum_{i=1}^N a_{j,i}^{(*)}\partial_iw_{V(\xi_j^0)}(x)
\quad\hbox{in }C^1_{\rm loc}(\mathbb R^N),
\]
and the full vector
\[
a^{(*)}:=\bigl(a_{1,1}^{(*)},\ldots,a_{1,N}^{(*)},\ldots,a_{k,1}^{(*)},\ldots,a_{k,N}^{(*)}\bigr)
\in\mathbb R^{kN}
\]
is nonzero. 
Moreover, since the eigenfunctions are chosen $L^2$-orthogonal, for every
$l=k+1,\ldots,k+kN$ we have
\[
\int_{\mathbb R^N}v_\varepsilon^{(k+kN+1)}v_\varepsilon^{(l)}dx=0.
\]
Repeating the orthogonality argument in Lemma~\ref{lem:small-eigenfunction-orthogonality} yields
\begin{equation}\label{eq:new-vector-orthogonal}
\sum_{j=1}^k C_j a_j^{(*)}\cdot a_j^{(l)}=0,
\qquad l=k+1,\ldots,k+kN.
\end{equation}
On the other hand, 
\[
\sum_{j=1}^k C_j a_j^{(l)}\cdot a_j^{(l')}=0,
\qquad k+1\le l\ne l'\le k+kN,
\]
and each $a^{(l)}$ is nonzero.  Thus the $kN$ vectors
\[
a^{(k+1)},\ldots,a^{(k+kN)}
\]
are nonzero and mutually orthogonal with respect to the positive definite inner product
\[
\langle a,b\rangle_*:=\sum_{j=1}^k C_j a_j\cdot b_j
\quad \hbox{on }\mathbb R^{kN}.
\]
Consequently they form a basis of $\mathbb R^{kN}$.  But \eqref{eq:new-vector-orthogonal} says that the nonzero vector $a^{(*)}$ is orthogonal to this basis with respect to the same positive definite inner product, which is impossible.

Therefore the case $\liminf_{\varepsilon\to0}\mu_{\varepsilon}^{(k+kN+1)}=0$
is impossible.  Since
$\liminf_{\varepsilon\to0}\mu_{\varepsilon}^{(k+kN+1)}\ge0$, we obtain
\[
\liminf_{\varepsilon\to0}\mu_{\varepsilon}^{(k+kN+1)}>0.
\]
Thus there exist $C>0$ and $\varepsilon_0>0$ such that
\[
\mu_{\varepsilon}^{(k+kN+1)}\ge C
\qquad\hbox{for all }\varepsilon\in(0,\varepsilon_0].
\]

\end{proof}

\section{Morse index and nondegeneracy}\label{sec:morse-computation}

We now combine the spectral estimates obtained in Section~\ref{sec:eig-estimates} to compute the Morse index and prove nondegeneracy.

\begin{proof}[Proof of Theorem~\ref{th:morse-index}]
By Proposition~\ref{prop:mu-zero}, the first $k$ eigenvalues of the linearized operator are negative:
\[
\mu_\varepsilon^{(1)},\ldots,\mu_\varepsilon^{(k)}<0.
\]
By Proposition~\ref{prop:hessian-eigenvalue-asymptotics}, the next $kN$ eigenvalues satisfy
\[
\frac{\mu_\varepsilon^{(k+\ell)}}{\varepsilon^2}=\eta_\ell+o(1),
\qquad \ell=1,\ldots,kN,
\]
where the signs of $\eta_1,\ldots,\eta_{kN}$ coincide with the signs of the eigenvalues of the Hessian matrices
$D^2V(\xi_j^0)$, $j=1,\ldots,k$.  Finally, Proposition~\ref{prop:kn-plus-one} gives
\[
\mu_\varepsilon^{(k+kN+1)}\ge C>0.
\]
Hence, by monotonicity of the ordered spectrum, all further eigenvalues are positive.  Therefore the number of negative eigenvalues is exactly
\[
k+\#\{\text{negative eigenvalues of }D^2V(\xi_j^0),\ j=1,\ldots,k\},
\]
which is the desired formula.
\end{proof}

\begin{proof}[Proof of Corollary~\ref{th:nondegeneracy}]
  The spectral analysis in Propositions~\ref{prop:mu-zero}, \ref{prop:hessian-eigenvalue-asymptotics} and \ref{prop:kn-plus-one} directly implies that 
\[
\ker L_\varepsilon=\{0\}.
\]
\end{proof}

\section{Parametrization}\label{sec:improved-expansion}
In this section, we establish a quantitative modulation decomposition for every sharply quantized \(k\)-peak family in the sense of Definition~\ref{def:kpeak-class}. Recall that, for \(\xi=(\xi_1,\ldots,\xi_k)\) and
\(\alpha=(\alpha_1,\ldots,\alpha_k)\), we have 
\[
W_{\varepsilon,j}(x;\xi_j):=w_{V(\xi_j)}\!\left(\frac{x-\xi_j}{\varepsilon}\right),
\qquad
W_{\varepsilon,\alpha,\xi}(x):=
\sum_{j=1}^k\alpha_j W_{\varepsilon,j}(x;\xi_j).
\]
The reference profile attached to the peak centers is
\[
W_{\varepsilon,j}^*(x):=W_{\varepsilon,j}(x;\xi_{\varepsilon,j}),
\qquad
W_\varepsilon^*(x):=W_{\varepsilon,\one,\xi_\varepsilon}(x)=
\sum_{j=1}^k W_{\varepsilon,j}^*(x),
\]
where \(\xi_\varepsilon=(\xi_{\varepsilon,1},\ldots,\xi_{\varepsilon,k})\).

We work with exponents
\[
0<\sigma<\tau<s.
\]
Thus \(\beta_\tau=N+2s-\tau\) and \(N<\beta_\tau<\beta_\sigma\).

\subsection{Expansion at the peak centers}\label{subsec:global-weighted-expansion}

Set
\[
\Phi_\varepsilon(y):=\widetilde u_\varepsilon(y)-W_\varepsilon^*(\varepsilon y),
\qquad
A_\varepsilon:=\|\Phi_\varepsilon\|_{\tau,q_\varepsilon}.
\]
For \(j=1,\ldots,k\) and \(h=1,\ldots,N\), define
\(
Z_{j,h}(y):=\partial_h w_{V(\xi_{\varepsilon,j})}(y-q_{\varepsilon,j}).
\)
The linearized operator around the reference profile in the rescaled variables is
\[
\mathcal L_\varepsilon^*
:=(-\Delta)^s+V(\varepsilon y)-p\bigl(W_\varepsilon^*(\varepsilon y)\bigr)^{p-1}.
\]
The rescaled centers satisfy
\[
|q_{\varepsilon,i}-q_{\varepsilon,j}|\ge \frac{c}{\varepsilon},\qquad i\ne j,
\qquad
\sum_{j=1}^k|q_{\varepsilon,j}|\le C\varepsilon^{-1},
\]
where \(q_{\varepsilon,j}=\frac{\xi_{\varepsilon,j}}{\varepsilon}\).
We first recall the projected inverse estimate for \(\mathcal L_\varepsilon^*\).

\begin{lemma}\label{lem:projected-linear-inverse}
If \(\psi_\varepsilon\) satisfies
\[
\mathcal L_\varepsilon^*\psi_\varepsilon
=g_\varepsilon+
\sum_{j=1}^k\sum_{h=1}^N c_{j,h}Z_{j,h}
\quad\hbox{in }\mathbb R^N,
\qquad
\int_{\mathbb R^N}\psi_\varepsilon Z_{j,h}=0,
\]
then, for sufficiently small \(\varepsilon\),
\begin{equation}\label{eq:projected-linear-inverse}
\|\psi_\varepsilon\|_{\tau,q_\varepsilon}
\le C\|g_\varepsilon\|_{\tau,q_\varepsilon}.
\end{equation}
\end{lemma}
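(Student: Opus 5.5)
The plan is a standard contradiction-and-blow-up argument in the weighted norm \(\|\cdot\|_{\tau,q_\varepsilon}\), in the spirit of \cite[Lemma~2.5]{MR3121716} and the projected linear theory of D\'avila--del~Pino--Wei.

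\textbf{Step 1: setting up the contradiction.} Suppose the estimate fails. Then along \(\varepsilon_n\to0\) there are \(\psi_n\), \(g_n\) and \(c^n_{j,h}\) satisfying the hypotheses with
\[
\|\psi_n\|_{\tau,q_{\varepsilon_n}}=1,\qquad \|g_n\|_{\tau,q_{\varepsilon_n}}\to0 .
\]

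\textbf{Step 2: the coefficients vanish.} I first show \(c^n_{j,h}\to0\). Test the equation against \(Z_{i,m}\). The matrix \(\int Z_{j,h}Z_{i,m}\) is almost block diagonal, with diagonal blocks \(C_i I_N\) up to \(o(1)\). The off-diagonal blocks are small because the centers are \(c/\varepsilon\) apart and \(Z_{j,h}\) decays like \(|y|^{-N-2s}\).

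For \(\int \psi_n\,\mathcal L^*_\varepsilon Z_{i,m}\), compute
\[
\mathcal L^*_\varepsilon Z_{i,m}=\bigl(V(\varepsilon y)-V(\xi_{\varepsilon,i})\bigr)Z_{i,m}-p\bigl[(W^*_\varepsilon)^{p-1}-w_{V(\xi_{\varepsilon,i})}^{p-1}(\cdot-q_{\varepsilon,i})\bigr]Z_{i,m}.
\]
This is \(o(1)\) in the dual pairing against the weight \(\rho_{\tau,q}\), using \(|V(\varepsilon y)-V(\xi_{\varepsilon,i})|\le C\min\{1,\varepsilon|y-q_{\varepsilon,i}|\}\) and the algebraic decay. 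Also \(\int g_n Z_{i,m}\le C\|g_n\|\to0\), since \(\beta_\tau+N+2s>N\) makes \(\rho_{\tau,q}Z\) integrable. Together these give \(c^n\to0\), and hence the full right-hand side tends to zero in \(\|\cdot\|_{\tau,q}\). Here I use \(\|Z_{j,h}\|_{\tau,q}\le C\), which holds because \(N+2s>\beta_\tau\).

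\textbf{Step 3: outer barrier, mass concentrates near a center.} Choose \(R\) large so that, outside \(B:=\bigcup_j B_R(q_{\varepsilon,j})\),
\[
V(\varepsilon y)-p(W^*_\varepsilon)^{p-1}\ge V_{\min}/2 .
\]
Apply the comparison/barrier argument of \cite[Lemma~2.5]{MR3121716} to the equation for \(\psi_n\). This uses the supersolution \(\rho_{\tau,q}\), for which \((-\Delta)^s\rho_{\tau,q}\ge -C\rho_{\tau,q}\,R^{-(\dots)}\) away from the centers, because \(\beta_\tau<N+2s\). It gives
\[
\|\psi_n\|_{\tau,q}\le C\bigl(\|\psi_n\|_{L^\infty(B)}+\|\text{RHS}\|_{\tau,q}\bigr).
\]
Hence \(\|\psi_n\|_{L^\infty(B_R(q_{\varepsilon_n,j}))}\ge c>0\) for some \(j\).

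\textbf{Step 4: inner limit contradicts nondegeneracy.} Translate by \(q_{\varepsilon_n,j}\). The functions \(\psi_n(\cdot+q_{\varepsilon_n,j})\) are bounded by \(C\rho\) and are uniformly H\"older by fractional regularity, so they converge locally uniformly to some \(\psi\) with \(|\psi|\le C(1+|y|)^{-\beta_\tau}\). The limit solves
\[
(-\Delta)^s\psi+V(\xi_j^0)\psi-pw_{V(\xi_j^0)}^{p-1}\psi=0 .
\]
The other bubbles and the variation of \(V\) disappear locally because \(\xi_{\varepsilon,j}\to\xi^0_j\). The decay places \(\psi\in H^s\), since \(2\beta_\tau>N\). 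Passing to the limit in the orthogonality conditions, by dominated convergence, gives \(\int\psi\,\partial_hw_{V(\xi_j^0)}=0\) for all \(h\). By Lemma~\ref{lem:single-bubble-input}(i), \(\psi\) lies in the span of the \(\partial_h w\), so \(\psi\equiv0\). This contradicts \(\|\psi_n\|_{L^\infty(B_R(q_{\varepsilon_n,j}))}\ge c\).

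\textbf{Main obstacle.} The hardest step is Step 3, the weighted barrier estimate for the nonlocal operator. Two points need care:
\begin{itemize}
\item The sum of algebraic weights \(\rho_{\tau,q}\) must be a genuine supersolution. This requires \(\tau>0\), i.e. \(\beta_\tau<N+2s\), and some care with the tail contributions of distant centers.
\item The maximum principle must be applied in the unbounded exterior region with the nonlocal interaction coming from inside \(B\).
\end{itemize}
I would handle both by citing \cite[Lemma~2.5]{MR3121716}, which covers exactly \(\beta<N+2s\), as was already done in Lemma~\ref{lem:u-polynomial-decay}. I would then check that the \(o(1)\) bounds of Step 2 are uniform in the positions \(q_\varepsilon\), using only the separation \(|q_{\varepsilon,i}-q_{\varepsilon,j}|\ge c/\varepsilon\).
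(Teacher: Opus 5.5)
Your argument is correct and matches the proof the paper relies on: the paper does not prove this lemma but cites \cite[Proposition~4.1 and Lemma~4.2]{MR3121716}, whose proof is exactly your contradiction argument (test against $Z_{i,m}$ to kill the $c_{j,h}$, apply the outer barrier of \cite[Lemma~2.5]{MR3121716}, then blow up near a center and contradict nondegeneracy). As in that reference, and as your normalization in Step~1 implicitly assumes, the estimate is for solutions with $\|\psi_\varepsilon\|_{\tau,q_\varepsilon}<\infty$; this holds in the paper's application to $\Phi_\varepsilon^\perp$.
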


\begin{proof}
This is a priori estimate for the projected operator in \cite[Proposition~4.1 and Lemma~4.2]{MR3121716}.
\end{proof}

\begin{proposition}\label{prop:local-expansion}
There exists \(C>0\) such that
\begin{equation}\label{eq:global-projected-estimate}
\|\widetilde u_\varepsilon(y)-W_\varepsilon^*(\varepsilon y)\|_{\tau,q_\varepsilon}
\le C\varepsilon^\tau .
\end{equation}
\end{proposition}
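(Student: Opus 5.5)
The estimate is obtained by writing the equation for $\Phi_\varepsilon=\widetilde u_\varepsilon-W_\varepsilon^*(\varepsilon\cdot)$ in rescaled variables, splitting off its components along the approximate kernel, and applying the projected inverse estimate of Lemma~\ref{lem:projected-linear-inverse}. The nonlinear and kernel terms are then absorbed using that $A_\varepsilon$ is small. Since $\widetilde u_\varepsilon$ solves \eqref{eq:scaled-u-equation}, we have
\[
\mathcal L_\varepsilon^*\Phi_\varepsilon = E_\varepsilon + N_\varepsilon(\Phi_\varepsilon),
\]
with the following two terms:
\[
E_\varepsilon:=\sum_j W_{\varepsilon,j}^*(\varepsilon y)^p-\Bigl(\sum_j W_{\varepsilon,j}^*\Bigr)^p(\varepsilon y)+\sum_j\bigl(V(\xi_{\varepsilon,j})-V(\varepsilon y)\bigr)W_{\varepsilon,j}^*(\varepsilon y),
\]
\[
N_\varepsilon:=(W^*+\Phi_\varepsilon)^p-(W^*)^p-p(W^*)^{p-1}\Phi_\varepsilon.
\]
A preliminary step is $A_\varepsilon<\infty$ and in fact $A_\varepsilon\to0$. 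Finiteness follows from Lemma~\ref{lem:u-polynomial-decay}(i) together with $\beta_\sigma>\beta_\tau$ and the decay of $w$. Smallness follows from three facts: the $C^1_{\rm loc}$ convergence in Lemma~\ref{lem:true-peak-profile-convergence} on $B_R(q_{\varepsilon,j})$, the uniform $\rho_{\sigma,q_\varepsilon}$ bounds, and the fact that the ratio $\rho_{\sigma}/\rho_{\tau}\le R^{-(\tau-\sigma)}$ outside these balls.

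Next I bound the error. The potential term is controlled by $|V(\xi_{\varepsilon,j})-V(\varepsilon y)|\le C\min\{1,\varepsilon|y-q_{\varepsilon,j}|\}$. Since $w\lesssim(1+|y|)^{-N-2s}$ and $\min\{1,\varepsilon t\}\le(\varepsilon t)^\tau$, we get
\[
|V(\xi_{\varepsilon,j})-V(\varepsilon y)|\,w(y-q_{\varepsilon,j})\le C\varepsilon^\tau(1+|y-q_{\varepsilon,j}|)^{-\beta_\tau}.
\]
The interaction term is bounded by $C\sum_{i\ne j}W_i^{p-1}W_j$. Because the centers are $c/\varepsilon$ apart, near $q_j$ this gives $\varepsilon^{N+2s}$ times the weight, and elsewhere it gives even faster decay. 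Hence $\|E_\varepsilon\|_{\tau,q_\varepsilon}\le C\varepsilon^\tau$. For the nonlinear term, $|N_\varepsilon|\le C(|\Phi_\varepsilon|^p+(W^*)^{p-2}\Phi_\varepsilon^2)$ (or $C|\Phi_\varepsilon|^p$ if $p\le2$). Using $\rho^{p}\le\rho$ and $\rho^2\le\rho$, this yields $\|N_\varepsilon\|_{\tau,q_\varepsilon}\le C A_\varepsilon^{\min(p,2)}=o(A_\varepsilon)$.

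To use Lemma~\ref{lem:projected-linear-inverse}, I decompose $\Phi_\varepsilon=\psi_\varepsilon+\sum_{j,h}d_{j,h}Z_{j,h}$, where $\psi_\varepsilon\perp Z_{j,h}$ in $L^2$. The Gram matrix of the $Z_{j,h}$ is nearly block diagonal, so the $d_{j,h}$ are well defined and satisfy $|d|\le CA_\varepsilon$. Since $Z_{j,h}$ decays with exponent $N+2s$, $\mathcal L_\varepsilon^*Z_{j,h}$ consists of potential-variation and interaction terms, and the same computation as for $E_\varepsilon$ shows its $\tau$-norm is $O(\varepsilon^\tau)$. Lemma~\ref{lem:projected-linear-inverse} then gives
\[
\|\psi_\varepsilon\|_{\tau}\le C\varepsilon^\tau+o(A_\varepsilon)+C\varepsilon^\tau|d|.
\]
It remains to show $|d|=O(\varepsilon^\tau)+o(A_\varepsilon)$, and this is the step I expect to be the main obstacle. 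I would exploit the choice of $\xi_{\varepsilon,j}$ as a local maximum. Evaluating $\nabla\widetilde u_\varepsilon(q_{\varepsilon,j})=0$ gives
\[
0=\nabla W^*_j(q_{\varepsilon,j})+\nabla\psi_\varepsilon(q_{\varepsilon,j})+\sum_h d_{j,h}\nabla Z_{j,h}(q_{\varepsilon,j})+O(\varepsilon^{N+2s}).
\]
Here $\nabla W^*_j(q_{\varepsilon,j})=0$, and by \eqref{eq:hessian-nondeg-max} the matrix $\nabla Z_{j,\cdot}(q_{\varepsilon,j})=-\kappa I$ is invertible. Hence $|d_j|\le C|\nabla\psi_\varepsilon(q_{\varepsilon,j})|$. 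To bound $\nabla\psi_\varepsilon$ pointwise by its weighted sup norm, I would apply local $C^{1,\alpha}$ Schauder-type estimates for the fractional equation satisfied by $\psi_\varepsilon$ on $B_1(q_{\varepsilon,j})$, as in \cite{silvestre2007regularity}. The right-hand side of that equation has size $O(\varepsilon^\tau)+o(A_\varepsilon)+C\varepsilon^\tau|d|$ in $L^\infty$, and the nonlocal tail of $\psi_\varepsilon$ is controlled by the weighted norm. This gives $|\nabla\psi_\varepsilon(q_{\varepsilon,j})|\le C\|\psi_\varepsilon\|_\tau+C(\text{rhs})$, which is circular. To break the circularity I would instead first test the equation against $Z_{j,h}$: the left side is $O(\varepsilon^\tau)A_\varepsilon$, and the error projections $\int E_\varepsilon Z_{j,h}$ are $O(\varepsilon)$ by oddness and Lemma~\ref{lem:pohozaev-center-estimate}. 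This controls the combination of $d$ with $\psi$.

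Combining the bounds gives $A_\varepsilon\le C\varepsilon^\tau+o(A_\varepsilon)$, and hence $A_\varepsilon\le C\varepsilon^\tau$. If the coupling between $d$ and $\psi$ cannot be closed by the argument above, the fallback is a blow-up contradiction. Assume $A_\varepsilon/\varepsilon^\tau\to\infty$ and normalize $\Phi_\varepsilon/A_\varepsilon$. The limit near each peak lies in $\ker L_{V(\xi_j^0)}$ and is thus a combination of the $\partial_h w$. The condition $\nabla\widetilde u_\varepsilon(q_{\varepsilon,j})=0$ combined with \eqref{eq:hessian-nondeg-max} forces the coefficients to vanish. Away from the peaks the weighted barrier argument of \cite[Lemma~2.5]{MR3121716} forces the normalized functions to zero, contradicting the normalization.
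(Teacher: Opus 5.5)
Your overall architecture matches the paper's: smallness of $A_\varepsilon$, splitting off the $Z_{j,h}$-components, Lemma~\ref{lem:projected-linear-inverse}, fixing the kernel coefficients by $\nabla\widetilde u_\varepsilon(q_{\varepsilon,j})=0$ and \eqref{eq:hessian-nondeg-max}, then absorption. However, the step you flag is left open, and the remedy you propose does not work.

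There is in fact no circularity. The projected estimate already gives $\|\psi_\varepsilon\|_{\tau,q_\varepsilon}\le C\varepsilon^\tau+CA_\varepsilon^\kappa+C\varepsilon^\tau|d|$, and $d$ enters only with the small factor $\varepsilon^\tau$. Substituting this into the gradient bound gives $|d|\le C\varepsilon^\tau+CA_\varepsilon^\kappa+C\varepsilon^\tau|d|$, so you can absorb. Then $A_\varepsilon\le C\|\psi_\varepsilon\|_{\tau,q_\varepsilon}+C|d|\le C\varepsilon^\tau+CA_\varepsilon^\kappa$, and $A_\varepsilon=o(1)$ finishes. This is exactly how the paper closes.

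Testing the equation against $Z_{j,h}$, by contrast, cannot bound $d$. In $\int\Phi_\varepsilon\,\mathcal L_\varepsilon^*Z_{j,h}=\int(E_\varepsilon+N_\varepsilon(\Phi_\varepsilon))Z_{j,h}$, the coefficients $d_{i,m}$ appear only multiplied by $\int Z_{i,m}\,\mathcal L_\varepsilon^*Z_{j,h}=O(\varepsilon^\tau)$. That matrix has no usable lower bound. The kernel component is pinned down by the choice of center (the local-maximum condition), not by the equation.

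The more substantive gap is regularity. A pointwise bound on $\nabla\psi_\varepsilon(q_{\varepsilon,j})$ does not follow from an $L^\infty$ bound on the right-hand side when $s\le 1/2$. For $(-\Delta)^s\psi=f\in L^\infty$, interior estimates give only $C^{2s}$ (log-Lipschitz at $s=1/2$), not $C^1$.

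The paper handles this as follows:
\begin{itemize}
\item It fixes $\gamma\in(\max\{0,1-2s\},1)$ with $\gamma+2s\notin\mathbb N$.
\item It proves the local bounds $\|E_\varepsilon\|_{C^\gamma(B_2(q_{\varepsilon,j}))}\le C\varepsilon^\tau$ and $\|\mathcal L_\varepsilon^*Z_{i,h}\|_{C^\gamma(B_2(q_{\varepsilon,j}))}\le C\varepsilon^\tau$. Here it uses $W_{\varepsilon,j}^*(\varepsilon y)\ge c_0$ on $B_3(q_{\varepsilon,j})$ to handle $t\mapsto t^{p-1}$ when $p<2$.
\item A Schauder bootstrap gives $\|\Phi_\varepsilon\|_{C^\gamma(B_2(q_{\varepsilon,j}))}\le C(A_\varepsilon+\varepsilon^\tau)$, hence $\|N_\varepsilon(\Phi_\varepsilon)\|_{C^\gamma(B_2(q_{\varepsilon,j}))}\le CA_\varepsilon^\kappa+C\varepsilon^\tau$.
\item Only then does \cite[Theorem~1.1]{RosOtonSerra2016Stable} yield $|\nabla\Phi_\varepsilon^\perp(q_{\varepsilon,j})|\le C\|\Phi_\varepsilon^\perp\|_{L^\infty}+C\|H_\varepsilon\|_{C^\gamma(B_2(q_{\varepsilon,j}))}$.
\end{itemize}

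Your blow-up fallback is a genuine alternative and would even bypass Lemma~\ref{lem:projected-linear-inverse}. The limit near each peak lies in the kernel, and the gradient condition kills it. The barrier of \cite[Lemma~2.5]{MR3121716} then gives $A_\varepsilon\le C(\|\Phi_\varepsilon\|_{L^\infty(\cup_jB_R(q_{\varepsilon,j}))}+\varepsilon^\tau+A_\varepsilon^\kappa)$. This is impossible once $\Phi_\varepsilon/A_\varepsilon\to0$ on the balls and $\varepsilon^\tau=o(A_\varepsilon)$.

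But the fallback rests on the same missing ingredient. To pass $\nabla(\Phi_\varepsilon/A_\varepsilon)(q_{\varepsilon,j})=O(\varepsilon^{N+2s}/A_\varepsilon)\to0$ to the limit, you need $C^1_{\mathrm{loc}}$ convergence of $\Phi_\varepsilon/A_\varepsilon$. That requires local H\"older control of $E_\varepsilon/A_\varepsilon$ of exactly the type above.
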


\begin{proof}
We divide the proof into several steps.

\smallskip
\noindent\emph{Step 1: preliminary convergence.}
We first prove
\begin{equation}\label{eq:global-rough-convergence-new}
A_\varepsilon=o(1).
\end{equation}
For fixed \(R>1\), set
\[
O_{\varepsilon,R}:=\mathbb R^N\setminus\bigcup_{j=1}^kB_R(q_{\varepsilon,j}).
\]
Since \(\tau>\sigma\), by Lemma~\ref{lem:single-bubble-input} and Lemma~\ref{lem:u-polynomial-decay},
\[
\sup_{y\in O_{\varepsilon,R}}\rho_{\tau,q_\varepsilon}(y)^{-1}|\Phi_\varepsilon(y)|
\le CR^{\sigma-\tau}+CR^{-\tau}.
\]
On \(B_R(q_{\varepsilon,j})\), Lemma~\ref{lem:single-bubble-input} and Lemma~\ref{lem:true-peak-profile-convergence} give
\[
\sup_{y\in\cup_jB_R(q_{\varepsilon,j})}
\rho_{\tau,q_\varepsilon}(y)^{-1}|\Phi_\varepsilon(y)|=o(1).
\]
Thus
\[
A_\varepsilon\le o(1)+CR^{\sigma-\tau}+CR^{-\tau}.
\]
Letting first \(\varepsilon\to0\) and then \(R\to\infty\) proves \eqref{eq:global-rough-convergence-new}.

\smallskip
\noindent\emph{Step 2: finite-dimensional decomposition.}
The standard Gram estimate for the family \(\{Z_{j,h}\}\) in \cite{MR3121716} gives
\[
\int_{\mathbb R^N}Z_{j,h}Z_{i,m}
=G_{\varepsilon,j}\delta_{ij}\delta_{hm}+O\left(\min_{\ell\ne r}|q_{\varepsilon,\ell}-q_{\varepsilon,r}|^{-N}\right),
\qquad
G_{\varepsilon,j}:=\int_{\mathbb R^N}(\partial_h w_{V(\xi_{\varepsilon,j})})^2>0.
\]
Since \(\min_{i\ne j}|q_{\varepsilon,i}-q_{\varepsilon,j}|\ge c\varepsilon^{-1}\), this Gram matrix is uniformly invertible.  Therefore there exists a unique decomposition
\begin{equation}\label{eq:global-orthogonal-decomposition-new}
  \Phi_\varepsilon
=
\Phi_\varepsilon^\perp
+
\sum_{j=1}^k\sum_{h=1}^N a_{j,h}Z_{j,h},
\qquad
\int_{\mathbb R^N}\Phi_\varepsilon^\perp Z_{j,h}=0.
\end{equation}
Taking the \(L^2\)-inner product with \(Z_{i,m}\), we find that the coefficients satisfy
\[
\sum_{j,h}a_{j,h}\int_{\mathbb R^N}Z_{j,h}Z_{i,m}
=
\int_{\mathbb R^N}\Phi_\varepsilon Z_{i,m}.
\]
Using the uniform invertibility of the Gram matrix and the weighted bound for
\(\Phi_\varepsilon\), we obtain
\[
|a|\le C A_\varepsilon.
\]
Moreover, by Lemma~\ref{lem:single-bubble-input}, we obtain
\[
\|\rho_{\tau,q_\varepsilon}^{-1}Z_{j,h}\|_{L^\infty}\le C.
\]
Then, the decomposition gives
\begin{equation}\label{eq:gram-decomposition-bounds-new}
  A_\varepsilon
\le
C\|\Phi_\varepsilon^\perp\|_{\tau,q_\varepsilon}
+
C|a|.
\end{equation}

\smallskip

\noindent\emph{Step 3: residual estimates.}
Set
\[
E_\varepsilon:=
\sum_{j=1}^k(V(\xi_{\varepsilon,j})-V(\varepsilon y))W_{\varepsilon,j}^*(\varepsilon y)
+
\bigl(W_\varepsilon^*(\varepsilon y)\bigr)^p
-
\sum_{j=1}^k\bigl(W_{\varepsilon,j}^*(\varepsilon y)\bigr)^p
\]
and
\[
N_\varepsilon(\Phi):=
(W_\varepsilon^*(\varepsilon y)+\Phi)^p-
\bigl(W_\varepsilon^*(\varepsilon y)\bigr)^p
-p\bigl(W_\varepsilon^*(\varepsilon y)\bigr)^{p-1}\Phi.
\]
Thus
\begin{equation}\label{eq:phi-global-equation-new}
\mathcal L_\varepsilon^*\Phi_\varepsilon
=E_\varepsilon+N_\varepsilon(\Phi_\varepsilon).
\end{equation}
Choose \(\gamma\in(\max\{0,1-2s\},1)\) such that
\[
\gamma+2s\notin\mathbb N.
\]
Then \(\gamma+2s>1\).  We shall use the following estimates.  With
\(\kappa:=\min\{2,p\}>1\),
\begin{equation}\label{eq:residual-basic-claim-1}
\|E_\varepsilon\|_{\tau,q_\varepsilon}\le C\varepsilon^\tau,
\qquad
\max_j\|E_\varepsilon\|_{C^\gamma(B_2(q_{\varepsilon,j}))}\le C\varepsilon^\tau,
\end{equation}
\begin{equation}\label{eq:residual-basic-claim-2}
\max_{j,h}\|\mathcal L_\varepsilon^*Z_{j,h}\|_{\tau,q_\varepsilon}
+
\max_{\ell,j,h}\|\mathcal L_\varepsilon^*Z_{j,h}\|_{C^\gamma(B_2(q_{\varepsilon,\ell}))}
\le C\varepsilon^\tau,
\end{equation}
and
\begin{equation}\label{eq:global-nonlinear-estimate-new}
\|N_\varepsilon(\Phi_\varepsilon)\|_{\tau,q_\varepsilon}\le CA_\varepsilon^\kappa.
\end{equation}
Consequently, if
\begin{equation}\label{eq:H-definition-new}
H_\varepsilon:=E_\varepsilon+N_\varepsilon(\Phi_\varepsilon)
-
\sum_{j=1}^k\sum_{h=1}^Na_{j,h}\mathcal L_\varepsilon^*Z_{j,h},
\end{equation}
then
\begin{equation}\label{eq:source-global-bound-new}
\|H_\varepsilon\|_{\tau,q_\varepsilon}
\le C\varepsilon^\tau+CA_\varepsilon^\kappa+C\varepsilon^\tau|a|,
\end{equation}
and
\begin{equation}\label{eq:source-local-bound-new}
\max_j\|H_\varepsilon\|_{C^\gamma(B_2(q_{\varepsilon,j}))}
\le C\varepsilon^\tau+CA_\varepsilon^\kappa+C\varepsilon^\tau|a|.
\end{equation}

We first prove \eqref{eq:residual-basic-claim-1}.  The weighted estimate follows directly from
Lemma~\ref{lem:weighted-tail}, applied with \(\alpha=\one\), \(\xi=\xi_\varepsilon\), and
\(x=\varepsilon y\).  It remains to prove the local \(C^\gamma\) estimate for
\(E_\varepsilon\). 

Fix \(j\in\{1,\ldots,k\}\). Using Lemma~\ref{lem:pohozaev-center-estimate}, we obtain
\begin{equation}\label{eq:E-main-potential-local-new}
\bigl\|(
V(\xi_{\varepsilon,j})-V(\varepsilon y))W_{\varepsilon,j}^*(\varepsilon y)
\bigr\|_{C^\gamma(B_3(q_{\varepsilon,j}))}
\le C\varepsilon^2.
\end{equation}

For \(i\ne j\), Lemma~\ref{lem:single-bubble-input} gives on \(B_3(q_{\varepsilon,j})\)
\begin{equation}\label{eq:E-tail-potential-local-new}
\sum_{i\ne j}
\bigl\|(
V(\xi_{\varepsilon,i})-V(\varepsilon y))W_{\varepsilon,i}^*(\varepsilon y)
\bigr\|_{C^\gamma(B_3(q_{\varepsilon,j}))}
\le C\varepsilon^{N+2s}.
\end{equation}

We now estimate the interaction term.  On \(B_3(q_{\varepsilon,j})\),
\[
W_{\varepsilon,j}^*(\varepsilon y)\ge c_0>0,
\qquad
\|W_{\varepsilon,j}^*(\varepsilon y)\|_{C^1(B_3(q_{\varepsilon,j}))}\le C,
\]
and by Lemma~\ref{lem:single-bubble-input}, we obtain
\[
\left\|\sum_{i\ne j}W_{\varepsilon,i}^*(\varepsilon y)\right\|_{C^\gamma(B_3(q_{\varepsilon,j}))}
+
\sum_{i\ne j}
\left\|\bigl(W_{\varepsilon,i}^*(\varepsilon y)\bigr)^p\right\|_{C^\gamma(B_3(q_{\varepsilon,j}))}
\le C\varepsilon^{N+2s}.
\]
Then,
\[
\begin{aligned}
&\bigl(W_\varepsilon^*(\varepsilon y)\bigr)^p
-
\sum_{i=1}^k\bigl(W_{\varepsilon,i}^*(\varepsilon y)\bigr)^p \\
&\quad=
\left(\int_0^1
p\left(W_{\varepsilon,j}^*(\varepsilon y)
+t\sum_{i\ne j}W_{\varepsilon,i}^*(\varepsilon y)\right)^{p-1}dt\right)
\sum_{i\ne j}W_{\varepsilon,i}^*(\varepsilon y)
-
\sum_{i\ne j}\bigl(W_{\varepsilon,i}^*(\varepsilon y)\bigr)^p,
\end{aligned}
\]
hence we obtain
\begin{equation}\label{eq:E-interaction-local-new}
\left\|
\bigl(W_\varepsilon^*(\varepsilon y)\bigr)^p
-
\sum_{i=1}^k\bigl(W_{\varepsilon,i}^*(\varepsilon y)\bigr)^p
\right\|_{C^\gamma(B_3(q_{\varepsilon,j}))}
\le C\varepsilon^{N+2s}.
\end{equation}
Combining \eqref{eq:E-main-potential-local-new}, \eqref{eq:E-tail-potential-local-new}, and
\eqref{eq:E-interaction-local-new}, we have
\[
\|E_\varepsilon\|_{C^\gamma(B_3(q_{\varepsilon,j}))}
\le C\varepsilon^2+C\varepsilon^{N+2s}
\le C\varepsilon^\tau,
\]
because \(\tau<s<1\).  This proves \eqref{eq:residual-basic-claim-1}.

We next prove \eqref{eq:residual-basic-claim-2}. We have
\[
\mathcal L_\varepsilon^*Z_{j,h}
=
\bigl(V(\varepsilon y)-V(\xi_{\varepsilon,j})\bigr)Z_{j,h}
+
p\left[\bigl(W_{\varepsilon,j}^*(\varepsilon y)\bigr)^{p-1}
-
\bigl(W_\varepsilon^*(\varepsilon y)\bigr)^{p-1}\right]Z_{j,h}.
\]
By the same argument as in the proof of \eqref{eq:residual-basic-claim-1}, we obtain
\eqref{eq:residual-basic-claim-2}.

For the nonlinear remainder, if \(1<p<2\), then
\[
|N_\varepsilon(\Phi_\varepsilon)|\le C|\Phi_\varepsilon|^p,
\]
whereas if \(p\ge2\), then
\[
|N_\varepsilon(\Phi_\varepsilon)|
\le C\bigl(W_\varepsilon^*(\varepsilon y)\bigr)^{p-2}\Phi_\varepsilon^2+C|\Phi_\varepsilon|^p.
\]
Using \(|\Phi_\varepsilon|\le A_\varepsilon\rho_{\tau,q_\varepsilon}\) gives
\eqref{eq:global-nonlinear-estimate-new}.

It remains to obtain the local H\"older estimate needed for \(H_\varepsilon\). Notice that \(\Phi_\varepsilon\) satisfies
\[
(-\Delta)^s\Phi_\varepsilon
=E_\varepsilon+N_\varepsilon(\Phi_\varepsilon)
-\bigl(V(\varepsilon y)-p(W_\varepsilon^*(\varepsilon y))^{p-1}\bigr)\Phi_\varepsilon.
\]
By \eqref{eq:residual-basic-claim-1}, \eqref{eq:global-nonlinear-estimate-new}, and the
interior estimates of \cite[Theorem~1.1 and Corollary~3.5]{RosOtonSerra2016Stable}, a
finite local Schauder bootstrap yields
\[
\|\Phi_\varepsilon\|_{C^\gamma(B_2(q_{\varepsilon,j}))}
\le C(A_\varepsilon+\varepsilon^\tau).
\]
Thus, we obtain
\begin{equation}\label{eq:nonlinear-local-estimate}
  \max_j\|N_\varepsilon(\Phi_\varepsilon)\|_{C^\gamma(B_2(q_{\varepsilon,j}))}
\le CA_\varepsilon^\kappa+C\varepsilon^\tau.
\end{equation}
Combining this estimate with \eqref{eq:residual-basic-claim-1},
\eqref{eq:residual-basic-claim-2}, and \eqref{eq:nonlinear-local-estimate} gives
\eqref{eq:source-global-bound-new} and \eqref{eq:source-local-bound-new}.

\smallskip
\noindent\emph{Step 4: projected estimate.}
From \eqref{eq:phi-global-equation-new}, \eqref{eq:global-orthogonal-decomposition-new}, and \eqref{eq:H-definition-new},
\[
\mathcal L_\varepsilon^*\Phi_\varepsilon^\perp=H_\varepsilon,
\qquad
\int_{\mathbb R^N}\Phi_\varepsilon^\perp Z_{j,h}=0.
\]
Lemma~\ref{lem:projected-linear-inverse} and \eqref{eq:source-global-bound-new} give
\begin{equation}\label{eq:perp-global-estimate-new}
\|\Phi_\varepsilon^\perp\|_{\tau,q_\varepsilon}
\le C\varepsilon^\tau+CA_\varepsilon^\kappa+C\varepsilon^\tau|a|.
\end{equation}

We next estimate \(a\).  Since \(\xi_{\varepsilon,j}\) is the local maximum point,
\[
\nabla_y\widetilde u_\varepsilon(q_{\varepsilon,j})=0.
\]
Moreover,
\[
\nabla_y W_\varepsilon^*(\varepsilon y)\big|_{y=q_{\varepsilon,j}}
=
\sum_{i\ne j}\nabla w_{V(\xi_{\varepsilon,i})}(q_{\varepsilon,j}-q_{\varepsilon,i})
=O(\varepsilon^{N+2s}),
\]
and hence
\begin{equation}\label{eq:gradient-phi-at-center-new}
\nabla\Phi_\varepsilon(q_{\varepsilon,j})=O(\varepsilon^{N+2s}).
\end{equation}
Differentiating \eqref{eq:global-orthogonal-decomposition-new} at \(q_{\varepsilon,j}\), we obtain
\[
\nabla\Phi_\varepsilon(q_{\varepsilon,j})
=\nabla\Phi_\varepsilon^\perp(q_{\varepsilon,j})
+\sum_{h=1}^Na_{j,h}\nabla Z_{j,h}(q_{\varepsilon,j})
+
\sum_{i\ne j}\sum_{h=1}^Na_{i,h}\nabla Z_{i,h}(q_{\varepsilon,j}).
\]
The principal term is
\[
\sum_{h=1}^Na_{j,h}\nabla Z_{j,h}(q_{\varepsilon,j})
=D^2w_{V(\xi_{\varepsilon,j})}(0)a_j
=-\kappa_{V(\xi_{\varepsilon,j})}a_j,
\qquad
\kappa_{V(\xi_{\varepsilon,j})}\ge c>0,
\]
while by Lemma~\ref{lem:single-bubble-input}, the remaining peak terms are \(O(\varepsilon^{N+2s}|a|)\). Since \(N+2s>\tau\), \eqref{eq:gradient-phi-at-center-new} gives
\begin{equation}\label{eq:a-by-gradient-new}
|a_j|
\le C|\nabla\Phi_\varepsilon^\perp(q_{\varepsilon,j})|+C\varepsilon^\tau+C\varepsilon^\tau|a|.
\end{equation}
On \(B_2(q_{\varepsilon,j})\), \(\Phi_\varepsilon^\perp\) solves
\[
(-\Delta)^s\Phi_\varepsilon^\perp+\left(V(\varepsilon y)-p(W_\varepsilon^*(\varepsilon y))^{p-1}\right)\Phi_\varepsilon^\perp=H_\varepsilon.
\]
Since \(\gamma+2s>1\), the local Schauder estimate in \cite[Theorem~1.1 and Corollary~3.5]{RosOtonSerra2016Stable}  yields
\[
|\nabla\Phi_\varepsilon^\perp(q_{\varepsilon,j})|
\le C\|\Phi_\varepsilon^\perp\|_{L^\infty(\mathbb R^N)}
+C\|H_\varepsilon\|_{C^\gamma(B_2(q_{\varepsilon,j}))}.
\]
Since \(\|\Phi_\varepsilon^\perp\|_{L^\infty}\le C\|\Phi_\varepsilon^\perp\|_{\tau,q_\varepsilon}\), \eqref{eq:perp-global-estimate-new} and \eqref{eq:source-local-bound-new} imply
\[
|\nabla\Phi_\varepsilon^\perp(q_{\varepsilon,j})|
\le C\varepsilon^\tau+CA_\varepsilon^\kappa+C\varepsilon^\tau|a|.
\]
Substitution into \eqref{eq:a-by-gradient-new} gives
\[
|a|\le C\varepsilon^\tau+CA_\varepsilon^\kappa+C\varepsilon^\tau|a|.
\]
Absorbing the last term, we obtain
\begin{equation}\label{eq:a-final-estimate-new}
|a|\le C\varepsilon^\tau+CA_\varepsilon^\kappa.
\end{equation}
Combining \eqref{eq:perp-global-estimate-new} and \eqref{eq:a-final-estimate-new} also gives
\[
\|\Phi_\varepsilon^\perp\|_{\tau,q_\varepsilon}
\le C\varepsilon^\tau+CA_\varepsilon^\kappa.
\]

\smallskip
\noindent\emph{Step 5: absorption.}
Using \eqref{eq:gram-decomposition-bounds-new},
\[
A_\varepsilon
\le C\|\Phi_\varepsilon^\perp\|_{\tau,q_\varepsilon}+C|a|
\le C\varepsilon^\tau+CA_\varepsilon^\kappa.
\]
By \eqref{eq:global-rough-convergence-new}, \(A_\varepsilon=o(1)\).  Since \(\kappa>1\),
\[
CA_\varepsilon^\kappa=CA_\varepsilon^{\kappa-1}A_\varepsilon\le \frac12A_\varepsilon
\]
for sufficiently small \(\varepsilon\).  Therefore \(A_\varepsilon\le C\varepsilon^\tau\), which is exactly \eqref{eq:global-projected-estimate}.
\end{proof}

We next give the corresponding energy estimate.

\begin{lemma}\label{lem:energy-approx}
There exists \(C>0\) such that
\begin{equation}\label{eq:energy-approx-estimate}
\|u_\varepsilon-W_\varepsilon^*\|_{\varepsilon,V}
\le C\varepsilon^{N/2+\tau}.
\end{equation}
\end{lemma}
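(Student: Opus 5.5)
Set $\varphi_\varepsilon:=u_\varepsilon-W_\varepsilon^*$, so that $\varphi_\varepsilon(\varepsilon y)=\Phi_\varepsilon(y)$. The plan is to convert the weighted $L^\infty$ bound of Proposition~\ref{prop:local-expansion} into an energy bound by testing the equation for $\varphi_\varepsilon$ against $\varphi_\varepsilon$ itself. Rescaling gives $\|\varphi_\varepsilon\|_{\varepsilon,V}^2=\varepsilon^N\bigl(\|(-\Delta)^{s/2}\Phi_\varepsilon\|_{L^2}^2+\int V(\varepsilon y)\Phi_\varepsilon^2\,dy\bigr)$. It therefore suffices to prove
\[
\int_{\mathbb R^N}\bigl(|(-\Delta)^{s/2}\Phi_\varepsilon|^2+V(\varepsilon y)\Phi_\varepsilon^2\bigr)\,dy\le C\varepsilon^{2\tau}.
\]

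Multiply \eqref{eq:phi-global-equation-new} by $\Phi_\varepsilon$ and integrate. The left-hand side is $\int(|(-\Delta)^{s/2}\Phi_\varepsilon|^2+V(\varepsilon y)\Phi_\varepsilon^2)\,dy-p\int (W_\varepsilon^*(\varepsilon y))^{p-1}\Phi_\varepsilon^2\,dy$. Hence the energy quantity equals
\[
p\int (W_\varepsilon^*)^{p-1}\Phi_\varepsilon^2+\int E_\varepsilon\Phi_\varepsilon+\int N_\varepsilon(\Phi_\varepsilon)\Phi_\varepsilon .
\]
We do not need coercivity, because every term on the right is a pointwise product that the weighted norms control. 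Proposition~\ref{prop:local-expansion} gives $|\Phi_\varepsilon|\le C\varepsilon^\tau\rho_{\tau,q_\varepsilon}$. Estimate \eqref{eq:residual-basic-claim-1} gives $|E_\varepsilon|\le C\varepsilon^\tau\rho_{\tau,q_\varepsilon}$. Estimate \eqref{eq:global-nonlinear-estimate-new} gives $|N_\varepsilon(\Phi_\varepsilon)|\le C\varepsilon^{\kappa\tau}\rho_{\tau,q_\varepsilon}$, and $\varepsilon^{\kappa\tau}\le\varepsilon^\tau$ because $\kappa>1$. Finally, $(W_\varepsilon^*)^{p-1}$ is bounded.

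Each term is thus bounded by $C\varepsilon^{2\tau}\int\rho_{\tau,q_\varepsilon}^2\,dy$, so the remaining step is to show $\int\rho_{\tau,q_\varepsilon}^2\,dy\le C$ uniformly in $\varepsilon$. Since $(a_1+\dots+a_k)^2\le k\sum a_j^2$, we have $\rho_{\tau,q_\varepsilon}^2\le k\sum_j(1+|y-q_{\varepsilon,j}|)^{-2\beta_\tau}$. Each summand integrates to a finite constant independent of $q_{\varepsilon,j}$, because $2\beta_\tau>2N>N$. This yields the displayed bound, and multiplying by $\varepsilon^N$ gives \eqref{eq:energy-approx-estimate}.

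The main technical point is justifying the test. We need $\Phi_\varepsilon\in H^s(\mathbb R^N)$, which holds because both $\widetilde u_\varepsilon$ and $W_\varepsilon^*(\varepsilon\cdot)$ lie in $H^s$. We also need the pairing $\langle(-\Delta)^s\Phi_\varepsilon,\Phi_\varepsilon\rangle=\|(-\Delta)^{s/2}\Phi_\varepsilon\|_{L^2}^2$, which holds in the weak $H^s$ formulation of the equations. The weighted bounds then put all right-hand-side integrands in $L^1$, so no further obstacle is expected.
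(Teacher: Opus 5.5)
Your proof is correct and is essentially the paper's argument. The paper also tests $\varepsilon^{2s}(-\Delta)^sR_\varepsilon+V R_\varepsilon=F_\varepsilon$, with $R_\varepsilon=u_\varepsilon-W_\varepsilon^*$, against $R_\varepsilon$; its $F_\varepsilon$ is, after rescaling, exactly your $p(W_\varepsilon^*)^{p-1}\Phi_\varepsilon+E_\varepsilon+N_\varepsilon(\Phi_\varepsilon)$, and it bounds this via Proposition~\ref{prop:local-expansion}. The only cosmetic difference is in the final estimate: the paper uses $\|F_\varepsilon\|_{L^1}\|R_\varepsilon\|_{L^\infty}$ together with $\rho_{\tau,q_\varepsilon}\in L^1$, whereas you bound the integrand pointwise by $C\varepsilon^{2\tau}\rho_{\tau,q_\varepsilon}^2$ and use $\rho_{\tau,q_\varepsilon}^2\in L^1$.
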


\begin{proof}
Set \(R_\varepsilon:=u_\varepsilon-W_\varepsilon^*\). We obtain 
\[
\varepsilon^{2s}(-\Delta)^sR_\varepsilon+V(x)R_\varepsilon=F_\varepsilon
\quad\hbox{in }\mathbb R^N,
\]
where
\[
F_\varepsilon:=u_\varepsilon^p-
\sum_{j=1}^k(W_{\varepsilon,j}^*)^p
-
\sum_{j=1}^k(V(x)-V(\xi_{\varepsilon,j}))W_{\varepsilon,j}^*.
\]
Testing the equation with \(R_\varepsilon\) gives
\[
\|R_\varepsilon\|_{\varepsilon,V}^2
=\int_{\mathbb R^N}F_\varepsilon R_\varepsilon
\le \|F_\varepsilon\|_{L^1(\mathbb R^N)}\|R_\varepsilon\|_{L^\infty(\mathbb R^N)}.
\]
By the computation in Proposition~\ref{prop:local-expansion}, we have
\[
\|F_\varepsilon\|_{L^1(\mathbb R^N)}
\le C\varepsilon^{N+\tau}.
\]
Proposition~\ref{prop:local-expansion} also gives \(\|R_\varepsilon\|_{L^\infty}\le C\varepsilon^\tau\).  Therefore
\[
\|R_\varepsilon\|_{\varepsilon,V}^2
\le C\varepsilon^{N+2\tau},
\]
which proves \eqref{eq:energy-approx-estimate}.
\end{proof}

\subsection{Modulation by minimization}\label{subsec:modulation-minimization}

Recall that, for \(\xi=(\xi_1,\ldots,\xi_k)\), we have 
\[
Y_{\varepsilon,j,h}(x;\xi_j):=\partial_{\xi_{j,h}}W_{\varepsilon,j}(x;\xi_j).
\qquad h=1,\ldots,N.
\]
Explicitly, taking \(y=(x-\xi_j)/\varepsilon\), we obtain
\begin{equation}\label{eq:tangent-vector-explicit}
Y_{\varepsilon,j,h}(x;\xi_j)
=-\varepsilon^{-1}\partial_hw_{V(\xi_j)}(y)+
\left.\partial_\lambda w_\lambda(y)\right|_{\lambda=V(\xi_j)}\partial_hV(\xi_j).
\end{equation}
And the orthogonality space is
\[
\begin{aligned}
E_{\varepsilon,\xi}:=
\{\omega\in H^s(\mathbb R^N):&
\ \langle\omega,W_{\varepsilon,j}(x;\xi_j)\rangle_{\varepsilon,V}=0,\\
&\ \langle\omega,Y_{\varepsilon,j,h}(x;\xi_j)\rangle_{\varepsilon,V}=0,
\quad j=1,\ldots,k,\ h=1,\ldots,N\}.
\end{aligned}
\]

\begin{proposition}\label{prop:minimization}
For all sufficiently small \(\varepsilon\), there exist
\(\alpha_\varepsilon\in\mathbb R^k\), \(\widehat\xi_\varepsilon\in(\mathbb R^N)^k\), and
\(\omega_\varepsilon\in H^s(\mathbb R^N)\) such that
\[
u_\varepsilon=W_{\varepsilon,\alpha_\varepsilon,\widehat\xi_\varepsilon}+\omega_\varepsilon,
\qquad
\omega_\varepsilon\in E_{\varepsilon,\widehat\xi_\varepsilon},
\]
and
\begin{equation}\label{eq:min-realization-estimate}
|\alpha_\varepsilon-\one|+
\frac{|\widehat\xi_\varepsilon-\xi_\varepsilon|}{\varepsilon}+
\varepsilon^{-N/2}\|\omega_\varepsilon\|_{\varepsilon,V}
\le C\varepsilon^\tau,
\end{equation}
\begin{equation}\label{eq:min-realization-weighted-estimate}
\|\omega_\varepsilon\|_{\tau,\varepsilon,\widehat\xi_\varepsilon}
\le C\varepsilon^\tau.
\end{equation}
\end{proposition}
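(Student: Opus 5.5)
The plan is to obtain the decomposition by minimizing the distance to the manifold of approximate profiles, and then to read off the estimates. Fix a small constant $K>0$. Over the closed set
\[
\mathcal K_\varepsilon:=\{(\alpha,\xi):|\alpha-\one|\le K\varepsilon^{\tau/2},\ |\xi-\xi_\varepsilon|\le K\varepsilon^{1+\tau/2}\},
\]
minimize
\[
F(\alpha,\xi):=\|u_\varepsilon-W_{\varepsilon,\alpha,\xi}\|_{\varepsilon,V}^2 .
\]
The map $(\alpha,\xi)\mapsto W_{\varepsilon,\alpha,\xi}$ is $C^1$ into $H^s$ by Lemma~\ref{lem:single-bubble-input}(ii), so $F$ is continuous on this compact set and a minimizer $(\alpha_\varepsilon,\widehat\xi_\varepsilon)$ exists. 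Set $\omega_\varepsilon:=u_\varepsilon-W_{\varepsilon,\alpha_\varepsilon,\widehat\xi_\varepsilon}$.

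\textbf{Step 1: the minimizer is interior.} Work in the rescaled parameters $(\alpha,(\xi-\xi_\varepsilon)/\varepsilon)$. Expand $F$ around $(\one,\xi_\varepsilon)$, where $F(\one,\xi_\varepsilon)\le C\varepsilon^{N+2\tau}$ by Lemma~\ref{lem:energy-approx}. Using the Gram matrix of the vectors $W_{\varepsilon,j}$ and $\varepsilon Y_{\varepsilon,j,h}$ in $\langle\cdot,\cdot\rangle_{\varepsilon,V}$, one gets
\[
\|W_{\varepsilon,\alpha,\xi}-W_{\varepsilon,\one,\xi_\varepsilon}\|_{\varepsilon,V}^2\ge c\,\varepsilon^N\bigl(|\alpha-\one|^2+\varepsilon^{-2}|\xi-\xi_\varepsilon|^2\bigr).
\]
To see this, rescale by $y=(x-\xi_j)/\varepsilon$. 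The Gram matrix becomes $\varepsilon^N$ times the block matrix built from $\int(|(-\Delta)^{s/2}w_\lambda|^2+\lambda w_\lambda^2)$ and $\int(|(-\Delta)^{s/2}\partial_hw_\lambda|^2+\lambda(\partial_hw_\lambda)^2)$. Cross terms vanish by parity. Terms involving $\partial_\lambda w_\lambda\,\partial_hV$ are $O(\varepsilon)$ after the $\varepsilon$ scaling. Interactions between different peaks are $O(\varepsilon^{N+2s})$ by the algebraic decay. A second-order Taylor remainder is controlled by the $D^2w_\lambda$ bounds. Consequently, on $\partial\mathcal K_\varepsilon$,
\[
F\ge \bigl(c^{1/2}K\varepsilon^{N/2+\tau/2}-C\varepsilon^{N/2+\tau}\bigr)^2>F(\one,\xi_\varepsilon),
\]
so the minimizer is interior.

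\textbf{Step 2: first-order conditions and the estimate \eqref{eq:min-realization-estimate}.} At an interior minimizer the derivatives of $F$ in $\alpha_j$ and $\xi_{j,h}$ vanish. Since
\[
\partial_{\xi_{j,h}}W_{\varepsilon,\alpha,\xi}=\alpha_jY_{\varepsilon,j,h}, \qquad \alpha_j\ne0,
\]
these conditions give exactly $\omega_\varepsilon\in E_{\varepsilon,\widehat\xi_\varepsilon}$. For the bound, minimality gives $\|\omega_\varepsilon\|_{\varepsilon,V}\le C\varepsilon^{N/2+\tau}$. The triangle inequality combined with the coercivity bound from Step 1 then yields
\[
|\alpha_\varepsilon-\one|+\varepsilon^{-1}|\widehat\xi_\varepsilon-\xi_\varepsilon|\le C\varepsilon^{\tau},
\]
which is \eqref{eq:min-realization-estimate}.

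\textbf{Step 3: the weighted bound \eqref{eq:min-realization-weighted-estimate}.} This is the main obstacle, since $L^2$-type control does not give the weighted $L^\infty$ control directly. Write
\[
\omega_\varepsilon=(u_\varepsilon-W^*_\varepsilon)+(W^*_\varepsilon-W_{\varepsilon,\alpha_\varepsilon,\widehat\xi_\varepsilon}).
\]
The first term is bounded by $C\varepsilon^\tau$ in $\|\cdot\|_{\tau,\varepsilon,\xi_\varepsilon}$ by Proposition~\ref{prop:local-expansion}. For the second term, use the mean value theorem in the parameters together with the pointwise bound \eqref{eq:uniform-decay} on $w_\lambda$, $\nabla w_\lambda$ and $\partial_\lambda w_\lambda$. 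This bounds it pointwise by
\[
C\bigl(|\alpha_\varepsilon-\one|+\varepsilon^{-1}|\widehat\xi_\varepsilon-\xi_\varepsilon|\bigr)\sum_j(1+|x-\xi_j'|/\varepsilon)^{-N-2s}
\]
at intermediate centers $\xi_j'$. Finally, $|\widehat\xi_\varepsilon-\xi_\varepsilon|\le C\varepsilon^{1+\tau}\ll\varepsilon$, so the weights $\rho_{\tau,\varepsilon,\xi_\varepsilon}$, $\rho_{\tau,\varepsilon,\widehat\xi_\varepsilon}$ and those at the intermediate centers are uniformly comparable. The stronger decay rate $N+2s>\beta_\tau$ then gives \eqref{eq:min-realization-weighted-estimate}.
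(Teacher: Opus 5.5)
Your proposal is correct and follows the paper's proof essentially step for step. Both arguments minimize $\|u_\varepsilon-W_{\varepsilon,\alpha,\xi}\|_{\varepsilon,V}^2$ over a box of size $\varepsilon^{\tau/2}\times\varepsilon^{1+\tau/2}$ and use the Gram/Taylor lower bound $\|W_{\varepsilon,\alpha,\xi}-W_\varepsilon^*\|_{\varepsilon,V}^2\ge c\,\varepsilon^N|\alpha-\one|^2+c\,\varepsilon^{N-2}|\xi-\xi_\varepsilon|^2$; the first-order conditions then give $\omega_\varepsilon\in E_{\varepsilon,\widehat\xi_\varepsilon}$, and the weighted bound comes from Proposition~\ref{prop:local-expansion} plus a pointwise mean-value estimate on the profile difference. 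The only cosmetic difference is that you prove interiority by comparing $F$ on the boundary of the box with $F(\one,\xi_\varepsilon)$, whereas the paper applies the lower bound directly to the minimizer to get $|\alpha_\varepsilon-\one|\le C\varepsilon^\tau<\varepsilon^{\tau/2}$; the two are equivalent.
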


\begin{proof}
Set
\[
K_\varepsilon:=\{(\alpha,\xi):
|\alpha_j-1|\le\varepsilon^{\tau/2},
\ |\xi_j-\xi_{\varepsilon,j}|\le\varepsilon^{1+\tau/2},
\ j=1,\ldots,k\}.
\]
The map
\[
(\alpha,\xi)\longmapsto \norm{u_\varepsilon-W_{\varepsilon,\alpha,\xi}}_{\varepsilon,V}^2
\]
is continuous on \(K_\varepsilon\). Hence
\[
m_\varepsilon:=\inf_{(\alpha,\xi)\in K_\varepsilon}
\norm{u_\varepsilon-W_{\varepsilon,\alpha,\xi}}_{\varepsilon,V}^2
\]
is achieved. Since \((\one,\xi_\varepsilon)\in K_\varepsilon\), Lemma~\ref{lem:energy-approx} yields
\begin{equation}\label{eq:m-upper}
m_\varepsilon\le
\norm{u_\varepsilon-W_\varepsilon^*}_{\varepsilon,V}^2
\le C\varepsilon^{N+2\tau}.
\end{equation}
Let \((\alpha_\varepsilon,\widehat{\xi}_\varepsilon)\) be a minimizer. Then
\[
\norm{u_\varepsilon-W_{\varepsilon,\alpha_\varepsilon,\widehat{\xi}_\varepsilon}}_{\varepsilon,V}
\le C\varepsilon^{N/2+\tau},
\]
and the triangle inequality gives
\begin{equation}\label{eq:manifold-distance}
\norm{W_{\varepsilon,\alpha_\varepsilon,\widehat{\xi}_\varepsilon}-W_\varepsilon^*}_{\varepsilon,V}
\le C\varepsilon^{N/2+\tau}.
\end{equation}

For the rest of the proof, take
\[
b_j:=\alpha_j-1,
\qquad
\eta_j:=\xi_j-\xi_{\varepsilon,j},
\qquad
\eta_{j,h}:=\xi_{j,h}-\xi_{\varepsilon,j,h}.
\]
Uniformly for \((\alpha,\xi)\in K_\varepsilon\), the Taylor expansion of
\((\alpha_j,\xi_j)\mapsto \alpha_jW_{\varepsilon,j}(x;\xi_j)\) at
\((1,\xi_{\varepsilon,j})\) gives
\begin{equation}\label{eq:manifold-taylor}
W_{\varepsilon,\alpha,\xi}-W_\varepsilon^*
=
\sum_{j=1}^k b_jW_{\varepsilon,j}^*
+
\sum_{j=1}^k\sum_{h=1}^N\eta_{j,h}Y_{\varepsilon,j,h}(x;\xi_{\varepsilon,j})
+
\mathcal R_\varepsilon(\alpha,\xi).
\end{equation}
Thus,
\[
\begin{aligned}
\mathcal R_\varepsilon(\alpha,\xi)
=
&\sum_{j=1}^k
\left[
W_{\varepsilon,j}(x;\xi_j)
-
W_{\varepsilon,j}^*
-
\sum_{h=1}^N
\eta_{j,h}Y_{\varepsilon,j,h}(x;\xi_{\varepsilon,j})
\right]                                      \\
&+
\sum_{j=1}^k
b_j
\left[
W_{\varepsilon,j}(x;\xi_j)
-
W_{\varepsilon,j}^*
\right].
\end{aligned}
\]
We prove that
\begin{equation}\label{eq:manifold-taylor-remainder}
\|\mathcal R_\varepsilon(\alpha,\xi)\|_{\varepsilon,V}
=
o\left(
\varepsilon^{N/2}|\alpha-\mathbf 1|
+
\varepsilon^{N/2-1}|\xi-\xi_\varepsilon|
\right),
\end{equation}
uniformly for \((\alpha,\xi)\in K_\varepsilon\). 

For the first term,
\[
\begin{aligned}
&W_{\varepsilon,j}(x;\xi_j)
-
W_{\varepsilon,j}^*
-
\sum_{h=1}^N
\eta_{j,h}Y_{\varepsilon,j,h}(x;\xi_{\varepsilon,j})                         \\
&=
\int_0^1
\frac{d}{dt}
W_{\varepsilon,j}(x;\xi_{\varepsilon,j}+t\eta_j)\,dt
-
\sum_{h=1}^N
\eta_{j,h}Y_{\varepsilon,j,h}(x;\xi_{\varepsilon,j})   \\
&=
\sum_{h=1}^N
\eta_{j,h}
\int_0^1
\left[
Y_{\varepsilon,j,h}(x;\xi_{\varepsilon,j}+t\eta_j)
-
Y_{\varepsilon,j,h}(x;\xi_{\varepsilon,j})
\right]\,dt .
\end{aligned}
\]
Meanwhile, after the change of variables \(x=\xi_{\varepsilon,j}+\varepsilon y\), we have
\[
\begin{aligned}
&\left\|
Y_{\varepsilon,j,h}(x;\xi_{\varepsilon,j}+t\eta_j)
-
Y_{\varepsilon,j,h}(x;\xi_{\varepsilon,j})
\right\|_{\varepsilon,V}                                      \\
&\le
C\varepsilon^{N/2-1}
\left\|
\partial_h w_{V(\xi_{\varepsilon,j}+t\eta_j)}
\left(
y-\frac{t\eta_j}{\varepsilon}
\right)
-
\partial_h w_{V(\xi_{\varepsilon,j})}(y)
\right\|_{H^s(\mathbb R^N)}                                   \\
&\quad
+
C\varepsilon^{N/2}
\left\|
\left.
\partial_\lambda w_\lambda
\left(
y-\frac{t\eta_j}{\varepsilon}
\right)
\right|_{\lambda=V(\xi_{\varepsilon,j}+t\eta_j)}
\partial_hV(\xi_{\varepsilon,j}+t\eta_j)
-
\left.
\partial_\lambda w_\lambda(y)
\right|_{\lambda=V(\xi_{\varepsilon,j})}
\partial_hV(\xi_{\varepsilon,j})
\right\|_{H^s(\mathbb R^N)}                                   \\
&\quad +o(\varepsilon^{N/2-1})\\
&=
o(\varepsilon^{N/2-1}).
\end{aligned}
\]
Therefore, by the Minkowski inequality, we obtain
\begin{equation}\label{eq:estimate-for-R-1}
  \begin{aligned}
&\left\|
\sum_{j=1}^k
\left[
W_{\varepsilon,j}(x;\xi_j)
-
W_{\varepsilon,j}^*
-
\sum_{h=1}^N
\eta_{j,h}Y_{\varepsilon,j,h}(x;\xi_{\varepsilon,j})
\right]
\right\|_{\varepsilon,V}                                      \\
&\le
\sum_{j=1}^k\sum_{h=1}^N
|\eta_{j,h}|
\int_0^1
\left\|
Y_{\varepsilon,j,h}(x;\xi_{\varepsilon,j}+t\eta_j)
-
Y_{\varepsilon,j,h}(x;\xi_{\varepsilon,j})
\right\|_{\varepsilon,V}\,dt                                  \\
&=
o(\varepsilon^{N/2-1})
|\xi-\xi_\varepsilon|.
\end{aligned}
\end{equation}

For the second term, by a similar argument, we have
\begin{equation}\label{eq:estimate-for-R-2}
  \begin{aligned}
&\left\|
\sum_{j=1}^k
b_j
\left[
W_{\varepsilon,j}(x;\xi_j)-W_{\varepsilon,j}^*
\right]
\right\|_{\varepsilon,V}                                      \\
&\le
o(1)
\left(
\varepsilon^{N/2}|\alpha-\mathbf 1|
+
\varepsilon^{N/2-1}|\xi-\xi_\varepsilon|
\right).
\end{aligned}
\end{equation}
Combining \eqref{eq:estimate-for-R-1} and \eqref{eq:estimate-for-R-2} gives \eqref{eq:manifold-taylor-remainder}.

On the other hand, by \eqref{eq:app-same-ww}--\eqref{eq:app-diff-yy},  we have
\begin{equation}\label{eq:linear-gram-lower-bound}
  \begin{aligned}
&\|\sum_{j=1}^k b_jW_{\varepsilon,j}^*
+
\sum_{j=1}^k\sum_{h=1}^N\eta_{j,h}Y_{\varepsilon,j,h}(x;\xi_{\varepsilon,j})\|_{\varepsilon,V}^2\\
=&
\varepsilon^N
\sum_{j=1}^k
\bigl(a_0V(\xi_{\varepsilon,j})^\theta+o(1)\bigr)b_j^2
+
\varepsilon^{N-2}
\sum_{j=1}^k\sum_{h=1}^N
(K_j+o(1))\eta_{j,h}^2     \\
&
+
o(1)
\left(
\varepsilon^N |b|^2
+
\varepsilon^{N-2}|\eta|^2
\right),\\
 \ge&
c\varepsilon^N|\alpha-\one|^2
+c\varepsilon^{N-2}|\xi-\xi_\varepsilon|^2,
\end{aligned}
\end{equation}
for all \((\alpha,\xi)\in K_\varepsilon\) and sufficiently small \(\varepsilon\). Combining \eqref{eq:manifold-taylor-remainder}
with \eqref{eq:linear-gram-lower-bound} gives
\begin{equation}\label{eq:gram-lower-bound}
\norm{W_{\varepsilon,\alpha,\xi}-W_\varepsilon^*}_{\varepsilon,V}^2
\ge
c_1\varepsilon^N|\alpha-\one|^2
+c_2\varepsilon^{N-2}|\xi-\xi_\varepsilon|^2.
\end{equation}
Applying \eqref{eq:gram-lower-bound} to the minimizer and using \eqref{eq:manifold-distance}, we obtain
\[
\varepsilon^N|\alpha_\varepsilon-\one|^2+
\varepsilon^{N-2}|\widehat{\xi}_\varepsilon-\xi_\varepsilon|^2
\le C\varepsilon^{N+2\tau}.
\]
Therefore
\begin{equation}\label{eq:minimizer-parameter-estimates}
|\alpha_\varepsilon-\one|\le C\varepsilon^\tau,
\qquad
|\widehat{\xi}_\varepsilon-\xi_\varepsilon|\le C\varepsilon^{1+\tau}.
\end{equation}
This implies that
\((\alpha_\varepsilon,\widehat{\xi}_\varepsilon)\) lies in the interior of \(K_\varepsilon\) for all sufficiently small
\(\varepsilon\).

Moreover, $\max_j|\widehat\xi_{\varepsilon,j}-\xi_{\varepsilon,j}|/\varepsilon=o(1)$, and hence the triangle inequality yields
\begin{equation}\label{eq:equivalent-modulation-weights}
C^{-1}\rho_{\tau,\varepsilon,\xi_\varepsilon}
\le \rho_{\tau,\varepsilon,\widehat\xi_\varepsilon}
\le C\rho_{\tau,\varepsilon,\xi_\varepsilon}
\qquad\text{in }\mathbb R^N.
\end{equation}
Thus the corresponding weighted norms are uniformly equivalent.

The derivatives of
\[
F_\varepsilon(\alpha,\xi):=\norm{u_\varepsilon-W_{\varepsilon,\alpha,\xi}}_{\varepsilon,V}^2
\]
therefore vanish at \((\alpha_\varepsilon,\widehat{\xi}_\varepsilon)\). With
\[
\omega_\varepsilon:=u_\varepsilon-W_{\varepsilon,\alpha_\varepsilon,\widehat{\xi}_\varepsilon},
\]
we have
\[
0=\partial_{\alpha_j}F_\varepsilon(\alpha_\varepsilon,\widehat{\xi}_\varepsilon)
=-2\langle\omega_\varepsilon,
W_{\varepsilon,j}(x;\widehat{\xi}_{\varepsilon,j})\rangle_{\varepsilon,V},
\]
and
\[
0=\partial_{\xi_{j,h}}F_\varepsilon(\alpha_\varepsilon,\widehat{\xi}_\varepsilon)
=-2\alpha_{\varepsilon,j}\langle\omega_\varepsilon,
Y_{\varepsilon,j,h}(x;\widehat{\xi}_{\varepsilon,j})\rangle_{\varepsilon,V}.
\]
Since \(\alpha_{\varepsilon,j}=1+O(\varepsilon^\tau)\), these two identities imply
\[
\omega_\varepsilon\in E_{\varepsilon,\widehat{\xi}_\varepsilon}.
\]
Meanwhile,
\[
\norm{\omega_\varepsilon}_{\varepsilon,V}^2=m_\varepsilon\le C\varepsilon^{N+2\tau}
\]
by \eqref{eq:m-upper}. This estimate and \eqref{eq:minimizer-parameter-estimates} prove
\eqref{eq:min-realization-estimate}.

Moreover, we estimate the difference between
$W_{\varepsilon,\alpha_\varepsilon,\widehat{\xi}_\varepsilon}$ and $W_\varepsilon^*$ in the
weighted norm. We claim that
\[
\left\|
W_{\varepsilon,\alpha_\varepsilon,\widehat{\xi}_\varepsilon}
-
W_\varepsilon^*
\right\|_{\tau,\varepsilon,\widehat{\xi}_\varepsilon}
\le
C|\alpha_\varepsilon-\mathbf 1|
+
C\frac{|\widehat{\xi}_\varepsilon-\xi_\varepsilon|}{\varepsilon}
+
C|\widehat{\xi}_\varepsilon-\xi_\varepsilon|.
\]
Indeed, for every $x\in\mathbb R^N$ and every $j=1,\ldots,k$,
\[
\begin{aligned}
&\alpha_{\varepsilon,j}
w_{V(\widehat{\xi}_{\varepsilon,j})}
\left(
\frac{x-\widehat{\xi}_{\varepsilon,j}}{\varepsilon}
\right)
-
w_{V(\xi_{\varepsilon,j})}
\left(
\frac{x-\xi_{\varepsilon,j}}{\varepsilon}
\right)  \\
&=
(\alpha_{\varepsilon,j}-1)
w_{V(\widehat{\xi}_{\varepsilon,j})}
\left(
\frac{x-\widehat{\xi}_{\varepsilon,j}}{\varepsilon}
\right)  \\
&\quad+
\left[
w_{V(\widehat{\xi}_{\varepsilon,j})}
\left(
\frac{x-\widehat{\xi}_{\varepsilon,j}}{\varepsilon}
\right)
-
w_{V(\widehat{\xi}_{\varepsilon,j})}
\left(
\frac{x-\xi_{\varepsilon,j}}{\varepsilon}
\right)
\right]  \\
&\quad+
\left[
w_{V(\widehat{\xi}_{\varepsilon,j})}
\left(
\frac{x-\xi_{\varepsilon,j}}{\varepsilon}
\right)
-
w_{V(\xi_{\varepsilon,j})}
\left(
\frac{x-\xi_{\varepsilon,j}}{\varepsilon}
\right)
\right].
\end{aligned}
\]
By Lemma~\ref{lem:single-bubble-input} and \eqref{eq:minimizer-parameter-estimates}, a direct computation gives
\[
\begin{aligned}
\left\|
W_{\varepsilon,\alpha_\varepsilon,\widehat{\xi}_\varepsilon}
-
W_\varepsilon^*
\right\|_{\tau,\varepsilon,\widehat{\xi}_\varepsilon}
&\le
C|\alpha_\varepsilon-\mathbf 1|
+
C\frac{|\widehat{\xi}_\varepsilon-\xi_\varepsilon|}{\varepsilon}
+
C|\widehat{\xi}_\varepsilon-\xi_\varepsilon|.
\end{aligned}
\]
Thus,
\begin{equation}\label{eq:manifold-weighted-estimate}
\left\|
W_{\varepsilon,\alpha_\varepsilon,\widehat{\xi}_\varepsilon}
-
W_\varepsilon^*
\right\|_{\tau,\varepsilon,\widehat{\xi}_\varepsilon}
\le
C\varepsilon^\tau .
\end{equation}
Consequently, by Proposition~\ref{prop:local-expansion} and \eqref{eq:manifold-weighted-estimate}, we obtain
\[
\begin{aligned}
\|\omega_\varepsilon\|_{\tau,\varepsilon,\widehat{\xi}_\varepsilon}
&=
\left\|
u_\varepsilon
-
W_{\varepsilon,\alpha_\varepsilon,\widehat{\xi}_\varepsilon}
\right\|_{\tau,\varepsilon,\widehat{\xi}_\varepsilon}  \\
&\le
\|u_\varepsilon-W_\varepsilon^*\|_{\tau,\varepsilon,\widehat{\xi}_\varepsilon}
+
\left\|
W_{\varepsilon,\alpha_\varepsilon,\widehat{\xi}_\varepsilon}
-
W_\varepsilon^*
\right\|_{\tau,\varepsilon,\widehat{\xi}_\varepsilon}  \\
&\le
C\varepsilon^\tau,
\end{aligned}
\]
which proves \eqref{eq:min-realization-weighted-estimate}.
\end{proof}

We now prove that the decomposition is unique.
\begin{lemma}\label{lem:local-decomposition}
There exist $\rho_*,r_*,\varepsilon_*>0$, independent of the base
point, such that the following holds. If
$0<\varepsilon<\varepsilon_*$,
$(\alpha^0,\xi^*)\in\overline{\mathcal D}_\varepsilon$, and
\[
\varepsilon^{-N/2}
\|u-W_{\varepsilon,\alpha^0,\xi^*}\|_{\varepsilon,V}
<\rho_*,
\]
then there is a unique pair $(\alpha,\xi)$ satisfying
\[
|\alpha-\alpha^0|
+\varepsilon^{-1}|\xi-\xi^*|
<r_*,
\qquad
u-W_{\varepsilon,\alpha,\xi}\in E_{\varepsilon,\xi}.
\]
Moreover, $(\alpha,\xi)$ depends $C^1$-smoothly on $u$.
\end{lemma}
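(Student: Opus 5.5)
The plan is to apply the implicit function theorem to the map $G:(u,\alpha,\xi)\mapsto \bigl(\langle u-W_{\varepsilon,\alpha,\xi},W_{\varepsilon,j}(\cdot;\xi_j)\rangle_{\varepsilon,V},\ \langle u-W_{\varepsilon,\alpha,\xi},Y_{\varepsilon,j,h}(\cdot;\xi_j)\rangle_{\varepsilon,V}\bigr)_{j,h}\in\mathbb R^{k+kN}$. This should be done in rescaled variables, so that every constant is uniform in $\varepsilon$ and in the base point. Concretely, I would set $\xi=\xi^*+\varepsilon\zeta$ and use $(\alpha,\zeta)$ as unknowns. I would also normalize the equations by dividing the $W$-equations by $\varepsilon^{N}$ and the $Y$-equations by $\varepsilon^{N-1}$. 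The map $G$ is $C^1$ in $(\alpha,\xi)$ because $\lambda\mapsto w_\lambda$ is $C^1$ (Lemma~\ref{lem:single-bubble-input}(ii)) and $V\in C^2_b$. It is affine, hence smooth, in $u$.

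The key step is uniform invertibility of the Jacobian. The Jacobian in $(\alpha,\zeta)$ at a point $(u,\alpha,\xi)$ splits into two pieces. The first is $-(\text{Gram matrix of } W_{\varepsilon,j},\varepsilon Y_{\varepsilon,j,h})$ multiplied by $\operatorname{diag}(1,\alpha_j)$. The second consists of terms $\langle u-W_{\varepsilon,\alpha,\xi},\partial_{\xi}W_{\varepsilon,j}\rangle_{\varepsilon,V}$ and $\langle u-W_{\varepsilon,\alpha,\xi},\partial_{\xi}Y_{\varepsilon,j,h}\rangle_{\varepsilon,V}$, which come from differentiating the test functions. For the Gram part I would use the expansions \eqref{eq:app-same-ww}--\eqref{eq:app-diff-yy} already used in \eqref{eq:linear-gram-lower-bound}. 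After normalization this part is block diagonal, with blocks $\operatorname{diag}(a_0V(\xi_j)^\theta,K_jI_N)+o(1)$. The cross terms $W$--$Y$ within a peak are $O(\varepsilon)$ after scaling, since $\int w\,\partial_h w=0$. Interactions between different peaks are $O(\varepsilon^{N+2s})$ by the algebraic decay in \eqref{eq:uniform-decay}. The whole part is therefore uniformly invertible, with $\alpha_j$ near $1$. For the second part, $\|\varepsilon\,\partial_\xi Y_{\varepsilon,j,h}\|_{\varepsilon,V}\le C\varepsilon^{N/2-1}$, which requires the second-derivative decay of $w_\lambda$. Cauchy--Schwarz then bounds the normalized contribution by $C\varepsilon^{-N/2}\|u-W_{\varepsilon,\alpha,\xi}\|_{\varepsilon,V}$. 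Since $\varepsilon^{-N/2}\|W_{\varepsilon,\alpha,\xi}-W_{\varepsilon,\alpha^0,\xi^*}\|_{\varepsilon,V}\le C(|\alpha-\alpha^0|+|\zeta|)$, this is at most $C(\rho_*+r_*)$. Choosing $\rho_*,r_*$ small makes the Jacobian a small perturbation of an invertible matrix on the whole ball.

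Existence then follows from a quantitative inverse function theorem, or equivalently a contraction argument for Newton's map on the ball $|\alpha-\alpha^0|+|\zeta|<r_*$. At the base point the normalized residual satisfies $|G(u,\alpha^0,\xi^*)|\le C\varepsilon^{-N/2}\|u-W_{\varepsilon,\alpha^0,\xi^*}\|_{\varepsilon,V}\le C\rho_*$. Taking $\rho_*\ll r_*$ produces a zero inside the ball. Uniqueness comes from injectivity of $G(u,\cdot)$ on the ball, which is the mean-value inequality with the uniformly invertible Jacobian. $C^1$ dependence on $u$ follows from the implicit function theorem, since $G$ is $C^1$ jointly.

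The main obstacle is showing that all constants are independent of $\varepsilon$ and of the base point in $\overline{\mathcal D}_\varepsilon$. This needs two things. First, the $C^1$-modulus of $(\alpha,\zeta)\mapsto$ the normalized Jacobian must be uniform. Second, the $\partial_\xi$ derivatives of $Y$ and of the $\partial_hV$ terms in \eqref{eq:tangent-vector-explicit} must be controlled, since these involve $\partial_\lambda^2 w_\lambda$ or $D^2V$ and so go slightly beyond what is stated earlier. If $C^2$ regularity in $\lambda$ is unavailable, I would instead prove only continuity of the Jacobian, uniform over the ball, using the $C^1$ map $\lambda\mapsto w_\lambda$ and dominated convergence with the decay bounds. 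That is sufficient for the inverse function argument.
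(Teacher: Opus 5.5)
Your proposal is correct and follows essentially the paper's route. Both use the same rescaled, normalized orthogonality map $G$ in the unknowns $(\alpha,(\xi-\xi^*)/\varepsilon)$, get uniform invertibility of its Jacobian from the Gram estimates of Lemma~\ref{lem:app-gram-estimates}, and conclude with an implicit-function argument whose constants are uniform in the base point. You are more explicit than the paper: it linearizes only at $u=W_{\varepsilon,\alpha^0,\xi^*}$, where the terms from differentiating the test functions vanish, and then appeals to a uniform implicit function theorem.

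One correction to your regularity remarks: $C^1$-smoothness of $G$ requires $\xi\mapsto Y_{\varepsilon,j,h}$ to be $C^1$, hence $C^2$ dependence of $\lambda\mapsto w_\lambda$, not just $C^1$. That $C^2$ dependence follows from the explicit scaling formula, so your fallback is not needed.
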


\begin{proof}
Set
\[
b=\alpha-\alpha^0,
\qquad
\eta=\frac{\xi-\xi^*}{\varepsilon},
\]
and write
\[
R(u,b,\eta)
:=
u-W_{\varepsilon,\alpha^0+b,\xi^*+\varepsilon\eta}.
\]
Define the scaled orthogonality map
\[
G(u,b,\eta)
:=
\left(
\varepsilon^{-N}
\langle R(u,b,\eta),W_{\varepsilon,j}\rangle_{\varepsilon,V},
\;
\varepsilon^{1-N}
\langle R(u,b,\eta),Y_{\varepsilon,j,h}\rangle_{\varepsilon,V}
\right)_{j,h},
\]
where the test functions are evaluated at
$\xi_j^*+\varepsilon\eta_j$. Then
\[
G(u,b,\eta)=0
\quad\Longleftrightarrow\quad
R(u,b,\eta)\in E_{\varepsilon,\xi^*+\varepsilon\eta}.
\]

At
\[
(u,b,\eta)
=
\bigl(W_{\varepsilon,\alpha^0,\xi^*},0,0\bigr),
\]
the derivative $D_{(b,\eta)} G$ is, up to signs and the
factors $\alpha_j^0=1+o(1)$, the $\varepsilon^{-N}$-normalized Gram
matrix of
\[
W_{\varepsilon,j}
\quad\text{and}\quad
\varepsilon Y_{\varepsilon,j,h}.
\]
By Lemma~C.1, this matrix converges uniformly, with respect to the
base point, to an invertible block-diagonal matrix. Hence it is
uniformly invertible for sufficiently small $\varepsilon$.
The implicit-function theorem, with constants uniform in the base
point, gives the required existence, uniqueness, and $C^1$
dependence.
\end{proof}

\begin{corollary}\label{cor:injectivity}
For all sufficiently small $\varepsilon$, the map
\[
(\alpha,\xi,\omega)
\longmapsto
W_{\varepsilon,\alpha,\xi}+\omega
\]
is injective on $S_\varepsilon$.
\end{corollary}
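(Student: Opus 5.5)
The plan is to reduce injectivity to the uniqueness part of Lemma~\ref{lem:local-decomposition}. Suppose $(\alpha,\xi,\omega)$ and $(\alpha',\xi',\omega')$ both lie in $S_\varepsilon$ and satisfy
\[
W_{\varepsilon,\alpha,\xi}+\omega=W_{\varepsilon,\alpha',\xi'}+\omega'=:u .
\]
Apply Lemma~\ref{lem:local-decomposition} with base point $(\alpha^0,\xi^*):=(\alpha,\xi)\in\mathcal D_\varepsilon$. The smallness hypothesis holds because $\varepsilon^{-N/2}\|u-W_{\varepsilon,\alpha,\xi}\|_{\varepsilon,V}=\varepsilon^{-N/2}\|\omega\|_{\varepsilon,V}<\varepsilon^{\delta_0}<\rho_*$ for small $\varepsilon$. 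Both pairs satisfy the orthogonality condition $u-W_{\varepsilon,\cdot,\cdot}\in E_{\varepsilon,\cdot}$, since $\omega\in E_{\varepsilon,\xi}$ and $\omega'\in E_{\varepsilon,\xi'}$. So once we show that $(\alpha',\xi')$ lies in the uniqueness neighbourhood, i.e.
\[
|\alpha'-\alpha|+\varepsilon^{-1}|\xi'-\xi|<r_*,
\]
uniqueness gives $(\alpha',\xi')=(\alpha,\xi)$, and then $\omega'=\omega$.

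The amplitude part is trivial: $|\alpha_j-\alpha'_j|<2\varepsilon^{\delta_0}$. The real issue is the centers, because $\mathcal D_\varepsilon$ only gives $|\xi_j-\xi'_j|<2\tau_0$, which is not $O(\varepsilon)$. To get the sharper bound, use the triangle inequality:
\[
\|W_{\varepsilon,\alpha,\xi}-W_{\varepsilon,\alpha',\xi'}\|_{\varepsilon,V}=\|\omega'-\omega\|_{\varepsilon,V}<2\varepsilon^{N/2+\delta_0}.
\]
Then prove a lower bound showing that two profiles on the manifold with distant centers are far apart in the scaled energy norm. Rescale $x=\xi_j+\varepsilon y$ and set $d_j:=|\xi_j-\xi'_j|/\varepsilon$; note $\alpha_j,\alpha'_j\approx1$ and $V(\xi_j),V(\xi'_j)\in[V_{\min},V_{\max}]$. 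The quantity
\[
\varepsilon^{-N}\|W_{\varepsilon,\alpha,\xi}-W_{\varepsilon,\alpha',\xi'}\|_{\varepsilon,V}^2
\]
is bounded below by $c\min\{1,\sum_j d_j^2\}$, up to errors of order $o(1)$ coming from peak interactions, which decay algebraically since the peaks are separated by $c/\varepsilon$. For large $d_j$ this follows because two translated bumps of comparable size have almost disjoint mass, so the squared norm is close to the sum of their norms. For small $d_j$ it is the Gram lower bound \eqref{eq:gram-lower-bound} / Lemma C.1 in the translation direction. One caveat: the scaling parameter $V(\xi'_j)$ also changes, but it cannot cancel a translation. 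After subtracting the best amplitude and scale correction, the difference is $\eta\cdot\nabla w$ plus higher-order terms, and $\nabla w$ is $L^2$-orthogonal to the radial functions $w_\lambda$ and $\partial_\lambda w_\lambda$.

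Comparing the upper bound $4\varepsilon^{2\delta_0}$ with this lower bound gives $d_j\le C\varepsilon^{\delta_0}$ for every $j$. The weighted norm condition in $S_\varepsilon$ gives the same conclusion pointwise, as a cross-check: $W_{\varepsilon,\alpha',\xi'}$ has its maximum near $\xi'_j$, while $u$ is within $\varepsilon^{\delta_0}$ of $W_{\varepsilon,\alpha,\xi}$ in sup norm. Hence $|\alpha'-\alpha|+\varepsilon^{-1}|\xi'-\xi|\le C\varepsilon^{\delta_0}<r_*$, and Lemma~\ref{lem:local-decomposition} concludes.

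The main obstacle is the step that upgrades $|\xi-\xi'|<2\tau_0$ to $|\xi-\xi'|=o(\varepsilon)$. It needs a lower bound for the manifold separation that is uniform over the intermediate regime $1\ll d_j\ll\varepsilon^{-1}$, and that must also handle the $\lambda$-dependence of $w_\lambda$. I would first try the crude mass-separation argument in $L^2$ via the equivalence of $\|\cdot\|_{\varepsilon,V}$ with $\varepsilon^{N/2}\|\cdot\|_{H^s}$ after rescaling, treating bounded $d_j$ by compactness and uniqueness of the ground state.
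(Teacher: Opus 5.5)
Your proposal is correct and follows the paper's proof. The paper likewise combines the triangle inequality $\|W_{\varepsilon,\alpha,\xi}-W_{\varepsilon,\widetilde\alpha,\widetilde\xi}\|_{\varepsilon,V}=\|\widetilde\omega-\omega\|_{\varepsilon,V}$ with the separation estimate \eqref{eq:separation-estimate} to get $|\alpha-\widetilde\alpha|+\varepsilon^{-1}|\xi-\widetilde\xi|\le C\varepsilon^{\delta_0}$, and then invokes the uniqueness in Lemma~\ref{lem:local-decomposition}. Your lower bound $c\min\{1,\sum_j d_j^2\}$ is equivalent to the paper's $\sum_j\min\{d_j^2,1\}$ form, and your treatment of the small, bounded and large $d_j$ regimes (including the $\lambda$-dependence of $w_\lambda$) gives more justification for that estimate than the paper, which only cites a Taylor expansion and Lemma~\ref{lem:app-gram-estimates}.
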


\begin{proof}
There exists $c>0$ such that, for all sufficiently small
$\varepsilon$ and all
\[
(\alpha,\xi),\,
(\widetilde\alpha,\widetilde\xi)
\in\overline{\mathcal D}_\varepsilon,
\]
one has the uniform estimate
\begin{equation}\label{eq:separation-estimate}
  \varepsilon^{-N}
\|W_{\varepsilon,\alpha,\xi}
-W_{\varepsilon,\widetilde\alpha,\widetilde\xi}\|_{\varepsilon,V}^2
\ge
c\left(
|\alpha-\widetilde\alpha|^2
+
\sum_{j=1}^k
\min\left\{
\frac{|\xi_j-\widetilde\xi_j|^2}{\varepsilon^2},1
\right\}
\right).
\end{equation}

Indeed, \eqref{eq:separation-estimate} follows from a Taylor expansion of the profile map and the uniform Gram estimates in Lemma~\ref{lem:app-gram-estimates}. 

Suppose that two triples in $S_\varepsilon$ represent the same
function:
\[
W_{\varepsilon,\alpha,\xi}+\omega
=
W_{\varepsilon,\widetilde\alpha,\widetilde\xi}
+\widetilde\omega.
\]
Then
\[
W_{\varepsilon,\alpha,\xi}
-W_{\varepsilon,\widetilde\alpha,\widetilde\xi}
=
\widetilde\omega-\omega.
\]
By the bounds defining $S_\varepsilon$ and \eqref{eq:separation-estimate},
\[
|\alpha-\widetilde\alpha|
+\varepsilon^{-1}|\xi-\widetilde\xi|
\le C\varepsilon^{\delta_0}.
\]
Thus, for sufficiently small $\varepsilon$, both parameter pairs
lie in the same uniqueness neighborhood in
Lemma~\ref{lem:local-decomposition}. Since both remainders satisfy
the corresponding orthogonality conditions, that gives
\[
\alpha=\widetilde\alpha,
\qquad
\xi=\widetilde\xi.
\]
The equality of the two representations then yields
$\omega=\widetilde\omega$.
\end{proof}

\begin{proof}[Proof of Theorem~\ref{th:parametrization-class}]

Proposition~\ref{prop:minimization} shows that $(\alpha_\varepsilon,\widehat\xi_\varepsilon,\omega_\varepsilon)\in S_\varepsilon$ for all sufficiently small $\varepsilon$.  The uniqueness follows from Lemma~\ref{lem:local-decomposition} and Corollary~\ref{cor:injectivity}.

\end{proof}

\section{Coercivity and reduction}\label{sec:approx-manifold-estimates}

In this section, we collect useful estimates for an approximate linearized problem and the approximating profile \(W_{\varepsilon,\alpha_\varepsilon,\widehat{\xi}_\varepsilon}\). 

We use the constants \(0<\delta_0<\tau\), \(\tau_0>0\), and the parameter domain
\(\mathcal D_\varepsilon\) fixed in \eqref{eq:parameter-domain}.
We denote its closure by \(\overline{\mathcal D}_\varepsilon\).

\subsection{Uniform coercivity on \texorpdfstring{$E_{\varepsilon,\xi}$}{Eepsxi}}\label{subsec:uniform-coercivity}

We first give a uniform spectral gap estimate for the linearized operator.
\begin{lemma}\label{lem:spectral-gap}
For $\lambda\in[V_{\min},V_{\max}]$, define
\[
\langle\phi,\psi\rangle_\lambda:=
\int_{\R^N}(-\Delta)^{s/2}\phi(-\Delta)^{s/2}\psi+
\lambda\int_{\R^N}\phi\psi,
\qquad
\norm{\phi}_\lambda^2:=\langle\phi,\phi\rangle_\lambda,
\]
and the symmetric bilinear form
\[
Q_\lambda(\phi,\psi):=
\langle\phi,\psi\rangle_\lambda
-p\int_{\R^N}w_\lambda^{p-1}\phi\psi,
\qquad
Q_\lambda(\phi):=Q_\lambda(\phi,\phi).
\]
There exists $\sigma_*>0$ such that
\begin{equation*}
Q_\lambda(\phi)\ge\sigma_*\norm{\phi}_\lambda^2
\end{equation*}
whenever
\[
\langle\phi,w_\lambda\rangle_\lambda=0,
\qquad
\langle\phi,\partial_hw_\lambda\rangle_\lambda=0,
\quad h=1,\ldots,N.
\]
\end{lemma}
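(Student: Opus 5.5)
Fix first a single $\lambda$ and argue spectrally, then obtain uniformity in $\lambda$ by scaling.

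\emph{Step 1: positivity for fixed $\lambda$.} Let $T_\lambda$ be the self-adjoint operator on the Hilbert space $(H^s,\langle\cdot,\cdot\rangle_\lambda)$ defined by $\langle T_\lambda\phi,\psi\rangle_\lambda=p\int w_\lambda^{p-1}\phi\psi$. Then $Q_\lambda(\phi)=\|\phi\|_\lambda^2-\langle T_\lambda\phi,\phi\rangle_\lambda$. Since $w_\lambda^{p-1}\to0$ at infinity, $T_\lambda$ is compact. By Lemma~\ref{lem:single-bubble-input}(i), $I-T_\lambda$ (equivalently $L_\lambda$) has exactly one negative direction and kernel $K_\lambda=\operatorname{span}\{\partial_hw_\lambda\}$. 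The spectrum of $I-T_\lambda$ accumulates only at $1$, so on the $\langle\cdot,\cdot\rangle_\lambda$-orthogonal complement of the negative eigenvector $e_\lambda$ and of $K_\lambda$ we get $Q_\lambda\ge\gamma_\lambda\|\cdot\|_\lambda^2$ with $\gamma_\lambda>0$.

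The constraint, however, is orthogonality to $w_\lambda$, not to $e_\lambda$. Using $L_\lambda w_\lambda=-(p-1)w_\lambda^p$, we have $Q_\lambda(w_\lambda,\phi)=-(p-1)\int w_\lambda^p\phi$. For $\phi\perp_\lambda w_\lambda$ the equation gives $\int w_\lambda^p\phi=\langle w_\lambda,\phi\rangle_\lambda=0$, so $Q_\lambda(w_\lambda,\phi)=0$, while $Q_\lambda(w_\lambda)=(1-p)\int w_\lambda^{p+1}<0$. Let $X=\{\phi:\phi\perp_\lambda w_\lambda,\ \phi\perp_\lambda K_\lambda\}$. Since $\partial_hw_\lambda$ is odd and $w_\lambda$ is even, $w_\lambda\perp K_\lambda$.

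Suppose some $\phi\in X\setminus\{0\}$ had $Q_\lambda(\phi)\le0$. Then on $\operatorname{span}\{w_\lambda,\phi\}$ the form would be nonpositive: it is diagonal in this basis, since the cross term vanishes, with entries $Q_\lambda(w_\lambda)<0$ and $Q_\lambda(\phi)\le 0$. This span is a two-dimensional subspace $Q_\lambda$-orthogonal to $K_\lambda$ and intersecting $K_\lambda$ trivially. Adding $K_\lambda$, which is $Q_\lambda$-null and $Q_\lambda$-orthogonal to everything, would then give an $(N+2)$-dimensional space on which $Q_\lambda\le0$. This contradicts the min–max count: with one negative eigenvalue and an $N$-dimensional kernel, the maximal dimension of a nonpositive subspace is $N+1$. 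Hence $Q_\lambda>0$ on $X\setminus\{0\}$.

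\emph{Step 2: upgrade to coercivity for fixed $\lambda$.} Take a minimizing sequence $\phi_n\in X$, $\|\phi_n\|_\lambda=1$, with $Q_\lambda(\phi_n)\to\inf$. Pass to a weak limit $\phi$. By compactness of $T_\lambda$, $\langle T_\lambda\phi_n,\phi_n\rangle_\lambda\to\langle T_\lambda\phi,\phi\rangle_\lambda$. If the infimum were $\le0$, then $\langle T\phi,\phi\rangle\ge1$, so $\phi\ne0$ and $\phi\in X$ (since $X$ is weakly closed). Moreover $Q_\lambda(\phi)\le\|\phi\|^2-1\le0$, contradicting Step 1. Thus $\sigma_\lambda:=\inf_{X,\|\phi\|_\lambda=1}Q_\lambda>0$.

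\emph{Step 3: uniformity in $\lambda$.} Use the scaling $\phi(x)=\lambda^{1/(p-1)}\tilde\phi(\lambda^{1/(2s)}x)$. Since $w_\lambda(x)=\lambda^{1/(p-1)}w_1(\lambda^{1/(2s)}x)$, we have $\|\phi\|_\lambda^2=c(\lambda)\|\tilde\phi\|_1^2$ and $Q_\lambda(\phi)=c(\lambda)Q_1(\tilde\phi)$ with the same factor $c(\lambda)=\lambda^{2/(p-1)+1-N/(2s)}$. The orthogonality conditions are also transported exactly, because $\langle\phi,w_\lambda\rangle_\lambda=c(\lambda)\langle\tilde\phi,w_1\rangle_1$, and similarly for $\partial_hw_\lambda$ up to a factor $\lambda^{1/(2s)}$. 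Hence $\sigma_\lambda=\sigma_1$ for all $\lambda$, and we take $\sigma_*=\sigma_1$.

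The main subtlety is Step 1: replacing the true negative eigenfunction by $w_\lambda$. This is handled by the identity $Q_\lambda(w_\lambda,\phi)=-(p-1)\langle w_\lambda,\phi\rangle_\lambda$ together with the dimension count. If one prefers to avoid scaling, uniformity can instead be obtained by a compactness-in-$\lambda$ contradiction argument, using the $C^1$ dependence of $\lambda\mapsto w_\lambda$ from Lemma~\ref{lem:single-bubble-input}(ii).
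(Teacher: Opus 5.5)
Your proof is correct. Its core matches the paper's: on the constraint set you have $Q_\lambda(w_\lambda,\phi)=(1-p)\langle w_\lambda,\phi\rangle_\lambda=0$, together with $Q_\lambda(w_\lambda)<0$ and the fact that $L_\lambda$ has Morse index one. The two arguments differ in how positivity is closed off and how it is made uniform.

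\emph{The degenerate case.} When $Q_\lambda(\phi)=0$, the paper uses a first-variation argument. Since $Q_\lambda\ge0$ on the constraint space, $\phi$ minimizes $Q_\lambda$ there, so $Q_\lambda(\phi,\cdot)$ vanishes on that space; it also vanishes on $w_\lambda$ and on $\partial_hw_\lambda$. Hence $L_\lambda\phi=0$, and nondegeneracy forces $\phi=0$. You instead adjoin the radical $K_\lambda$ to $\operatorname{span}\{w_\lambda,\phi\}$ and contradict the fact that nonpositive subspaces of $Q_\lambda$ have dimension at most $N+1$. This handles $Q_\lambda(\phi)<0$ and $Q_\lambda(\phi)=0$ in one stroke. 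It does need the slightly stronger spectral input that the $(N+2)$-th min--max level is strictly positive, which you correctly obtain from the compactness of $T_\lambda$.

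\emph{The constant.} The paper runs a single contradiction argument over sequences $\lambda_n\to\lambda_*$ in the compact interval $[V_{\min},V_{\max}]$. It uses weak limits and the $C^1$ dependence of $\lambda\mapsto w_\lambda$, and this delivers fixed-$\lambda$ coercivity and uniformity at once, since constant sequences are allowed. You separate the two: a minimizing-sequence argument at fixed $\lambda$, then an exact rescaling showing $\sigma_\lambda=\sigma_1$. Under that rescaling, $\|\cdot\|_\lambda^2$, $Q_\lambda$ and all the constraints transform by the common factor $\lambda^{2/(p-1)+1-N/(2s)}$, with an extra $\lambda^{1/(2s)}$ for the translation constraints, which is harmless.

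\emph{Trade-off.} Your scaling route is shorter and gives a constant valid for every $\lambda>0$. However, it relies on the pure-power nonlinearity and the homogeneity of $(-\Delta)^s$. The paper's compactness argument would survive perturbations that destroy this scale invariance.
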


\begin{proof}
For fixed \(\lambda\), set
\[
M_\lambda
=
\{\phi:\langle\phi,w_\lambda\rangle_\lambda=0,\
\langle\phi,\partial_hw_\lambda\rangle_\lambda=0,\ h=1,\ldots,N\}.
\]
For \(\phi\in M_\lambda\), using the equation satisfied by $w_\lambda$, we have 
\[
Q_\lambda(w_\lambda,\phi)
=
(1-p)\int_{\mathbb R^N}w_\lambda^p\phi
=
(1-p)\langle w_\lambda,\phi\rangle_\lambda
=
0,
\]
while
\[
Q_\lambda(w_\lambda)
=
(1-p)\int_{\mathbb R^N}w_\lambda^{p+1}<0.
\]

If \(Q_\lambda(\phi)<0\) for some \(\phi\in M_\lambda\), then
\(\operatorname{span}\{w_\lambda,\phi\}\) is a two-dimensional negative
subspace for \(Q_\lambda\), contradicting Lemma~\ref{lem:single-bubble-input}, which states that the Morse index is one. Hence
\(Q_\lambda\ge0\) on \(M_\lambda\).

If \(Q_\lambda(\phi)=0\) for some \(\phi\in M_\lambda\), then for every
\(\psi\in M_\lambda\) we have
 \[
0\le Q_\lambda(\phi+t\psi)
=
2tQ_\lambda(\phi,\psi)+t^2Q_\lambda(\psi),
\]
where we used \(Q_\lambda(\phi)=0\).  Taking \(t>0\), dividing by \(t\),
and letting \(t \to 0\) gives \(Q_\lambda(\phi,\psi)\ge0\).  Replacing
\(t\) by \(-t\) gives \(Q_\lambda(\phi,\psi)\le0\).  Hence
\[
Q_\lambda(\phi,\psi)=0,\qquad \psi\in M_\lambda.
\]
Together with
\[
Q_\lambda(\phi,w_\lambda)=0,
\qquad
Q_\lambda(\phi,\partial_hw_\lambda)=0,
\]
this gives \(L_\lambda\phi=0\). By nondegeneracy in Lemma~\ref{lem:single-bubble-input},
\[
\phi\in\operatorname{span}\{\partial_1w_\lambda,\ldots,\partial_Nw_\lambda\}.
\]
The orthogonality conditions then force \(\phi=0\).

It remains to make the lower bound uniform for $\lambda\in[V_{\min},V_{\max}]$.  Suppose not.  Then there are $\lambda_n\in[V_{\min},V_{\max}]$ and $\phi_n\in H^s(\R^N)$ such that
\[
\norm{\phi_n}_{\lambda_n}=1,
\qquad
\langle\phi_n,w_{\lambda_n}\rangle_{\lambda_n}=0,
\qquad
\langle\phi_n,\partial_hw_{\lambda_n}\rangle_{\lambda_n}=0,
\]
for $h=1,\ldots,N$, and
\[
Q_{\lambda_n}(\phi_n)\to0.
\]
After passing to a subsequence, $\lambda_n\to\lambda_*$ and $\phi_n \rightharpoonup  \phi$ in $H^s(\R^N)$.  By Lemma~\ref{lem:single-bubble-input}, 
\[
\int_{\R^N}w_{\lambda_n}^{p-1}\phi_n^2
\longrightarrow
\int_{\R^N}w_{\lambda_*}^{p-1}\phi^2.
\]
The orthogonality relations also pass to the limit:
\[
\langle\phi,w_{\lambda_*}\rangle_{\lambda_*}=0,
\qquad
\langle\phi,\partial_hw_{\lambda_*}\rangle_{\lambda_*}=0,
\quad h=1,\ldots,N.
\]
If $\phi=0$, we obtain 
\[
p\int_{\R^N}w_{\lambda_n}^{p-1}\phi_n^2=o(1),
\]
and hence
\[
Q_{\lambda_n}(\phi_n)=\norm{\phi_n}_{\lambda_n}^2+o(1)=1+o(1),
\]
contrary to $Q_{\lambda_n}(\phi_n)\to0$.  Thus $\phi\ne0$.

By weak lower semicontinuity,
\[
Q_{\lambda_*}(\phi)
\le \liminf_{n\to\infty}Q_{\lambda_n}(\phi_n)=0.
\]
The preceding argument gives $Q_{\lambda_*}(\phi)\ge0$.  Hence $Q_{\lambda_*}(\phi)=0$, so $\phi\in\ker L_{\lambda_*}$. Since $\phi \in M_{\lambda_*}$, the orthogonality conditions force  $\phi=0$, a contradiction. This completes the proof.
\end{proof}

For $(\alpha,\xi)\in \overline{\mathcal D}_\varepsilon$, define
\[
L_{\varepsilon,\alpha,\xi}:=\varepsilon^{2s}(-\Delta)^s+V(x)-pW_{\varepsilon,\alpha,\xi}^{p-1},
\]
and
\[
Q_{\varepsilon,\alpha,\xi}(\phi):=
\norm{\phi}_{\varepsilon,V}^2-p\int_{\R^N}W_{\varepsilon,\alpha,\xi}^{p-1}\phi^2.
\]
Since $|\alpha_j-1|\le\varepsilon^{\delta_0}$ and $p$ is fixed, one has
\begin{equation}\label{eq:alpha-power-close}
\alpha_j^{p-1}=1+O(\varepsilon^{\delta_0})
\qquad\hbox{uniformly in }\overline{\mathcal D}_\varepsilon.
\end{equation}

Next, we prove uniform coercivity of $Q_{\varepsilon,\alpha,\xi}$ on $E_{\varepsilon,\xi}$.
\begin{proposition}\label{prop:coercivity}
There exists $c_1>0$ such that, for all sufficiently small $\varepsilon$, all $(\alpha,\xi)\in \overline{\mathcal D}_\varepsilon$, and all $\phi\in E_{\varepsilon,\xi}$,
\begin{equation}\label{eq:coercivity}
Q_{\varepsilon,\alpha,\xi}(\phi)
\ge c_1\norm{\phi}_{\varepsilon,V}^2.
\end{equation}
\end{proposition}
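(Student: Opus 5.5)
We argue by contradiction and blow up at each peak, using the uniform single-bubble gap of Lemma~\ref{lem:spectral-gap}. Suppose there are $\varepsilon_n\to0$, $(\alpha^n,\xi^n)\in\overline{\mathcal D}_{\varepsilon_n}$ and $\phi_n\in E_{\varepsilon_n,\xi^n}$, normalized by $\|\phi_n\|_{\varepsilon_n,V}^2=\varepsilon_n^N$, such that $\varepsilon_n^{-N}Q_{\varepsilon_n,\alpha^n,\xi^n}(\phi_n)\le 1/n$. After passing to a subsequence we have $\xi^n_j\to\xi_j^*$ with $|\xi_j^*-\xi_j^0|\le\tau_0$, so $\lambda_j:=\lim V(\xi^n_j)\in[V_{\min},V_{\max}]$, and $\alpha^n\to\mathbf 1$. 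Set $\phi_{n,j}(y):=\phi_n(\xi^n_j+\varepsilon_n y)$. These are bounded in $H^s$, so along a subsequence $\phi_{n,j}\rightharpoonup\phi_j$ in $H^s(\mathbb R^N)$ and strongly in $L^2_{\rm loc}$.

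\emph{Step 1: the limits satisfy the limiting orthogonality conditions.} We rescale the conditions defining $E_{\varepsilon,\xi}$. Since $\langle\phi,W_{\varepsilon,j}\rangle_{\varepsilon,V}=\int\phi\,W_{\varepsilon,j}^p+\int(V(x)-V(\xi_j))\phi W_{\varepsilon,j}$, after rescaling this becomes $\varepsilon^N\bigl(\int\phi_{n,j}w_{V(\xi^n_j)}^p+o(1)\bigr)$; the $o(1)$ uses continuity of $V$ and the $L^2$ decay of $w$. In the limit we get $\int\phi_jw_{\lambda_j}^p=\langle\phi_j,w_{\lambda_j}\rangle_{\lambda_j}=0$. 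For the $Y$-conditions we use \eqref{eq:tangent-vector-explicit}: the term $\varepsilon Y_{\varepsilon,j,h}$ rescales to $-\partial_hw_{V(\xi_j)}+O(\varepsilon)$, and the same argument gives $\langle\phi_j,\partial_hw_{\lambda_j}\rangle_{\lambda_j}=0$. The inner products are given by $L^2$ pairings against $w^p$ and $pw^{p-1}\partial_hw$ plus potential errors, and these pass to the weak limit because the test functions are fixed and decay. Lemma~\ref{lem:spectral-gap} then gives $Q_{\lambda_j}(\phi_j)\ge\sigma_*\|\phi_j\|_{\lambda_j}^2$.

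\emph{Step 2: lower semicontinuity of the quadratic form.} We split
\[
\varepsilon_n^{-N}Q_{\varepsilon_n,\alpha^n,\xi^n}(\phi_n)=1-p\,\varepsilon_n^{-N}\int W_{\varepsilon_n,\alpha^n,\xi^n}^{p-1}\phi_n^2 .
\]
The peaks are separated by a distance of order $1/\varepsilon_n$ in rescaled variables, $\alpha^{p-1}=1+o(1)$ by \eqref{eq:alpha-power-close}, and $w_\lambda^{p-1}$ decays uniformly by Lemma~\ref{lem:single-bubble-input}. Hence the potential term is concentrated near the peaks: for fixed $R$ the region outside $\bigcup_jB_{R\varepsilon_n}(\xi_j^n)$ contributes at most $CR^{-(N+2s)(p-1)}$, and on the balls the weighted integrals converge by local compactness. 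This yields
\[
p\,\varepsilon_n^{-N}\int W^{p-1}\phi_n^2\to p\sum_j\int w_{\lambda_j}^{p-1}\phi_j^2 .
\]

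To compare $1$ with $\sum_j\|\phi_j\|_{\lambda_j}^2$ we need a localized lower bound for the norm,
\[
1=\varepsilon_n^{-N}\|\phi_n\|_{\varepsilon_n,V}^2\ge\sum_j\|\phi_j\|_{\lambda_j}^2+\limsup\bigl(\text{nonnegative outer part}\bigr)-o_R(1).
\]
We obtain it from the IMS localization \eqref{eq:ims-localized-form} with cutoffs $\chi_{\varepsilon,j}$ at scale $R\varepsilon$, centered at $\xi^n_j$. The localization error is $CR^{-2s}$. Weak lower semicontinuity then applies to each $\chi_{\varepsilon_n,j}\phi_n$ in rescaled variables, since $V(\xi^n_j+\varepsilon_n y)\to\lambda_j$ locally uniformly. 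The outer piece contributes at least $V_{\min}\|\chi_0\phi_n\|_{L^2}^2\ge0$ and carries no potential.

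\emph{Step 3: conclusion.} Write $m:=\liminf(\text{outer norm})\ge0$. Combining the two steps,
\[
0\ge\liminf\varepsilon_n^{-N}Q\ge\sum_jQ_{\lambda_j}(\phi_j)+m-CR^{-2s}\ge\sigma_*\sum_j\|\phi_j\|_{\lambda_j}^2+m-CR^{-2s}.
\]
Since the norm is normalized to $1$, we also have $\sum_j\|\phi_j\|^2+m\ge1-CR^{-2s}$. Letting $R\to\infty$, the first inequality forces $\sum_j\|\phi_j\|^2=0$ and $m=0$, which contradicts the second. Hence \eqref{eq:coercivity} holds with some $c_1>0$.

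\emph{Main obstacle.} The delicate point is Step 2. Weak limits of the localized pieces only give lower semicontinuity of the norm, so the bookkeeping has to keep both the local and outer mass, and the IMS commutator error must be handled consistently. Controlling it costs a factor $R^{-2s}$, uniformly in $n$, which is why $R\to\infty$ is taken only at the end. The uniformity over $(\alpha,\xi)\in\overline{\mathcal D}_\varepsilon$ is automatic, because the contradiction sequence is arbitrary in these parameters.
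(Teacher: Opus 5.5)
Your route differs from the paper's. The paper normalizes $p\int W_n^{p-1}\phi_n^2=1$ and localizes at a fixed physical radius $d$, so the IMS error is $O(\varepsilon_n^{2s})$. It transfers the orthogonality conditions to the cut-off rescalings $\psi_{n,j}$ only up to $C\omega(d)\|\psi_{n,j}\|+o(1)$, and applies Lemma~\ref{lem:spectral-gap} at each $n$ to the projection onto $S_{n,j}^{\perp}$. You instead take weak limits of the uncut rescalings, where the orthogonality conditions pass to the limit exactly. This avoids both the $\omega(d)$ errors and the projection, and your Steps 1 and 2 are sound.

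The gap is in Step 3. With $m$ defined as the $\liminf$ of the outer norm, the inequality $\sum_j\|\phi_j\|_{\lambda_j}^2+m\ge1-CR^{-2s}$ does not follow from the normalization. IMS gives $1\le\sum_{j\ge1}\varepsilon_n^{-N}\|\chi_{\varepsilon_n,j}\phi_n\|_{\varepsilon_n,V}^2+\varepsilon_n^{-N}\|\chi_{\varepsilon_n,0}\phi_n\|_{\varepsilon_n,V}^2+CR^{-2s}$. Replacing the inner terms by $\|\phi_j\|_{\lambda_j}^2$ would need upper semicontinuity. Weak convergence only gives $\liminf_n\varepsilon_n^{-N}\|\chi_{\varepsilon_n,j}\phi_n\|_{\varepsilon_n,V}^2\ge\|\phi(\cdot/R)\phi_j\|_{\lambda_j}^2$. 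The $\dot H^s$ energy inside $B_{2R}$ can disappear in the weak limit, for example a concentrating or oscillating piece tending weakly to $0$. Such a defect is counted in the normalization $1$ but in neither $\sum_j\|\phi_j\|^2$ nor $m$. Your first inequality discards the same defect (legitimately there, since it is nonnegative). So the two inequalities as written do not combine into a contradiction. You flagged this danger under ``Main obstacle'', but Step 3 does not carry it out.

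The repair is short, and there are two ways to do it.

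\emph{Option 1.} Add the local defects $\lim_n\varepsilon_n^{-N}\|\chi_{\varepsilon_n,j}\phi_n\|_{\varepsilon_n,V}^2-\|\phi(\cdot/R)\phi_j\|_{\lambda_j}^2\ge0$, taken along a subsequence, to $m$. They enter the lower bound for $\varepsilon_n^{-N}Q$ with a plus sign as well, so both inequalities then hold.

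\emph{Option 2 (simpler).} Use the compactness you already proved in Step 2.
\begin{itemize}
\item From $\varepsilon_n^{-N}Q\le 1/n$ and the convergence of the potential term, you get $p\sum_j\int w_{\lambda_j}^{p-1}\phi_j^2\ge1$.
\item IMS together with lower semicontinuity, letting $R\to\infty$, gives $\sum_j\|\phi_j\|_{\lambda_j}^2\le1$.
\item Hence $\sum_jQ_{\lambda_j}(\phi_j)\le0$, and Lemma~\ref{lem:spectral-gap} forces $\phi_j=0$ for every $j$.
\item This contradicts $p\sum_j\int w_{\lambda_j}^{p-1}\phi_j^2\ge1$.
\end{itemize}
This is exactly why the paper normalizes the compact term $p\int W_n^{p-1}\phi_n^2$ rather than the norm: the outer mass and any loss of energy in the weak limit then play no role.
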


\begin{proof}
Suppose that \eqref{eq:coercivity} is false.  Then there exist
\[
\varepsilon_n\to0,
\qquad
(\alpha^n,\xi^n)\in \overline{\mathcal D}_{\varepsilon_n},
\qquad
0\ne\phi_n\in E_{\varepsilon_n,\xi^n},
\]
such that, with $W_n:=W_{\varepsilon_n,\alpha^n,\xi^n}$,
\[
Q_{\varepsilon_n,\alpha^n,\xi^n}(\phi_n)
=o(1)\norm{\phi_n}_{\varepsilon_n,V}^2 .
\]
Since
\[
Q_{\varepsilon_n,\alpha^n,\xi^n}(\phi_n)
=\norm{\phi_n}_{\varepsilon_n,V}^2
-p\int_{\R^N}W_n^{p-1}\phi_n^2,
\]
we have
\[
p\int_{\R^N}W_n^{p-1}\phi_n^2
=(1+o(1))\norm{\phi_n}_{\varepsilon_n,V}^2>0
\]
for large \(n\). We may assume
\begin{equation}\label{eq:coercivity-normalization}
p\int_{\R^N}W_n^{p-1}\phi_n^2=1.
\end{equation}
Then the preceding relation gives
\begin{equation}\label{eq:coercivity-norm-bound}
\norm{\phi_n}_{\varepsilon_n,V}^2=1+o(1).
\end{equation}
Fix $d>0$ so small that the balls $B_{4d}(\xi_j^0)$ are disjoint and
\begin{equation}\label{eq:d-small-oscillation}
\omega(d):=
\sup_{\substack{|x-y|\le4d\\ x,y\in\cup_jB_{4\tau_0}(\xi_j^0)}}|V(x)-V(y)|
\le \frac{\sigma_*}{64}.
\end{equation}
Let $\chi_{n,j}\in C_c^\infty(B_{2d}(\xi_j^n))$, $j=1,\ldots,k$, satisfy $\chi_{n,j}=1$ in $B_d(\xi_j^n)$, and let
\[
\chi_{n,0}^2+\sum_{j=1}^k\chi_{n,j}^2=1,
\]
{as in Proposition~\ref{prop:mu-small}}. Set
\begin{equation*}
\psi_{n,j}(y):=\varepsilon_n^{N/2}(\chi_{n,j}\phi_n)(\xi_j^n+\varepsilon_n y).
\end{equation*}

By \cite[Lemma~3.5]{frank2008hardy}, we have
\begin{equation}\label{eq:coercivity-ims-localization}
\norm{\phi_n}_{\varepsilon_n,V}^2
\ge \sum_{j=1}^k\norm{\chi_{n,j}\phi_n}_{\varepsilon_n,V}^2
+\norm{\chi_{n,0}\phi_n}_{\varepsilon_n,V}^2-o(1).
\end{equation}
Indeed, arguing as in the proof of Proposition~\ref{prop:mu-small}, we have 
\[
\varepsilon_n^{2s}\sum_{j=0}^k\iint
\frac{(\chi_{n,j}(x)-\chi_{n,j}(y))^2|\phi_n(x)||\phi_n(y)|}{|x-y|^{N+2s}}\,dxdy
\le C\varepsilon_n^{2s}\norm{\phi_n}_{L^2}^2=o(1).
\]
By the change of variables $x=\xi_j^n+\varepsilon_n y$,
\begin{equation}\label{eq:localized-norm}
\begin{aligned}
\norm{\chi_{n,j}\phi_n}_{\varepsilon_n,V}^2
&=\int_{\R^N}|(-\Delta)^{s/2}\psi_{n,j}|^2
+\int_{\R^N}V(\xi_j^n+\varepsilon_n y)\psi_{n,j}^2\,dy  \\
&=\norm{\psi_{n,j}}_{V(\xi_j^n)}^2
+O(\omega(d))\norm{\psi_{n,j}}_{L^2}^2  \\
&\ge (1-C\omega(d))\norm{\psi_{n,j}}_{V(\xi_j^n)}^2.
\end{aligned}
\end{equation}
Similarly, on \(\operatorname{supp}\chi_{n,j}\subset B_{2d}(\xi_j^n)\),
\begin{equation}\label{eq:localized-denominator}
\begin{aligned}
&p\int_{\R^N}W_n^{p-1}(\chi_{n,j}\phi_n)^2  \\
&\quad=
p\int_{\R^N}
\left(
\alpha_j^n W_{\varepsilon_n,j}(x;\xi_j^n)
+
\sum_{i\ne j}\alpha_i^n W_{\varepsilon_n,i}(x;\xi_i^n)
\right)^{p-1}
(\chi_{n,j}\phi_n)^2\,dx                         \\
&\quad=
p(\alpha_j^n)^{p-1}
\int_{\R^N}
W_{\varepsilon_n,j}^{p-1}(x;\xi_j^n)
(\chi_{n,j}\phi_n)^2\,dx                         \\
&\qquad
+p\int_{\R^N}
\Bigg[
\left(
\alpha_j^n W_{\varepsilon_n,j}(x;\xi_j^n)
+
\sum_{i\ne j}\alpha_i^n W_{\varepsilon_n,i}(x;\xi_i^n)
\right)^{p-1}
-
\left(
\alpha_j^n W_{\varepsilon_n,j}(x;\xi_j^n)
\right)^{p-1}
\Bigg]
(\chi_{n,j}\phi_n)^2\,dx                         \\
&\quad=
p(\alpha_j^n)^{p-1}
\int_{\R^N}
w_{V(\xi_j^n)}^{p-1}(y)\psi_{n,j}^2(y)\,dy
 +o(1),
\end{aligned}
\end{equation}
where the last equality uses \eqref{eq:coercivity-normalization} and Lemma~\ref{lem:single-bubble-input}.
Consequently,
\begin{equation}\label{eq:global-denominator-localized}
1=p\int W_n^{p-1}\phi_n^2
=\sum_{j=1}^k p(\alpha_j^n)^{p-1}\int w_{V(\xi_j^n)}^{p-1}\psi_{n,j}^2+o(1).
\end{equation}
Moreover, since
\[
\operatorname{supp}\chi_{n,0}
\subset
\mathbb R^N\setminus\bigcup_{j=1}^k B_d(\xi_j^n),
\]
and
\[
\sup_{\mathbb R^N\setminus\cup_jB_d(\xi_j^n)}W_n
\le C\varepsilon_n^{N+2s},
\]
we have
\[
\begin{aligned}
p\int_{\mathbb R^N}W_n^{p-1}(\chi_{n,0}\phi_n)^2
&\le C\varepsilon_n^{(N+2s)(p-1)}\|\chi_{n,0}\phi_n\|_{L^2(\mathbb R^N)}^2 \\
&= o(1)\|\chi_{n,0}\phi_n\|_{\varepsilon_n,V}^2.
\end{aligned}
\]
Therefore,
\begin{equation}\label{eq:outer-region-positive}
\|\chi_{n,0}\phi_n\|_{\varepsilon_n,V}^2
-p\int_{\mathbb R^N}W_n^{p-1}(\chi_{n,0}\phi_n)^2
\ge (1-o(1))\|\chi_{n,0}\phi_n\|_{\varepsilon_n,V}^2.
\end{equation}

We now localize the orthogonality.  Since $\langle\phi_n,W_{\varepsilon_n,j}(x;\xi_j^n)\rangle_{\varepsilon_n,V}=0$,
\begin{equation*}
\begin{aligned}
0&=\langle\chi_{n,j}\phi_n,W_{\varepsilon_n,j}\rangle_{\varepsilon_n,V}
+\langle(1-\chi_{n,j})\phi_n,W_{\varepsilon_n,j}\rangle_{\varepsilon_n,V} \\
&=\varepsilon_n^{N/2}\langle\psi_{n,j},w_{V(\xi_j^n)}\rangle_{V(\xi_j^n)} \\
&\quad+\varepsilon_n^{N/2}\int_{\mathbb R^N}
\bigl(V(\xi_j^n+\varepsilon_n y)-V(\xi_j^n)\bigr)
\psi_{n,j}w_{V(\xi_j^n)}\,dy \\
&\quad+\langle(1-\chi_{n,j})\phi_n,W_{\varepsilon_n,j}\rangle_{\varepsilon_n,V} \\
&=\varepsilon_n^{N/2}\langle\psi_{n,j},w_{V(\xi_j^n)}\rangle_{V(\xi_j^n)}
+O(\omega(d))\varepsilon_n^{N/2}\norm{\psi_{n,j}}_{V(\xi_j^n)}
+o(\varepsilon_n^{N/2}).
\end{aligned}
\end{equation*}
Thus
\begin{equation}\label{eq:orthogonality-alpha}
|\langle\psi_{n,j},w_{V(\xi_j^n)}\rangle_{V(\xi_j^n)}|
\le C\omega(d)\norm{\psi_{n,j}}_{V(\xi_j^n)}+o(1).
\end{equation}
Using \eqref{eq:tangent-vector-explicit} and $\langle\phi_n,Y_{\varepsilon_n,j,h}(x;\xi_j^n)\rangle_{\varepsilon_n,V}=0$, one obtains
\begin{equation*}
\begin{aligned}
0&=-\varepsilon_n^{N/2-1}
\langle\psi_{n,j},\partial_hw_{V(\xi_j^n)}\rangle_{V(\xi_j^n)}
+O(\varepsilon_n^{N/2})\norm{\psi_{n,j}}_{V(\xi_j^n)} \\
&\quad+O(\omega(d))\varepsilon_n^{N/2-1}\norm{\psi_{n,j}}_{V(\xi_j^n)}
+o(\varepsilon_n^{N/2-1}),
\end{aligned}
\end{equation*}
and hence
\begin{equation}\label{eq:orthogonality-translation}
|\langle\psi_{n,j},\partial_hw_{V(\xi_j^n)}\rangle_{V(\xi_j^n)}|
\le C\omega(d)\norm{\psi_{n,j}}_{V(\xi_j^n)}+o(1).
\end{equation}
Set
\[
S_{n,j}:=\operatorname{span}\{w_{V(\xi_j^n)},\partial_1w_{V(\xi_j^n)},\ldots,\partial_Nw_{V(\xi_j^n)}\}.
\]
Let $\Pi_{n,j}\psi_{n,j}$ denote the orthogonal projection, in the $\langle\cdot,\cdot\rangle_{V(\xi_j^n)}$ inner product, onto
\[
S_{n,j}^{\perp}.
\]
Since $V(\xi_j^n)$ stays in the compact interval $[V_{\min},V_{\max}]$,
the Gram matrices of the basis
\[
w_{V(\xi_j^n)},\partial_1w_{V(\xi_j^n)},\ldots,\partial_Nw_{V(\xi_j^n)}
\]
are uniformly invertible.  Hence, if
\(r_{n,j}:=\psi_{n,j}-\Pi_{n,j}\psi_{n,j}\in S_{n,j}\), then
\[
\norm{r_{n,j}}_{V(\xi_j^n)}
\le C\left(
|\langle r_{n,j},w_{V(\xi_j^n)}\rangle_{V(\xi_j^n)}|
+\sum_{h=1}^N
|\langle r_{n,j},\partial_hw_{V(\xi_j^n)}\rangle_{V(\xi_j^n)}|
\right).
\]
Because $\Pi_{n,j}\psi_{n,j}\perp S_{n,j}$, the inner products of \(r_{n,j}\)
with the above basis are exactly the corresponding inner products of
\(\psi_{n,j}\).  Equations \eqref{eq:orthogonality-alpha} and \eqref{eq:orthogonality-translation} therefore give
\begin{equation}\label{eq:projection-error}
\norm{\psi_{n,j}-\Pi_{n,j}\psi_{n,j}}_{V(\xi_j^n)}
\le C\omega(d)\norm{\psi_{n,j}}_{V(\xi_j^n)}+o(1).
\end{equation}

Set \(v_{n,j}:=\Pi_{n,j}\psi_{n,j}\).  By Lemma~\ref{lem:spectral-gap},
\[
Q_{V(\xi_j^n)}(v_{n,j})
\ge \sigma_*\norm{v_{n,j}}_{V(\xi_j^n)}^2 .
\]
On the other hand, the quadratic form \(Q_\lambda\) is continuous uniformly for
\(\lambda\in[V_{\min},V_{\max}]\).  Using \eqref{eq:projection-error} and the
boundedness of \(\norm{\psi_{n,j}}_{V(\xi_j^n)}\), which follows from
\eqref{eq:localized-norm} and \eqref{eq:coercivity-norm-bound}, we obtain
\[
\norm{v_{n,j}}_{V(\xi_j^n)}^2
\ge \bigl(1-C\omega(d)\bigr)\norm{\psi_{n,j}}_{V(\xi_j^n)}^2-o(1),
\]
and
\[
\begin{aligned}
Q_{V(\xi_j^n)}(\psi_{n,j})
&\ge Q_{V(\xi_j^n)}(v_{n,j})
-C\norm{\psi_{n,j}-v_{n,j}}_{V(\xi_j^n)}
\norm{\psi_{n,j}}_{V(\xi_j^n)}
-o(1) \\
&\ge \bigl(\sigma_*-C\omega(d)\bigr)
\norm{\psi_{n,j}}_{V(\xi_j^n)}^2-o(1).
\end{aligned}
\]
Finally, by \eqref{eq:alpha-power-close},
\[
\left|(\alpha_j^n)^{p-1}-1\right|\int_{\mathbb R^N}
w_{V(\xi_j^n)}^{p-1}\psi_{n,j}^2
\le o(1)\norm{\psi_{n,j}}_{V(\xi_j^n)}^2=o(1).
\]
Thus, after decreasing \(d\) if necessary so that the \(C\omega(d)\) term is
absorbed into \(\sigma_*/2\), and then taking \(n\) sufficiently large,
\begin{equation}\label{eq:localized-spectral-gap}
\begin{aligned}
&\norm{\psi_{n,j}}_{V(\xi_j^n)}^2
-p(\alpha_j^n)^{p-1}\int w_{V(\xi_j^n)}^{p-1}\psi_{n,j}^2   \\
&\quad\ge \frac{\sigma_*}{2}\norm{\psi_{n,j}}_{V(\xi_j^n)}^2-o(1),
\end{aligned}
\end{equation}
provided $d$ is fixed so that \eqref{eq:d-small-oscillation} holds and $n$ is large.

Combining \eqref{eq:coercivity-ims-localization}, \eqref{eq:localized-norm}, \eqref{eq:localized-denominator}, \eqref{eq:localized-spectral-gap}, and \eqref{eq:outer-region-positive}, we obtain
\begin{equation*}
Q_{\varepsilon_n,\alpha^n,\xi^n}(\phi_n)
\ge c\sum_{j=1}^k\norm{\psi_{n,j}}_{V(\xi_j^n)}^2
+(1-o(1))\norm{\chi_{n,0}\phi_n}_{\varepsilon_n,V}^2-o(1).
\end{equation*}
On the other hand, \eqref{eq:global-denominator-localized} and the boundedness of $(\alpha_j^n)^{p-1}$ give
\begin{equation*}
1\le C\sum_{j=1}^k\norm{\psi_{n,j}}_{V(\xi_j^n)}^2+o(1).
\end{equation*}
Thus
\[
Q_{\varepsilon_n,\alpha^n,\xi^n}(\phi_n)\ge c_0+o(1)>0.
\]
But by \eqref{eq:coercivity-normalization} and \eqref{eq:coercivity-norm-bound},
\[
Q_{\varepsilon_n,\alpha^n,\xi^n}(\phi_n)
=\norm{\phi_n}_{\varepsilon_n,V}^2-1=o(1),
\]
a contradiction.
\end{proof}

As an immediate consequence, we obtain the following result.
\begin{corollary}\label{cor:homotopy-coercivity}
For every \(t\in[0,1]\), the bilinear form
\[
B_t(\omega,\psi)
:=
\langle \omega,\psi\rangle_{\varepsilon,V}
-
tp\int_{\mathbb R^N}
W_{\varepsilon,\alpha,\xi}^{p-1}\omega\psi
\]
is uniformly coercive on \(E_{\varepsilon,\xi}\). More precisely, there exists
\(c>0\), independent of \(\varepsilon\), \(t\), and \((\alpha,\xi)\in \overline{\mathcal D}_\varepsilon\), such that
\[
B_t(\omega,\omega)
\ge
c\|\omega\|_{\varepsilon,V}^2,
\qquad
\omega\in E_{\varepsilon,\xi}.
\]
\end{corollary}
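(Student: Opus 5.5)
The statement is Corollary~\ref{cor:homotopy-coercivity}. I would deduce it from Proposition~\ref{prop:coercivity} by interpolating linearly in \(t\) between the two endpoints \(t=0\) and \(t=1\).

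The key observation is that \(B_t(\omega,\omega)\) is affine in \(t\). Writing
\[
P(\omega):=p\int_{\mathbb R^N}W_{\varepsilon,\alpha,\xi}^{p-1}\omega^2\ge0,
\]
we have
\[
B_t(\omega,\omega)=\|\omega\|_{\varepsilon,V}^2-tP(\omega)=(1-t)\|\omega\|_{\varepsilon,V}^2+t\,Q_{\varepsilon,\alpha,\xi}(\omega).
\]
Here \(P(\omega)\ge0\) because \(W_{\varepsilon,\alpha,\xi}>0\) when \(\alpha_j\) is close to \(1\), so \(W_{\varepsilon,\alpha,\xi}^{p-1}\) is well defined and positive.

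\textbf{Step 1.} Apply Proposition~\ref{prop:coercivity} to the second term. For \(\omega\in E_{\varepsilon,\xi}\), \((\alpha,\xi)\in\overline{\mathcal D}_\varepsilon\) and \(\varepsilon\) small, it gives \(Q_{\varepsilon,\alpha,\xi}(\omega)\ge c_1\|\omega\|_{\varepsilon,V}^2\).

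\textbf{Step 2.} Combine the two terms. We obtain
\[
B_t(\omega,\omega)\ge\bigl((1-t)+tc_1\bigr)\|\omega\|_{\varepsilon,V}^2\ge\min\{1,c_1\}\|\omega\|_{\varepsilon,V}^2.
\]
Since \(c_1\le1\) automatically, because \(Q\le\|\cdot\|^2\), one may take \(c=c_1\).

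\textbf{Step 3.} Check uniformity. The constant \(c_1\) from Proposition~\ref{prop:coercivity} is independent of \(\varepsilon\) and of \((\alpha,\xi)\in\overline{\mathcal D}_\varepsilon\), and the convexity bound introduces no dependence on \(t\). Hence \(c\) is uniform in \(\varepsilon\), \(t\) and \((\alpha,\xi)\), as required.

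I do not expect a real obstacle. The only point needing care is the sign of the potential term, which justifies the monotonicity in \(t\). All the analytic difficulty is already contained in Proposition~\ref{prop:coercivity}.
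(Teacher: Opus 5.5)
Your proof is correct and is essentially the paper's argument. The paper writes $B_t(\omega,\omega)=Q_{\varepsilon,\alpha,\xi}(\omega)+(1-t)p\int_{\mathbb R^N}W_{\varepsilon,\alpha,\xi}^{p-1}\omega^2$ and uses $W_{\varepsilon,\alpha,\xi}\ge0$ together with Proposition~\ref{prop:coercivity}; your convex-combination form $(1-t)\|\omega\|_{\varepsilon,V}^2+t\,Q_{\varepsilon,\alpha,\xi}(\omega)$ with $c=\min\{1,c_1\}$ is equivalent, and in fact does not even need the sign of the potential term.
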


\begin{proof}
For \(\omega\in E_{\varepsilon,\xi}\), Proposition~\ref{prop:coercivity} gives
\[
\begin{aligned}
B_t(\omega,\omega)
&=
\|\omega\|_{\varepsilon,V}^2
-
tp\int_{\mathbb R^N}
W_{\varepsilon,\alpha,\xi}^{p-1}\omega^2
\\
&=
\left(
\|\omega\|_{\varepsilon,V}^2
-
p\int_{\mathbb R^N}
W_{\varepsilon,\alpha,\xi}^{p-1}\omega^2
\right)
+
(1-t)p\int_{\mathbb R^N}
W_{\varepsilon,\alpha,\xi}^{p-1}\omega^2
\\
&=
Q_{\varepsilon,\alpha,\xi}(\omega)
+
(1-t)p\int_{\mathbb R^N}
W_{\varepsilon,\alpha,\xi}^{p-1}\omega^2
\\
&\ge
c_1\|\omega\|_{\varepsilon,V}^2 .
\end{aligned}
\]
Since \(t\in[0,1]\) and \(W_{\varepsilon,\alpha,\xi}\ge0\), the second term is nonnegative.
\end{proof}

\subsection{Some crucial estimates}

\begin{lemma}\label{lem:alpha-equation}
For each $j=1,\ldots,k$, uniformly for $(\alpha,\xi)\in \overline{\mathcal D}_\varepsilon$,
\begin{equation}\label{eq:alpha-equation}
\mathcal E_\varepsilon'(W_{\varepsilon,
\alpha,
\xi})[W_{\varepsilon,j}(x;\xi_j)]
=a_0\varepsilon^NV(\xi_j)^\theta(\alpha_j-\alpha_j^p)+O(\varepsilon^{N+\tau}).
\end{equation}
\end{lemma}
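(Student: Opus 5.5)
We compute directly. Write $W:=W_{\varepsilon,\alpha,\xi}$ and $W_j:=W_{\varepsilon,j}(x;\xi_j)$. Since $\mathcal E_\varepsilon'(W)[W_j]=\langle W,W_j\rangle_{\varepsilon,V}-\int W^pW_j$, we will split
\[
\mathcal E_\varepsilon'(W)[W_j]=\alpha_j\Bigl(\|W_j\|_{\varepsilon,V}^2-\int W_j^{p+1}\Bigr)+\alpha_j\Bigl(\int W_j^{p+1}-\alpha_j^{p-1}\int W_j^{p+1}\Bigr)+\sum_{i\ne j}\alpha_i\langle W_i,W_j\rangle_{\varepsilon,V}-\int\Bigl(W^p-\alpha_j^pW_j^p\Bigr)W_j.
\]
The second group is the main term. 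Scaling $x=\xi_j+\varepsilon y$ and using $c_\lambda=c_*\lambda^\theta$ gives $\int W_j^{p+1}=\varepsilon^N\int w_{V(\xi_j)}^{p+1}=a_0\varepsilon^NV(\xi_j)^\theta$. Since $\alpha_j-\alpha_j^p$ comes out exactly, this yields $a_0\varepsilon^NV(\xi_j)^\theta(\alpha_j-\alpha_j^p)$.

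For the first group, the equation $\varepsilon^{2s}(-\Delta)^sW_j+V(\xi_j)W_j=W_j^p$ gives $\|W_j\|_{\varepsilon,V}^2-\int W_j^{p+1}=\int(V(x)-V(\xi_j))W_j^2$. Rescaling, this equals $\varepsilon^N\int (V(\xi_j+\varepsilon y)-V(\xi_j))w^2_{V(\xi_j)}(y)\,dy$. Here we must be careful, because $\xi$ ranges over a $\tau_0$-neighborhood and $\nabla V(\xi_j)$ need not vanish. The first-order Taylor term $\varepsilon\nabla V(\xi_j)\cdot\int y\,w^2\,dy$ vanishes, however, by radial symmetry of $w_\lambda$ (Lemma~\ref{lem:single-bubble-input}). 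The remainder is bounded by $C\varepsilon^2\int_{|y|\le \varepsilon^{-1}}|y|^2w^2\,dy+C\int_{|y|>\varepsilon^{-1}}w^2$. Using the decay $w\le C(1+|y|)^{-N-2s}$, the integrand $|y|^2w^2\sim|y|^{2-2N-4s}$ is integrable since $N>2s$ and $N\ge1$; at worst it produces a logarithm. Either way the group is $O(\varepsilon^{N+\min\{2,\cdot\}})=O(\varepsilon^{N+\tau})$ because $\tau<1$.

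For the interaction terms, we will write $\langle W_i,W_j\rangle_{\varepsilon,V}=\int W_i^pW_j+\int(V-V(\xi_i))W_iW_j$, which is bounded by $C\int W_iW_j$. Since $|\xi_i-\xi_j|\ge c$, the peak $W_i$ is $O(\varepsilon^{N+2s})$ on $B_{c/2}(\xi_j)$, and symmetrically $W_j$ is small near $\xi_i$. So $\int W_iW_j\le C\varepsilon^{N+2s}\bigl(\int W_j+\int W_i\bigr)+\text{far part}=O(\varepsilon^{2N+2s})$. Finally, for the last term we use $|(a+b)^p-a^p|\le C(a^{p-1}b+b^p)$ with $a=\alpha_jW_j$ and $b=\sum_{i\ne j}\alpha_iW_i$, which gives the bound $C\int(W_j^pb+b^pW_j)$. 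Each piece is handled by the same splitting and is $O(\varepsilon^{2N+2s})$; this is the weighted tail estimate already used for $E_\varepsilon$ in Proposition~\ref{prop:local-expansion}.

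The main obstacle is only bookkeeping: we need uniformity in $(\alpha,\xi)\in\overline{\mathcal D}_\varepsilon$. This follows because all constants depend only on the uniform decay in Lemma~\ref{lem:single-bubble-input}, on $\|V\|_{C^2}$, and on the separation of the balls $B_{\tau_0}(\xi_j^0)$. The only genuinely delicate point is the cancellation of the linear $\nabla V$ term by radial symmetry.
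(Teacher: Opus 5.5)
Your proposal is correct and follows essentially the paper's route. You test the frozen-coefficient equation of each single bubble against $W_{\varepsilon,j}$, extract the exact main term $(\alpha_j-\alpha_j^p)\int W_{\varepsilon,j}^{p+1}=a_0\varepsilon^NV(\xi_j)^\theta(\alpha_j-\alpha_j^p)$, and show that the potential, overlap and nonlinear interaction terms are $O(\varepsilon^{N+\tau})$ (for the interactions you even get $O(\varepsilon^{2N+2s})$).

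The step you call delicate is not. Since $\tau<1$, the crude bound $|V(\xi_j+\varepsilon y)-V(\xi_j)|\le C\varepsilon|y|$ together with $\int|y|\,w_{V(\xi_j)}^2\,dy<\infty$ already gives $O(\varepsilon^{N+1})$; the paper simply uses $|(V-V(\xi_j))W_{\varepsilon,j}|\le C\varepsilon^\tau\rho_{\tau,\varepsilon,\xi}$. So the radial cancellation is unnecessary. This is fortunate, because your claim that $|y|^2w^2$ is integrable up to a logarithm fails for $N=1$, $s<1/4$: the loss there is a power, although the resulting bound $O(\varepsilon^{N+1+4s})$ is still sufficient.
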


\begin{proof}
For each $i=1,\ldots,k$, the function $W_{\varepsilon,i}(x;\xi_i)$ satisfies
\[
\varepsilon^{2s}(-\Delta)^sW_{\varepsilon,i}(x;\xi_i)
+V(\xi_i)W_{\varepsilon,i}(x;\xi_i)
=W_{\varepsilon,i}(x;\xi_i)^p
\qquad\hbox{in }\mathbb R^N.
\]
Testing this equation against $W_{\varepsilon,j}(x;\xi_j)$, we obtain
\begin{equation}\label{eq:alpha-frozen-testing}
\begin{aligned}
\langle W_{\varepsilon,i}(x;\xi_i),W_{\varepsilon,j}(x;\xi_j)\rangle_{\varepsilon,V}
&=\int_{\mathbb R^N}W_{\varepsilon,i}(x;\xi_i)^pW_{\varepsilon,j}(x;\xi_j)\,dx\\
&\quad+\int_{\mathbb R^N}(V(x)-V(\xi_i))
W_{\varepsilon,i}(x;\xi_i)W_{\varepsilon,j}(x;\xi_j)\,dx.
\end{aligned}
\end{equation}
Consequently,
\begin{equation}\label{eq:alpha-direct-decomposition}
\begin{aligned}
&\mathcal E_\varepsilon'(W_{\varepsilon,\alpha,\xi})
[W_{\varepsilon,j}(x;\xi_j)]\\
&=\sum_{i=1}^k(\alpha_i-\alpha_i^p)
\int_{\mathbb R^N}W_{\varepsilon,i}(x;\xi_i)^p
W_{\varepsilon,j}(x;\xi_j)\,dx\\
&\quad+\sum_{i=1}^k\alpha_i
\int_{\mathbb R^N}(V(x)-V(\xi_i))
W_{\varepsilon,i}(x;\xi_i)W_{\varepsilon,j}(x;\xi_j)\,dx\\
&\quad-\int_{\mathbb R^N}
\left[
W_{\varepsilon,\alpha,\xi}(x)^p
-\sum_{i=1}^k\alpha_i^pW_{\varepsilon,i}(x;\xi_i)^p
\right]W_{\varepsilon,j}(x;\xi_j)\,dx.
\end{aligned}
\end{equation}

For the second term, \eqref{eq:sep-tail-potential-pointwise} gives
\begin{equation}\label{eq:alpha-potential-errors}
\begin{aligned}
\sum_{i=1}^k
\left|\int_{\mathbb R^N}(V(x)-V(\xi_i))
W_{\varepsilon,i}(x;\xi_i)W_{\varepsilon,j}(x;\xi_j)\,dx\right|
&\le C\varepsilon^\tau
\int_{\mathbb R^N}\rho_{\tau,\varepsilon,\xi}(x)
W_{\varepsilon,j}(x;\xi_j)\,dx\\
&\le C\varepsilon^{N+\tau}.
\end{aligned}
\end{equation}
The $i=j$ term in the first sum is
\begin{equation}\label{eq:alpha-diagonal-main}
\begin{aligned}
(\alpha_j-\alpha_j^p)
\int_{\mathbb R^N}W_{\varepsilon,j}(x;\xi_j)^{p+1}\,dx
&=a_0\varepsilon^NV(\xi_j)^\theta(\alpha_j-\alpha_j^p).
\end{aligned}
\end{equation}
For $i\ne j$, using \eqref{eq:alpha-frozen-testing}, \eqref{eq:alpha-potential-errors} and \eqref{eq:app-diff-ww}, we have
\begin{equation}\label{eq:alpha-off-diagonal-overlap}
\begin{aligned}
\left|\int_{\mathbb R^N}W_{\varepsilon,i}(x;\xi_i)^p
W_{\varepsilon,j}(x;\xi_j)\,dx\right|
&\le
\left|\langle W_{\varepsilon,i}(x;\xi_i),
W_{\varepsilon,j}(x;\xi_j)\rangle_{\varepsilon,V}\right|\\
&\quad+
\left|\int_{\mathbb R^N}(V(x)-V(\xi_i))
W_{\varepsilon,i}(x;\xi_i)W_{\varepsilon,j}(x;\xi_j)\,dx\right|\\
&\le C\varepsilon^{2N+2s}+C\varepsilon^{N+\tau}
\le C\varepsilon^{N+\tau}.
\end{aligned}
\end{equation}

Finally, by \eqref{eq:sep-tail-nonlinear-pointwise}, we have
\begin{equation}\label{eq:alpha-nonlinear-error}
\begin{aligned}
&\left|\int_{\mathbb R^N}
\left[
W_{\varepsilon,\alpha,\xi}(x)^p
-\sum_{i=1}^k\alpha_i^pW_{\varepsilon,i}(x;\xi_i)^p
\right]W_{\varepsilon,j}(x;\xi_j)\,dx\right|\\
&\qquad\le C\varepsilon^\tau
\int_{\mathbb R^N}\rho_{\tau,\varepsilon,\xi}(x)
W_{\varepsilon,j}(x;\xi_j)\,dx
\le C\varepsilon^{N+\tau}.
\end{aligned}
\end{equation}
Combining \eqref{eq:alpha-direct-decomposition}--\eqref{eq:alpha-nonlinear-error},
we conclude that
\[
\mathcal E_\varepsilon'(W_{\varepsilon,\alpha,\xi})
[W_{\varepsilon,j}(x;\xi_j)]
=a_0\varepsilon^NV(\xi_j)^\theta(\alpha_j-\alpha_j^p)
+O(\varepsilon^{N+\tau}).
\]
This completes the proof.
\end{proof}

Choose \(d>0\) so small that the balls \(B_{4d}(\xi_j)\) are pairwise disjoint for every \(\xi\in\overline{\mathcal D}_\varepsilon\).  Let
\[
\eta\in C_c^\infty(\overline{\mathbb R^{N+1}_+}),
\qquad
0\le\eta\le1,
\qquad
\eta\equiv1\quad\hbox{in }\overline{\mathcal B_d^+(0)},
\qquad
\operatorname{supp}\eta\subset \overline{\mathcal B_{2d}^+(0)}.
\]
Set
\[
\eta_j(x,t):=\eta(x-\xi_j,t),
\qquad
\eta_j(x):=\eta_j(x,0).
\]
For \(u\in H^s(\R^N)\cap L^\infty(\R^N)\), let \(\bar u=E(u)\) be its extension, and set
\[
{F(t):=\frac{|t|^{p+1}}{p+1}.}
\]
Define
\begin{equation}\label{eq:pohozaev-functional}
\begin{aligned}
\mathcal P_{j,h}(u)
&:=\frac{\varepsilon^{2s}}{2}
\int_{\R^{N+1}_+}t^{1-2s}(\partial_h\eta_j)|\nabla\bar u|^2
-\varepsilon^{2s}
\int_{\R^{N+1}_+}t^{1-2s}(\partial_h\bar u)(\nabla\bar u\cdot\nabla\eta_j)\\
&\quad
+\frac{1}{2}\int_{\R^N}\partial_h(V\eta_j)u^2
-
\int_{\R^N}(\partial_h\eta_j)F(u).
\end{aligned}
\end{equation}
For smooth \(u\), integration by parts gives
\begin{equation}\label{eq:pohozaev-as-test}
\mathcal P_{j,h}(u)=\mathcal E_\varepsilon'(u)[-\eta_j\partial_hu].
\end{equation}
Formula \eqref{eq:pohozaev-functional} is taken as the definition; it involves only \(\nabla\bar u\in L^2(t^{1-2s})\) and is therefore meaningful in the fractional energy space.

\begin{lemma}\label{lem:xi-equation}
Uniformly for \((\alpha,\xi)\in\overline{\mathcal D}_\varepsilon\),
\begin{equation}\label{eq:pohozaev-xi-main}
\mathcal P_{j,h}(W_{\varepsilon,\alpha,\xi})
=c_*\theta\varepsilon^N V(\xi_j)^{\theta-1}\partial_hV(\xi_j)
+O(\varepsilon^N|\alpha_j-1|)+O(\varepsilon^{N+\tau}).
\end{equation}
\end{lemma}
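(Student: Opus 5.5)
Write \(W:=W_{\varepsilon,\alpha,\xi}\), \(W_i:=W_{\varepsilon,i}(x;\xi_i)\). The plan is to use \eqref{eq:pohozaev-as-test} (legitimate since \(W\) is smooth with the decay of Lemma~\ref{lem:single-bubble-input}) to write
\[
\mathcal P_{j,h}(W)=\mathcal E_\varepsilon'(W)[-\eta_j\partial_hW],
\]
and then to compute this derivative by freezing the potential at \(\xi_j\). First split \(W=\alpha_jW_j+\sum_{i\ne j}\alpha_iW_i\). On \(\operatorname{supp}\eta_j\subset B_{2d}(\xi_j)\), each \(W_i\) with \(i\ne j\) is \(O(\varepsilon^{N+2s})\) together with its derivatives, by the algebraic decay. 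Moreover, in the extension formulation \eqref{eq:pohozaev-functional} the cross terms between \(\bar W_j\) and \(\bar W_i\) are controlled by the annular extension bounds of the type in Lemma~\ref{lem:extension-u-bound}. Hence every contribution involving some \(i\ne j\) should be \(O(\varepsilon^{N+2s})\subset O(\varepsilon^{N+\tau})\), and we are reduced to computing \(\mathcal P_{j,h}(\alpha_jW_j)\).

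Next, use the equation \(\varepsilon^{2s}(-\Delta)^sW_j+V(\xi_j)W_j=W_j^p\) to evaluate
\[
\mathcal E_\varepsilon'(\alpha_jW_j)[-\eta_j\partial_h W_j]
=-\int (\alpha_j-\alpha_j^p)W_j^p\,\eta_j\partial_hW_j
-\alpha_j\int (V(x)-V(\xi_j))W_j\,\eta_j\partial_hW_j .
\]
In the first term, \(W_j^p\partial_hW_j=\partial_h(W_j^{p+1})/(p+1)\). After integrating by parts only \(\partial_h\eta_j\) survives, and it is supported where \(|x-\xi_j|\ge d\), so this term is \(O(|\alpha_j-1|\varepsilon^{N+2s})\). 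The error terms coming from the cutoff in the nonlocal part are handled directly in the extension form \eqref{eq:pohozaev-functional}. There the terms carrying \(\nabla\eta_j\) live in the annulus \(A^{+}_{d,2d}\), where \(|\nabla\bar W_j|\) is \(O(\varepsilon^{N})\)-small, so they are \(O(\varepsilon^{2N})\).

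For the potential term, write \(W_j\partial_hW_j=\tfrac12\partial_h(W_j^2)\) and integrate by parts, which gives
\[
\frac{\alpha_j}{2}\int \partial_hV\,\eta_jW_j^2+O(\varepsilon^{2N}).
\]
After the change of variables \(x=\xi_j+\varepsilon y\) and a Taylor expansion \(\partial_hV(\xi_j+\varepsilon y)=\partial_hV(\xi_j)+O(\varepsilon|y|)\), this equals
\[
\frac{\alpha_j}{2}\varepsilon^N\partial_hV(\xi_j)\int w_{V(\xi_j)}^2\,dy+O(\varepsilon^{N+\tau}).
\]
The factor \(|y|\) is admissible against \(w^2\sim|y|^{-2N-4s}\), and when the \(y\)-integral is cut at \(|y|\sim d/\varepsilon\) the \(O(\varepsilon|y|)\) part gives at worst \(\varepsilon^{\min(1,N+4s)}\le\varepsilon^\tau\). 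Then use the scaling \(w_\lambda(y)=\lambda^{1/(p-1)}w_1(\lambda^{1/(2s)}y)\). It gives \(\tfrac12\int w_\lambda^2=\tfrac{d}{d\lambda}c_\lambda\), because by the Feynman–Hellmann principle \(\partial_\lambda\mathcal I_\lambda(w_\lambda)=\tfrac12\int w_\lambda^2\). Since \(c_\lambda=c_*\lambda^\theta\), this yields \(\tfrac12 M(\lambda)=c_*\theta\lambda^{\theta-1}\). Finally, replacing \(\alpha_j\) by \(1\) costs \(O(\varepsilon^N|\alpha_j-1|)\), which gives \eqref{eq:pohozaev-xi-main}.

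The main obstacle is the justification in the nonlocal setting. The localization \(\eta_j\) does not commute with \((-\Delta)^s\), so the integration by parts behind \eqref{eq:pohozaev-as-test} must be carried out in the extension, using \(\operatorname{div}(t^{1-2s}\nabla\bar W_j)=0\) with the Neumann datum \(\varepsilon^{-2s}(W_j^p-V(\xi_j)W_j)\). The bulk terms are then exact divergences, and everything else sits on \(\operatorname{supp}\nabla\eta_j\), away from the peak. I expect the sharpest point to be controlling \(\int t^{1-2s}|\nabla\bar W_j|^2\) on \(A^+_{d,2d}\), since \(\bar W_j\) decays only algebraically in \(t\). I would first try the Poisson-kernel bound \(|\bar W_j|\le C\varepsilon^N\) there, as in the proof of Lemma~\ref{lem:extension-u-bound}, combined with the Caccioppoli estimate used there.
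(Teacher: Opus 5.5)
Your argument is correct, but it isolates the main term by a different route than the paper.

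\textbf{The paper's route.} The paper never converts back to \(\mathcal E_\varepsilon'\) and never uses the equation of \(W_{\varepsilon,j}\). Formula \eqref{eq:pohozaev-functional} is already in integrated-by-parts form, and \(\eta_j\equiv1\), \(\nabla\eta_j=0\) on \(\mathcal B_d^+(\xi_j)\). So the paper simply splits \(\mathcal P_{j,h}(W_{\varepsilon,\alpha,\xi})\) into \(\frac12\int_{B_d(\xi_j)}\partial_hV\,W_{\varepsilon,\alpha,\xi}^2\) plus annular terms. It removes the two extension terms with the single bound \eqref{eq:extension-w-gradient-bound-xi} on \(A_j^+\). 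It then expands \(W_{\varepsilon,\alpha,\xi}^2\) on \(B_d(\xi_j)\) to obtain \(\frac12\alpha_j^2\int_{B_d(\xi_j)}\partial_hV\,W_{\varepsilon,j}^2\). From there it uses the same Taylor expansion and the same identity \(\frac12\int w_\lambda^2=c_*\theta\lambda^{\theta-1}\) as you. The paper merely quotes that identity; your Feynman--Hellmann justification, via the \(C^1\) dependence in Lemma~\ref{lem:single-bubble-input}, is a useful addition.

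\textbf{Your route.} You pass through \eqref{eq:pohozaev-as-test} and the weak equation of \(W_{\varepsilon,j}\). This cancels the nonlocal form exactly. The \((\alpha_j-\alpha_j^p)\) part becomes a pure cutoff-boundary term, and the main term comes from the potential mismatch \((V-V(\xi_j))W_{\varepsilon,j}\).

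\textbf{What each route buys.} Your route can be pushed further. Also use \(\varepsilon^{2s}(-\Delta)^sW_{\varepsilon,\alpha,\xi}=\sum_i\alpha_i\bigl(W_{\varepsilon,i}^p-V(\xi_i)W_{\varepsilon,i}\bigr)\), tested against \(-\eta_j\partial_hW_{\varepsilon,\alpha,\xi}\in H^s\). Then the whole computation, including the cross terms, takes place in \(\R^N\). The annular weighted-energy bound you single out as the main obstacle is then not needed. The extension enters only through \eqref{eq:pohozaev-as-test}, which the paper supplies. Conversely, the paper's route needs neither \eqref{eq:pohozaev-as-test} nor any equation: only decay and the annular bound. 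That is natural, since \(\mathcal P_{j,h}\) is later applied to \(W_{\varepsilon,\alpha,\xi}+\omega\) with \(\omega\) merely in the energy space.

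\textbf{Two small slips, both harmless.}
\begin{enumerate}
\item The test function is \(-\eta_j\partial_h(\alpha_jW_{\varepsilon,j})\), not \(-\eta_j\partial_hW_{\varepsilon,j}\). Your main term should therefore carry \(\alpha_j^2/2\) rather than \(\alpha_j/2\); the difference is absorbed in \(O(\varepsilon^N|\alpha_j-1|)\).
\item \(|\nabla\bar W_{\varepsilon,j}|\) is not pointwise \(O(\varepsilon^N)\) on the annulus when \(s<1/2\). The conormal condition forces \(|\partial_t\bar W_{\varepsilon,j}|\sim t^{2s-1}\varepsilon^{N}\) as \(t\to0\). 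Only the weighted integral is small, and that is exactly what the Poisson-plus-Caccioppoli argument you propose delivers.
\end{enumerate}
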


\begin{proof}
Let
\[
A_j^+:=\mathcal B_{2d}^+(\xi_j)\setminus\overline{\mathcal B_d^+(\xi_j)}.
\]
The extension terms involving \(\nabla\eta_j\) are supported in \(A_j^+\). By the Poisson representation and Lemma~\ref{lem:single-bubble-input}, an argument analogous to that in the proof of Lemma~\ref{lem:extension-u-bound}(i), we have
\begin{equation}\label{eq:extension-w-gradient-bound-xi}
\varepsilon^{2s}\int_{A_j^+}t^{1-2s}|\nabla\overline{W_{\varepsilon,\alpha,\xi}}|^2
\le C\varepsilon^{2N}.
\end{equation}
Since the trace of \(\eta_j\) equals \(1\) on \(B_d(\xi_j)\) and \(\nabla\eta_j=0\) in \(\mathcal B_d^+(\xi_j)\), we keep the annular and different peak terms explicit and obtain
\[
\begin{aligned}
\mathcal P_{j,h}(W_{\varepsilon,\alpha,\xi})
&= 
\frac{1}{2}\int_{B_d(\xi_j)}\partial_hV(x)W_{\varepsilon,\alpha,\xi}^2(x)\,dx
+\frac{\varepsilon^{2s}}{2}\int_{A_j^+}t^{1-2s}(\partial_h\eta_j)|\nabla\overline{W_{\varepsilon,\alpha,\xi}}|^2\\
&\quad
-\varepsilon^{2s}\int_{A_j^+}t^{1-2s}(\partial_h\overline{W_{\varepsilon,\alpha,\xi}})(\nabla\overline{W_{\varepsilon,\alpha,\xi}}\cdot\nabla\eta_j)
+\frac{1}{2}\int_{B_{2d}(\xi_j)\setminus\overline{B_d(\xi_j)}}\partial_h(V\eta_j)W_{\varepsilon,\alpha,\xi}^2\\
&\quad
-\int_{B_{2d}(\xi_j)\setminus\overline{B_d(\xi_j)}}(\partial_h\eta_j)F(W_{\varepsilon,\alpha,\xi})\\
&=
\frac{1}{2}\alpha_j^2\int_{B_d(\xi_j)}\partial_hV(x)W_{\varepsilon,j}^2\,dx
+\alpha_j\sum_{i\ne j}\alpha_i
\int_{B_d(\xi_j)}\partial_hV(x)W_{\varepsilon,j}W_{\varepsilon,i}\,dx\\
&\quad
+
\frac{1}{2}\int_{B_d(\xi_j)}\partial_hV(x)\left(\sum_{i\ne j}\alpha_iW_{\varepsilon,i}\right)^2\,dx
+\frac{\varepsilon^{2s}}{2}\int_{A_j^+}t^{1-2s}(\partial_h\eta_j)|\nabla\overline{W_{\varepsilon,\alpha,\xi}}|^2\\
&\quad
-\varepsilon^{2s}\int_{A_j^+}t^{1-2s}(\partial_h\overline{W_{\varepsilon,\alpha,\xi}})(\nabla\overline{W_{\varepsilon,\alpha,\xi}}\cdot\nabla\eta_j)
+\frac{1}{2}\int_{B_{2d}(\xi_j)\setminus\overline{B_d(\xi_j)}}\partial_h(V\eta_j)W_{\varepsilon,\alpha,\xi}^2\\
&\quad
-\int_{B_{2d}(\xi_j)\setminus\overline{B_d(\xi_j)}}(\partial_h\eta_j)F(W_{\varepsilon,\alpha,\xi})\\
&=\frac{1}{2}\alpha_j^2\int_{B_d(\xi_j)}\partial_hV(x)W_{\varepsilon,j}(x;\xi_j)^2\,dx+O(\varepsilon^{N+\tau}).
\end{aligned}
\]
The last equality follows from Lemma~\ref{lem:single-bubble-input} and \eqref{eq:extension-w-gradient-bound-xi}.
Changing variables \(x=\xi_j+\varepsilon y\), we obtain
\[
\begin{aligned}
\frac{1}{2}\alpha_j^2
\int_{B_d(\xi_j)}
\partial_hV(x)W_{\varepsilon,j}(x;\xi_j)^2\,dx
&=
\frac{1}{2}\alpha_j^2\varepsilon^N
\int_{B_{d/\varepsilon}(0)}
\partial_hV(\xi_j+\varepsilon y)
w_{V(\xi_j)}^2(y)\,dy                                      \\
&=
\frac{1}{2}\alpha_j^2\varepsilon^N
\partial_hV(\xi_j)
\int_{\mathbb R^N}w_{V(\xi_j)}^2(y)\,dy
+
O(\varepsilon^{N+1})                                  \\
&=
\alpha_j^2 c_*\theta\varepsilon^N
V(\xi_j)^{\theta-1}\partial_hV(\xi_j)
+
O(\varepsilon^{N+1}).
\end{aligned}
\]

Here we used the identity
\[
\frac12\int_{\mathbb R^N}w_\lambda^2
=c_*\theta\lambda^{\theta-1}.
\]

Therefore
\[
\begin{aligned}
\mathcal P_{j,h}(W_{\varepsilon,\alpha,\xi})
&=
\alpha_j^2 c_*\theta\varepsilon^N
V(\xi_j)^{\theta-1}\partial_hV(\xi_j)
+
O(\varepsilon^{N+\tau})                             \\
&=
c_*\theta\varepsilon^N
V(\xi_j)^{\theta-1}\partial_hV(\xi_j)
+
O(\varepsilon^N|\alpha_j-1|)
+
O(\varepsilon^{N+\tau}),
\end{aligned}
\]
which proves \eqref{eq:pohozaev-xi-main}.

\end{proof}

Lastly, we give the following weighted norm estimate.
\begin{lemma}\label{lem:weighted-norm-estimate}
Let \(0\le t\le 1\), \((\alpha,\xi)\in \overline{\mathcal D}_\varepsilon\), and
\(\varphi\in E_{\varepsilon,\xi}\) solve
\[
\varepsilon^{2s}(-\Delta)^s\varphi+V(x)\varphi
-tpW_{\varepsilon,\alpha,\xi}^{p-1}\varphi=H
\qquad\text{in }\mathbb R^N .
\]
Suppose that, for some \(M>0\),
\[
|H(x)|\le M\rho_{\tau,\varepsilon,\xi}(x),
\qquad
\|\varphi\|_{\tau,\varepsilon,\xi}<\infty.
\]
Then there exists \(C>0\), independent of \(\varepsilon\), \(t\), and
\((\alpha,\xi)\), such that
\[
\|\varphi\|_{\tau,\varepsilon,\xi}\le CM .
\]
\end{lemma}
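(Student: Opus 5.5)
The natural route is a blow-up/contradiction argument in rescaled variables, combining the barrier estimate of \cite[Lemma~2.5]{MR3121716} (already used in Lemma~\ref{lem:u-polynomial-decay}) with the coercivity of Corollary~\ref{cor:homotopy-coercivity}. By linearity it suffices to show that $M\le1$ implies $\|\varphi\|_{\tau,\varepsilon,\xi}\le C$. Set $\tilde\varphi(y):=\varphi(\varepsilon y)$, $\tilde H(y):=H(\varepsilon y)$ and $q:=\xi/\varepsilon$. Then
\[
(-\Delta)^s\tilde\varphi+\bigl(V(\varepsilon y)-tpW_{\varepsilon,\alpha,\xi}^{p-1}(\varepsilon y)\bigr)\tilde\varphi=\tilde H,
\qquad |\tilde H|\le M\rho_{\tau,q},
\]
and $\|\varphi\|_{\tau,\varepsilon,\xi}=\|\tilde\varphi\|_{\tau,q}$.

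\emph{Outer reduction.} Choose $R$ large, independent of $\varepsilon,t,(\alpha,\xi)$, so that by Lemma~\ref{lem:single-bubble-input}
\[
tpW_{\varepsilon,\alpha,\xi}^{p-1}(\varepsilon y)\le V_{\min}/2
\quad\text{outside } B:=\bigcup_jB_R(q_j).
\]
The potential is then $\ge V_{\min}/2$ off $B$. Since $\tilde\varphi$ is a priori in the weighted space, it decays, so the maximum-principle comparison with the supersolution $\rho_{\tau,q}$ is legitimate. Applying \cite[Lemma~2.5]{MR3121716}, or the same barrier argument as in Lemma~\ref{lem:u-polynomial-decay}, with the inhomogeneous term $\tilde H$ gives
\[
\|\tilde\varphi\|_{\tau,q}\le C\bigl(\|\tilde\varphi\|_{L^\infty(B)}+M\bigr).
\]
The barrier needs $(-\Delta)^s\rho_{\tau,q}\ge -c\rho_{\tau,q}$ far out, which holds because $\beta_\tau<N+2s$. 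So everything reduces to bounding $\|\tilde\varphi\|_{L^\infty(B)}\le CM$.

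\emph{Contradiction step.} Suppose there are $\varepsilon_n\to0$ (or $\varepsilon_n$ bounded away from $0$, treated the same way), $t_n$, $(\alpha^n,\xi^n)$ and $\varphi_n\in E_{\varepsilon_n,\xi^n}$ with $\|\tilde\varphi_n\|_{\tau,q^n}=1$ and $M_n\to0$. By the outer reduction, $\|\tilde\varphi_n\|_{L^\infty(B_R(q^n_j))}\ge c>0$ for some $j$. Uniform H\"older bounds follow from the fractional regularity estimates \cite{silvestre2007regularity,RosOtonSerra2016Stable}. After translating by $q^n_j$ and passing to a subsequence, $t_n\to t_*$, $V(\xi^n_j)\to\lambda$, and $\tilde\varphi_n(\cdot+q^n_j)\to\phi\neq0$ locally uniformly, with $|\phi|\le C(1+|y|)^{-\beta_\tau}$. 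The limit solves
\[
(-\Delta)^s\phi+\lambda\phi-t_*pw_\lambda^{p-1}\phi=0 .
\]
Since $\beta_\tau>N$, the weighted bound gives $\phi\in L^1\cap L^\infty$ and domination of tails. The orthogonality conditions defining $E_{\varepsilon,\xi}$, after rescaling (using \eqref{eq:tangent-vector-explicit} for the $Y$-conditions), therefore pass to the limit:
\[
\langle\phi,w_\lambda\rangle_\lambda=0,\qquad \langle\phi,\partial_hw_\lambda\rangle_\lambda=0 .
\]
Testing the limit equation with $\phi$ gives $Q_\lambda(\phi)+(1-t_*)p\int w_\lambda^{p-1}\phi^2=0$. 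This requires $\phi\in H^s$, which follows from the energy identity together with the decay. Lemma~\ref{lem:spectral-gap} then yields $\sigma_*\|\phi\|_\lambda^2\le0$, so $\phi=0$, a contradiction.

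\emph{Main obstacle.} The delicate point is passing the orthogonality to the limit. The conditions are stated in the $\langle\cdot,\cdot\rangle_{\varepsilon,V}$ inner product, not $L^2$. I would first rewrite them via the equations of $w_\lambda$ and $\partial_hw_\lambda$, for instance $\langle\varphi,W_{\varepsilon,j}\rangle_{\varepsilon,V}=\int\varphi\,[W_{\varepsilon,j}^p+(V-V(\xi_j))W_{\varepsilon,j}]$, so that only integrals against decaying functions remain. The contributions of the other peaks and of $V-V(\xi_j)$ are then $o(1)$ by the weighted bound. A second, smaller point is making the barrier constant uniform in $t$; this holds because $t$ only lowers the potential inside $B$ and never outside.
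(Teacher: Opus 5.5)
Your proof is correct, but it closes differently from the paper's.

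\textbf{Common part.} Both proofs use the same outer reduction. The barrier of \cite[Lemma~2.5]{MR3121716} is applied off $\bigcup_jB_R(q_j)$; you rightly note that $\beta_\tau<N+2s$ is what makes $\rho_{\tau,q}$ a supersolution there. This yields a point near some $q_{n,j}$ where $|\tilde\varphi_n|\ge c_0$.

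\textbf{The paper's closing step.} The paper never takes a limit. H\"older continuity upgrades the pointwise bound to $\|\varphi_n\|_{L^2}^2\ge c\,\varepsilon_n^N$. Testing the equation with $\varphi_n$ and applying the $\varepsilon$-level coercivity of Corollary~\ref{cor:homotopy-coercivity} gives $\|\varphi_n\|_{\varepsilon_n,V}\le CM_n\varepsilon_n^{N/2}$, hence $\|\varphi_n\|_{L^2}^2=o(\varepsilon_n^N)$, a contradiction. The orthogonality defining $E_{\varepsilon,\xi}$ enters only through that corollary, in its native $\langle\cdot,\cdot\rangle_{\varepsilon,V}$ form.

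\textbf{Your closing step.} You re-derive the nondegeneracy at the blow-up scale.
\begin{itemize}
\item You extract $\phi\ne0$ solving $(-\Delta)^s\phi+\lambda\phi-t_*pw_\lambda^{p-1}\phi=0$.
\item You transfer the orthogonality by rewriting the $\langle\cdot,\cdot\rangle_{\varepsilon,V}$ pairings through the equations of $W_{\varepsilon,j}$ and $Y_{\varepsilon,j,h}$. This works: the weighted bound gives a global integrable majorant, and the $V-V(\xi_j)$ and $\partial_\lambda w_\lambda$ contributions vanish in the limit.
\item You conclude with Lemma~\ref{lem:spectral-gap} together with $(1-t_*)p\int w_\lambda^{p-1}\phi^2\ge0$.
\end{itemize}

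\textbf{Comparison.} Your route is more self-contained: it needs only the single-bubble gap, not the localization argument of Proposition~\ref{prop:coercivity}. In effect, though, it reproves a local version of that proposition. You already cite Corollary~\ref{cor:homotopy-coercivity} in your first sentence; using it directly, as the paper does, would remove both the limit problem and the orthogonality transfer you flag as the main obstacle.

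\textbf{One correction.} Drop the parenthetical about $\varepsilon_n$ bounded away from $0$. That case is not ``treated the same way'': the limit operator is then not $L_\lambda$, so Lemma~\ref{lem:spectral-gap} is unavailable. Like Proposition~\ref{prop:coercivity}, the lemma is only meant for small $\varepsilon$.
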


\begin{proof}
We argue by contradiction. Suppose that the estimate is false. Then there exist
\[
\varepsilon_n\to 0,\qquad
t_n\in[0,1],\qquad
(\alpha^n,\xi^n)\in\overline{\mathcal D}_{\varepsilon_n},
\]
and functions
\[
\varphi_n\in E_{\varepsilon_n,\xi^n},\qquad H_n,
\]
such that, with
\[
W_n:=W_{\varepsilon_n,\alpha^n,\xi^n},
\]
one has
\[
\varepsilon_n^{2s}(-\Delta)^s\varphi_n+V(x)\varphi_n
-t_npW_n^{p-1}\varphi_n=H_n
\qquad\text{in }\mathbb R^N,
\]
and
\[
|H_n(x)|\le m_n\rho_{\tau,\varepsilon_n,\xi^n}(x),\qquad
\|\varphi_n\|_{\tau,\varepsilon_n,\xi^n}=1,
\qquad
m_n\to0.
\]

We first rescale the equation. Define
\[
\widetilde\varphi_n(y):=\varphi_n(\varepsilon_n y),
\qquad
q_{n,j}:=\frac{\xi_j^n}{\varepsilon_n},
\qquad
\widetilde W_n(y):=W_n(\varepsilon_n y),
\]
and
\[
\rho_n(y):=\rho_{\tau,\varepsilon_n,\xi^n}(\varepsilon_n y)
=\sum_{j=1}^k(1+|y-q_{n,j}|)^{-N-2s+\tau}.
\]
Then
\[
\|\rho_n^{-1}\widetilde\varphi_n\|_{L^\infty(\mathbb R^N)}=1
\]
and
\[
(-\Delta)^s\widetilde\varphi_n+
\bigl[
V(\varepsilon_n y)-t_np\widetilde W_n(y)^{p-1}
\bigr]\widetilde\varphi_n
=
\widetilde H_n(y),
\]
where
\[
\widetilde H_n(y):=H_n(\varepsilon_n y),
\qquad
|\widetilde H_n(y)|\le m_n\rho_n(y).
\]

Choose \(R\gg1\) large, independently of \(n\). Since, by
Lemma~\ref{lem:single-bubble-input},
\[
\widetilde W_n(y)
\le
C\sum_{j=1}^k(1+|y-q_{n,j}|)^{-N-2s},
\]
we have, for
\[
y\in\mathbb R^N\setminus\bigcup_{j=1}^kB_R(q_{n,j}),
\]
that
\[
\widetilde W_n(y)^{p-1}
\le
CR^{-(N+2s)(p-1)}.
\]
Taking \(R\) sufficiently large gives
\[
V(\varepsilon_n y)-t_np\widetilde W_n(y)^{p-1}
\ge
\frac{1}{2}V_{\min}
\]
outside \(\bigcup_{j=1}^kB_R(q_{n,j})\). The Lemma~2.5 of \cite{MR3121716} then yields
\[
1=\|\rho_n^{-1}\widetilde\varphi_n\|_{L^\infty(\mathbb R^N)}
\le
C\left(
\|\widetilde\varphi_n\|_{L^\infty(\cup_jB_R(q_{n,j}))}
+
m_n
\right).
\]
Hence there exist \(j_n\in\{1,\dots,k\}\) and
\(y_n\in B_R(q_{n,j_n})\) such that
\[
|\widetilde\varphi_n(y_n)|\ge c_0>0.
\]

We convert this pointwise lower bound into an \(L^2\) lower bound.  Fix
\(m>0\). The rescaled equation can be rewritten as
\[
\widetilde\varphi_n
=T_m\!\left[
\widetilde H_n+
\bigl(m-V(\varepsilon_n y)+t_np\widetilde W_n^{p-1}\bigr)
\widetilde\varphi_n
\right],
\]
where \(T_m=((-\Delta)^s+m)^{-1}\). The expression in brackets is uniformly
bounded in \(L^2(\mathbb R^N)\cap L^\infty(\mathbb R^N)\). Hence the
H\"older estimate for \(T_m\) in \cite[Lemma~2.3]{MR3121716} gives a radius
\(r>0\), independent of \(n\), such that
\[
|\widetilde\varphi_n(y)|\ge \frac{c_0}{2}
\qquad\text{for }y\in B_r(y_n).
\]
Consequently,
\begin{equation}\label{eq:weighted-inverse-l2-lower}
\begin{aligned}
\|\varphi_n\|_{L^2(\mathbb R^N)}^2
&=\varepsilon_n^N\|\widetilde\varphi_n\|_{L^2(\mathbb R^N)}^2  \\
&\ge
\varepsilon_n^N\int_{B_r(y_n)}|\widetilde\varphi_n(y)|^2\,dy
\ge c_1\varepsilon_n^N .
\end{aligned}
\end{equation}

On the other hand, the equation and Corollary~\ref{cor:homotopy-coercivity}
give
\[
\begin{aligned}
c\|\varphi_n\|_{\varepsilon_n,V}^2
&\le
\|\varphi_n\|_{\varepsilon_n,V}^2
-t_np\int_{\mathbb R^N}W_n^{p-1}\varphi_n^2                         \\
&=
\int_{\mathbb R^N}H_n\varphi_n                                      \\
&\le
\|H_n\|_{L^2(\mathbb R^N)}
\|\varphi_n\|_{L^2(\mathbb R^N)} .
\end{aligned}
\]
Since
\[
\|H_n\|_{L^2(\mathbb R^N)}
\le m_n\|\rho_{\tau,\varepsilon_n,\xi^n}\|_{L^2(\mathbb R^N)}
\le Cm_n\varepsilon_n^{N/2},
\]
and \(\|\varphi_n\|_{L^2}\le C\|\varphi_n\|_{\varepsilon_n,V}\), we obtain
\[
\|\varphi_n\|_{\varepsilon_n,V}\le Cm_n\varepsilon_n^{N/2}.
\]
Thus
\begin{equation}\label{eq:weighted-inverse-l2-upper}
\|\varphi_n\|_{L^2(\mathbb R^N)}^2
\le C\|\varphi_n\|_{\varepsilon_n,V}^2
=o(\varepsilon_n^N),
\end{equation}
which contradicts \eqref{eq:weighted-inverse-l2-lower}.  This proves the
desired weighted estimate.
\end{proof}

\section{Degree reduction and computation}\label{sec:degree-reduction}
In this section, we compute the Leray--Schauder degree. Recall that
\[
\mathcal V(\xi):=\sum_{j=1}^k V(\xi_j)^\theta,\qquad
\theta=\frac{p+1}{p-1}-\frac{N}{2s}>0.
\]
We use \(m(\xi^0,\mathcal V)\) for the Morse index of \(\mathcal V\) at \(\xi^0\).
Since \(\nabla V(\xi_j^0)=0\),
\[
 m(\xi^0,\mathcal V)=\sum_{j=1}^k m(\xi_j^0,V).
\]

For each fixed \(\varepsilon\) and \((\alpha,\xi)\in\mathcal D_\varepsilon\), set
\[
\mathcal W_{\varepsilon,\xi}:=
E_{\varepsilon,\xi}\cap
\{\omega:\|\omega\|_{\tau,\varepsilon,\xi}<\infty\}
\]
with the Banach norm
\[
\|\omega\|_{\mathcal W_{\varepsilon,\xi}}
:=\|\omega\|_{\varepsilon,V}+\|\omega\|_{\tau,\varepsilon,\xi}.
\]


The degree domain is the open set $S_\varepsilon$ introduced in \eqref{eq:local-degree-set}.

For \((\alpha,\xi,\omega)\in S_\varepsilon\), we define
\begin{equation*}
A(\alpha,\xi,\omega)=
\bigl(A_{1,j}(\alpha,\xi,\omega),A_{2,j,h}(\alpha,\xi,\omega),A_3(\alpha,\xi,\omega)\bigr),
\end{equation*}
where
\begin{equation*}
A_{1,j}(\alpha,\xi,\omega):=\mathcal E_\varepsilon'(W_{\varepsilon,\alpha,\xi}+\omega)[W_{\varepsilon,j}(x;\xi_j)],
\qquad
A_{2,j,h}(\alpha,\xi,\omega):=\mathcal P_{j,h}(W_{\varepsilon,\alpha,\xi}+\omega),
\end{equation*}
and \(A_3(\alpha,\xi,\omega)\in E_{\varepsilon,\xi}\) satisfies
\begin{equation*}
\langle A_3(\alpha,\xi,\omega),\widetilde\omega\rangle_{\varepsilon,V}
=\mathcal E_\varepsilon'(W_{\varepsilon,\alpha,\xi}+\omega)[\widetilde\omega]
\qquad
\forall\widetilde\omega\in E_{\varepsilon,\xi}.
\end{equation*}

For each fixed small $\varepsilon$, the weighted norms are uniformly
equivalent as $(\alpha,\xi)$ varies in $\mathcal{D}_\varepsilon$. Then by a local trivialization argument for the smoothly varying
finite-codimensional spaces $E_{\varepsilon,\xi}$, $S_\varepsilon$ may be regarded as an open subset of a fixed Banach space, see \cite{FinsterLottner2021Banach, kato1995perturbation, McDuffWehrheim2024Polyfold}. Moreover,
since $W_{\varepsilon,\alpha,\xi}$ is orthogonal to
$E_{\varepsilon,\xi}$, the definition of $A_3$ gives
\[
\langle A_3(\alpha,\xi,\omega),\widetilde\omega
\rangle_{\varepsilon,V}
 =
 \langle\omega,\widetilde\omega\rangle_{\varepsilon,V}
 -
 \int_{\mathbb R^N}
 |W_{\varepsilon,\alpha,\xi}+\omega|^{p-1}
 (W_{\varepsilon,\alpha,\xi}+\omega)\widetilde\omega,
 \qquad
 \widetilde\omega\in E_{\varepsilon,\xi}.
\]
Thus $A_3$ is the identity in $\omega$ minus a compact nonlinear
operator, while $A_1$ and $A_2$ are finite-dimensional; consequently,
$A$ is an admissible Leray--Schauder map on $S_\varepsilon$.

We next show that the zeros of $A$ are precisely the concentrating solutions.

\begin{proposition}\label{prop:fractional-a-equivalence}
For all sufficiently small \(\varepsilon\), the following assertions are equivalent.
\begin{enumerate}[label=(\roman*)]
\item \(u_\varepsilon\) is a positive solution of \eqref{eq:fractional-schrodinger-problem} concentrating at \(\xi_1^0,\ldots,\xi_k^0\).
\item There exists \((\alpha,\xi,\omega)\in S_\varepsilon\) such that
\[
A(\alpha,\xi,\omega)=0,
\qquad
u_\varepsilon= W_{\varepsilon,\alpha,\xi}+\omega.
\]
\end{enumerate}
\end{proposition}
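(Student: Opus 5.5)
The two implications are proved separately. Throughout, ``concentrating at \(\xi_1^0,\ldots,\xi_k^0\)'' is read in the sense of Definition~\ref{def:kpeak-class}.

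\emph{(i)\(\Rightarrow\)(ii).} Theorem~\ref{th:parametrization-class} writes \(u_\varepsilon=W_{\varepsilon,\alpha_\varepsilon,\widehat\xi_\varepsilon}+\omega_\varepsilon\) with \((\alpha_\varepsilon,\widehat\xi_\varepsilon,\omega_\varepsilon)\in S_\varepsilon\). Proposition~\ref{prop:minimization} supplies the quantitative bounds. These bounds are \(\varepsilon^\tau\) with \(\tau>\delta_0\), so the triple lies well inside the open set \(S_\varepsilon\). In particular \(|\widehat\xi_{\varepsilon,j}-\xi_j^0|<\tau_0\), because \(\xi_{\varepsilon,j}\to\xi_j^0\).

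Since \(u_\varepsilon\) is a critical point of \(\mathcal E_\varepsilon\), we have \(\mathcal E_\varepsilon'(u_\varepsilon)=0\) on all of \(H^s\). This gives \(A_{1,j}=0\) and \(A_3=0\) directly. For \(A_2\), the solution \(u_\varepsilon\) is \(C^{1,\alpha}\) and decays by Lemma~\ref{lem:u-polynomial-decay}, so \(\eta_j\partial_h u_\varepsilon\) is an admissible test function. Identity \eqref{eq:pohozaev-as-test} then gives \(\mathcal P_{j,h}(u_\varepsilon)=\mathcal E_\varepsilon'(u_\varepsilon)[-\eta_j\partial_hu_\varepsilon]=0\). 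Here one must justify the integration by parts in the extension: \(\partial_h\bar u_\varepsilon\) must have the right weighted integrability near \(t=0\). This follows from Lemma~\ref{lem:extension-u-bound}(ii) and the standard local Pohozaev computation already used in Lemma~\ref{lem:pohozaev-center-estimate}.

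\emph{(ii)\(\Rightarrow\)(i).} Suppose \(A(\alpha,\xi,\omega)=0\) and set \(u=W_{\varepsilon,\alpha,\xi}+\omega\). The condition \(A_3=0\) says that \(\mathcal E_\varepsilon'(u)\) vanishes on \(E_{\varepsilon,\xi}\). Since \(H^s=E_{\varepsilon,\xi}\oplus\operatorname{span}\{W_{\varepsilon,j},Y_{\varepsilon,j,h}\}\), we can write
\[
\mathcal E_\varepsilon'(u)=\sum_j b_j\langle W_{\varepsilon,j},\cdot\rangle_{\varepsilon,V}+\sum_{j,h}c_{j,h}\langle Y_{\varepsilon,j,h},\cdot\rangle_{\varepsilon,V}
\]
for some real coefficients \(b_j,c_{j,h}\). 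The remaining equations \(A_{1,i}=0\) and \(A_{2,i,m}=0\) are obtained by testing this identity with \(W_{\varepsilon,i}\) and with \(-\eta_i\partial_m u\).

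The test \(-\eta_i\partial_m u\) needs \(u\) smooth, which requires a regularity step. From the Euler--Lagrange equation, \(u\) solves \(\varepsilon^{2s}(-\Delta)^su+Vu=|u|^{p-1}u+g\), where \(g\) is a finite combination of \(W_{\varepsilon,j}\)- and \(Y_{\varepsilon,j,h}\)-type terms that are smooth and decaying. Fractional regularity then makes \(u\) smooth, so \eqref{eq:pohozaev-as-test} applies. Testing with \(-\eta_i\partial_m u\) produces \(\langle W_{\varepsilon,j},\eta_i\partial_mu\rangle\) and \(\langle Y_{\varepsilon,j,h},\eta_i\partial_mu\rangle\). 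Using \(u=W+\omega\) with \(\omega\) small, together with the Gram estimates of Lemma~\ref{lem:app-gram-estimates}, these are dominated by \(\langle Y_{\varepsilon,i,m},\varepsilon Y_{\varepsilon,i,h}\rangle\sim\varepsilon^{N-1}\delta_{mh}\) and \(\langle W_{\varepsilon,i},W_{\varepsilon,i}\rangle\sim\varepsilon^N\). The resulting \((k+kN)\)-square linear system for \((b,c)\) is therefore a perturbation of an invertible block-diagonal matrix after scaling. The principal difficulty lies here: the smallness of \(\omega\) in \(\|\cdot\|_{\varepsilon,V}\) and in the weighted norm (from the definition of \(S_\varepsilon\)) must control the perturbation, including the \(\partial_m\omega\) term, by moving the derivative onto the test functions. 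Granting this, \(b=c=0\), so \(u\) is a critical point of \(\mathcal E_\varepsilon\).

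It remains to check that \(u\) is positive and concentrating. Near each \(\xi_j\) the function \(u\) is close to \(\alpha_jW_{\varepsilon,j}\), because \(\|\omega\|_{\tau,\varepsilon,\xi}<\varepsilon^{\delta_0}\) and \(\rho\) is comparable to the bubbles. For positivity, testing the equation with \(u^-\) gives \(\|u^-\|^2_{\varepsilon,V}\le\int (u^-)^{p+1}\). Since \(|u^-|\le|\omega|\) is small in \(L^\infty\), this forces \(u^-=0\), and the strong maximum principle for \((-\Delta)^s\) then gives \(u>0\). The uniqueness of the local maximum in each \(B_r(\xi_j^0)\) follows from \(C^1\)-closeness of the rescaled \(u\) to \(\alpha_jw_{V(\xi_j)}\), via local Schauder estimates as in Step 4 of Proposition~\ref{prop:local-expansion}, combined with the nondegenerate maximum \eqref{eq:hessian-nondeg-max}. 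The energy quantization follows by expanding \(\mathcal E_\varepsilon(W+\omega)\), using \(c_\lambda=c_*\lambda^\theta\), \(\alpha\to1\) and \(\xi\to\xi^0\).

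The last point is that \(\xi\to\xi^0\) is not automatic from \(|\xi_j-\xi_j^0|<\tau_0\). Lemma~\ref{lem:xi-equation} together with \(A_2=0\) gives \(\nabla V(\xi_j)=o(1)\). Taking \(\tau_0\) small enough that \(\xi_j^0\) is the only critical point of \(V\) in \(B_{\tau_0}(\xi_j^0)\) (by nondegeneracy) then forces \(\xi_j\to\xi_j^0\).
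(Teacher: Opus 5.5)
Your proposal is correct and follows the paper's proof. For (i)$\Rightarrow$(ii) you use Theorem~\ref{th:parametrization-class} together with $\mathcal E_\varepsilon'(u_\varepsilon)=0$. For (ii)$\Rightarrow$(i) you represent $\mathcal E_\varepsilon'(u)$ through multipliers on $W_{\varepsilon,j}$ and $Y_{\varepsilon,j,h}$, test with $W_{\varepsilon,i}$ and with $-\eta_i\partial_\ell u$ (moving the derivative of $\omega$ onto $\eta_iR_j$ and $\eta_iS_{j,h}$, at a cost of only $\|\omega\|_{L^2}\lesssim\varepsilon^{N/2+\delta_0}$), invert the rescaled near-block-diagonal system, and test with $u^-$ for positivity.

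Your use of $A_2=0$ and Lemma~\ref{lem:xi-equation} to force $\xi_j\to\xi_j^0$ improves on the paper. The paper only asserts that concentration ``follows from $(\alpha,\xi,\omega)\in S_\varepsilon$'', which by itself gives merely $|\xi_j-\xi_j^0|<\tau_0$. However, your local-maximum argument controls only $B_{R\varepsilon}(\xi_j)$, not the whole ball $B_r(\xi_j^0)$ required by Definition~\ref{def:kpeak-class}; the paper does not address this point either.
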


\begin{proof}
If \(u_\varepsilon\) is a concentrating solution, Theorem~\ref{th:parametrization-class} gives
\[
u_\varepsilon=W_{\varepsilon,\alpha,\xi}+\omega,
\qquad
\omega\in E_{\varepsilon,\xi},
\qquad 
(\alpha,\xi,\omega)\in S_\varepsilon.
\]
Thus \(A(\alpha,\xi,\omega)=0\).

Conversely, suppose that \(A(\alpha,\xi,\omega)=0\), and set \(u_\varepsilon:=W_{\varepsilon,\alpha,\xi}+\omega\). Then \(A_3=0\) gives
\[
\mathcal E_\varepsilon'(u_\varepsilon)[\widetilde\omega]=0
\qquad
\forall\widetilde\omega\in E_{\varepsilon,\xi}.
\]
There are constants \(\beta_j\) and \(\gamma_{jh}\) such that, for every \(\psi\in H^s(\R^N)\),
\begin{equation}\label{eq:a-equivalence-multipliers-detailed}
\mathcal E_\varepsilon'(u_\varepsilon)[\psi]
=
\sum_{j=1}^k\beta_j\langle W_{\varepsilon,j}(x;\xi_j),\psi\rangle_{\varepsilon,V}
+
\sum_{j=1}^k\sum_{h=1}^N\gamma_{jh}\langle Y_{\varepsilon,j,h}(x;\xi_j),\psi\rangle_{\varepsilon,V}.
\end{equation}
Equivalently,
\begin{equation}\label{eq:a-equivalence-weak-residual}
{\varepsilon^{2s}(-\Delta)^su_\varepsilon+V(x)u_\varepsilon-|u_\varepsilon|^{p-1}u_\varepsilon}
=
\sum_{j=1}^k\beta_jR_j+
\sum_{j=1}^k\sum_{h=1}^N\gamma_{jh}S_{j,h},
\end{equation}
where
\[
\begin{aligned}
R_j&:=W_{\varepsilon,j}(x;\xi_j)^p
+(V(x)-V(\xi_j))W_{\varepsilon,j}(x;\xi_j),\\
S_{j,h}&:=pW_{\varepsilon,j}(x;\xi_j)^{p-1}Y_{\varepsilon,j,h}(x;\xi_j)
-\partial_hV(\xi_j)W_{\varepsilon,j}(x;\xi_j)\\
&\quad +(V(x)-V(\xi_j))Y_{\varepsilon,j,h}(x;\xi_j).
\end{aligned}
\]
Testing \eqref{eq:a-equivalence-multipliers-detailed} with \(W_{\varepsilon,i}(x;\xi_i)\) and using \(A_{1,i}=0\) gives
\begin{equation*}
0=
\sum_{j=1}^k\beta_j\langle W_{\varepsilon,j}(x;\xi_j),W_{\varepsilon,i}(x;\xi_i)\rangle_{\varepsilon,V}
+
\sum_{j=1}^k\sum_{h=1}^N\gamma_{jh}\langle Y_{\varepsilon,j,h}(x;\xi_j),W_{\varepsilon,i}(x;\xi_i)\rangle_{\varepsilon,V}.
\end{equation*}
By \eqref{eq:app-same-ww}, \eqref{eq:app-same-wy}, \eqref{eq:app-diff-ww} and \eqref{eq:app-diff-wy}, we have
\begin{equation}\label{eq:beta-system-from-a1}
\varepsilon^N\bigl(a_0V(\xi_i)^\theta+o(1)\bigr)\beta_i
+o(\varepsilon^N)\sum_{j\ne i}|\beta_j|
+o(\varepsilon^{N-1})\sum_{j,h}|\gamma_{jh}|=0.
\end{equation}

Set
\[
\begin{aligned}
\mathcal R_{ij\ell}
&:=
-\int_{\mathbb R^N}R_j\eta_i\partial_\ell W_{\varepsilon,\alpha,\xi}
+\int_{\mathbb R^N}\omega\,\partial_\ell(\eta_iR_j),\\
\mathcal S_{ijh\ell}
&:=
-\int_{\mathbb R^N}S_{j,h}\eta_i\partial_\ell W_{\varepsilon,\alpha,\xi}
+\int_{\mathbb R^N}\omega\,\partial_\ell(\eta_iS_{j,h}) .
\end{aligned}
\]
Since \(A_{2,i,\ell}=\mathcal P_{i,\ell}(u_\varepsilon)=0\), the localized Pohozaev identity applied to \eqref{eq:a-equivalence-weak-residual} gives
\begin{equation}\label{eq:forced-pohozaev-coefficients}
\begin{aligned}
0
&=
-\int_{\mathbb R^N}\left(\sum_{j=1}^k\beta_jR_j
+\sum_{j=1}^k\sum_{h=1}^N\gamma_{jh}S_{j,h}\right)\,\eta_i\partial_\ell W_{\varepsilon,\alpha,\xi}\\
&\quad +\int_{\mathbb R^N}\omega\,\partial_\ell\left(\left(\sum_{j=1}^k\beta_jR_j
+\sum_{j=1}^k\sum_{h=1}^N\gamma_{jh}S_{j,h}\right) \eta_i\right) \\
&=
\sum_{j=1}^k\beta_j\mathcal R_{ij\ell}
+\sum_{j=1}^k\sum_{h=1}^N\gamma_{jh}\mathcal S_{ijh\ell}.
\end{aligned}
\end{equation}

For the \(R_j\)-coefficients, by Lemma~\ref{lem:app-gram-estimates},
\begin{equation}\label{eq:r-coefficient-long}
\begin{aligned}
\mathcal R_{ij\ell}
&= 
-\sum_{r=1}^k\alpha_r
\int_{\mathbb R^N}
R_j\eta_i\partial_\ell W_{\varepsilon,r}(x;\xi_r)
+\int_{\mathbb R^N}\omega\,\partial_\ell(\eta_iR_j)\\
&=
\frac{\alpha_i\delta_{ij}}{p+1}
\int_{\mathbb R^N}
(\partial_\ell\eta_i)W_{\varepsilon,i}(x;\xi_i)^{p+1}
+
\frac{\alpha_i\delta_{ij}}{2}
\int_{\mathbb R^N}
\partial_\ell\!\bigl((V(x)-V(\xi_i))\eta_i\bigr)W_{\varepsilon,i}(x;\xi_i)^2 \\
&\qquad
+O(\varepsilon^{N+2s-1})
+\int_{\mathbb R^N}\omega\,\partial_\ell(\eta_iR_j)\\
&=
O(\varepsilon^{N})
+O(\varepsilon^{N+2s-1})
+
O\!\left(
\|\omega\|_{L^2(\mathbb R^N)}
\|\partial_\ell(\eta_iR_j)\|_{L^2(\mathbb R^N)}
\right)\\
&=
O(\varepsilon^{N})
+O(\varepsilon^{N+2s-1})
+O(\varepsilon^{N-1+\delta_0})
=
o(\varepsilon^{N-1}) .
\end{aligned}
\end{equation}

For the \(S_{j,h}\)-coefficients, by Lemma~\ref{lem:app-gram-estimates},
\begin{equation}\label{eq:s-coefficient-long}
\begin{aligned}
\mathcal S_{ijh\ell}
&=
-\sum_{r=1}^k\alpha_r
\int_{\mathbb R^N}
S_{j,h}\eta_i\partial_\ell W_{\varepsilon,r}(x;\xi_r) +\int_{\mathbb R^N}\omega\,\partial_\ell(\eta_iS_{j,h})\\
&=
-\alpha_i\delta_{ij}
\int_{B_d(\xi_i)}
pW_{\varepsilon,i}^{p-1}
\left[
-\varepsilon^{-1}\partial_h w_{V(\xi_i)}\left(\frac{x-\xi_i}{\varepsilon}\right)
+\left.\partial_\lambda w_\lambda\left(\frac{x-\xi_i}{\varepsilon}\right)\right|_{\lambda=V(\xi_i)}\partial_hV(\xi_i)
\right]\\
&\quad\times 
\left[
\varepsilon^{-1}\partial_\ell w_{V(\xi_i)}\left(\frac{x-\xi_i}{\varepsilon}\right)
\right]\,dx +O(\varepsilon^{N-1})
+O(\varepsilon^{N+2s-2})\\
&\quad
+
O\!\left(
\|\omega\|_{L^2(\mathbb R^N)}
\|\partial_\ell(\eta_iS_{j,h})\|_{L^2(\mathbb R^N)}
\right)\\
&=
\alpha_iK_i\varepsilon^{N-2}\delta_{ij}\delta_{h\ell}
+O(\varepsilon^{N-1})
+O(\varepsilon^{N+2s-2})
+O(\varepsilon^{N-2+\delta_0})\\
&=
\alpha_iK_i\varepsilon^{N-2}\delta_{ij}\delta_{h\ell}
+o(\varepsilon^{N-2}),
\end{aligned}
\end{equation}
where
\[
K_i:=p\int_{\mathbb R^N}w_{V(\xi_i)}^{p-1}
\bigl(\partial_1w_{V(\xi_i)}\bigr)^2>0 .
\]

Substituting \eqref{eq:r-coefficient-long} and \eqref{eq:s-coefficient-long} into
\eqref{eq:forced-pohozaev-coefficients}, we obtain
\begin{equation}\label{eq:gamma-system-from-pohozaev}
o(\varepsilon^{N-1})\sum_{j=1}^k|\beta_j|
+\alpha_iK_i\varepsilon^{N-2}\gamma_{i\ell}
+o(\varepsilon^{N-2})\sum_{j=1}^k\sum_{h=1}^N|\gamma_{jh}|=0 .
\end{equation}

Let
\[
\widetilde\gamma_{jh}:=\varepsilon^{-1}\gamma_{jh}.
\]
Dividing \eqref{eq:beta-system-from-a1} by \(\varepsilon^N\) and
\eqref{eq:gamma-system-from-pohozaev} by \(\varepsilon^{N-1}\), we obtain
\[
\begin{aligned}
\bigl(a_0V(\xi_i)^\theta+o(1)\bigr)\beta_i
+o(1)\sum_{j=1}^k|\beta_j|
+o(1)\sum_{j=1}^k\sum_{h=1}^N|\widetilde\gamma_{jh}|&=0,\\
o(1)\sum_{j=1}^k|\beta_j|
+(\alpha_iK_i+o(1))\widetilde\gamma_{i\ell}
+o(1)\sum_{j=1}^k\sum_{h=1}^N|\widetilde\gamma_{jh}|&=0 .
\end{aligned}
\]
The last two systems form a finite-dimensional linear system whose coefficient
matrix is a small perturbation of
\[
\operatorname{diag}\bigl(a_0V(\xi_1)^\theta,\ldots,a_0V(\xi_k)^\theta,\alpha_1K_1 I_N,\ldots,\alpha_kK_k I_N\bigr),
\]
which is uniformly invertible for sufficiently small \(\varepsilon\).  Hence
\[
\beta_i=0,\qquad \widetilde\gamma_{i\ell}=0,
\]
and consequently \(\gamma_{i\ell}=0\).  Thus \(\mathcal E_\varepsilon'(u_\varepsilon)=0\) on all of
\(H^s(\R^N)\), and \(u_\varepsilon\) solves the equation
\[
\varepsilon^{2s}(-\Delta)^su_\varepsilon+V(x)u_\varepsilon=|u_\varepsilon|^{p-1}u_\varepsilon\qquad\hbox{in }\R^N.
\]
We now close the positivity argument.  Testing the weak equation with \(-u_\varepsilon^-\), where \(u_\varepsilon^-:=\max\{-u_\varepsilon,0\}\), and using the elementary fractional inequality
\[
\int_{\mathbb R^N}(-\Delta)^{s/2}u_\varepsilon\,(-\Delta)^{s/2}(-u_\varepsilon^-)\,dx
\ge \int_{\mathbb R^N}|(-\Delta)^{s/2}u_\varepsilon^-|^2\,dx,
\]
 we obtain
\[
\|u_\varepsilon^-\|_{\varepsilon,V}^2\le \int_{\R^N}(u_\varepsilon^-)^{p+1}.
\]
Since \(u_\varepsilon=W_{\varepsilon,\alpha,\xi}+\omega\), \(W_{\varepsilon,\alpha,\xi}>0\), and \((\alpha,\xi,\omega)\in S_\varepsilon\), one has \(u_\varepsilon^-\le |\omega|\). Moreover, the weight is uniformly bounded above, and therefore
\[
\|\omega\|_{L^\infty(\R^N)}\le C\|\omega\|_{\tau,\varepsilon,\xi}\le C\varepsilon^{\delta_0}.
\]
Using also \(\int_{\mathbb R^N}(u_\varepsilon^-)^2\le V_{\min}^{-1}\|u_\varepsilon^-\|_{\varepsilon,V}^2\), we obtain
\[
\int_{\R^N}(u_\varepsilon^-)^{p+1}
\le \|u_\varepsilon^-\|_{L^\infty}^{p-1}\int_{\R^N}(u_\varepsilon^-)^2
\le C\varepsilon^{\delta_0(p-1)}\|u_\varepsilon^-\|_{\varepsilon,V}^2.
\]
Choosing \(\varepsilon\) so small that \(C\varepsilon^{\delta_0(p-1)}<1\), we obtain \(u_\varepsilon^-=0\). Thus \(u_\varepsilon\ge0\), and the strong maximum principle gives \(u_\varepsilon>0\). Consequently \(|u_\varepsilon|^{p-1}u_\varepsilon=u_\varepsilon^p\), so \(u_\varepsilon\) solves the original positive problem \eqref{eq:fractional-schrodinger-problem}. The concentration at \((\xi_1^0,\ldots,\xi_k^0)\) follows from \((\alpha,\xi,\omega)\in S_\varepsilon\).
\end{proof}

\begin{remark}\label{rem:pohozaev-xi-component}
We use the localized Pohozaev functional \(\mathcal P_{j,h}\), rather than
\[
\mathcal E_\varepsilon'(W_{\varepsilon,\alpha,\xi}+\omega)
[Y_{\varepsilon,j,h}(x;\xi_j)].
\]
The reason is that the latter term contains
\[
\int_{\mathbb R^N}
W_{\varepsilon,\alpha,\xi}^{p-2}\omega^2
Y_{\varepsilon,j,h}(x;\xi_j)\,dx,
\]
which is difficult to control directly. After integration by parts in the extension space, the derivatives of the cut-off functions are supported in annuli away from \(\xi_j\).
\end{remark}

We have the following degree formula for the map \(A\).
\begin{theorem}\label{thm:fractional-degree-formula}
Assume that \(\xi^0=(\xi_1^0,\ldots,\xi_k^0)\)  has different components and  is a nondegenerate critical point of the reduced potential \(\mathcal V\) defined in Theorem~\ref{th:degree-counting}.  Then, for all sufficiently small \(\varepsilon\),
\begin{equation}\label{eq:fractional-degree-formula}
\deg(A,S_\varepsilon,0)=(-1)^{k+m(\xi^0,\mathcal V)}.
\end{equation}
\end{theorem}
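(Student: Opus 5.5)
We prove \eqref{eq:fractional-degree-formula} by a homotopy $A^t$, $t\in[0,1]$, on $S_\varepsilon$ that deforms $A$ into a decoupled model map, followed by a product formula. For $t\in[0,1]$ we keep the orthogonality structure and set
\[
A^t_{1,j}:=t\,A_{1,j}+(1-t)\,a_0\varepsilon^NV(\xi_j)^\theta(\alpha_j-\alpha_j^p),\qquad
A^t_{2,j,h}:=t\,A_{2,j,h}+(1-t)\,c_*\theta\varepsilon^NV(\xi_j)^{\theta-1}\partial_hV(\xi_j).
\]
We define $A_3^t\in E_{\varepsilon,\xi}$ by
\[
\langle A^t_3,\widetilde\omega\rangle_{\varepsilon,V}=\langle\omega,\widetilde\omega\rangle_{\varepsilon,V}-t\Bigl(\int|W+\omega|^{p-1}(W+\omega)\widetilde\omega-\int W^p\widetilde\omega\Bigr)-\int W^p\widetilde\omega+\text{(potential residual)}\cdot t .
\]
Equivalently, at $t=0$ the $\omega$-component is the linear map $\omega\mapsto\omega$ on $E_{\varepsilon,\xi}$, with the error term $\mathcal E_\varepsilon'(W)$ and the nonlinear part switched off. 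Each $A^t$ is a compact perturbation of the identity, as for $A$. Once admissibility is established, homotopy invariance gives
\[
\deg(A,S_\varepsilon,0)=\deg(A^0,S_\varepsilon,0).
\]
The map $A^0$ is a product of three maps: the identity on $\omega$ (degree $1$); for each $j$, the scalar map $\alpha_j\mapsto\alpha_j-\alpha_j^p$ near $1$, whose derivative is $1-p<0$, so each contributes $-1$; and the vector field $\xi\mapsto c_*\nabla\mathcal V(\xi)$ near $\xi^0$. Since $\nabla V(\xi^0_j)=0$,
\[
D^2\mathcal V(\xi^0)=\operatorname{diag}\bigl(\theta V(\xi_j^0)^{\theta-1}D^2V(\xi_j^0)\bigr),
\]
so the Brouwer index of this vector field is $(-1)^{m(\xi^0,\mathcal V)}$. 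The product of the three contributions is $(-1)^{k+m(\xi^0,\mathcal V)}$, which is the claimed formula.

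The real work is admissibility: no zero of $A^t$ may lie on $\partial S_\varepsilon$, for any $t$. We rule out each boundary piece using a priori estimates at a zero.

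First, the $\omega$-bound. From $A_3^t=0$ and Corollary~\ref{cor:homotopy-coercivity}, after linearizing, $\omega$ solves a projected equation with right-hand side $t\mathcal E_\varepsilon'(W)+tN(\omega)$. The residual satisfies $\|\mathcal E'_\varepsilon(W)\|\le C\varepsilon^{N/2+\tau}$ in energy norm and is bounded by $C\varepsilon^\tau\rho_{\tau,\varepsilon,\xi}$ pointwise; this comes from Lemma~\ref{lem:weighted-tail}, \eqref{eq:sep-tail-potential-pointwise} and \eqref{eq:sep-tail-nonlinear-pointwise}. This yields $\|\omega\|_{\varepsilon,V}\le C\varepsilon^{N/2+\tau}$. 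Lemma~\ref{lem:weighted-norm-estimate} then gives $\|\omega\|_{\tau,\varepsilon,\xi}\le C\varepsilon^\tau$, where the nonlinear term is absorbed because $\|\omega\|_{\tau}<\varepsilon^{\delta_0}$ on $S_\varepsilon$. Since $\tau>\delta_0$, both bounds are strict inequalities away from the boundary.

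Second, the $\alpha$-bound. We plug $\omega$ into $A_{1,j}$. The difference
\[
A_{1,j}(\alpha,\xi,\omega)-\mathcal E_\varepsilon'(W)[W_{\varepsilon,j}]
\]
is $O(\|\omega\|_{\varepsilon,V}\varepsilon^{N/2})+O(\|\omega\|^{\kappa}\cdot)$, and $\mathcal E_\varepsilon'(W)[W_{\varepsilon,j}]$ is given by Lemma~\ref{lem:alpha-equation}. Hence
\[
|\alpha_j-\alpha_j^p|\le C\varepsilon^{\tau},\qquad\text{so}\qquad |\alpha_j-1|\le C\varepsilon^\tau<\varepsilon^{\delta_0}.
\]

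Third, the $\xi$-bound, which is the main obstacle. We must show
\[
\mathcal P_{j,h}(W+\omega)-\mathcal P_{j,h}(W)=o(\varepsilon^N)
\]
uniformly. Only then does Lemma~\ref{lem:xi-equation} give $|\nabla V(\xi_j)|\le C\varepsilon^{\tau}+C|\alpha_j-1|=o(1)$, which forces $\xi$ to stay within $o(1)$ of $\xi^0$, by nondegeneracy and $\tau_0$ small. This is exactly why the Pohozaev functional is used (Remark~\ref{rem:pohozaev-xi-component}). We expand $\mathcal P_{j,h}$, which is quadratic in $\bar u$ plus $F$. The extension terms are cross terms on the annulus $A_j^+$, where $\nabla\eta_j$ is supported; we bound them by the weighted decay of $\omega$ and $W$ there, via the Poisson kernel as in Lemma~\ref{lem:extension-u-bound}. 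These are $O(\varepsilon^{2N})$ times powers of $\varepsilon^{\delta_0}$. The interior terms have the form $\int\partial_h V\,\eta_j\,W\omega$ and $\int\eta_j\partial_h(F)$. We handle them by writing $\mathcal P_{j,h}(W+\omega)=\mathcal E'_\varepsilon(W+\omega)[-\eta_j\partial_h(W+\omega)]$ and integrating by parts; the linear-in-$\omega$ part then pairs $\omega$ with $L\partial_hW$-type terms, which are of size $\varepsilon^{N/2}\cdot\varepsilon^{N/2+\tau}$ after using $\nabla V(\xi_j)$ small. I would first try exactly this, accepting that a careful bookkeeping of the $\varepsilon^{-1}$ from $\partial_h$ against the gain $\varepsilon$ from $\partial_hV$ is needed.

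Finally, the homotopy is continuous in $t$. It has no zeros on $\partial S_\varepsilon$, and compactness holds because $S_\varepsilon$ is trivialized into a fixed Banach space as described before the statement. Homotopy invariance therefore applies and concludes the proof.
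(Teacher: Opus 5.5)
Your overall strategy is the paper's: deform $A$, with no zeros on $\partial S_\varepsilon$, to the decoupled leading-order map, and read off $(-1)^k(-1)^{m(\xi^0,\mathcal V)}$ from the product structure. Your product computation is correct, and merging the paper's two deformations $A\to B\to\widetilde B$ into one homotopy is harmless. Your displayed formula for $A^t_3$ is garbled: at $t=0$ it leaves $-\int W^p\widetilde\omega$, and at $t=1$ it adds an extra term. The intended map is $tA_3+(1-t)\omega$, i.e.\ $\langle A_3^t,\widetilde\omega\rangle_{\varepsilon,V}=\langle\omega,\widetilde\omega\rangle_{\varepsilon,V}-t\int|W+\omega|^{p-1}(W+\omega)\widetilde\omega$.

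The genuine gap is the weighted bound on $\omega$, which you need to exclude the boundary piece $\|\omega\|_{\tau,\varepsilon,\xi}=\varepsilon^{\delta_0}$. There are two problems.
\begin{itemize}
\item Lemma~\ref{lem:weighted-norm-estimate} needs an equation on all of $\mathbb R^N$, but $A^t_3=0$ only gives it tested against $E_{\varepsilon,\xi}$. You must add multipliers $\sum_j\beta_jR_j+\sum_{j,h}\gamma_{jh}S_{j,h}$ and bound them by testing against $W_{\varepsilon,i}$ and $Y_{\varepsilon,i,\ell}$, using Lemma~\ref{lem:app-gram-estimates}. Since $|S_{j,h}|\le C\varepsilon^{-1}\rho_{\tau,\varepsilon,\xi}$, the sharp bound $|\gamma_{jh}|\le C\varepsilon M_\varepsilon$ is essential.
\item The full residual $G_\varepsilon=\varepsilon^{2s}(-\Delta)^sW+VW-W^p$ contains $\sum_j(\alpha_j-\alpha_j^p)W_{\varepsilon,j}^p$. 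Orthogonality to $W_{\varepsilon,j}$ removes this term from the projected energy estimate, but not from the pointwise bound. So $|G_\varepsilon|\le C\varepsilon^\tau\rho_{\tau,\varepsilon,\xi}$ is false on $\overline S_\varepsilon$ until you know $|\alpha_j-1|\le C\varepsilon^\tau$. With only $|\alpha_j-1|<\varepsilon^{\delta_0}$, your setup yields $\|\omega\|_{\tau,\varepsilon,\xi}\le C\varepsilon^{\delta_0}$, which does not rule out that boundary piece.
\end{itemize}
The fix is the paper's order: first the energy bound, then $\alpha$ (which needs only the energy bound), then the weighted bound with multipliers.

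The $\xi$-step, which you call the main obstacle, is routine, and you should not route it through $\mathcal E'_\varepsilon(u)[-\eta_j\partial_hu]$. That identity holds only for smooth $u$, which is exactly why \eqref{eq:pohozaev-functional} is taken as the definition, and it is what creates the $\varepsilon^{-1}$ bookkeeping you are worried about. Instead, estimate $\mathcal P_{j,h}(W+\omega)-\mathcal P_{j,h}(W)$ directly from \eqref{eq:pohozaev-functional}:
\begin{itemize}
\item The extension terms live on $A_j^+$. Bound them by Cauchy--Schwarz, using $\varepsilon^{2s}\int_{A_j^+}t^{1-2s}|\nabla\overline W|^2\le C\varepsilon^{2N}$ and the global bound $\varepsilon^{2s}\int t^{1-2s}|\nabla\bar\omega|^2\le\|\omega\|_{\varepsilon,V}^2\le C\varepsilon^{N+2\tau}$. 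Pointwise Poisson-kernel decay of $\omega$ does not control $\nabla\bar\omega$ near $t=0$.
\item The potential term is at most $C\|W\|_{L^2}\|\omega\|_{L^2}+C\|\omega\|_{L^2}^2=O(\varepsilon^{N+\tau})$.
\item The $F$-term is supported where $\nabla\eta_j\ne0$, where $W$ is small.
\end{itemize}
Together these give $O(\varepsilon^{N+\tau})$, as in Lemma~\ref{lem:degree-stability-estimates}, which is more than enough to force $|\nabla V(\xi_j)|\le C\varepsilon^\tau$.
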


To prove Theorem~\ref{thm:fractional-degree-formula}, we first reduce the degree of \(A\) to the degree of the comparison map \(B\), and then compute the resulting finite-dimensional degree.

We now introduce the finite-dimensional comparison map
\begin{equation*}
B(\alpha,\xi,\omega)=
\bigl(B_{1,j}(\alpha,\xi),B_{2,j,h}(\alpha,\xi),B_3(\omega)\bigr),
\end{equation*}
where
\begin{equation*}
B_{1,j}(\alpha,\xi):=\mathcal E_\varepsilon'(W_{\varepsilon,\alpha,\xi})[W_{\varepsilon,j}(x;\xi_j)],
\qquad
B_{2,j,h}(\alpha,\xi):=\mathcal P_{j,h}(W_{\varepsilon,\alpha,\xi}),
\end{equation*}
and \(B_3(\omega)\in E_{\varepsilon,\xi}\) is the identity in the energy metric:
\begin{equation*}
\langle B_3(\omega),\widetilde\omega\rangle_{\varepsilon,V}
=\langle \omega,\widetilde\omega\rangle_{\varepsilon,V}
\qquad
\forall\widetilde\omega\in E_{\varepsilon,\xi}.
\end{equation*}

The following estimates are the basic stability results for the homotopy between
\(A\) and \(B\).

\begin{lemma}\label{lem:degree-stability-estimates}
Uniformly for \((\alpha,\xi)\in\overline{\mathcal D}_\varepsilon\),
\begin{equation}\label{eq:projected-residual-estimate}
\sup_{\substack{\psi\in E_{\varepsilon,\xi}\\
\norm{\psi}_{\varepsilon,V}=1}}
|\mathcal E_\varepsilon'(W_{\varepsilon,\alpha,\xi})[\psi]|
\le C\varepsilon^{N/2+\tau}.
\end{equation}
If \(\omega\in E_{\varepsilon,\xi}\) satisfies
\begin{equation*}
\norm{\omega}_{\varepsilon,V}\le C\varepsilon^{N/2+\tau},
\qquad
\norm{\omega}_{\tau,\varepsilon,\xi}\le \varepsilon^{\delta_0},
\end{equation*}
then
\begin{equation}\label{eq:a1-b1-stability}
A_{1,j}(\alpha,\xi,\omega)=B_{1,j}(\alpha,\xi)+O(\varepsilon^{N+\tau}),
\end{equation}
and
\begin{equation}\label{eq:a2-b2-stability}
A_{2,j,h}(\alpha,\xi,\omega)=B_{2,j,h}(\alpha,\xi)+O(\varepsilon^{N+\tau}).
\end{equation}

\end{lemma}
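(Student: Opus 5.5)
The plan is to treat the three assertions of Lemma~\ref{lem:degree-stability-estimates} separately, using only the pointwise bubble estimates (Lemma~\ref{lem:single-bubble-input}), the weighted tail bounds already used in Lemma~\ref{lem:alpha-equation}, and the extension energy bounds of the type in Lemma~\ref{lem:extension-u-bound}.

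For \eqref{eq:projected-residual-estimate}, I will use the fact that each $W_{\varepsilon,j}$ solves the frozen equation, exactly as in \eqref{eq:alpha-frozen-testing}. For any $\psi\in H^s$ this gives
\[
\mathcal E_\varepsilon'(W_{\varepsilon,\alpha,\xi})[\psi]=\sum_j\alpha_j\!\int (V(x)-V(\xi_j))W_{\varepsilon,j}\psi+\sum_j(\alpha_j-\alpha_j^p)\!\int W_{\varepsilon,j}^p\psi-\int\Bigl[W_{\varepsilon,\alpha,\xi}^p-\sum_j\alpha_j^pW_{\varepsilon,j}^p\Bigr]\psi .
\]
The middle term needs care, because $|\alpha_j-1|<\varepsilon^{\delta_0}$ only. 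For $\psi\in E_{\varepsilon,\xi}$, however, I rewrite $\int W_{\varepsilon,j}^p\psi=\langle W_{\varepsilon,j},\psi\rangle_{\varepsilon,V}-\int(V-V(\xi_j))W_{\varepsilon,j}\psi$. The first piece vanishes by orthogonality, so the middle term is absorbed into the potential error. The remaining terms are bounded by Cauchy--Schwarz and the pointwise bounds \eqref{eq:sep-tail-potential-pointwise}, \eqref{eq:sep-tail-nonlinear-pointwise}. These give $|\cdot|\le C\varepsilon^\tau\rho_{\tau,\varepsilon,\xi}$, and since $\|\rho_{\tau,\varepsilon,\xi}\|_{L^2}\le C\varepsilon^{N/2}$ and $\|\psi\|_{L^2}\le C\|\psi\|_{\varepsilon,V}$, this yields $C\varepsilon^{N/2+\tau}$.

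For \eqref{eq:a1-b1-stability}, I expand
\[
A_{1,j}-B_{1,j}=\langle\omega,W_{\varepsilon,j}\rangle_{\varepsilon,V}-\int\bigl[(W+\omega)^p-W^p\bigr]W_{\varepsilon,j}.
\]
The first term is zero because $\omega\in E_{\varepsilon,\xi}$. I split the second term as $p\int W^{p-1}\omega W_{\varepsilon,j}$ plus a remainder bounded by $C\int(W^{p-2}\omega^2+|\omega|^p)W_{\varepsilon,j}$, which is $O(\varepsilon^{N+\tau})$ in view of $\|\omega\|_{L^2}^2\le C\varepsilon^{N+2\tau}$ and the $L^\infty$ smallness of $\omega$. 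For the linear part, I replace $W^{p-1}W_{\varepsilon,j}$ by $\alpha_j^{p-1}W_{\varepsilon,j}^p$, with error $C\varepsilon^\tau\rho$ pointwise, and then use orthogonality again via $\int W_{\varepsilon,j}^p\omega=-\int(V-V(\xi_j))W_{\varepsilon,j}\omega$. In both steps Cauchy--Schwarz gives $\varepsilon^{N/2+\tau}\cdot\varepsilon^{N/2}$.

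For \eqref{eq:a2-b2-stability}, which I expect to be the main obstacle, I use that $\mathcal P_{j,h}$ is quadratic in $\nabla\bar u$ and $u$ plus the $F$ term. Writing $u=W+\omega$, $\mathcal P_{j,h}(W+\omega)-\mathcal P_{j,h}(W)$ consists of bilinear cross terms, quadratic $\omega$ terms, and $F$-differences. The quadratic terms are at most $C\|\omega\|^2_{\varepsilon,V}\le C\varepsilon^{N+2\tau}$. The $F$-differences are supported in the annulus, where $W\le C\varepsilon^{N+2s}$, and are controlled via the weighted bound on $\omega$. The delicate part is the linear cross terms, for which I would use \eqref{eq:pohozaev-as-test} in reverse. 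The linearization of $\mathcal P_{j,h}$ at $W$ applied to $\omega$ equals $\mathcal E_\varepsilon''(W)[-\eta_j\partial_hW,\omega]+\mathcal E_\varepsilon'(W)[-\eta_j\partial_h\omega]$. After integrating by parts in $x_h$, the second piece becomes $\int\omega\,\partial_h(\eta_j\,\text{residual of }W)$, which is $O(\|\omega\|_{L^2}\varepsilon^{N/2-1+\tau})$. This is one step where the count looks tight and I might need the local $C^1$ structure of the residual: the leading residual $(V-V(\xi_j))W_{\varepsilon,j}$ has $\partial_h$ of size $\varepsilon^{-1}|V-V(\xi_j)|+|\nabla V|$. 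The first piece is $\langle \omega, L_{\varepsilon,\alpha,\xi}(\eta_j\partial_hW)\rangle$. Since $\partial_hW_{\varepsilon,j}$ is a kernel element of the frozen operator, $L(\eta_j\partial_hW)$ reduces to potential-variation terms, interaction terms and cutoff commutators. The commutators are localized to the annulus and controlled by the extension bound \eqref{eq:extension-w-gradient-bound-xi} together with Cauchy--Schwarz in the extension. The remaining terms are $O(\varepsilon^{\tau-1}\rho)$ pointwise. If the plain $L^2$ count then loses a factor $\varepsilon^{-1}$, I would instead pair against the weighted norm, $|\omega|\le\varepsilon^{\delta_0}\rho$, and exploit $\nabla V(\xi_j)$ smallness near $\xi^0$ (using $|\xi_j-\xi_j^0|<\tau_0$ only gives boundedness) together with the oddness of $\partial_hw$ to gain the missing power. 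Settling this power count is where I expect the real work.
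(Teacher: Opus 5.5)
Your arguments for \eqref{eq:projected-residual-estimate} and \eqref{eq:a1-b1-stability} are essentially the paper's. Orthogonality to $W_{\varepsilon,j}$ turns $\int W_{\varepsilon,j}^p\psi$ into $-\int(V-V(\xi_j))W_{\varepsilon,j}\psi$. Cauchy--Schwarz with the bounds of Lemma~\ref{lem:weighted-tail} and $\|\omega\|_{L^2}\le C\varepsilon^{N/2+\tau}$ then finishes.

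The genuine gap is \eqref{eq:a2-b2-stability}, which your proposal leaves unproved, and the route you chose is the cause. Splitting the linear part via \eqref{eq:pohozaev-as-test} gives two pieces, $-\int\omega\,L_{\varepsilon,\alpha,\xi}(\eta_j\partial_hW_{\varepsilon,\alpha,\xi})$ and $\int\omega\,\partial_h(\eta_jG)$, where $G$ is the residual of $W_{\varepsilon,\alpha,\xi}$. Both pieces contain bubble-scale terms carrying a factor $\varepsilon^{-1}$:
\begin{itemize}
\item both contain $\alpha_j(V-V(\xi_j))\partial_hW_{\varepsilon,j}$, of size $\varepsilon^{\tau-1}\rho_{\tau,\varepsilon,\xi}$;
\item both also contain $p(\alpha_j-\alpha_j^p)W_{\varepsilon,j}^{p-1}\partial_hW_{\varepsilon,j}$, because on $\overline{\mathcal D}_\varepsilon$ you only know $|\alpha_j-1|<\varepsilon^{\delta_0}$. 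This term is of size $\varepsilon^{\delta_0-1}$ near $\xi_j$.
\end{itemize}
So your stated bounds $O(\varepsilon^{\tau-1}\rho)$ and $O(\|\omega\|_{L^2}\varepsilon^{N/2-1+\tau})$ do not hold as written. Paired with $\omega$, each piece is only $O(\varepsilon^{N-1+\tau+\delta_0})$, which is not $O(\varepsilon^{N+\tau})$. These terms cancel exactly in the sum, leaving essentially $\alpha_j\int\partial_hV\,W_{\varepsilon,j}\,\omega$. Your proposal does not exhibit this cancellation. The devices you mention are also unavailable: $\omega$ has no parity, and $\nabla V(\xi_j)$ is only bounded on $\overline{\mathcal D}_\varepsilon$.

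The missing observation is that no linearization is needed. Expand $\mathcal P_{j,h}(W_{\varepsilon,\alpha,\xi}+\omega)-\mathcal P_{j,h}(W_{\varepsilon,\alpha,\xi})$ directly from the definition \eqref{eq:pohozaev-functional}. That form was chosen precisely so that nothing is differentiated at the peak (cf.\ Remark~\ref{rem:pohozaev-xi-component}).
\begin{itemize}
\item Every extension term carries a factor $\nabla\eta_j$, supported in $A_j^+=\mathcal B_{2d}^+(\xi_j)\setminus\overline{\mathcal B_d^+(\xi_j)}$. There $\varepsilon^{2s}\int_{A_j^+}t^{1-2s}|\nabla\overline{W_{\varepsilon,\alpha,\xi}}|^2\le C\varepsilon^{2N}$, as in \eqref{eq:extension-w-gradient-bound-xi}. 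Cauchy--Schwarz bounds the cross terms by $C\varepsilon^N\norm{\omega}_{\varepsilon,V}$ and the quadratic terms by $C\norm{\omega}_{\varepsilon,V}^2$.
\item The only non-annular term is $\frac12\int\partial_h(V\eta_j)\bigl(2W_{\varepsilon,\alpha,\xi}\omega+\omega^2\bigr)$. It involves no derivative of $W_{\varepsilon,\alpha,\xi}$ or $\omega$, and is at most $C\|W_{\varepsilon,\alpha,\xi}\|_{L^2}\|\omega\|_{L^2}+C\|\omega\|_{L^2}^2\le C\varepsilon^{N/2}\cdot\varepsilon^{N/2+\tau}$.
\item The $F$-difference lives on the annulus, where both $W_{\varepsilon,\alpha,\xi}$ and $\omega$ are tiny.
\end{itemize}
Together these give \eqref{eq:a2-b2-stability} directly.
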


\begin{proof}
By Lemmas~\ref{lem:single-bubble-input} and \ref{lem:weighted-tail}, a direct computation gives
\begin{equation}\label{eq:l2-variable-potential-residual}
\|(V-V(\xi_i))W_{\varepsilon,i}\|_{L^2}
+\left\|W_{\varepsilon,\alpha,\xi}^p-
\sum_{i=1}^k\alpha_i^pW_{\varepsilon,i}^p\right\|_{L^2}
\le C\varepsilon^{N/2+\tau},
\end{equation}
and, for each fixed \(j\),
\begin{equation}\label{eq:l2-separated-nonlinear-residual}
\left\|W_{\varepsilon,\alpha,\xi}^{p-1}W_{\varepsilon,j}
-\alpha_j^{p-1}W_{\varepsilon,j}^p\right\|_{L^2}
\le C\varepsilon^{N/2+\tau}.
\end{equation}
Let \(\psi\in E_{\varepsilon,\xi}\), \(\|\psi\|_{\varepsilon,V}=1\).  Using
\(\langle W_{\varepsilon,i},\psi\rangle_{\varepsilon,V}=0\) and the equation of
\(W_{\varepsilon,i}\), we have
\[
\int_{\mathbb R^N}W_{\varepsilon,i}^p\psi
=-\int_{\mathbb R^N}(V-V(\xi_i))W_{\varepsilon,i}\psi .
\]
Therefore
\[
\begin{aligned}
\mathcal E_\varepsilon'(W_{\varepsilon,\alpha,\xi})[\psi]
&=
\langle W_{\varepsilon,\alpha,\xi},\psi\rangle_{\varepsilon,V}
-
\int_{\mathbb R^N}W_{\varepsilon,\alpha,\xi}^p\psi                                                                        \\
&=
-\int_{\mathbb R^N}
\left(
W_{\varepsilon,\alpha,\xi}^p-\sum_{i=1}^k\alpha_i^pW_{\varepsilon,i}(x;\xi_i)^p
\right)\psi
-
\sum_{i=1}^k\alpha_i^p
\int_{\mathbb R^N}W_{\varepsilon,i}(x;\xi_i)^p\psi                                    \\
&=
-\int_{\mathbb R^N}
\left(
W_{\varepsilon,\alpha,\xi}^p-\sum_{i=1}^k\alpha_i^pW_{\varepsilon,i}(x;\xi_i)^p
\right)\psi
+
\sum_{i=1}^k\alpha_i^p
\int_{\mathbb R^N}
(V(x)-V(\xi_i))W_{\varepsilon,i}(x;\xi_i)\psi .
\end{aligned}
\]
Since \(\|\psi\|_{L^2}\le C\|\psi\|_{\varepsilon,V}\), \eqref{eq:l2-variable-potential-residual} gives
\eqref{eq:projected-residual-estimate}.

We next prove the estimate for $A_{1,j} $. Set
\[
{N_{\varepsilon,\alpha,\xi}(\omega):=|W_{\varepsilon,\alpha,\xi}+\omega|^{p-1}(W_{\varepsilon,\alpha,\xi}+\omega)-W_{\varepsilon,\alpha,\xi}^p-pW_{\varepsilon,\alpha,\xi}^{p-1}\omega,}
\qquad
\kappa:=\min\{2,p\}>1.
\]
Since \(W_{\varepsilon,\alpha,\xi}\) is uniformly bounded, we have
\begin{equation}\label{eq:nonlinear-remainder-pointwise}
|N_{\varepsilon,\alpha,\xi}(\omega)|
\le
C\|\omega\|_{L^\infty}^{\kappa-1}|\omega|.
\end{equation}
Using \(\omega\in E_{\varepsilon,\xi}\), we have
\[
\begin{aligned}
A_{1,j}(\alpha,\xi,\omega)-B_{1,j}(\alpha,\xi)
&=
\mathcal E_\varepsilon'(W_{\varepsilon,\alpha,\xi}+\omega)
[W_{\varepsilon,j}(x;\xi_j)]
-
\mathcal E_\varepsilon'(W_{\varepsilon,\alpha,\xi})
[W_{\varepsilon,j}(x;\xi_j)]                                      \\
&=
\langle\omega,W_{\varepsilon,j}(x;\xi_j)\rangle_{\varepsilon,V}
-
\int_{\mathbb R^N}
{\bigl(|W_{\varepsilon,\alpha,\xi}+\omega|^{p-1}
(W_{\varepsilon,\alpha,\xi}+\omega)-W_{\varepsilon,\alpha,\xi}^p\bigr)}\\
&\quad \times W_{\varepsilon,j}(x;\xi_j)                            \\
&=
-p\int_{\mathbb R^N}
W_{\varepsilon,\alpha,\xi}^{p-1}\omega\,
W_{\varepsilon,j}(x;\xi_j)
-\int_{\mathbb R^N}
N_{\varepsilon,\alpha,\xi}(\omega)\,
W_{\varepsilon,j}(x;\xi_j) .
\end{aligned}
\]
For the first term, \eqref{eq:l2-variable-potential-residual} and
\eqref{eq:l2-separated-nonlinear-residual} give
\[
\begin{aligned}
\int_{\mathbb R^N}
W_{\varepsilon,\alpha,\xi}^{p-1}\omega W_{\varepsilon,j}(x;\xi_j)
&=
\alpha_j^{p-1}
\int_{\mathbb R^N}
W_{\varepsilon,j}(x;\xi_j)^p\omega                                      
+\int_{\mathbb R^N}
\Bigl[
W_{\varepsilon,\alpha,\xi}^{p-1}W_{\varepsilon,j}(x;\xi_j)
-\alpha_j^{p-1}W_{\varepsilon,j}(x;\xi_j)^p
\Bigr]\omega                                                 \\
&=
-\alpha_j^{p-1}
\int_{\mathbb R^N}
(V(x)-V(\xi_j))W_{\varepsilon,j}(x;\xi_j)\omega
+
\int_{\mathbb R^N}
\Bigl[
W_{\varepsilon,\alpha,\xi}^{p-1}W_{\varepsilon,j}(x;\xi_j)
\\
&\qquad
-\alpha_j^{p-1}W_{\varepsilon,j}(x;\xi_j)^p
\Bigr]\omega \\
&\le
C\varepsilon^{N/2+\tau}\|\omega\|_{L^2}
+
C\varepsilon^{N/2+\tau}\|\omega\|_{L^2}=
O(\varepsilon^{N+\tau}).
\end{aligned}
\]
Moreover, for the second term, using \eqref{eq:nonlinear-remainder-pointwise},
\[
\begin{aligned}
\left|
\int_{\mathbb R^N}N_{\varepsilon,\alpha,\xi}(\omega)W_{\varepsilon,j}(x;\xi_j)
\right|
&\le
C\|\omega\|_{L^\infty}^{\kappa-1}
\|\omega\|_{L^2}
\|W_{\varepsilon,j}(x;\xi_j)\|_{L^2}                                                   \\
&=
O(\varepsilon^{N+\tau+(\kappa-1)\delta_0})
=
O(\varepsilon^{N+\tau}).
\end{aligned}
\]
Consequently,
\[
A_{1,j}(\alpha,\xi,\omega)
=
B_{1,j}(\alpha,\xi)
+
O(\varepsilon^{N+\tau}),
\]
which implies \eqref{eq:a1-b1-stability}.

We now prove the estimate for $A_{2,j,h} $. We have
\[
\begin{aligned}
A_{2,j,h}&(\alpha,\xi,\omega)-B_{2,j,h}(\alpha,\xi)\\
&=
\mathcal P_{j,h}(W_{\varepsilon,\alpha,\xi}+\omega)
-\mathcal P_{j,h}(W_{\varepsilon,\alpha,\xi})                         \\
&=
\frac{\varepsilon^{2s}}{2}
\int_{\mathbb R^{N+1}_+}
t^{1-2s}(\partial_h\eta_j)
\left(
|\nabla(\overline W_{\varepsilon,\alpha,\xi}+\overline\omega)|^2
-
|\nabla\overline W_{\varepsilon,\alpha,\xi}|^2
\right)                                      \\
&\quad
-\varepsilon^{2s}
\int_{\mathbb R^{N+1}_+}
t^{1-2s}
\left[
\partial_h(\overline W_{\varepsilon,\alpha,\xi}+\overline\omega)
\nabla(\overline W_{\varepsilon,\alpha,\xi}+\overline\omega)-
\partial_h\overline W_{\varepsilon,\alpha,\xi}\nabla\overline W_{\varepsilon,\alpha,\xi}
\right]\cdot\nabla\eta_j                    \\
&\quad
+\frac{1}{2}
\int_{\mathbb R^N}
\partial_h(V\eta_j)
\left(
(W_{\varepsilon,\alpha,\xi}+\omega)^2-W_{\varepsilon,\alpha,\xi}^2
\right)                                      \\
&\quad
-\int_{\mathbb R^N}
(\partial_h\eta_j)
\left(
F(W_{\varepsilon,\alpha,\xi}+\omega)-F(W_{\varepsilon,\alpha,\xi})
\right).
\end{aligned}
\]
Let
\[
A_j^+
:=
\mathcal B_{2d}^+(\xi_j)\setminus\overline{\mathcal B_d^+(\xi_j)}.
\]
By the Poisson representation, Lemma~\ref{lem:single-bubble-input}, and an argument analogous to that in the proof of Lemma~\ref{lem:extension-u-bound}(i), we obtain
\begin{equation}\label{eq:extension-w-gradient-bound}
  \varepsilon^{2s}
\int_{A_j^+}
t^{1-2s}|\nabla\overline W_{\varepsilon,\alpha,\xi}|^2
\le
C\varepsilon^{2N}.
\end{equation}
Meanwhile, 
\begin{equation}\label{eq:extension-w-gradient-bound-omega}
  \varepsilon^{2s}
\int_{\mathbb R^{N+1}_+}
t^{1-2s}|\nabla\overline\omega|^2
\le
\|\omega\|_{\varepsilon,V}^2
\le
C\varepsilon^{N+2\tau},
\end{equation}
and
\begin{equation}\label{eq:extension-w-gradient-bound-cross}
\begin{aligned}
\left|
\int_{\mathbb R^N}
\partial_h(V\eta_j)
\left((W_{\varepsilon,\alpha,\xi}+\omega)^2-W_{\varepsilon,\alpha,\xi}^2\right)
\right|
&\le
C\int_{\mathbb R^N}|W_{\varepsilon,\alpha,\xi}\omega|
+
C\int_{\mathbb R^N}\omega^2                                \\
&\le
C\|W_{\varepsilon,\alpha,\xi}\|_{L^2}\|\omega\|_{L^2}
+
C\|\omega\|_{L^2}^2                                         \\
&=
O(\varepsilon^{N+\tau}).
\end{aligned}
\end{equation}
Finally, on \(\operatorname{supp}\partial_h\eta_j\), we have
\begin{equation}\label{eq:supp-eta}
  \begin{aligned}
\left|
\int_{\mathbb R^N}
(\partial_h\eta_j)
\left(
F(W_{\varepsilon,\alpha,\xi}+\omega)
-F(W_{\varepsilon,\alpha,\xi})
\right)
\right|
&\le
C
\int_{\operatorname{supp}\partial_h\eta_j}
\left(
|W_{\varepsilon,\alpha,\xi}|^p|\omega|+|\omega|^{p+1}
\right)                                      \\
&=
o(\varepsilon^{N+\tau}).
\end{aligned}
\end{equation}
Combining \eqref{eq:extension-w-gradient-bound}, \eqref{eq:extension-w-gradient-bound-omega}, \eqref{eq:extension-w-gradient-bound-cross}, and \eqref{eq:supp-eta} gives
\[
A_{2,j,h}(\alpha,\xi,\omega)
=
B_{2,j,h}(\alpha,\xi)
+
O(\varepsilon^{N+\tau}),
\]
which implies \eqref{eq:a2-b2-stability}.
\end{proof}
Now we prove that the maps $A$ and $B$ are homotopic on $S_\varepsilon $.
\begin{proposition}\label{prop:fractional-homotopy}
Let
\[
A_t:=tA+(1-t)B,
\qquad 0\le t\le1.
\]
Then, for all sufficiently small \(\varepsilon\),
\begin{equation}\label{eq:homotopy-no-boundary-zero}
A_t(\alpha,\xi,\omega)\ne0
\qquad
\hbox{for every }(\alpha,\xi,\omega)\in\partial S_\varepsilon
\hbox{ and every }t\in[0,1].
\end{equation}
Consequently,
\begin{equation}\label{eq:degree-homotopy}
\deg(A,S_\varepsilon,0)=\deg(B,S_\varepsilon,0).
\end{equation}
\end{proposition}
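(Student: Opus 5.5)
The plan is to argue by contradiction. Suppose that for some $t\in[0,1]$ and some $(\alpha,\xi,\omega)\in\partial S_\varepsilon$ we have $A_t(\alpha,\xi,\omega)=0$. A boundary point of $S_\varepsilon$ must saturate one of the defining constraints. So either $|\alpha_j-1|=\varepsilon^{\delta_0}$ for some $j$, or $|\xi_j-\xi_j^0|=\tau_0$, or $\|\omega\|_{\varepsilon,V}=\varepsilon^{N/2+\delta_0}$, or $\|\omega\|_{\tau,\varepsilon,\xi}=\varepsilon^{\delta_0}$. We rule out each case in turn, and we handle the $\omega$-component first, since the remaining components need the bounds it produces.

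\emph{The $\omega$-component.} The third component of $A_t=0$ reads
\[
\omega-t\,\Pi_{\xi}\bigl(W+\omega\bigr)^p_{\rm proj}=0 \quad\text{on } E_{\varepsilon,\xi},
\]
in the weak sense. Written as an equation, this says that $\omega\in E_{\varepsilon,\xi}$ solves
\[
\varepsilon^{2s}(-\Delta)^s\omega+V\omega-tpW^{p-1}\omega=-t\,\mathcal E'_\varepsilon(W)+tN_{\varepsilon,\alpha,\xi}(\omega)+\sum\text{multipliers},
\]
with $W=W_{\varepsilon,\alpha,\xi}$. We test against $\omega$ itself, for which the multipliers drop out. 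Corollary~\ref{cor:homotopy-coercivity} then gives
\[
c\|\omega\|_{\varepsilon,V}^2\le t\,|\mathcal E'_\varepsilon(W)[\omega]|+\int|N(\omega)\omega|.
\]
By \eqref{eq:projected-residual-estimate} the first term is at most $C\varepsilon^{N/2+\tau}\|\omega\|_{\varepsilon,V}$. By \eqref{eq:nonlinear-remainder-pointwise} the second is at most $C\varepsilon^{(\kappa-1)\delta_0}\|\omega\|^2_{\varepsilon,V}$, and this is absorbed into the left side. Hence $\|\omega\|_{\varepsilon,V}\le C\varepsilon^{N/2+\tau}<\varepsilon^{N/2+\delta_0}$, because $\tau>\delta_0$.

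For the weighted norm we use Lemma~\ref{lem:weighted-norm-estimate}. The right-hand side $H$ consists of three parts. The residual $E$ has weighted size $C\varepsilon^\tau$, by Lemma~\ref{lem:weighted-tail}. The nonlinear part is bounded by $C\varepsilon^{\kappa\delta_0}\rho$. The multiplier terms are combinations of $R_j$ and $S_{j,h}$, whose coefficients we control by testing against $W_{\varepsilon,j}$ and $Y_{\varepsilon,j,h}$ and using the Gram estimates, as in Proposition~\ref{prop:fractional-a-equivalence}; these are also $O(\varepsilon^\tau)$ after weighting. Lemma~\ref{lem:weighted-norm-estimate} then yields $\|\omega\|_{\tau,\varepsilon,\xi}\le C(\varepsilon^\tau+\varepsilon^{\kappa\delta_0})<\varepsilon^{\delta_0}$. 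I expect this multiplier bookkeeping in the weighted norm, with constants uniform in $t$, to be the main obstacle. One also has to make sure that the multipliers really are weighted-small despite the factor $\varepsilon^{-1}$ carried by $Y$. The way around this is that $\gamma_{jh}$ comes out of order $\varepsilon\cdot(\text{small})$, exactly as in the scaling $\widetilde\gamma=\varepsilon^{-1}\gamma$ used in Proposition~\ref{prop:fractional-a-equivalence}.

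\emph{The $\alpha$- and $\xi$-components.} With the bounds on $\omega$ in hand, Lemma~\ref{lem:degree-stability-estimates} gives $A_{1}=B_1+O(\varepsilon^{N+\tau})$ and $A_2=B_2+O(\varepsilon^{N+\tau})$. Therefore
\[
tA_{1,j}+(1-t)B_{1,j}=a_0\varepsilon^NV(\xi_j)^\theta(\alpha_j-\alpha_j^p)+O(\varepsilon^{N+\tau}),
\]
by Lemma~\ref{lem:alpha-equation}. If $|\alpha_j-1|=\varepsilon^{\delta_0}$, then $|\alpha_j-\alpha_j^p|\ge c\varepsilon^{\delta_0}$. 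Since $\delta_0<\tau$, this component is nonzero.

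For the centers, Lemma~\ref{lem:xi-equation} gives the $\xi$-component as
\[
c_*\theta\varepsilon^NV(\xi_j)^{\theta-1}\partial_hV(\xi_j)+O(\varepsilon^{N+\delta_0}).
\]
On $|\xi_j-\xi^0_j|=\tau_0$ we have $|\nabla V(\xi_j)|\ge c>0$, because $\xi_j^0$ is a nondegenerate, hence isolated, critical point and $\tau_0$ is small. So this component is nonzero as well.

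Thus $A_t$ has no zeros on $\partial S_\varepsilon$ for any $t\in[0,1]$. The homotopy $A_t$ is of the form identity minus a compact map in the $\omega$-variable, plus finite-dimensional parts, after the trivialization of $E_{\varepsilon,\xi}$. Homotopy invariance of the Leray--Schauder degree therefore gives \eqref{eq:degree-homotopy}.
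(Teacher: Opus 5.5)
Your architecture matches the paper's: coercivity on $E_{\varepsilon,\xi}$ for the energy norm, the first component with Lemma~\ref{lem:alpha-equation} for $\alpha$, multipliers with Lemma~\ref{lem:weighted-norm-estimate} for the weighted norm, and the Pohozaev component for $\xi$. However, the order in which you run the steps leaves a genuine gap in the weighted estimate.

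You claim that the residual $G_\varepsilon=\varepsilon^{2s}(-\Delta)^sW_{\varepsilon,\alpha,\xi}+VW_{\varepsilon,\alpha,\xi}-W_{\varepsilon,\alpha,\xi}^p$ is $O(\varepsilon^\tau)$ in the weighted norm by Lemma~\ref{lem:weighted-tail} alone. That is false. Using the equations of the $W_{\varepsilon,j}$,
\[
G_\varepsilon=\sum_{j=1}^k(\alpha_j-\alpha_j^p)W_{\varepsilon,j}^p-\Bigl(W_{\varepsilon,\alpha,\xi}^p-\sum_{j=1}^k\alpha_j^pW_{\varepsilon,j}^p\Bigr)+\sum_{j=1}^k\alpha_j\bigl(V-V(\xi_j)\bigr)W_{\varepsilon,j},
\]
and Lemma~\ref{lem:weighted-tail} controls only the last two sums. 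The first sum has weighted size comparable to $|\alpha-\one|$, which on $\overline S_\varepsilon$ is only known to be at most $\varepsilon^{\delta_0}$.

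The multipliers have the same defect. Testing with $W_{\varepsilon,i}$ brings in $t\int G_\varepsilon W_{\varepsilon,i}\approx t(\alpha_i-\alpha_i^p)\int W_{\varepsilon,i}^{p+1}$, so $\beta_i\approx t(\alpha_i-\alpha_i^p)$, not $O(\varepsilon^\tau)$. As written, Lemma~\ref{lem:weighted-norm-estimate} therefore gives only $\|\omega\|_{\tau,\varepsilon,\xi}\le C\varepsilon^{\delta_0}$ with an uncontrolled constant $C$. That does not exclude the face $\|\omega\|_{\tau,\varepsilon,\xi}=\varepsilon^{\delta_0}$. Your $\alpha$-step cannot rescue this: it comes afterwards, and it only excludes the face $|\alpha_j-1|=\varepsilon^{\delta_0}$.

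The repair is to use the paper's ordering.
\begin{enumerate}
\item Prove the energy bound $\|\omega\|_{\varepsilon,V}\le C\varepsilon^{N/2+\tau}$, which needs only the a priori bound $\|\omega\|_{\tau,\varepsilon,\xi}\le\varepsilon^{\delta_0}$ valid on $\overline S_\varepsilon$.
\item Apply Lemma~\ref{lem:degree-stability-estimates} to the first component of $A_t=0$ to get $B_{1,j}(\alpha,\xi)=O(\varepsilon^{N+\tau})$. Lemma~\ref{lem:alpha-equation} then gives the quantitative bound $|\alpha_j-1|\le C\varepsilon^\tau$ for every $j$, not just a contradiction on one face.
\item Only now do the weighted estimate. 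With this bound, $|G_\varepsilon|\le C\varepsilon^\tau\rho_{\tau,\varepsilon,\xi}$, and the multipliers satisfy $|\beta_j|\le CM_\varepsilon$ and $|\gamma_{jh}|\le C\varepsilon M_\varepsilon$, where $M_\varepsilon=\varepsilon^\tau+\varepsilon^{(\kappa-1)\delta_0}\|\omega\|_{\tau,\varepsilon,\xi}$ and $\kappa=\min\{2,p\}$. Lemma~\ref{lem:weighted-norm-estimate} then yields $\|\omega\|_{\tau,\varepsilon,\xi}\le C\varepsilon^\tau<\varepsilon^{\delta_0}$.
\end{enumerate}
Your handling of the $\xi$-face is fine. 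You use $|\nabla V(\xi_j)|\ge c\tau_0$ on $|\xi_j-\xi_j^0|=\tau_0$, whereas the paper derives $|\xi_j-\xi_j^0|\le C\varepsilon^\tau$; either excludes that face.
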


\begin{proof}
Suppose, for contradiction, that
\[
A_t(\alpha,\xi,\omega)=0
\]
for some \(t\in[0,1]\) and some \((\alpha,\xi,\omega)\in\partial S_\varepsilon\).  
Set \(u_\varepsilon:=W_{\varepsilon,\alpha,\xi}+\omega\).
We divide the proof into five steps.

\emph{Step 1: an energy estimate of \(\omega\).}
For every \(\psi\in E_{\varepsilon,\xi}\), the third component of \(A_t(\alpha,\xi,\omega)=0\) gives
\[
0=t\mathcal E_\varepsilon'(u_\varepsilon)[\psi]+(1-t)\langle\omega,\psi\rangle_{\varepsilon,V}.
\]
Since
\[
\langle W_{\varepsilon,\alpha,\xi},\psi\rangle_{\varepsilon,V}=\sum_{j=1}^k\alpha_j\langle W_{\varepsilon,j}(x;\xi_j),\psi\rangle_{\varepsilon,V}=0,
\]
we have
\begin{equation}\label{eq:projected-homotopy-equation-detailed}
\begin{aligned}
\langle\omega,\psi\rangle_{\varepsilon,V}
&-tp\int_{\R^N}W_{\varepsilon,\alpha,\xi}^{p-1}\omega\psi                                      \\
&=-t\mathcal E_\varepsilon'(W_{\varepsilon,\alpha,\xi})[\psi]
+t\int_{\R^N}N_{\varepsilon,\alpha,\xi}(\omega)\psi,
\qquad \psi\in E_{\varepsilon,\xi},
\end{aligned}
\end{equation}
where
\[
{N_{\varepsilon,\alpha,\xi}(\omega):=|u_\varepsilon|^{p-1}u_\varepsilon-W_{\varepsilon,\alpha,\xi}^p-pW_{\varepsilon,\alpha,\xi}^{p-1}\omega.}
\]
Taking \(\psi=\omega\) in \eqref{eq:projected-homotopy-equation-detailed}, using Corollary~\ref{cor:homotopy-coercivity}, Lemma~\ref{lem:degree-stability-estimates}, and
\[
|N_{\varepsilon,\alpha,\xi}(\omega)|
\le C|\omega|^\kappa
\le C\norm{\omega}_{L^\infty}^{\kappa-1}|\omega|,
\qquad
\kappa:=\min\{2,p\}>1,
\]
we obtain
\begin{equation}\label{eq:omega-energy-identity-before-absorption}
\begin{aligned}
c\norm{\omega}_{\varepsilon,V}^2
&\le
\langle\omega,\omega\rangle_{\varepsilon,V}
-tp\int_{\R^N}W_{\varepsilon,\alpha,\xi}^{p-1}\omega^2                                           \\
&=-t\mathcal E_\varepsilon'(W_{\varepsilon,\alpha,\xi})[\omega]
+t\int_{\R^N}N_{\varepsilon,\alpha,\xi}(\omega)\omega                    \\
&\le
C\varepsilon^{N/2+\tau}\norm{\omega}_{\varepsilon,V}
+C\norm{\omega}_{L^\infty}^{\kappa-1}
\norm{\omega}_{L^2}^2                                                     \\
&\le
C\varepsilon^{N/2+\tau}\norm{\omega}_{\varepsilon,V}
+C\norm{\omega}_{L^\infty}^{\kappa-1}
\norm{\omega}_{\varepsilon,V}^2 .
\end{aligned}
\end{equation}
Since $ (\alpha,\xi,\omega)\in\overline{S}_\varepsilon $,
\[
\norm{\omega}_{L^\infty(\R^N)}
\le C\norm{\omega}_{\tau,\varepsilon,\xi}
\le C\varepsilon^{\delta_0}=o(1).
\]
Absorbing the last term in \eqref{eq:omega-energy-identity-before-absorption} gives
\begin{equation}\label{eq:omega-energy-small-homotopy}
\norm{\omega}_{\varepsilon,V}
\le C\varepsilon^{N/2+\tau}.
\end{equation}

\emph{Step 2: the estimate of \(\alpha\).}
By \eqref{eq:omega-energy-small-homotopy} and the defining bound \(\norm{\omega}_{\tau,\varepsilon,\xi}\le\varepsilon^{\delta_0}\), Lemma~\ref{lem:degree-stability-estimates} gives
\[
A_{1,j}(\alpha,\xi,\omega)=B_{1,j}(\alpha,\xi)+O(\varepsilon^{N+\tau}).
\]
The first component of \(A_t(\alpha,\xi,\omega)=0\) gives
\begin{equation*}
\begin{aligned}
0
&=tA_{1,j}+(1-t)B_{1,j}                                                   \\
&=t\bigl(B_{1,j}+O(\varepsilon^{N+\tau})\bigr)+(1-t)B_{1,j}              \\
&=B_{1,j}+O(\varepsilon^{N+\tau}),
\end{aligned}
\end{equation*}
and therefore
\[
B_{1,j}(\alpha,\xi)=O(\varepsilon^{N+\tau}).
\]
By Lemma~\ref{lem:alpha-equation},
\begin{equation*}
\begin{aligned}
a_0\varepsilon^NV(\xi_j)^\theta(\alpha_j-\alpha_j^p)
&=B_{1,j}(\alpha,\xi)+O(\varepsilon^{N+\tau})\\
&=O(\varepsilon^{N+\tau}).
\end{aligned}
\end{equation*}
Since \(V\) is bounded away from zero and \(\alpha_j\to1\) in \(S_\varepsilon\),
\[
\begin{aligned}
|\alpha_j-\alpha_j^p|
&\le C\varepsilon^{\tau},                                                  \\
\alpha_j-\alpha_j^p
&=(1-p)(\alpha_j-1)+O(|\alpha_j-1|^2),                                      \\
|(1-p)+O(|\alpha_j-1|)|\,|\alpha_j-1|
&\le C\varepsilon^{\tau}.
\end{aligned}
\]
Thus, for sufficiently small \(\varepsilon\),
\begin{equation}\label{eq:alpha-homotopy-interior-detailed}
|\alpha_j-1|
\le C\varepsilon^{\tau},
\qquad j=1,\ldots,k.
\end{equation}
Because \(\delta_0<\tau\), this estimate places $\alpha$ strictly inside \(|\alpha_j-1|<\varepsilon^{\delta_0}\).

\emph{Step 3: a weighted estimate of \(\omega\).}
By the Lagrange multiplier rule applied to \eqref{eq:projected-homotopy-equation-detailed}, there exist \(\beta_j\) and \(\gamma_{jh}\) such that the following identity holds weakly in \(\R^N\),
\begin{equation}\label{eq:omega-full-equation-with-multipliers}
\begin{aligned}
\varepsilon^{2s}(-\Delta)^s\omega+V(x)\omega-tpW_{\varepsilon,\alpha,\xi}^{p-1}\omega
&=-tG_\varepsilon+tN_{\varepsilon,\alpha,\xi}(\omega)
+\sum_{j=1}^k\beta_jR_j
+\sum_{j=1}^k\sum_{h=1}^N\gamma_{jh}S_{j,h},
\end{aligned}
\end{equation}
where
\[
\begin{aligned}
G_\varepsilon&:=\varepsilon^{2s}(-\Delta)^sW_{\varepsilon,\alpha,\xi}
+V(x)W_{\varepsilon,\alpha,\xi}-W_{\varepsilon,\alpha,\xi}^p,\\
R_j&:=\varepsilon^{2s}(-\Delta)^sW_{\varepsilon,j}(x;\xi_j)
+V(x)W_{\varepsilon,j}(x;\xi_j)\\
&=W_{\varepsilon,j}(x;\xi_j)^p+(V(x)-V(\xi_j))W_{\varepsilon,j}(x;\xi_j),\\
S_{j,h}&:=\varepsilon^{2s}(-\Delta)^sY_{\varepsilon,j,h}(x;\xi_j)
+V(x)Y_{\varepsilon,j,h}(x;\xi_j)\\
&=pW_{\varepsilon,j}(x;\xi_j)^{p-1}Y_{\varepsilon,j,h}(x;\xi_j)
-\partial_hV(\xi_j)W_{\varepsilon,j}(x;\xi_j)\\
&\quad +(V(x)-V(\xi_j))Y_{\varepsilon,j,h}(x;\xi_j),
\end{aligned}
\]
By Lemma~\ref{lem:weighted-tail} and
\eqref{eq:alpha-homotopy-interior-detailed},
\begin{equation}\label{eq:residual-pointwise-homotopy}
|G_\varepsilon(x)|
\le C\varepsilon^\tau\rho_{\tau,\varepsilon,\xi}(x).
\end{equation}
The nonlinear remainder satisfies
\begin{equation}\label{eq:nonlinear-pointwise-homotopy}
\begin{aligned}
|N_{\varepsilon,\alpha,\xi}(\omega)(x)|
&\le C|\omega(x)|^\kappa                                            \\
&\le C\norm{\omega}_{\tau,\varepsilon,\xi}^{\kappa-1}
\norm{\omega}_{\tau,\varepsilon,\xi}\rho_{\tau,\varepsilon,\xi}(x)^\kappa \\
&\le C\varepsilon^{(\kappa-1)\delta_0}
\norm{\omega}_{\tau,\varepsilon,\xi}\rho_{\tau,\varepsilon,\xi}(x).
\end{aligned}
\end{equation}
Set
\[
M_\varepsilon:=\varepsilon^\tau
+\varepsilon^{(\kappa-1)\delta_0}\norm{\omega}_{\tau,\varepsilon,\xi}.
\]
Then \eqref{eq:residual-pointwise-homotopy} and \eqref{eq:nonlinear-pointwise-homotopy} imply
\begin{equation}\label{eq:combined-pointwise-homotopy}
|-tG_\varepsilon+tN_{\varepsilon,\alpha,\xi}(\omega)|
\le CM_\varepsilon\rho_{\tau,\varepsilon,\xi}(x).
\end{equation}

We next estimate the multipliers.  Testing \eqref{eq:omega-full-equation-with-multipliers}
against \(W_{\varepsilon,i}(x;\xi_i)\) and using
\[
\langle\omega,W_{\varepsilon,i}(x;\xi_i)\rangle_{\varepsilon,V}=0,
\]
we obtain
\[
\begin{aligned}
&\sum_{j=1}^k\beta_j\langle W_{\varepsilon,j}(x;\xi_j),W_{\varepsilon,i}(x;\xi_i)\rangle_{\varepsilon,V}
+\sum_{j=1}^k\sum_{h=1}^N\gamma_{jh}\langle Y_{\varepsilon,j,h}(x;\xi_j),W_{\varepsilon,i}(x;\xi_i)\rangle_{\varepsilon,V} \\
&\qquad
=-tp\int_{\R^N}W_{\varepsilon,\alpha,\xi}^{p-1}\omega W_{\varepsilon,i}(x;\xi_i)
+t\int_{\R^N}G_\varepsilon W_{\varepsilon,i}(x;\xi_i)
-t\int_{\R^N}N_{\varepsilon,\alpha,\xi}(\omega)W_{\varepsilon,i}(x;\xi_i) .
\end{aligned}
\]
Moreover,
\[
\begin{aligned}
\left|\int_{\R^N}W_{\varepsilon,\alpha,\xi}^{p-1}\omega W_{\varepsilon,i}(x;\xi_i)\right|
&\le \norm{W_{\varepsilon,\alpha,\xi}^{p-1}W_{\varepsilon,i}(x;\xi_i)}_{L^2}\norm{\omega}_{L^2}
\le C\varepsilon^{N/2}\varepsilon^{N/2+\tau}                             \\
&=C\varepsilon^{N+\tau}
\le C\varepsilon^NM_\varepsilon,
\end{aligned}
\]
and, since \(\int_{\R^N}\rho_{\tau,\varepsilon,\xi} W_{\varepsilon,i}(x;\xi_i)\le C\varepsilon^N\),
\[
\begin{aligned}
\left|\int_{\R^N}G_\varepsilon W_{\varepsilon,i}(x;\xi_i)\right|
+\left|\int_{\R^N}N_{\varepsilon,\alpha,\xi}(\omega)W_{\varepsilon,i}(x;\xi_i)\right|
&\le CM_\varepsilon\int_{\R^N}\rho_{\tau,\varepsilon,\xi} W_{\varepsilon,i}(x;\xi_i)                 \\
&\le C\varepsilon^NM_\varepsilon.
\end{aligned}
\]
Using \eqref{eq:app-same-ww}, \eqref{eq:app-same-wy}, and
\eqref{eq:app-diff-ww}--\eqref{eq:app-diff-wy}, this gives
\begin{equation}\label{eq:beta-system-homotopy}
\varepsilon^N\bigl(a_0V(\xi_i)^\theta+o(1)\bigr)\beta_i
+o(\varepsilon^N)\sum_{j=1}^k|\beta_j|
+o(\varepsilon^{N-1})\sum_{j=1}^k\sum_{h=1}^N|\gamma_{jh}|
=O(\varepsilon^NM_\varepsilon).
\end{equation}
Testing \eqref{eq:omega-full-equation-with-multipliers} against \(Y_{\varepsilon,i,\ell}(x;\xi_i)\) and using \(\langle\omega,Y_{\varepsilon,i,\ell}(x;\xi_i)\rangle_{\varepsilon,V}=0\), we similarly get
\[
\begin{aligned}
&\sum_{j=1}^k\beta_j\langle W_{\varepsilon,j}(x;\xi_j),Y_{\varepsilon,i,\ell}(x;\xi_i)\rangle_{\varepsilon,V}
+\sum_{j=1}^k\sum_{h=1}^N\gamma_{jh}\langle Y_{\varepsilon,j,h}(x;\xi_j),Y_{\varepsilon,i,\ell}(x;\xi_i)\rangle_{\varepsilon,V} \\
&
\qquad
=-tp\int_{\R^N}W_{\varepsilon,\alpha,\xi}^{p-1}\omega Y_{\varepsilon,i,\ell}(x;\xi_i)
+t\int_{\R^N}G_\varepsilon Y_{\varepsilon,i,\ell}(x;\xi_i)
-t\int_{\R^N}N_{\varepsilon,\alpha,\xi}(\omega)Y_{\varepsilon,i,\ell}(x;\xi_i).
\end{aligned}
\]
Since
\[
\norm{W_{\varepsilon,\alpha,\xi}^{p-1}Y_{\varepsilon,i,\ell}(x;\xi_i)}_{L^2}
\le C\varepsilon^{N/2-1},
\qquad
\int_{\R^N}\rho_{\tau,\varepsilon,\xi}|Y_{\varepsilon,i,\ell}(x;\xi_i)|
\le C\varepsilon^{N-1},
\]
we have
\[
\begin{aligned}
\left|\int_{\R^N}W_{\varepsilon,\alpha,\xi}^{p-1}\omega Y_{\varepsilon,i,\ell}(x;\xi_i)\right|
&\le C\varepsilon^{N/2-1}\varepsilon^{N/2+\tau}
=C\varepsilon^{N-1+\tau}
\le C\varepsilon^{N-1}M_\varepsilon,                                      \\
\left|\int_{\R^N}G_\varepsilon Y_{\varepsilon,i,\ell}(x;\xi_i)\right|
+\left|\int_{\R^N}N_{\varepsilon,\alpha,\xi}(\omega)Y_{\varepsilon,i,\ell}(x;\xi_i)\right|
&\le CM_\varepsilon\int_{\R^N}\rho_{\tau,\varepsilon,\xi}|Y_{\varepsilon,i,\ell}(x;\xi_i)|          \\
&\le C\varepsilon^{N-1}M_\varepsilon.
\end{aligned}
\]
Therefore, using \eqref{eq:app-same-wy}, \eqref{eq:app-same-yy}, and
\eqref{eq:app-diff-wy}--\eqref{eq:app-diff-yy},
\begin{equation}\label{eq:gamma-system-homotopy}
o(\varepsilon^{N-1})\sum_{j=1}^k|\beta_j|
+\varepsilon^{N-2}(K_i+o(1))\gamma_{i\ell}
+o(\varepsilon^{N-2})\sum_{j=1}^k\sum_{h=1}^N|\gamma_{jh}|
=O(\varepsilon^{N-1}M_\varepsilon).
\end{equation}
Let
\[
\widetilde\gamma_{jh}:=\varepsilon^{-1}\gamma_{jh}.
\]
Dividing \eqref{eq:beta-system-homotopy} by \(\varepsilon^N\) and \eqref{eq:gamma-system-homotopy} by \(\varepsilon^{N-1}\), we obtain
\[
\begin{aligned}
\bigl(a_0V(\xi_i)^\theta+o(1)\bigr)\beta_i
+o(1)\sum_{j=1}^k|\beta_j|
+o(1)\sum_{j=1}^k\sum_{h=1}^N|\widetilde\gamma_{jh}|
&=O(M_\varepsilon),                                                        \\
o(1)\sum_{j=1}^k|\beta_j|
+(K_i+o(1))\widetilde\gamma_{i\ell}
+o(1)\sum_{j=1}^k\sum_{h=1}^N|\widetilde\gamma_{jh}|
&=O(M_\varepsilon).
\end{aligned}
\]
Since \(a_0V(\xi_i)^\theta\) and \(K_i\) are uniformly positive, the finite-dimensional matrix is invertible for sufficiently small \(\varepsilon\). Hence
\begin{equation}\label{eq:multiplier-bound-homotopy}
|\beta_j|\le CM_\varepsilon,
\qquad
|\gamma_{jh}|\le C\varepsilon M_\varepsilon.
\end{equation}
On the other hand,
\[
|R_j(x)|\le C\rho_{\tau,\varepsilon,\xi}(x),
\qquad
|S_{j,h}(x)|\le C\varepsilon^{-1}\rho_{\tau,\varepsilon,\xi}(x).
\]
Combining these estimates with \eqref{eq:combined-pointwise-homotopy} and \eqref{eq:multiplier-bound-homotopy}, the right-hand side of \eqref{eq:omega-full-equation-with-multipliers} satisfies
\begin{equation}\label{eq:full-right-hand-side-pointwise-homotopy}
\left|-tG_\varepsilon+tN_{\varepsilon,\alpha,\xi}(\omega)
+\sum_{j=1}^k\beta_jR_j
+\sum_{j=1}^k\sum_{h=1}^N\gamma_{jh}S_{j,h}\right|
\le CM_\varepsilon\rho_{\tau,\varepsilon,\xi}(x).
\end{equation}

Applying Lemma~\ref{lem:weighted-norm-estimate} to \eqref{eq:omega-full-equation-with-multipliers} and using \eqref{eq:full-right-hand-side-pointwise-homotopy}, we obtain
\[
\norm{\omega}_{\tau,\varepsilon,\xi}
\le C\left(\varepsilon^\tau
+\varepsilon^{(\kappa-1)\delta_0}\norm{\omega}_{\tau,\varepsilon,\xi}\right).
\]
For sufficiently small \(\varepsilon\), the second term is absorbed, and hence
\begin{equation}\label{eq:omega-homotopy-weighted-interior}
\norm{\omega}_{\tau,\varepsilon,\xi}\le C\varepsilon^\tau.
\end{equation}

\emph{Step 4: the estimate of \(\xi\).}
By \eqref{eq:omega-energy-small-homotopy} and the defining bound \(\norm{\omega}_{\tau,\varepsilon,\xi}\le\varepsilon^{\delta_0}\), Lemma~\ref{lem:degree-stability-estimates} also gives
\[
A_{2,j,h}(\alpha,\xi,\omega)=B_{2,j,h}(\alpha,\xi)+O(\varepsilon^{N+\tau}).
\]
The second component of \(A_t(\alpha,\xi,\omega)=0\) gives
\begin{equation*}
\begin{aligned}
0
&=tA_{2,j,h}+(1-t)B_{2,j,h}                                                \\
&=t\bigl(B_{2,j,h}+O(\varepsilon^{N+\tau})\bigr)+(1-t)B_{2,j,h}           \\
&=B_{2,j,h}+O(\varepsilon^{N+\tau}),
\end{aligned}
\end{equation*}
so that
\[
B_{2,j,h}(\alpha,\xi)=O(\varepsilon^{N+\tau}).
\]
Using \eqref{eq:pohozaev-xi-main}, we obtain
\begin{equation}\label{eq:translation-small-homotopy-detailed}
\begin{aligned}
c_*\theta\varepsilon^NV(\xi_j)^{\theta-1}\partial_hV(\xi_j)
&=B_{2,j,h}(\alpha,\xi)
+O(\varepsilon^N|\alpha_j-1|)
+O(\varepsilon^{N+\tau})                                                   \\
&=O(\varepsilon^{N+\tau})
+O(\varepsilon^N|\alpha_j-1|)
+O(\varepsilon^{N+\tau})                                                   \\
&=O(\varepsilon^{N+\tau}),
\end{aligned}
\end{equation}
where the last equality uses \eqref{eq:alpha-homotopy-interior-detailed}.  Since \(c_*\theta V(\xi_j)^{\theta-1}\) is uniformly positive and bounded, \eqref{eq:translation-small-homotopy-detailed} gives
\begin{equation*}
|\nabla V(\xi_j)|
\le C\varepsilon^{\tau},
\qquad j=1,\ldots,k.
\end{equation*}
Since \(\nabla V(\xi_j^0)=0\) and \(D^2V(\xi_j^0)\) is invertible, after choosing \(\tau_0\) small there is \(c>0\) such that
\[
|\nabla V(\xi)|
\ge c|\xi-\xi_j^0|
\qquad\hbox{for }|\xi-\xi_j^0|<\tau_0.
\]
Therefore
\begin{equation}\label{eq:xi-homotopy-interior-detailed}
|\xi_j-\xi_j^0|
\le C|\nabla V(\xi_j)|
\le C\varepsilon^{\tau},
\qquad j=1,\ldots,k.
\end{equation}
For sufficiently small \(\varepsilon\), this gives \(|\xi_j-\xi_j^0|<\tau_0\) strictly.

\emph{Step 5: conclusion.}
The four boundary alternatives in \(\partial S_\varepsilon\) are
\[
|\alpha_j-1|=\varepsilon^{\delta_0},
\qquad
|\xi_j-\xi_j^0|=\tau_0,
\qquad
\norm{\omega}_{\varepsilon,V}=\varepsilon^{N/2+\delta_0},
\qquad
\norm{\omega}_{\tau,\varepsilon,\xi}=\varepsilon^{\delta_0}.
\]
But \eqref{eq:omega-energy-small-homotopy}, \eqref{eq:omega-homotopy-weighted-interior}, \eqref{eq:alpha-homotopy-interior-detailed}, and \eqref{eq:xi-homotopy-interior-detailed} give
\[
\begin{aligned}
\norm{\omega}_{\varepsilon,V}
&\le C\varepsilon^{N/2+\tau}
<\varepsilon^{N/2+\delta_0},                                                   \\
\norm{\omega}_{\tau,\varepsilon,\xi}
&\le C\varepsilon^\tau
<\varepsilon^{\delta_0},                                                        \\
|\alpha_j-1|
&\le C\varepsilon^{\tau}
<\varepsilon^{\delta_0},                                                        \\
|\xi_j-\xi_j^0|
&\le C\varepsilon^{\tau}
<\tau_0,
\end{aligned}
\]
for sufficiently small \(\varepsilon\), because
\[
0<\delta_0<\tau.
\]
Hence \((\alpha,\xi,\omega)\) is strictly inside \(S_\varepsilon\), contradicting \((\alpha,\xi,\omega)\in\partial S_\varepsilon\).  Therefore \eqref{eq:homotopy-no-boundary-zero} holds.  Homotopy invariance of the degree gives \eqref{eq:degree-homotopy}.
\end{proof}

\begin{proof}[Proof of Theorem~\ref{thm:fractional-degree-formula}]
By Proposition~\ref{prop:fractional-homotopy},
\begin{equation}\label{eq:degree-homotopy-reduction}
\deg(A,S_\varepsilon,0)=\deg(B,S_\varepsilon,0).
\end{equation}
The third component of \(B\) is the identity on
\(E_{\varepsilon, \xi }\), so it contributes degree one.  Hence
\begin{equation}\label{eq:b-map-degree-reduction}
\deg(B,S_\varepsilon,0)=\deg(\overline B,\mathcal D_\varepsilon,0),
\qquad
\overline B(\alpha,\xi):=\bigl(B_{1,j}(\alpha,\xi),B_{2,j,h}(\alpha,\xi)\bigr).
\end{equation}
By Lemma~\ref{lem:alpha-equation} and the expansion
\eqref{eq:pohozaev-xi-main} for
\(\mathcal P_{j,h}(W_{\varepsilon,\alpha,\xi})\), we can further deform
\(\overline B\) in \(S_\varepsilon \) to
\[
\widetilde B(\alpha,\xi):=
\left(
\bigl(a_0\varepsilon^NV(\xi_j)^\theta(\alpha_j-\alpha_j^p)\bigr)_{j=1}^k,
\bigl(c_*\theta\varepsilon^NV(\xi_j)^{\theta-1}\nabla V(\xi_j)\bigr)_{j=1}^k
\right),
\]
so that
\begin{equation}\label{eq:reduced-map-degree-homotopy}
\deg(\overline B,\mathcal D_\varepsilon,0)=\deg(\widetilde B,\mathcal D_\varepsilon,0).
\end{equation}
A direct computation gives
\begin{equation}\label{eq:leading-map-degree}
\deg(\widetilde B,\mathcal D_\varepsilon,0)
=(-1)^k(-1)^{m(\xi^0,\mathcal V)}
=(-1)^{k+m(\xi^0,\mathcal V)}.
\end{equation}
Combining \eqref{eq:degree-homotopy-reduction}, \eqref{eq:b-map-degree-reduction},
\eqref{eq:reduced-map-degree-homotopy}, and \eqref{eq:leading-map-degree} proves
\eqref{eq:fractional-degree-formula}. 
\end{proof}

\begin{proof}[Proof of Theorem~\ref{th:degree-counting}]
Theorem~\ref{th:degree-counting} follows directly from Proposition~\ref{prop:fractional-a-equivalence} and Theorem~\ref{thm:fractional-degree-formula}.
\end{proof}

\section{Proof of the local uniqueness}\label{sec:local-uniqueness}

We now derive local uniqueness from the Morse index formula and the degree formula obtained above.  

\begin{proof}[Proof of Theorem~\ref{th:local-uniqueness}]
By Theorem~\ref{th:nondegeneracy}, every positive \(k\)-peak solution satisfying Definition~\ref{def:kpeak-class} is nondegenerate for sufficiently small \(\varepsilon\). Hence the set of solutions in the class is finite; denote the number of such solutions by $q$.

By Theorem~\ref{th:morse-index}, every solution in the class has the same Morse index
\(
m(u_\varepsilon)=k+\sum_{j=1}^k m(\xi_j^0,V).
\)
Thus its degree is \( (-1)^{k+\sum_{j=1}^k m(\xi_j^0,V)} \). It follows that the total degree of all such solutions is
\(
q(-1)^{k+m(\xi^0,\mathcal V)}.
\)

On the other hand, the degree formula in Theorem~\ref{th:degree-counting} shows that the total degree in this class is
\(
(-1)^{k+m(\xi^0,\mathcal V)}.
\)
Therefore $q=1$. 
\end{proof}

\appendix
\section{Proof of Lemma~\ref{lem:single-bubble-input} and Lemma~\ref{lem:true-peak-profile-convergence}}\label{app:proof-profile-lemmas}
\begin{proof}[Proof of Lemma~\ref{lem:single-bubble-input}]
(i) Assertion (i) follows from Theorem~\ref{th:ground-state} and the scaling relation between $w_\lambda $ and $w$; see also \cite[Section 3]{frank2016uniqueness}.

(ii) The $C^1$ dependence of $\lambda\mapsto w_\lambda$ follows from the explicit scaling formula. By Theorem~\ref{th:ground-state} and the compactness of $[V_{\min},V_{\max}]$, we have 
\[
c(1+|y|)^{-N-2s}\le w_\lambda(y)\le C(1+|y|)^{-N-2s},
\qquad \lambda\in[V_{\min},V_{\max}].
\]

We first record a consequence of Appendix~C in \cite{frank2016uniqueness}.  Let $G_{s,\mu}$ be the kernel of $((-\Delta)^s+\mu)^{-1}$, $\mu>0$.  By \cite[Lemma~C.1]{frank2016uniqueness},
\[
G_{s,\mu}(x)\le C(1+|x|)^{-N-2s}\quad (|x|\ge1),
\]
and if $0\le f(x)\le C(1+|x|)^{-N-2s}$, then by an argument similar to that in the proof of \cite[Lemma~C.3]{frank2016uniqueness}, we have
\begin{equation}\label{eq:green-kernel-convolution-decay}
  G_{s,\mu}*f\le C(1+|x|)^{-N-2s}.
\end{equation}
Suppose that $z\in C(\R^N)\cap L^\infty(\R^N)$, $z(x)\to0$ as $|x|\to\infty$, and
\[
(-\Delta)^sz+z-a(x)z=f\quad\text{in }\R^N,
\]
where $a\ge0$, $a(x)\to0$ as $|x|\to\infty$, and
\[
|f(x)|\le C(1+|x|)^{-N-2s},
\]
we claim that 
\begin{equation}\label{eq:app-comparison-decay}
|z(x)|\le C(1+|x|)^{-N-2s}.
\end{equation}
Indeed, choose $R>0$ such that
\[
a(x)\le\frac{1}{2},\qquad |x|\ge R.
\]
The Kato inequality \[
(-\Delta)^s|z|\le(\operatorname{sgn}z)(-\Delta)^s z,
\]
gives, in $\R^N\setminus B_R$,
\[
(-\Delta)^s|z|+\frac{1}{2}|z|\le |f|.
\]
Let $h:=G_{s,1/2}*|f|$.  Then, by \eqref{eq:green-kernel-convolution-decay},
\[
(-\Delta)^s h+\frac{1}{2}h=|f|,
\qquad
h(x)\le C(1+|x|)^{-N-2s}.
\]

Choose $M>0$ so large that
\[
|z|\le h+MG_{s,1/2}\quad\text{in }B_R;
\]
this is possible because $G_{s,1/2}$ is positive and has a positive minimum on compact annuli, while it is singular at the origin.  Set
\[
U:=|z|-h-MG_{s,1/2}.
\]

Then
\begin{equation}\label{eq:comparison-function-nonpositive}
  (-\Delta)^sU+\frac{1}{2} U\le0
\quad\text{in }\mathbb R^N\setminus B_R,
\end{equation}
with \(U\le0\) in \(B_R\) and \(U(x)\to0\) as \(|x|\to\infty\).  If \(U\) were positive somewhere, then it would attain
a positive maximum at some point \(x_0\) in the exterior region.  At this point \(U(x_0)>0\),
\[
  (-\Delta)^sU(x_0)
  =C_{N,s}\,{\rm P.V.}\!\int_{\mathbb R^N}
  \frac{U(x_0)-U(y)}{|x_0-y|^{N+2s}}\,dy\ge0 .
  \]
Hence
\[
(-\Delta)^sU(x_0)+\frac{1}{2} U(x_0)>0,
\]
contradicting \eqref{eq:comparison-function-nonpositive}.  Therefore \(U\le0\) in
\(\mathbb R^N\). Using \eqref{eq:green-kernel-convolution-decay}, the decay of \(G_{s,1/2}\) and the boundedness of $z$, we obtain
\eqref{eq:app-comparison-decay}.

For the first derivatives, set
\[
z_i:=\partial_iw.
\]
Differentiating
\[
(-\Delta)^sw+w=w^p
\]
gives
\[
(-\Delta)^sz_i+z_i-pw^{p-1}z_i=0.
\]
Applying \eqref{eq:app-comparison-decay} with $a=pw^{p-1}$ and $f=0$ yields
\[
|\nabla w(y)|\le C(1+|y|)^{-N-2s}.
\]
By scaling,
\begin{equation}\label{eq:ground-state-gradient-decay}
  |\nabla w_\lambda(y)|\le C(1+|y|)^{-N-2s},
\qquad \lambda\in[V_{\min},V_{\max}].
\end{equation}

For the $\lambda$-derivative, define
\[
\eta(y):=\frac{1}{p-1}w(y)+\frac{1}{2s}y\cdot\nabla w(y)
=\partial_\lambda w_\lambda(y)\big|_{\lambda=1}.
\]
Differentiating
\[
(-\Delta)^sw_\lambda+\lambda w_\lambda-w_\lambda^p=0
\]
at $\lambda=1$ gives
\[
(-\Delta)^s\eta+\eta-pw^{p-1}\eta=-w.
\]
Since $w(y)\le C(1+|y|)^{-N-2s}$, \eqref{eq:app-comparison-decay} gives
\[
|\eta(y)|\le C(1+|y|)^{-N-2s}.
\]
Using
\[
\partial_\lambda w_\lambda(y)=\lambda^{1/(p-1)-1}\eta(\lambda^{1/(2s)}y),
\]
we obtain
\[
|\partial_\lambda w_\lambda(y)|\le C(1+|y|)^{-N-2s},
\qquad \lambda\in[V_{\min},V_{\max}].
\]

For the second derivatives, set
\[
z_{ij}:=\partial_{ij}w.
\]
Differentiating the equation twice yields
\[
(-\Delta)^sz_{ij}+z_{ij}-pw^{p-1}z_{ij}
=p(p-1)w^{p-2}\partial_iw\partial_jw.
\]
The two-sided decay of $w$ and the first-derivative estimate imply
\[
\left|w^{p-2}\partial_iw\partial_jw\right|
\le C(1+|y|)^{-p(N+2s)}
\le C(1+|y|)^{-N-2s},
\]
because $p>1$.  Applying \eqref{eq:app-comparison-decay} again gives
\[
|D^2w(y)|\le C(1+|y|)^{-N-2s}.
\]
Finally,
\[
D^2w_\lambda(y)=\lambda^{1/(p-1)+1/s}D^2w(\lambda^{1/(2s)}y),
\]
and therefore
\begin{equation}\label{eq:ground-state-hessian-decay}
  |D^2w_\lambda(y)|\le C(1+|y|)^{-N-2s},
\qquad \lambda\in[V_{\min},V_{\max}].
\end{equation}

It remains to prove that the maximum at the origin is nondegenerate.  We first prove that
\begin{equation}\label{eq:app-hessian-w1}
D^2w(0)=-\kappa_1I_N
\qquad\text{for some }\kappa_1>0.
\end{equation}
Let
\[
F(y):=w(y)^p.
\]
Then $F=F(|y|)$ is positive, continuous, strictly decreasing in $|y|$, and tends to zero at infinity.  Since
\[
(-\Delta)^sw+w=F,
\]
we have
\[
w=((-\Delta)^s+1)^{-1}F
=\int_0^\infty e^{-t}P_t^sF\,dt,
\qquad P_t^s:=e^{-t(-\Delta)^s}.
\]
By the subordination formula used in \cite[Lemma~C.1 and its proof]{frank2016uniqueness}, the fractional heat kernel $p_s(t,x)=p_s(t,|x|)$ of $P_t^s$ is positive, radial, smooth for $t>0$, and strictly decreasing in $|x|$.  Hence
\[
\partial_rp_s(t,r)<0,
\qquad t>0,\quad r>0.
\]

For every $t>0$ and every unit vector $e\in\mathbb S^{N-1}$, we claim that
\[
\partial_{ee}(P_t^sF)(0)<0.
\]
Indeed, \(P_t^sF(x)=\int_{\mathbb R^N}p_s(t,x-y)F(y)\,dy\) is radial and smooth. Thus
\[
\partial_{ee}(P_t^sF)(0)
=\frac{1}N\Delta(P_t^sF)(0)
=\frac{1}N\int_{\R^N}\Delta p_s(t,y)F(y)\,dy.
\]
Since \(F\) is positive, continuous, and strictly decreasing, for each $\tau\in(0,F(0))$ there exists $R_\tau>0$ such that
\[
\{y\in\R^N:F(y)>\tau\}=B_{R_\tau},
\qquad
F(y)=\int_0^{F(0)}{\chi }_{B_{R_\tau}}(y)\,d\tau.
\]
Using Fubini's theorem and the divergence theorem,
\[
\begin{aligned}
  \int_{\R^N}\Delta p_s(t,y)F(y)\,dy&=\int_0^{F(0)}\int_{B_{R_\tau}}\Delta p_s(t,y)\,dy\,d\tau\\
&=|\mathbb S^{N-1}|\int_0^{F(0)}R_\tau^{N-1}\partial_rp_s(t,R_\tau)\,d\tau<0.
\end{aligned}
\]

By Theorem~\ref{th:ground-state}, \(w>0\) and
\(w\in C^\infty(\mathbb R^N)\). Hence \(F\in C^\infty(\mathbb R^N)\).
Moreover, by \eqref{eq:ground-state-gradient-decay} and \eqref{eq:ground-state-hessian-decay}, we obtain 
\[
|D^2F(y)|\le C(1+|y|)^{-p(N+2s)}.
\]
In particular, \(D^2F\in L^\infty(\mathbb R^N)\).
Since
\[
w=((-\Delta)^s+1)^{-1}F
=
\int_0^\infty e^{-t}P_t^sF\,dt,
\]
we may differentiate under the integral sign:
\[
\partial_{ee}w(0)
=
\int_0^\infty e^{-t}\partial_{ee}(P_t^sF)(0)\,dt.
\]
Indeed,
\[
\partial_{ee}(P_t^sF)(0)=P_t^s(\partial_{ee}F)(0),
\qquad
|\partial_{ee}(P_t^sF)(0)|\le \|\partial_{ee}F\|_{L^\infty}.
\]
Therefore
\[
\partial_{ee}w(0)<0.
\]
By radiality, $D^2w(0)=\mu I_N$ for some $\mu\in\R$.  The previous inequality gives $\mu<0$, and \eqref{eq:app-hessian-w1} follows with $\kappa_1=-\mu$.

After scaling, this gives
\[
D^2w_\lambda(0)
=\lambda^{1/(p-1)+1/s}D^2w(0)
=-\kappa_\lambda I_N,
\qquad
\kappa_\lambda:=\lambda^{1/(p-1)+1/s}\kappa_1.
\]
Since $\lambda\in[V_{\min},V_{\max}]$,
\[
0<\kappa_*\le \kappa_\lambda\le \kappa^*
\qquad \text{for all }\lambda\in[V_{\min},V_{\max}].
\]
This proves (ii).
\end{proof}

\begin{proof}[Proof of Lemma~\ref{lem:true-peak-profile-convergence}]
Fix $j\in\{1,\ldots,k\}$. Definition~\ref{def:kpeak-class} together with the fractional Brezis--Kato type estimate in \cite[Proposition~4.5]{DuarteSouto2019} gives a uniform $L^\infty$ bound. The regularity estimates for fractional equations \cite{cabre2014nonlinear,silvestre2007regularity}, applied to
\[
(-\Delta)^s u_{\varepsilon,j}+V(\xi_{\varepsilon,j}+\varepsilon y)u_{\varepsilon,j}=u_{\varepsilon,j}^p,
\]
imply that, along a subsequence,
\[
u_{\varepsilon,j}\to u_j\quad\hbox{in }C^1_{\loc}(\R^N),
\]
and the limit satisfies
\begin{equation}\label{eq:appendix-limit-equation}
(-\Delta)^su_j+V(\xi_j^0)u_j=u_j^p
\qquad\hbox{in }\R^N.
\end{equation}
The nonvanishing condition in Definition~\ref{def:kpeak-class} gives $u_j(0)>0$ after passing to the limit, hence $u_j\not\equiv0$.

We now prove that $u_j$ is a ground state.  Testing \eqref{eq:fractional-schrodinger-problem} with $u_\varepsilon$ gives
\[
\|u_\varepsilon\|_{\varepsilon,V}^2=\int_{\R^N}u_\varepsilon^{p+1}\,dx,
\qquad
\mathcal E_\varepsilon(u_\varepsilon)=\frac{p-1}{2(p+1)}\int_{\R^N}u_\varepsilon^{p+1}\,dx.
\]
Fatou's lemma and \eqref{eq:appendix-limit-equation} yield
\[
\sum_{j=1}^k c_{V(\xi_j^0)}
=\lim_{\varepsilon\to0}\varepsilon^{-N}\mathcal E_\varepsilon(u_\varepsilon)
\ge \sum_{j=1}^k\mathcal I_{V(\xi_j^0)}(u_j)
\ge \sum_{j=1}^k c_{V(\xi_j^0)}.
\]
Therefore $\mathcal I_{V(\xi_j^0)}(u_j)=c_{V(\xi_j^0)}$ for every $j$, and $u_j$ is a ground state solution.  By Theorem~\ref{th:ground-state}, $u_j$ is a translate of $w_{V(\xi_j^0)}$.  Since $y=0$ is a local maximum of every $u_{\varepsilon,j}$, it is a local maximum of $u_j$; strict radial monotonicity therefore places the translation center at the origin.  Hence
\[
u_j=w_{V(\xi_j^0)}.
\]
Every convergent subsequence has this same limit, so the convergence holds for the full sequence.  This proves \eqref{eq:true-peak-profile-convergence}.
\end{proof}

\section{Auxiliary estimates}\label{sec:appendix}

\begin{lemma}\label{lem:appendix-energy-expansions}
The following expansions hold:
\[
\begin{aligned}
  \varepsilon^{2s}&\!\int_{\mathbb{R}^{N+1}_+}\! t^{1-2s}
\Big(|\nabla\bar f_\varepsilon|^2+|\nabla\bar g_\varepsilon|^2+|\nabla\bar h_\varepsilon|^2\Big)\,dX
+\int_{\mathbb{R}^N} V(x)\big(f_\varepsilon^2+g_\varepsilon^2+h_\varepsilon^2\big)\,dx\\={}&\sum_{j=1}^k\alpha_{\varepsilon,j}^2\Bigg[
\int_{\mathbb{R}^N}u_\varepsilon^{p+1}\phi_j^2\,dx
+\varepsilon^{2s}\!\int_{\mathbb{R}^{N+1}_+}\!t^{1-2s}|\nabla\Phi_j|^2\,\bar u_\varepsilon^{\,2}\,dX\Bigg]\\
&+\sum_{m=1}^k\Bigg[
p\int_{\mathbb{R}^N}u_\varepsilon^{p-1}\phi_m^2\,z_{\varepsilon,m}^2\,dx
+\varepsilon^{2s}\!\int_{\mathbb{R}^{N+1}_+}\!t^{1-2s}|\nabla\Phi_m|^2\,\bar z_{\varepsilon,m}^{\,2}\,dX\Bigg]\\
&+\sum_{r=1}^i\Bigg[
p\int_{\mathbb{R}^N}u_\varepsilon^{p-1}\phi_r^2\,\hat z_{\varepsilon,r}^2\,dx
+\varepsilon^{2s}\!\int_{\mathbb{R}^{N+1}_+}\!t^{1-2s}|\nabla\Phi_r|^2\,\overline{\hat z}_{\varepsilon,r}^{\,2}\,dX\Bigg]\\
&-\sum_{m=1}^k\sum_{q=1}^{j-1}\beta_{\varepsilon,m,q}\int_{\mathbb{R}^N}\frac{\partial V}{\partial x_q}(x)\,u_\varepsilon\,\phi_m^2\,z_{\varepsilon,m}\,dx
-\sum_{r=1}^i\beta_{\varepsilon,r,j}\int_{\mathbb{R}^N}\frac{\partial V}{\partial x_j}(x)\,u_\varepsilon\,\phi_r^2\,\hat z_{\varepsilon,r}\,dx.
\end{aligned}
\]
Moreover,
\[
\begin{aligned}
2\varepsilon^{2s}&\!\int_{\mathbb{R}^{N+1}_+}\! t^{1-2s}
\Big(\nabla\bar f_\varepsilon\!\cdot\!\nabla\bar g_\varepsilon
+\nabla\bar f_\varepsilon\!\cdot\!\nabla\bar h_\varepsilon
+\nabla\bar g_\varepsilon\!\cdot\!\nabla\bar h_\varepsilon\Big)\,dX\\
&\qquad +2\int_{\mathbb{R}^N} V(x)\big(f_\varepsilon g_\varepsilon+f_\varepsilon h_\varepsilon+g_\varepsilon h_\varepsilon\big)\,dx\\
=&
\sum_{m=1}^k\alpha_{\varepsilon,m}\Bigg[
(p+1)\int_{\mathbb{R}^N}u_\varepsilon^{p}\phi_m^2\,z_{\varepsilon,m}\,dx
+2\varepsilon^{2s}\!\int_{\mathbb{R}^{N+1}_+}\!t^{1-2s}|\nabla\Phi_m|^2\,\bar u_\varepsilon\,\bar z_{\varepsilon,m}\,dX\Bigg]\\
&+\sum_{r=1}^i\alpha_{\varepsilon,r}\Bigg[
(p+1)\int_{\mathbb{R}^N}u_\varepsilon^{p}\phi_r^2\,\hat z_{\varepsilon,r}\,dx
+2\varepsilon^{2s}\!\int_{\mathbb{R}^{N+1}_+}\!t^{1-2s}|\nabla\Phi_r|^2\,\bar u_\varepsilon\,\overline{\hat z}_{\varepsilon,r}\,dX\Bigg]\\
&+2\sum_{r=1}^i\Bigg[
p\int_{\mathbb{R}^N}u_\varepsilon^{p-1}\phi_r^2\,z_{\varepsilon,r}\hat z_{\varepsilon,r}\,dx
+\varepsilon^{2s}\!\int_{\mathbb{R}^{N+1}_+}\!t^{1-2s}|\nabla\Phi_r|^2\,\bar z_{\varepsilon,r}\,\overline{\hat z}_{\varepsilon,r}\,dX\Bigg]\\
&-\sum_{m=1}^k\sum_{q=1}^{j-1}\alpha_{\varepsilon,m}\beta_{\varepsilon,m,q}\int_{\mathbb{R}^N}\frac{\partial V}{\partial x_q}(x)\,u_\varepsilon^{2}\phi_m^2\,dx
-\sum_{r=1}^i\alpha_{\varepsilon,r}\beta_{\varepsilon,r,j}\int_{\mathbb{R}^N}\frac{\partial V}{\partial x_j}(x)\,u_\varepsilon^{2}\phi_r^2\,dx\\
&-\sum_{r=1}^i\sum_{q=1}^{j-1}\beta_{\varepsilon,r,q}\int_{\mathbb{R}^N}\frac{\partial V}{\partial x_q}(x)\,u_\varepsilon\,\phi_r^2\,\hat z_{\varepsilon,r}\,dx
-\sum_{r=1}^i\beta_{\varepsilon,r,j}\int_{\mathbb{R}^N}\frac{\partial V}{\partial x_j}(x)\,u_\varepsilon\,\phi_r^2\,z_{\varepsilon,r}\,dx.
\end{aligned}
\]
\end{lemma}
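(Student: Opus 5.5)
The statement to prove is Lemma~\ref{lem:appendix-energy-expansions}: two algebraic identities for the test functions $\bar f_\varepsilon,\bar g_\varepsilon,\bar h_\varepsilon$. The plan is to obtain both by summing, peak by peak, the localized identities of Lemma~\ref{lem:local-identities}. No estimates are needed; only bookkeeping. The key structural point is that $\Phi_1,\dots,\Phi_k$ have pairwise disjoint supports. Hence $\bar f_\varepsilon=\sum_j\alpha_{\varepsilon,j}\Phi_j\bar u_\varepsilon$, $\bar g_\varepsilon=\sum_m\Phi_m\bar z_{\varepsilon,m}$ and $\bar h_\varepsilon=\sum_{r\le i}\Phi_r\overline{\hat z}_{\varepsilon,r}$ split into sums of functions with disjoint supports. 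All squared gradients and all cross products therefore reduce to sums over a single index $l$ of terms localized by $\Phi_l$, and likewise for the $V$-integrals with $\phi_l^2$.

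For the first identity I will treat the three pieces separately. For $\bar f_\varepsilon$, disjointness gives $\sum_j\alpha_{\varepsilon,j}^2\bigl[\varepsilon^{2s}\int t^{1-2s}|\nabla(\Phi_j\bar u_\varepsilon)|^2+\int V\phi_j^2u_\varepsilon^2\bigr]$. Applying \eqref{eq:localized-u-energy-identity} then produces the bracket $\int u_\varepsilon^{p+1}\phi_j^2+\varepsilon^{2s}\int t^{1-2s}|\nabla\Phi_j|^2\bar u_\varepsilon^2$. For $\bar g_\varepsilon$, I apply \eqref{eq:localized-f-energy-identity} with $l=m$ and $f=z_{\varepsilon,m}$, choosing the coefficient vector $(\beta_{\varepsilon,m,1},\dots,\beta_{\varepsilon,m,j-1},0,\dots,0)$. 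This yields the $p\int u_\varepsilon^{p-1}\phi_m^2z_{\varepsilon,m}^2$ term, the cut-off gradient term, and the $-\sum_q\beta_{\varepsilon,m,q}\int\partial_qV\,u_\varepsilon\phi_m^2z_{\varepsilon,m}$ term. For $\bar h_\varepsilon$ the same identity, applied with $f=\hat z_{\varepsilon,r}$ and the coefficient vector $\beta_{\varepsilon,r,j}e_j$, gives the corresponding $\partial_jV$ term. Note that $\bar z_{\varepsilon,m}=E(z_{\varepsilon,m})$ is exactly the extension of $f$ used in Lemma~\ref{lem:local-identities}, so the identities apply verbatim.

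For the second identity I expand the cross terms. The $\bar f\cdot\bar g$ and $\bar f\cdot\bar h$ pairings localize to $\sum_m\alpha_{\varepsilon,m}$ (respectively $\sum_{r\le i}\alpha_{\varepsilon,r}$) times the left side of \eqref{eq:localized-u-f-cross-identity}, with $f=z_{\varepsilon,m}$ or $\hat z_{\varepsilon,r}$. That identity then supplies $(p+1)\int u_\varepsilon^p\phi_l^2f$, the term $2\varepsilon^{2s}\int t^{1-2s}|\nabla\Phi_l|^2\bar u_\varepsilon\bar f$, and the $-\alpha\beta\int\partial V\,u_\varepsilon^2\phi_l^2$ terms.

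The $\bar g\cdot\bar h$ pairing needs a polarized version of \eqref{eq:localized-f-energy-identity}, which Lemma~\ref{lem:local-identities} does not state. I will derive it the same way: test the extension equation \eqref{eq:f-extension-equation} for $z_{\varepsilon,r}$ against $\Phi_r^2\overline{\hat z}_{\varepsilon,r}$, test the equation for $\hat z_{\varepsilon,r}$ against $\Phi_r^2\bar z_{\varepsilon,r}$, and add. Using $\nabla(\Phi\bar a)\cdot\nabla(\Phi\bar b)=\nabla\bar a\cdot\nabla(\Phi^2\bar b)/2+\nabla\bar b\cdot\nabla(\Phi^2\bar a)/2+|\nabla\Phi|^2\bar a\bar b$, this gives
\[
2\varepsilon^{2s}\!\int t^{1-2s}\nabla(\Phi_r\bar z_{\varepsilon,r})\cdot\nabla(\Phi_r\overline{\hat z}_{\varepsilon,r})+2\!\int V\phi_r^2z_{\varepsilon,r}\hat z_{\varepsilon,r}
\]
equal to the stated bracket, together with the two $\partial V$ terms involving $\beta_{\varepsilon,r,q}\hat z_{\varepsilon,r}$ and $\beta_{\varepsilon,r,j}z_{\varepsilon,r}$.

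The main obstacle is only this polarized identity, in particular the sign and the factor conventions for the boundary term $\lim t^{1-2s}\partial_\nu$ after the $\omega_s$ normalization. Beyond that, I must check that the $m>i$ indices contribute no $\bar g\cdot\bar h$ overlap, which holds because $\bar h_\varepsilon$ only involves $r\le i$ and the supports of the $\Phi$'s are disjoint.
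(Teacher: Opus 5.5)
Your proposal is correct and follows essentially the paper's route. The paper's proof only says the expansions come from the same direct calculations as in Lemma~\ref{lem:local-identities}: use the disjoint supports of the $\Phi_l$, test the extension equations against $\Phi_l^2$ times extensions, and, for the $\bar g_\varepsilon\cdot\bar h_\varepsilon$ term, apply the polarized identity you derived (it checks out term by term, and the $\omega_s$ normalization is the same convention already used in Lemma~\ref{lem:local-identities}).
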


\begin{proof}
  The expansions follow by the same direct calculations as in the proof of Lemma~\ref{lem:local-identities}; we omit the details.
\end{proof}

\begin{lemma}\label{lem:fractional-test-space-estimate}
Without loss of generality, we may assume that the coefficient norm
\begin{equation}\label{eq:scaled-coeff-norm}
\varepsilon^N\sum_{s=1}^k\alpha_{\varepsilon,s}^2
+
\varepsilon^{N-2}
\left(
\sum_{m=1}^k\sum_{q=1}^{j-1}\beta_{\varepsilon,m,q}^2
+
\sum_{r=1}^i\beta_{\varepsilon,r,j}^2
\right)=1.
\end{equation}
Then 
\begin{equation}\label{eq:scaled-rayleigh-quotient-bound}
\frac{J_{1,\varepsilon}(v)+J_{2,\varepsilon}(v)}{J_{3,\varepsilon}(v)}
=o(\varepsilon ).
\end{equation}
\end{lemma}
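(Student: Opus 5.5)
We bound the three pieces of the quotient separately under the normalization \eqref{eq:scaled-coeff-norm}. The target is $J_{3,\varepsilon}(v)\ge c\,\varepsilon^{2N}$ for the denominator, while $J_{1,\varepsilon}(v)+J_{2,\varepsilon}(v)$ should be a nonpositive main part plus $o(\varepsilon^{2N+1})$. Everything is rescaled by $x=\xi_{\varepsilon,r}+\varepsilon y$ around each peak, so that each integral becomes $\varepsilon^N$ times an integral in $y$. In that scaling we use $u_\varepsilon(\xi_{\varepsilon,r}+\varepsilon y)\to w_{V(\xi_r^0)}$ and $\varepsilon\,\partial_{x_q}u_\varepsilon\to\partial_q w_{V(\xi_r^0)}$ in $C^1_{\rm loc}$ (Lemma~\ref{lem:true-peak-profile-convergence}). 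The decay bounds \eqref{eq:u-polynomial-decay} and \eqref{eq:rescaled-u-derivative-decay} make the tails uniformly integrable. The normalization is chosen so that the variables $A_r:=\varepsilon^{N/2}\alpha_{\varepsilon,r}$ and $B_{r,q}:=\varepsilon^{N/2-1}\beta_{\varepsilon,r,q}$ lie on the unit sphere.

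\textbf{Denominator.} Expanding $J_{3,\varepsilon}$ by \eqref{eq:test-space-l2-expansion}, the mixed term $\int\phi_r^2u_\varepsilon\partial_qu_\varepsilon$ equals $\frac12\int\phi_r^2\partial_q(u_\varepsilon^2)$. Integrating by parts moves the derivative onto $\phi_r^2$, whose gradient is supported where $u_\varepsilon=O(\varepsilon^{\beta_\sigma})$, so this term is negligible. The $\partial_q u\,\partial_{q'}u$ terms give $\varepsilon^{N-2}\beta\beta'(C_r\delta_{qq'}+o(1))$ by radial symmetry of $w$. Hence
\[
J_{3,\varepsilon}(v)=\varepsilon^{N}\Big(\sum_r A_r^2M_r+\sum_{r,q}B_{r,q}^2C_r+o(1)\Big)\ge c\,\varepsilon^{N}.
\]

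\textbf{Numerator.} In $J_{1,\varepsilon}$, the leading term $(1-p)\sum\alpha^2\int u^{p+1}\phi^2$ is $\le0$ and can be dropped, since we only need an upper bound. The cross terms $(1-p)\alpha\int u^p\phi^2\partial_qu=\frac{1-p}{p+1}\alpha\int\phi^2\partial_q(u^{p+1})$ again reduce after integration by parts to cut-off-gradient regions, where they are $O(\varepsilon^{\beta_\sigma(p+1)})$. All $\varepsilon^{2s}|\nabla\Phi|^2$ terms are handled by Lemma~\ref{lem:extension-u-bound}: they are $O(\varepsilon^{2s+2N})$ times $\alpha^2$, and $O(\varepsilon^{2s+2N-2})$ times $\beta^2$ using \eqref{eq:u-extension-derivative-decay}. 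After normalization these are $o(\varepsilon^{N+1})$.

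For $J_{2,\varepsilon}$, the terms $\beta\beta'\int\partial_qV\,u\,\phi^2\partial_{q'}u$ are handled by Taylor-expanding $\partial_qV$ about $\xi_{\varepsilon,r}$. Lemma~\ref{lem:pohozaev-center-estimate} gives $\partial_qV(\xi_{\varepsilon,r})=O(\varepsilon)$. The constant term is a derivative of $u^2$, hence negligible, and the linear term contributes $\varepsilon\,\partial^2V\cdot\varepsilon^{N-1}\!\int y\,w\,\partial w$. Altogether these terms are $O(\varepsilon^{N})\beta\beta'=O(\varepsilon^{2})$ in normalized units, i.e.\ $O(\varepsilon^{N+2})$ relative to $J_{3,\varepsilon}$. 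The terms $\alpha\beta\int\partial_qV\,u^2\phi^2$ equal $\alpha\beta\,\varepsilon^N(\partial_qV(\xi_{\varepsilon,r})M_r+O(\varepsilon))=\alpha\beta\,O(\varepsilon^{N+1})$. In normalized variables $\alpha\beta=\varepsilon^{1-N}AB$, so this is $O(\varepsilon^{2})$. Both pieces are $o(\varepsilon)$ after division by $J_{3,\varepsilon}\sim\varepsilon^N$, which gives \eqref{eq:scaled-rayleigh-quotient-bound}.

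\textbf{Main obstacle.} The delicate step is the $\alpha\beta$ cross term. Obtaining $O(\varepsilon^2)$ rather than $O(\varepsilon)$ requires the sharp center estimate $|\nabla V(\xi_{\varepsilon,r})|\le C\varepsilon$ from Lemma~\ref{lem:pohozaev-center-estimate}, together with oddness, $\int y\,w^2=0$, to kill the first-order Taylor term. I would first verify that the $O(\varepsilon)$ remainder in $\partial_qV$ integrated against $u^2$ indeed leaves only $O(\varepsilon^{N+1})$. If not, a weaker $o(\varepsilon^{N})$ bound still yields $o(1)$ for the quotient, which suffices for Proposition~\ref{prop:mu-small}.
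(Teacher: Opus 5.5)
Your term-by-term estimates are the paper's own. You integrate by parts onto $\nabla(\phi_r^2)$ for the mixed terms $\int\phi_r^2u_\varepsilon\partial_qu_\varepsilon$ and $\int\phi_r^2u_\varepsilon^p\partial_qu_\varepsilon$, use the annular bounds of Lemma~\ref{lem:extension-u-bound} for the $|\nabla\Phi_r|^2$ terms, discard the nonpositive term $(1-p)\sum_m\alpha_{\varepsilon,m}^2\int u_\varepsilon^{p+1}\phi_m^2$, and Taylor-expand $\partial_qV$ in $J_{2,\varepsilon}$. However, the powers of $\varepsilon$ attached to the normalization are mis-tracked, and as written your final step does not give $o(\varepsilon)$.

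Because $A_r=\varepsilon^{N/2}\alpha_{\varepsilon,r}$ and $B_{r,q}=\varepsilon^{N/2-1}\beta_{\varepsilon,r,q}$ lie on the unit sphere, one has $\alpha_{\varepsilon,r}^2\int\phi_r^2u_\varepsilon^2=A_r^2(M_r+o(1))$ and $\beta_{\varepsilon,r,q}^2\int\phi_r^2(\partial_qu_\varepsilon)^2=B_{r,q}^2(C_r+o(1))$. Hence $J_{3,\varepsilon}(v)=\sum_rA_r^2M_r+\sum_{r,q}B_{r,q}^2C_r+o(1)\ge c$, with no prefactor $\varepsilon^N$. Your opening targets ($J_{3,\varepsilon}\ge c\,\varepsilon^{2N}$, and numerator $=$ nonpositive $+\,o(\varepsilon^{2N+1})$) are inconsistent with this and with your later computations. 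You then correctly find the $J_{2,\varepsilon}$ contributions to be $O(\varepsilon^2)$ in normalized units. But you divide by ``$J_{3,\varepsilon}\sim\varepsilon^N$'', which would give $O(\varepsilon^{2-N})$, not $o(\varepsilon)$. The repair is to use $J_{3,\varepsilon}\ge c$, exactly as the paper does. Then $J_{1,\varepsilon}+J_{2,\varepsilon}\le o(\varepsilon)+C\varepsilon^2$ gives the claim, as an upper bound, which is all that is proved and all that Proposition~\ref{prop:mu-small} needs.

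There are two smaller corrections.

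\begin{enumerate}
\item In normalized units the extension terms are $O(\varepsilon^{N+2s})$; for instance $\varepsilon^{2s+2N}\alpha_{\varepsilon,m}^2=\varepsilon^{N+2s}A_m^2$. This is not $o(\varepsilon^{N+1})$ when $s\le 1/2$, contrary to what you state. What is needed, and true since $N+2s>1$, is only $o(\varepsilon)$.
\item Your ``main obstacle'' is not one. Under \eqref{eq:scaled-coeff-norm} one has $|\alpha_{\varepsilon,r}\beta_{\varepsilon,r,q}|\le\varepsilon^{1-N}$. So the bound $\int\partial_qV\,u_\varepsilon^2\phi_r^2=o(\varepsilon^N)$, which follows from $\nabla V(\xi_{\varepsilon,r})\to0$ by dominated convergence, already makes the $\alpha\beta$ terms $o(\varepsilon)$. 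Moreover, the first-order Taylor term is $O(\varepsilon^{N+1})$ without oddness, since $\int|y|(1+|y|)^{-2\beta_\sigma}\,dy<\infty$. Your fallback therefore gives the full $o(\varepsilon)$, not merely $o(1)$.
\end{enumerate}
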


\begin{proof}
We first estimate $J_{3,\varepsilon}$. Recall that
\[
J_{3,\varepsilon}(v)=\int_{\mathbb R^N}v^2\,dx.
\]
As in the definition of $J_{3,\varepsilon}$, write
\begin{align*}
J_{3,\varepsilon}(v)
&=
\underbrace{
\sum_{j=1}^k\alpha_{\varepsilon,j}^2
\int_{\mathbb R^N}\phi_j^2u_\varepsilon^2\,dx
+2\sum_{j=1}^k\alpha_{\varepsilon,j}
\int_{\mathbb R^N}\phi_j^2u_\varepsilon z_{\varepsilon,j}\,dx
+2\sum_{r=1}^i\alpha_{\varepsilon,r}
\int_{\mathbb R^N}\phi_r^2u_\varepsilon\hat z_{\varepsilon,r}\,dx
}_{J_{31, \varepsilon}}
\nonumber\\
&\quad+
\underbrace{
\sum_{m=1}^k\int_{\mathbb R^N}\phi_m^2z_{\varepsilon,m}^2\,dx
+\sum_{r=1}^i\int_{\mathbb R^N}\phi_r^2\hat z_{\varepsilon,r}^2\,dx
+2\sum_{r=1}^i\int_{\mathbb R^N}\phi_r^2z_{\varepsilon,r}\hat z_{\varepsilon,r}\,dx
}_{J_{32, \varepsilon}}.
\end{align*}

By Lemma~\ref{lem:true-peak-profile-convergence} and Lemma~\ref{lem:u-polynomial-decay}, we have:
\begin{equation}\label{eq:u-l2-diagonal-limit}
\int_{\mathbb R^N}\phi_m^2u_\varepsilon^2\,dx
=
\varepsilon^N(\int_{\mathbb R^N}w_{V(\xi_m^0)}^2+o(1)),
\end{equation}
and
\begin{equation}\label{eq:derivative-diagonal-limit}
\int_{\mathbb R^N}
\phi_m^2
\frac{\partial u_\varepsilon}{\partial x_q}
\frac{\partial u_\varepsilon}{\partial x_a}\,dx
=
\varepsilon^{N-2}
\left(
\frac{1}N\int_{\mathbb R^N}|\nabla w_{V(\xi_m^0)}|^2\delta_{qa}+o(1)
\right).
\end{equation}
Moreover,
\begin{equation}\label{eq:u-derivative-mixed-small}
\int_{\mathbb R^N}
\phi_m^2u_\varepsilon
\frac{\partial u_\varepsilon}{\partial x_q}\,dx
=-\frac{1}{2}
\int_{\mathbb R^N}
u_\varepsilon^2
\frac{\partial(\phi_m^2)}{\partial x_q}\,dx=
o(\varepsilon^{N-1}).
\end{equation}

Using \eqref{eq:u-l2-diagonal-limit} and \eqref{eq:u-derivative-mixed-small}, we obtain
\begin{equation}\label{eq:first-denominator-limit}
J_{31,\varepsilon}
=
\varepsilon^N\sum_{m=1}^k
\left(\int_{\mathbb R^N}w_{V(\xi_m^0)}^2+o(1)\right)\alpha_{\varepsilon,m}^2
+o\!\left(\varepsilon^N\sum_m\alpha_{\varepsilon,m}^2
+\varepsilon^{N-2}\sum_{m,q}\beta_{\varepsilon,m,q}^2\right).
\end{equation}
For $J_{32, \varepsilon}$, by \eqref{eq:derivative-diagonal-limit}, we have:
\begin{equation}\label{eq:second-denominator-limit}
\begin{aligned}[b]
J_{32, \varepsilon}
&=
\sum_{m=1}^{k}\int_{\mathbb{R}^{N}}\phi_{m}^{2}z_{\varepsilon,m}^{2}\,dx
+\sum_{r=1}^{i}\int_{\mathbb{R}^{N}}\phi_{r}^{2}\hat{z}_{\varepsilon,r}^{2}\,dx
+2\sum_{m=1}^{k}\sum_{r=1}^{i}
\int_{\mathbb{R}^{N}}\phi_{m}\phi_{r}z_{\varepsilon,m}\hat{z}_{\varepsilon,r}\,dx\\
&\quad
+\sum_{m\neq m^{\prime}}^k
\int_{\mathbb{R}^N}\phi_m\phi_{m^{\prime}}z_{\varepsilon,m}z_{\varepsilon,m^{\prime}}\,dx
+\sum_{r\neq r^{\prime}}^i
\int_{\mathbb{R}^N}\phi_r\phi_{r^{\prime}}\hat{z}_{\varepsilon,r}\hat{z}_{\varepsilon,r^{\prime}}\,dx\\
&=\varepsilon^{N-2}
\left[
\sum_{m=1}^k
\frac{1}N\int_{\mathbb R^N}|\nabla w_{V(\xi_m^0)}|^2
\sum_{q=1}^{j-1}
\beta_{\varepsilon,m,q}^2
+
\sum_{r=1}^i
\frac{1}N\int_{\mathbb R^N}|\nabla w_{V(\xi_r^0)}|^2\beta_{\varepsilon,r,j}^2
\right]\\
&\quad+{o\!\left(\varepsilon^{N-2}
\left[\sum_{m=1}^k\sum_{q=1}^{j-1}\beta_{\varepsilon,m,q}^2
+\sum_{r=1}^i\beta_{\varepsilon,r,j}^2\right]\right)}.
\end{aligned}
\end{equation}
Combining \eqref{eq:first-denominator-limit} and \eqref{eq:second-denominator-limit}, we obtain
\begin{equation}\label{eq:scaled-denominator-lower-bound}
  J_{3,\varepsilon}(v)
\ge
c.
\end{equation}

We next estimate $J_{1,\varepsilon}$. We write
\[
J_{1,\varepsilon}=J_{11,\varepsilon}+J_{12,\varepsilon},
\]
where 
\begin{align*}
J_{11,\varepsilon}:={}&
(1-p)\sum_{m=1}^k
\alpha_{\varepsilon,m}^2
\int_{\mathbb R^N}u_\varepsilon^{p+1}\phi_m^2\,dx
+(1-p)\sum_{m=1}^k
\alpha_{\varepsilon,m}
\int_{\mathbb R^N}u_\varepsilon^p\phi_m^2z_{\varepsilon,m}\,dx \\
&\quad
+(1-p)\sum_{r=1}^i
\alpha_{\varepsilon,r}
\int_{\mathbb R^N}u_\varepsilon^p\phi_r^2\hat z_{\varepsilon,r}\,dx,
\end{align*}
and 
\begin{align*}
J_{12,\varepsilon}:={}&
\varepsilon^{2s}\sum_{m=1}^k
\alpha_{\varepsilon,m}^2
\int_{\mathbb R^{N+1}_+}
t^{1-2s}|\nabla\Phi_m|^2\bar u_\varepsilon^{\,2}\,dX
+\varepsilon^{2s}\sum_{m=1}^k
\int_{\mathbb R^{N+1}_+}
t^{1-2s}|\nabla\Phi_m|^2\bar z_{\varepsilon,m}^{\,2}\,dX \\
&\quad
+\varepsilon^{2s}\sum_{r=1}^i
\int_{\mathbb R^{N+1}_+}
t^{1-2s}|\nabla\Phi_r|^2\overline{\hat z}_{\varepsilon,r}^{\,2}\,dX \\
&\quad
+2\varepsilon^{2s}\sum_{m=1}^k
\alpha_{\varepsilon,m}
\int_{\mathbb R^{N+1}_+}
t^{1-2s}|\nabla\Phi_m|^2\bar u_\varepsilon\bar z_{\varepsilon,m}\,dX \\
&\quad
+2\varepsilon^{2s}\sum_{r=1}^i
\alpha_{\varepsilon,r}
\int_{\mathbb R^{N+1}_+}
t^{1-2s}|\nabla\Phi_r|^2\bar u_\varepsilon\overline{\hat z}_{\varepsilon,r}\,dX \\
&\quad
+2\varepsilon^{2s}\sum_{r=1}^i
\int_{\mathbb R^{N+1}_+}
t^{1-2s}|\nabla\Phi_r|^2\bar z_{\varepsilon,r}\overline{\hat z}_{\varepsilon,r}\,dX .
\end{align*}

The first term in \(J_{11,\varepsilon}\) is non-positive. For each
$m$ and $q$, Lemma~\ref{lem:u-polynomial-decay} gives
\begin{align*}
\int_{\mathbb R^N}
u_\varepsilon^p\phi_m^2
\frac{\partial u_\varepsilon}{\partial x_q}\,dx
&=
\frac{1}{p+1}
\int_{\mathbb R^N}
\phi_m^2
\frac{\partial(u_\varepsilon^{p+1})}{\partial x_q}\,dx
=
-
\frac{1}{p+1}
\int_{\mathbb R^N}
u_\varepsilon^{p+1}
\frac{\partial(\phi_m^2)}{\partial x_q}\,dx=o(\varepsilon^N).
\end{align*}
Consequently,
\begin{align}
\left|
\alpha_{\varepsilon,m}
\beta_{\varepsilon,m,q}
\int_{\mathbb R^N}
u_\varepsilon^p\phi_m^2
\frac{\partial u_\varepsilon}{\partial x_q}\,dx
\right|
&\le
o(\varepsilon^N)
|\alpha_{\varepsilon,m}|
|\beta_{\varepsilon,m,q}| 
\nonumber\\
&\le
o(\varepsilon )
\left(
\varepsilon^N\alpha_{\varepsilon,m}^2
+
\varepsilon^{N-2}\beta_{\varepsilon,m,q}^2
\right)=o(\varepsilon).
\label{eq:mixed-nonlinear-scaled}
\end{align}
The terms involving $\hat z_{\varepsilon,r}$ are estimated in the same way. By \eqref{eq:scaled-coeff-norm}, summing \eqref{eq:mixed-nonlinear-scaled} over all indices gives
\[
J_{11,\varepsilon}\le o(\varepsilon).
\]

For $J_{12,\varepsilon}$, we use the annular estimates
\begin{equation}\label{eq:cutoff-u-scaled}
\varepsilon^{2s}
\int_{\operatorname{supp}\nabla\Phi_m}
t^{1-2s}
|\nabla\Phi_m|^2
\bar u_\varepsilon^2\,dX
=o(\varepsilon^{N+1}),
\end{equation}
\begin{equation}\label{eq:cutoff-u-du-scaled}
\varepsilon^{2s}
\int_{\operatorname{supp}\nabla\Phi_m}
t^{1-2s}
|\nabla\Phi_m|^2
\left|
\bar u_\varepsilon\,
\overline{\partial_{x_q}u_\varepsilon}
\right|\,dX
=
o(\varepsilon^{N}),
\end{equation}
and
\begin{equation}\label{eq:cutoff-du-du-scaled}
\varepsilon^{2s}
\int_{\operatorname{supp}\nabla\Phi_m}
t^{1-2s}
|\nabla\Phi_m|^2
\left|
\overline{\partial_{x_q}u_\varepsilon}\,
\overline{\partial_{x_a}u_\varepsilon}
\right|\,dX
=o(\varepsilon ^{N-1}).
\end{equation}
Indeed, these estimates follow from Lemma~\ref{lem:extension-u-bound} because \(\operatorname{supp}\nabla\Phi_m\subset A_{r,2r}^{+,m}\).

Expanding $z_{\varepsilon,m}$ and $\hat z_{\varepsilon,r}$ and applying Cauchy's inequality to
\eqref{eq:cutoff-u-scaled}--\eqref{eq:cutoff-du-du-scaled}, we obtain
\[
J_{12,\varepsilon}
=o(\varepsilon)
\left[
\varepsilon^N\sum_{m=1}^k\alpha_{\varepsilon,m}^2
+
\varepsilon^{N-2}
\left(
\sum_{m=1}^k\sum_{q=1}^{j-1}\beta_{\varepsilon,m,q}^2
+
\sum_{r=1}^i\beta_{\varepsilon,r,j}^2
\right)
\right]
=
o(\varepsilon).
\]
Together with the estimate for $J_{11,\varepsilon}$, this proves 
\begin{equation}\label{eq:scaled-first-numerator-upper-bound}
  J_{1,\varepsilon }=o(\varepsilon).
\end{equation}

It remains to estimate $J_{2,\varepsilon}$. By Lemmas~\ref{lem:u-polynomial-decay}, \ref{lem:extension-u-bound} and \ref{lem:pohozaev-center-estimate}, we have
\begin{equation}
  \begin{aligned}
      \int_{\mathbb R^N}
\frac{\partial V}{\partial x_q}(x)
u_\varepsilon^2\phi_m^2\,dx&=\varepsilon ^N\int_{B_{2r/\varepsilon}(0)}\frac{\partial V}{\partial x_q}
(\xi_{\varepsilon,m}+\varepsilon y)u_\varepsilon^2(\xi_{\varepsilon,m}+\varepsilon y)\phi^2(\varepsilon y)\,dy\\
=&\varepsilon ^N\frac{\partial V}{\partial x_q}(\xi_{\varepsilon,m})\int_{B_{2r/\varepsilon}(0)}u^2_\varepsilon(\xi_{\varepsilon,m}+\varepsilon y)\phi^2(\varepsilon y)\,dy+o(\varepsilon ^{N+1})\\
=&O(\varepsilon ^{N+1}).
  \end{aligned}
\end{equation}
Similarly,
\begin{equation}
  \begin{aligned}
      \int_{\mathbb R^N}
\frac{\partial V}{\partial x_q}(x)
u_\varepsilon \phi_m^2 z_{\varepsilon,m}\,dx&=\varepsilon ^N\int_{B_{2r/\varepsilon}(0)}\frac{\partial V}{\partial x_q}
(\xi_{\varepsilon,m}+\varepsilon y)u_\varepsilon(\xi_{\varepsilon,m}+\varepsilon y)\phi^2(\varepsilon y)z_{\varepsilon,m}(\xi_{\varepsilon,m}+\varepsilon y)\,dy\\
=&\varepsilon ^N\frac{\partial V}{\partial x_q}(\xi_{\varepsilon,m})\int_{B_{2r/\varepsilon}(0)}u_\varepsilon(\xi_{\varepsilon,m}+\varepsilon y)\phi^2(\varepsilon y)z_{\varepsilon,m}(\xi_{\varepsilon,m}+\varepsilon y)\,dy+o(\varepsilon ^{N})\\
=&O(\varepsilon ^{N})\left(\sum_{q=1}^{j-1}\beta_{\varepsilon,m,q}^2\right)^{1/2}.
  \end{aligned}
\end{equation}
The remaining terms in $J_{2,\varepsilon}$ are estimated in the same way. Therefore
\begin{equation}\label{eq:scaled-second-numerator-upper-bound}
\left|J_{2,\varepsilon}(v)\right|
\le
C\varepsilon^N|\beta_\varepsilon|^2
+
C\varepsilon^{N+1}
|\alpha_\varepsilon|\,|\beta_\varepsilon|
\le
C\varepsilon^2.
\end{equation}

Finally, \eqref{eq:scaled-denominator-lower-bound}, \eqref{eq:scaled-first-numerator-upper-bound} and
\eqref{eq:scaled-second-numerator-upper-bound} imply
\[
\frac{J_{1,\varepsilon}(v)+J_{2,\varepsilon}(v)}{J_{3,\varepsilon}(v)}
\le
\frac{o(\varepsilon )+C\varepsilon^2}{c}
=
o(\varepsilon ).
\]
This proves
\eqref{eq:scaled-rayleigh-quotient-bound}.
\end{proof}

We shall use the following elementary estimate for separated algebraic weights; see \cite{wei2010infinitely}.
\begin{lemma}\label{lem:separated-product-estimate}
For any $0<\delta\le \min\{\alpha,\beta\}$ and $x_1\neq x_2$, there holds that
\begin{equation*}
\frac{1}{(1+|y-x_1|)^\alpha(1+|y-x_2|)^\beta}
\le
\frac{C}{|x_1-x_2|^\delta}
\left(
\frac{1}{(1+|y-x_1|)^{\alpha+\beta-\delta}}
+
\frac{1}{(1+|y-x_2|)^{\alpha+\beta-\delta}}
\right),
\end{equation*}
where $\alpha,\beta$ are two constants.
\end{lemma}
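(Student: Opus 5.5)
The plan is a direct pointwise splitting argument in \(y\). Set \(d:=|x_1-x_2|>0\), and split \(\mathbb R^N\) into the two half-space-like regions
\[
\Omega_1:=\{y:|y-x_1|\le|y-x_2|\},\qquad \Omega_2:=\{y:|y-x_2|<|y-x_1|\}.
\]
By symmetry of the roles of \((x_1,\alpha)\) and \((x_2,\beta)\), it suffices to treat \(y\in\Omega_1\) and prove
\[
\frac{1}{(1+|y-x_1|)^\alpha(1+|y-x_2|)^\beta}\le \frac{C}{d^\delta}\,\frac{1}{(1+|y-x_1|)^{\alpha+\beta-\delta}}.
\]

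On \(\Omega_1\) the triangle inequality gives
\[
d\le|y-x_1|+|y-x_2|\le 2|y-x_2|,
\]
so \(1+|y-x_2|\ge \max\{d/2,\,1+|y-x_1|\}\).

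I will then factor \((1+|y-x_2|)^{\beta}=(1+|y-x_2|)^{\delta}(1+|y-x_2|)^{\beta-\delta}\), which is legitimate since \(0<\delta\le\beta\) makes both exponents nonnegative. I bound the first factor from below by \((d/2)^\delta\) and the second by \((1+|y-x_1|)^{\beta-\delta}\). Multiplying by \((1+|y-x_1|)^\alpha\) yields the denominator bound
\[
(1+|y-x_1|)^\alpha(1+|y-x_2|)^\beta\ge 2^{-\delta}d^\delta(1+|y-x_1|)^{\alpha+\beta-\delta},
\]
which is the claim on \(\Omega_1\) with \(C=2^\delta\).

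The case \(y\in\Omega_2\) is identical with indices swapped. There the splitting uses \(0<\delta\le\alpha\) and produces the term \((1+|y-x_2|)^{-(\alpha+\beta-\delta)}\). Adding the two region bounds (each dominated by the sum on the right) gives the lemma with \(C=2^\delta\), which is uniform in \(x_1,x_2\).

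There is no real obstacle. The only point needing care is the constraint \(\delta\le\min\{\alpha,\beta\}\): it is exactly what guarantees nonnegative exponents \(\beta-\delta\) and \(\alpha-\delta\) in the factorization, so that the monotone lower bounds can be applied. If \(d<1\) the estimate is even easier, but no separate case split is needed.
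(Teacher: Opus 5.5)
Your proof is correct: on $\{|y-x_1|\le|y-x_2|\}$ you have $1+|y-x_2|\ge\max\{|x_1-x_2|/2,\,1+|y-x_1|\}$, and $\delta\le\min\{\alpha,\beta\}$ is exactly what justifies the factorization, so the bound holds with $C=2^\delta$. The paper gives no proof of its own and only cites \cite{wei2010infinitely}; the argument there is the same elementary pointwise case distinction according to which of $x_1,x_2$ is closer to $y$ (phrased with balls of radius $|x_1-x_2|/2$), so your proposal is essentially that proof.
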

\begin{lemma}\label{lem:weighted-convolution-estimate}

Let $\rho>0$, $\alpha>N$, $\beta>0$, and $\beta\ne N$.  There exists $C=C(N,\alpha,\beta,\rho)$ such that, for $0<\varepsilon\le1$, $t>0$, and $|y-x|^2+t^2\ge\rho^2$,
\[
\int_{\mathbb R^N}\frac{dz}{(t+|z|)^\alpha\left(1+\frac{|y-z-x|}{\varepsilon}\right)^\beta}
\le
\begin{cases}
C\varepsilon^\beta\bigl[t^{N-\alpha}(1+|y-x|)^{-\beta}+(1+|y-x|)^{-\alpha}\bigr],&0<\beta<N,\\[1mm]
C\varepsilon^N\bigl[t^{N-\alpha}(1+|y-x|)^{-\beta}+(1+|y-x|)^{-\alpha}\bigr],&\beta>N.
\end{cases}
\]
The borderline case $\beta=N$, which carries a logarithmic correction, is not used.

\end{lemma}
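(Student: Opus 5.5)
We reduce to a one-variable radial integral in the $z$ variable and split $\mathbb R^N$ according to which factor is large. Set $a:=y-x$, so $|a|^2+t^2\ge\rho^2$. Substituting $z'=a-z$, the integral becomes
\[
I:=\int_{\mathbb R^N}\frac{dz}{(t+|a-z|)^\alpha\,(1+|z|/\varepsilon)^\beta}.
\]
We split $\mathbb R^N$ into the region $\Omega_1:=\{|z|\le |a|/2\}$ and its complement $\Omega_2$. On $\Omega_1$ we have $|a-z|\ge|a|/2$, so the first factor is bounded by $C(t+|a|)^{-\alpha}$. The remaining integral $\int_{|z|\le|a|/2}(1+|z|/\varepsilon)^{-\beta}dz$ equals $\varepsilon^N\int_{|w|\le |a|/(2\varepsilon)}(1+|w|)^{-\beta}dw$. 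It is therefore $\le C\varepsilon^N$ if $\beta>N$, and $\le C\varepsilon^N(|a|/\varepsilon)^{N-\beta}=C\varepsilon^\beta|a|^{N-\beta}$ if $\beta<N$.

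For the case $\beta>N$, we use that $|a|^2+t^2\ge\rho^2$ gives $t+|a|\ge c_\rho(1+|a|)$. Hence the contribution of $\Omega_1$ is $\le C\varepsilon^N(1+|a|)^{-\alpha}$, which is the second term in the claimed bound.

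For the case $\beta<N$, the contribution of $\Omega_1$ is $C\varepsilon^\beta|a|^{N-\beta}(t+|a|)^{-\alpha}$, and we need to dominate it by the right-hand side. If $t\le|a|$, it is at most $C\varepsilon^\beta|a|^{N-\alpha-\beta}$. This is $\le C\varepsilon^\beta t^{N-\alpha}|a|^{-\beta}$, because $t\le|a|$ and $N-\alpha<0$. If instead $t\ge|a|$, the bound is $\varepsilon^\beta|a|^{N-\beta}t^{-\alpha}$. This is $\le\varepsilon^\beta t^{N-\alpha}|a|^{-\beta}$, since $|a|^{N}\le t^N$. In both subcases we then convert $|a|^{-\beta}$ into $(1+|a|)^{-\beta}$, treating $|a|\ge1$ and $|a|<1$ separately. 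For $|a|<1$ we use that $|a|^{N-\beta}$ stays bounded.

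On $\Omega_2$ we have $|z|\ge|a|/2$, so $(1+|z|/\varepsilon)^{-\beta}\le\varepsilon^\beta|z|^{-\beta}\le C\varepsilon^\beta|a|^{-\beta}$. Moreover, $\int_{\mathbb R^N}(t+|a-z|)^{-\alpha}dz=Ct^{N-\alpha}$ because $\alpha>N$. This gives $C\varepsilon^\beta t^{N-\alpha}|a|^{-\beta}$, and since $\varepsilon^\beta\le\varepsilon^N$ when $\beta>N$, this already has the right form for $|a|\ge1$.

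The main obstacle is the region where $|a|$ is small, that is $|a|\lesssim1$, so that $t\ge c\rho$ by the constraint. There the factor $|a|^{-\beta}$ is not controlled by $(1+|a|)^{-\beta}$. We handle it by bounding the $t$-factor instead: $(t+|a-z|)^{-\alpha}\le t^{-\alpha}$, which leaves $\int(1+|z|/\varepsilon)^{-\beta}$. When $\beta>N$ this integral is $C\varepsilon^N$, and $t^{-\alpha}\le Ct^{N-\alpha}$ because $t\ge c\rho$. When $\beta<N$ the integral diverges, so we split it at $|z|=t$. The inner part $|z|\le t$ gives $\varepsilon^\beta t^{N-\beta}t^{-\alpha}$. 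The outer part uses $(t+|a-z|)\ge c|z|$ for $|z|\ge t\gg|a|$, giving $\varepsilon^\beta\int_{|z|\ge t}|z|^{-\alpha-\beta}=C\varepsilon^\beta t^{N-\alpha-\beta}$. Both are $\le C_\rho\varepsilon^\beta t^{N-\alpha}$, which is of the claimed form since $(1+|a|)^{-\beta}\ge2^{-\beta}$ here.

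Collecting the contributions from $\Omega_1$, $\Omega_2$, and the small-$|a|$ region yields the stated bound in both cases $0<\beta<N$ and $\beta>N$.
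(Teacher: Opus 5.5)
Your proof is correct. The paper states this lemma in Appendix~B without proof and only invokes it in the proof of Lemma~\ref{lem:extension-u-bound}(i), with $\alpha=N+2s$ and $\beta=\beta_\sigma>N$. So there is no alternative route to compare with. Your split of the $z$-integral into $\Omega_1=\{|z|\le|a|/2\}$ and $\Omega_2$, combined with $t+|a|\ge c_\rho(1+|a|)$, is the natural direct proof. The $\Omega_1$ estimates hold for every $a$, and the $\Omega_2$ estimate holds for $|a|\ge1$, in both regimes $\beta<N$ and $\beta>N$.

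One inference needs tightening. The constraint $|a|^2+t^2\ge\rho^2$ does not give $t\ge c\rho$ on the whole region $|a|\lesssim1$ when $\rho<1$: take $|a|=\rho$ and let $t\to0$. Your small-$|a|$ argument genuinely needs two facts there:
\begin{itemize}
\item $t\ge c\rho$, to absorb $t^{-N}$ and $t^{-\beta}$ into constants;
\item $|a|\le t$, to get $t+|a-z|\ge|z|$ on $\{|z|\ge t\}$.
\end{itemize}
Taken literally on $\{|a|\le1\}$, your bound $t^{-\alpha}\int(1+|z|/\varepsilon)^{-\beta}\,dz=C\varepsilon^N t^{-\alpha}$ is not $O(\varepsilon^N t^{N-\alpha})$ as $t\to0$.

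The fix is to define the small region as $|a|\le\rho_1:=\min\{1,\rho/2\}$. There $t^2\ge\rho^2-\rho^2/4$, so $t\ge\frac{\sqrt3}{2}\rho>|a|$, and your argument applies verbatim. For $|a|>\rho_1$ nothing extra is needed: since $\rho_1\le1$, one has $|a|^{-\beta}\le(2/\rho_1)^\beta(1+|a|)^{-\beta}$. Hence your $\Omega_2$ bound $C\varepsilon^\beta t^{N-\alpha}|a|^{-\beta}$ already has the required form on the intermediate range $\rho_1<|a|<1$. With this threshold all constants depend only on $N,\alpha,\beta,\rho$, as the statement requires.
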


\begin{lemma}\label{lem:algebraic-nonlinear-estimates}
Let \(a,b\ge0\). Then
\[
|(a+b)^{p-1}-a^{p-1}|
\le C
\begin{cases}
b^{p-1},&1<p<2,\\
(a^{p-2}+b^{p-2})b,&p\ge2.
\end{cases}
\]
\end{lemma}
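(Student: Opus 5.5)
The statement is the elementary algebraic inequality for \(a,b\ge0\), and I will prove it by a case split on \(p\) combined with a homogeneity reduction.

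\emph{Case \(1<p<2\).} Here \(0<p-1<1\), so the function \(t\mapsto t^{p-1}\) is concave and nondecreasing on \([0,\infty)\) with value \(0\) at \(t=0\). Such a function is subadditive: \((a+b)^{p-1}\le a^{p-1}+b^{p-1}\). To verify this, note that for \(a+b>0\) we have
\[
\frac{a^{p-1}+b^{p-1}}{(a+b)^{p-1}}=\lambda^{p-1}+(1-\lambda)^{p-1}\ge\lambda+(1-\lambda)=1,\qquad \lambda=\frac{a}{a+b}\in[0,1],
\]
because \(\lambda^{p-1}\ge\lambda\) on \([0,1]\). Monotonicity gives \((a+b)^{p-1}\ge a^{p-1}\). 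Combining the two facts yields \(0\le (a+b)^{p-1}-a^{p-1}\le b^{p-1}\), so this case holds with \(C=1\).

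\emph{Case \(p\ge2\).} I apply the mean value theorem to \(t\mapsto t^{p-1}\) on \([a,a+b]\). This is legitimate since the function is \(C^1\) on \([0,\infty)\) when \(p\ge2\); for \(p=2\) the derivative is the constant \(1\). It gives
\[
(a+b)^{p-1}-a^{p-1}=(p-1)\,c^{\,p-2}\,b\qquad\text{for some }c\in[a,a+b].
\]
Since \(p-2\ge0\), we have \(c^{p-2}\le (a+b)^{p-2}\). It remains to bound \((a+b)^{p-2}\le C(a^{p-2}+b^{p-2})\). If \(p-2\ge1\), this is convexity: \((a+b)^{p-2}\le 2^{p-3}(a^{p-2}+b^{p-2})\). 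If \(0\le p-2<1\), it is the subadditivity shown in the first case. In both ranges \((a+b)^{p-2}\le 2^{\max\{p-3,0\}}(a^{p-2}+b^{p-2})\). This gives the claim with \(C=(p-1)2^{\max\{p-3,0\}}\). At \(p=2\) the convention \(t^0=1\) makes the right-hand side \(2b\), which is consistent with the exact value \(b\) of the left-hand side.

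I expect no genuine obstacle. The only points needing care are the endpoint conventions: the case \(a=0\), which is covered directly by the subadditivity argument, and the use of \(0^0=1\) at \(p=2\).
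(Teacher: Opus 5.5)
Your proof is correct: for \(1<p<2\) the subadditivity of \(t\mapsto t^{p-1}\) gives \(0\le (a+b)^{p-1}-a^{p-1}\le b^{p-1}\). For \(p\ge 2\), the mean value theorem together with \((a+b)^{p-2}\le 2^{\max\{p-3,0\}}(a^{p-2}+b^{p-2})\) gives the second bound with an explicit constant, using the convention \(0^0=1\) at \(p=2\), which the statement itself implicitly requires when \(a=0\). The paper states this lemma in the appendix without proof, treating it as elementary, so your argument supplies the standard omitted verification rather than a competing route.
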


\section{Useful estimates}

We record the Gram estimates.
Throughout this appendix, \((\alpha,\xi)\in \overline{\mathcal D}_\varepsilon\).
\begin{lemma}\label{lem:app-gram-estimates}
The following estimates hold.

First, for the same peak \(i=j\),
\begin{equation}\label{eq:app-same-ww}
\langle W_{\varepsilon,j},W_{\varepsilon,j}\rangle_{\varepsilon,V}
=
\varepsilon^N\bigl(a_0V(\xi_j)^\theta+o(1)\bigr),
\end{equation}
Moreover,
\begin{equation}\label{eq:app-same-wy}
\langle W_{\varepsilon,j},Y_{\varepsilon,j,h}\rangle_{\varepsilon,V}
=
o(\varepsilon^{N-1}),
\end{equation}
and
\begin{equation}\label{eq:app-same-yy}
\langle Y_{\varepsilon,j,h},Y_{\varepsilon,j,m}\rangle_{\varepsilon,V}
=
\varepsilon^{N-2}(K_j\delta_{hm}+o(1)),
\end{equation}
where
\[
K_j
:=
p\int_{\mathbb R^N}
w_{V(\xi_j)}^{p-1}
\left(\partial_1w_{V(\xi_j)}\right)^2
>0.
\]

Second, for different peaks \(i\ne j\),
\begin{equation}\label{eq:app-diff-ww}
\langle W_{\varepsilon,i},W_{\varepsilon,j}\rangle_{\varepsilon,V}
=
O(\varepsilon^{2N+2s})
=
o(\varepsilon^N),
\end{equation}
\begin{equation}\label{eq:app-diff-wy}
\langle W_{\varepsilon,i},Y_{\varepsilon,j,h}\rangle_{\varepsilon,V}
=
O(\varepsilon^{2N+2s-1})
=
o(\varepsilon^{N-1}),
\end{equation}
and
\begin{equation}\label{eq:app-diff-yy}
\langle Y_{\varepsilon,i,h},Y_{\varepsilon,j,m}\rangle_{\varepsilon,V}
=
O(\varepsilon^{2N+2s-2})
=
o(\varepsilon^{N-2}).
\end{equation}

Finally, 
\begin{equation}\label{eq:app-weight-w}
p\int_{\mathbb R^N}
W_{\varepsilon,\alpha,\xi}^{p-1}
W_{\varepsilon,i}^2
=
O(\varepsilon^N),
\end{equation}
and
\begin{equation}\label{eq:app-weight-y}
p\int_{\mathbb R^N}
W_{\varepsilon,\alpha,\xi}^{p-1}
Y_{\varepsilon,i,\ell}^2
=
O(\varepsilon^{N-2}).
\end{equation}
\end{lemma}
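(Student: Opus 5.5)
The plan is to prove each estimate by the change of variables $x=\xi_j+\varepsilon y$, which converts every $\langle\cdot,\cdot\rangle_{\varepsilon,V}$ pairing into $\varepsilon^N$ times an integral over $\mathbb R^N$. I would then combine this with the decay bounds of Lemma~\ref{lem:single-bubble-input}(ii) and the separated-weight estimate of Lemma~\ref{lem:separated-product-estimate}. The key tool is to use the equation $\varepsilon^{2s}(-\Delta)^sW_{\varepsilon,j}+V(\xi_j)W_{\varepsilon,j}=W_{\varepsilon,j}^p$, together with its $\xi_{j,h}$-derivative
\[
\varepsilon^{2s}(-\Delta)^sY_{\varepsilon,j,h}+V(\xi_j)Y_{\varepsilon,j,h}=pW_{\varepsilon,j}^{p-1}Y_{\varepsilon,j,h}-\partial_hV(\xi_j)W_{\varepsilon,j},
\]
so that every pairing becomes an $L^2$-type integral plus a remainder $\int(V(x)-V(\xi_j))(\cdots)$.

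For the same-peak estimates I would proceed term by term. For \eqref{eq:app-same-ww}, testing the equation with $W_{\varepsilon,j}$ gives $\int W_{\varepsilon,j}^{p+1}+\int(V-V(\xi_j))W_{\varepsilon,j}^2=\varepsilon^N a_0V(\xi_j)^\theta+O(\varepsilon^{N+1})$, using the scaling $\int w_\lambda^{p+1}=a_0\lambda^\theta$. For \eqref{eq:app-same-yy}, I insert the explicit form \eqref{eq:tangent-vector-explicit} of $Y$. The leading part is $\varepsilon^{-2}\varepsilon^N\,p\int w^{p-1}\partial_hw\,\partial_mw$, which by radial symmetry equals $\varepsilon^{N-2}K_j\delta_{hm}$. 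The cross terms carry an extra factor $\varepsilon$ and are $O(\varepsilon^{N-1})$, and the potential remainder is $O(\varepsilon^{N-1})$ by Taylor expansion, since $|y||\nabla w|^2$ is integrable. For \eqref{eq:app-same-wy}, the leading term $-\varepsilon^{N-1}\int w^p\partial_hw$ vanishes by oddness. What remains is $O(\varepsilon^N)$, together with the term $\varepsilon^{N-1}\int(V(\xi_j+\varepsilon y)-V(\xi_j))w\,\partial_hw=O(\varepsilon^N)$, which gives $o(\varepsilon^{N-1})$.

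For different peaks $i\ne j$, the centers satisfy $|\xi_i-\xi_j|\ge c$, so the rescaled centers are at distance at least $c/\varepsilon$. After writing the pairing as $\int W_{\varepsilon,i}^pW_{\varepsilon,j}+\int(V-V(\xi_i))W_{\varepsilon,i}W_{\varepsilon,j}$ (and analogously for $Y$), I would bound the integrands by $C\varepsilon^{-a}\prod_{l\in\{i,j\}}(1+|x-\xi_l|/\varepsilon)^{-(N+2s)}$, where $a\in\{0,1,2\}$ counts the $Y$ factors. Lemma~\ref{lem:separated-product-estimate} with $\delta=N+2s$ then gives a factor $(\varepsilon/|\xi_i-\xi_j|)^{N+2s}$ times an integrable weight of mass $\varepsilon^N$. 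This yields $O(\varepsilon^{2N+2s-a})$, which is \eqref{eq:app-diff-ww}--\eqref{eq:app-diff-yy}. Finally, \eqref{eq:app-weight-w}--\eqref{eq:app-weight-y} follow because $W_{\varepsilon,\alpha,\xi}$ is uniformly bounded and $\|W_{\varepsilon,i}\|_{L^2}^2=O(\varepsilon^N)$, $\|Y_{\varepsilon,i,\ell}\|_{L^2}^2=O(\varepsilon^{N-2})$.

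The main obstacle is the different-peak bound for the $V$-remainder term. Since $V(x)-V(\xi_i)$ does not vanish near $\xi_j$, the product estimate must be applied carefully. I would use only boundedness of $V$ there and accept the rate $\varepsilon^{2N+2s-a}$, which is still $o(\varepsilon^{N-a})$. This requires $N+2s>0$, which holds trivially, and the exponent condition $\delta\le\min\{\alpha,\beta\}$ is met with $\alpha=\beta=N+2s$. The resulting weight $(1+|y|)^{-(N+2s)}$ is integrable, so the bound closes.
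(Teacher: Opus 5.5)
Your proposal is correct and follows essentially the same route as the paper. Both arguments rescale around each center and test the frozen profile equation (and its derivative) to turn each pairing into $L^2$-type integrals plus a $(V(x)-V(\xi_j))$ remainder. Both then use oddness and radial symmetry for the same-peak terms, Lemma~\ref{lem:separated-product-estimate} with $\delta=N+2s$ for different peaks, and uniform boundedness of $W_{\varepsilon,\alpha,\xi}$ for the last two estimates.

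The only cosmetic difference concerns the same-peak $\langle Y_{\varepsilon,j,h},Y_{\varepsilon,j,m}\rangle_{\varepsilon,V}$. You test the $\xi_{j,h}$-differentiated equation directly against $Y_{\varepsilon,j,m}$. The paper instead first reduces to $\langle\partial_{x_h}W_{\varepsilon,j},\partial_{x_m}W_{\varepsilon,j}\rangle_{\varepsilon,V}$ up to $O(\varepsilon^{N-1})$ and then uses the $x_h$-differentiated equation. Both routes yield the same leading term $\varepsilon^{N-2}K_j\delta_{hm}$.
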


\begin{proof}
We divide the proof into three parts.

\medskip
\noindent
\textbf{Step 1: Same peak.} Observe that
\begin{equation}\label{eq:app-wj-equation}
\varepsilon^{2s}(-\Delta)^sW_{\varepsilon,j}
+
V(\xi_j)W_{\varepsilon,j}
=
W_{\varepsilon,j}^p .
\end{equation}
Testing \eqref{eq:app-wj-equation} against \(W_{\varepsilon,j}\), we obtain
\[
\begin{aligned}
\langle W_{\varepsilon,j},W_{\varepsilon,j}\rangle_{\varepsilon,V}
&=
\int_{\mathbb R^N}W_{\varepsilon,j}^{p+1}
+
\int_{\mathbb R^N}
\bigl(V(x)-V(\xi_j)\bigr)W_{\varepsilon,j}^2  \\
&=
\varepsilon^Na_0V(\xi_j)^\theta+o(\varepsilon^N),
\end{aligned}
\]
which proves \eqref{eq:app-same-ww}.

Next, by the definition of \(Y_{\varepsilon,j,h}\), we have
\[
Y_{\varepsilon,j,h}
=
-\partial_{x_h}W_{\varepsilon,j}
+
\partial_hV(\xi_j)
\left.
\partial_\lambda w_\lambda
\left(\frac{x-\xi_j}{\varepsilon}\right)
\right|_{\lambda=V(\xi_j)}.
\]
Using \eqref{eq:app-wj-equation}, Lemma~\ref{lem:single-bubble-input}, and scaling,
\[
\langle W_{\varepsilon,j},Y_{\varepsilon,j,h}\rangle_{\varepsilon,V}
=
O(\varepsilon^N)
=
o(\varepsilon^{N-1}),
\]
which proves \eqref{eq:app-same-wy}.

For the following product, we obtain
\[
\begin{aligned}
\langle Y_{\varepsilon,j,h},Y_{\varepsilon,j,m}\rangle_{\varepsilon,V}
&=
\langle \partial_{x_h}W_{\varepsilon,j},
\partial_{x_m}W_{\varepsilon,j}\rangle_{\varepsilon,V}
+
O(\varepsilon^{N-1}).
\end{aligned}
\]
Differentiating \eqref{eq:app-wj-equation} with respect to \(x_h\) and testing against
\(\partial_{x_m}W_{\varepsilon,j}\), we obtain
\[
\begin{aligned}
\langle Y_{\varepsilon,j,h},Y_{\varepsilon,j,m}\rangle_{\varepsilon,V}
&=
p\int_{\mathbb R^N}
W_{\varepsilon,j}^{p-1}
\partial_{x_h}W_{\varepsilon,j}
\partial_{x_m}W_{\varepsilon,j}
+
O(\varepsilon^{N-1})  \\
&=
\varepsilon^{N-2}
\left(
p\int_{\mathbb R^N}
w_{V(\xi_j)}^{p-1}
\left(\partial_1w_{V(\xi_j)}\right)^2
\delta_{hm}
+
o(1)
\right),
\end{aligned}
\]
which proves \eqref{eq:app-same-yy}.

\medskip
\noindent
\textbf{Step 2: Different peaks.}
Let \(i\ne j\).
For every \(q,r\ge1\), Lemma~\ref{lem:separated-product-estimate} gives
\begin{equation}\label{eq:app-separated-tail}
\begin{aligned}
&\int_{\mathbb R^N}
\left(1+\frac{|x-\xi_i|}{\varepsilon}\right)^{-q(N+2s)}
\left(1+\frac{|x-\xi_j|}{\varepsilon}\right)^{-r(N+2s)}\,dx \\
&\qquad\le
C\varepsilon^N
\left(
1+\frac{|\xi_i-\xi_j|}{\varepsilon}
\right)^{-(N+2s)}
\le
C\varepsilon^{2N+2s}.
\end{aligned}
\end{equation}
Indeed, after the change of variables \(x=\xi_i+\varepsilon y\), this is Lemma~\ref{lem:separated-product-estimate} with exponents \(q(N+2s)\), \(r(N+2s)\), and \(\delta=N+2s\).

We now estimate the energy products. Observe that
\begin{equation}\label{eq:app-frozen-wi}
\varepsilon^{2s}(-\Delta)^sW_{\varepsilon,i}
+
V(\xi_i)W_{\varepsilon,i}
=
W_{\varepsilon,i}^p.
\end{equation}
Testing \eqref{eq:app-frozen-wi} against \(W_{\varepsilon,j}\), we obtain
\[
\begin{aligned}
\langle W_{\varepsilon,i},W_{\varepsilon,j}\rangle_{\varepsilon,V}
&=
\int_{\mathbb R^N}
W_{\varepsilon,i}^pW_{\varepsilon,j}
+
\int_{\mathbb R^N}
\bigl(V(x)-V(\xi_i)\bigr)
W_{\varepsilon,i}W_{\varepsilon,j}.
\end{aligned}
\]
By Lemma~\ref{lem:single-bubble-input} and \eqref{eq:app-separated-tail} with \((q,r)=(p,1)\) and \((q,r)=(1,1)\),
\[
\left|
\langle W_{\varepsilon,i},W_{\varepsilon,j}\rangle_{\varepsilon,V}
\right|
\le
C\varepsilon^{2N+2s}.
\]
This proves \eqref{eq:app-diff-ww}.

Testing \eqref{eq:app-frozen-wi} against \(Y_{\varepsilon,j,h}\), we obtain
\[
\begin{aligned}
\langle W_{\varepsilon,i},Y_{\varepsilon,j,h}\rangle_{\varepsilon,V}
&=
\int_{\mathbb R^N}
W_{\varepsilon,i}^pY_{\varepsilon,j,h}
+
\int_{\mathbb R^N}
\bigl(V(x)-V(\xi_i)\bigr)
W_{\varepsilon,i}Y_{\varepsilon,j,h}.
\end{aligned}
\]
By Lemma~\ref{lem:single-bubble-input} and \eqref{eq:app-separated-tail} with \((q,r)=(p,1)\) and \((q,r)=(1,1)\),
\[
\left|
\langle W_{\varepsilon,i},Y_{\varepsilon,j,h}\rangle_{\varepsilon,V}
\right|
\le
C\varepsilon^{2N+2s-1}.
\]
Thus \eqref{eq:app-diff-wy} follows.

Finally, differentiating \eqref{eq:app-frozen-wi} with respect to \(\xi_{i,h}\), we obtain
\begin{equation}\label{eq:app-frozen-yi}
\varepsilon^{2s}(-\Delta)^sY_{\varepsilon,i,h}
+
V(\xi_i)Y_{\varepsilon,i,h}
=
pW_{\varepsilon,i}^{p-1}Y_{\varepsilon,i,h}
-
\partial_hV(\xi_i)W_{\varepsilon,i}.
\end{equation}
Testing \eqref{eq:app-frozen-yi} against \(Y_{\varepsilon,j,m}\), we obtain
\[
\begin{aligned}
\langle Y_{\varepsilon,i,h},Y_{\varepsilon,j,m}\rangle_{\varepsilon,V}
&=
p\int_{\mathbb R^N}
W_{\varepsilon,i}^{p-1}
Y_{\varepsilon,i,h}
Y_{\varepsilon,j,m} \\
&\quad
-
\partial_hV(\xi_i)
\int_{\mathbb R^N}
W_{\varepsilon,i}Y_{\varepsilon,j,m} \\
&\quad
+
\int_{\mathbb R^N}
\bigl(V(x)-V(\xi_i)\bigr)
Y_{\varepsilon,i,h}Y_{\varepsilon,j,m}.
\end{aligned}
\]
By Lemma~\ref{lem:single-bubble-input} and \eqref{eq:app-separated-tail} with \((q,r)=(p,1)\) and \((q,r)=(1,1)\), 
\[
\left|
\langle Y_{\varepsilon,i,h},Y_{\varepsilon,j,m}\rangle_{\varepsilon,V}
\right|
\le
C\varepsilon^{2N+2s-2},
\]
which proves \eqref{eq:app-diff-yy}.

\medskip
\noindent
\textbf{Step 3: Integral estimates.} By Lemma~\ref{lem:single-bubble-input}, a direct computation gives
\[
p\int_{\mathbb R^N}
W_{\varepsilon,\alpha,\xi}^{p-1}
W_{\varepsilon,i}^2
\le
C\varepsilon^N
\int_{\mathbb R^N}w_{V(\xi_i)}^2 
=
O(\varepsilon^N).
\]
This proves \eqref{eq:app-weight-w}.

Similarly,
\[
\begin{aligned}
p\int_{\mathbb R^N}
W_{\varepsilon,\alpha,\xi}^{p-1}
Y_{\varepsilon,i,\ell}^2
&\le
C\int_{\mathbb R^N}
Y_{\varepsilon,i,\ell}^2 \\
&\le
C\varepsilon^{N-2}
\int_{\mathbb R^N}
|\partial_\ell w_{V(\xi_i)}|^2
+
C\varepsilon^N
\int_{\mathbb R^N}
\left|
\left.\partial_\lambda w_\lambda\right|_{\lambda=V(\xi_i)}
\right|^2 \\
&=
O(\varepsilon^{N-2}),
\end{aligned}
\]
which proves \eqref{eq:app-weight-y}, and the lemma follows.
\end{proof}
Moreover, we have the following pointwise estimates. 
\begin{lemma}\label{lem:weighted-tail}
\begin{equation}\label{eq:sep-tail-single-profile-pointwise}
W_{\varepsilon,j}(x)^p
\le
C\rho_{\tau,\varepsilon,\xi}(x),
\end{equation}
\begin{equation}\label{eq:sep-tail-potential-pointwise}
\bigl|(V(x)-V(\xi_j))W_{\varepsilon,j}(x)\bigr|
\le
C\varepsilon^\tau\rho_{\tau,\varepsilon,\xi}(x),
\end{equation}
and
\begin{equation}\label{eq:sep-tail-nonlinear-pointwise}
\left|
W_{\varepsilon,\alpha,\xi}(x)^p
-
\sum_{j=1}^k\alpha_j^pW_{\varepsilon,j}(x)^p
\right|
\le
C\varepsilon^\tau\rho_{\tau,\varepsilon,\xi}(x).
\end{equation}
For each fixed \(j\),
\begin{equation}\label{eq:sep-tail-linearized-pointwise}
\left|
W_{\varepsilon,\alpha,\xi}(x)^{p-1}W_{\varepsilon,j}(x)
-
\alpha_j^{p-1}W_{\varepsilon,j}(x)^p
\right|
\le
C\varepsilon^\tau\rho_{\tau,\varepsilon,\xi}(x).
\end{equation}
Moreover,
\begin{equation}\label{eq:sep-tail-frozen-residual-pointwise}
\left|
W_{\varepsilon,j}^p
+
(V(x)-V(\xi_j))W_{\varepsilon,j}
\right|
\le
C\rho_{\tau,\varepsilon,\xi}(x),
\end{equation}
and
\begin{equation}\label{eq:sep-tail-frozen-derivative-pointwise}
\left|
pW_{\varepsilon,j}^{p-1}Y_{\varepsilon,j,h}
-
\partial_hV(\xi_j)W_{\varepsilon,j}
+
(V(x)-V(\xi_j))Y_{\varepsilon,j,h}
\right|
\le
C\varepsilon^{-1}\rho_{\tau,\varepsilon,\xi}(x).
\end{equation}
\end{lemma}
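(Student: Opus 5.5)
We prove all six bounds by direct pointwise comparison with the single-peak profiles, using Lemma~\ref{lem:single-bubble-input}. Throughout, write $r_i:=|x-\xi_i|/\varepsilon$. The basic inputs are
\[
W_{\varepsilon,i}(x)\le C(1+r_i)^{-(N+2s)},
\qquad
|Y_{\varepsilon,i,h}(x)|\le C\varepsilon^{-1}(1+r_i)^{-(N+2s)},
\]
where the second follows from \eqref{eq:tangent-vector-explicit} and \eqref{eq:uniform-decay}. We also use
\[
\rho_{\tau,\varepsilon,\xi}\ge (1+r_i)^{-\beta_\tau}
\qquad\text{for every } i,
\]
together with $\beta_\tau=N+2s-\tau<N+2s$. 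Consequently
\[
(1+r_i)^{-(N+2s)}\le (1+r_i)^{-\tau}\rho_{\tau,\varepsilon,\xi}.
\]
Since $W_{\varepsilon,j}$ is uniformly bounded and $p>1$, this already gives \eqref{eq:sep-tail-single-profile-pointwise}. Likewise \eqref{eq:sep-tail-frozen-residual-pointwise} and \eqref{eq:sep-tail-frozen-derivative-pointwise} follow termwise. For the latter we use that $\partial_hV$ and $V$ are bounded, that $W_{\varepsilon,j}^{p-1}$ is bounded, and that the $Y$ bound carries the factor $\varepsilon^{-1}$.

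For \eqref{eq:sep-tail-potential-pointwise}, the mean value theorem gives
\[
|V(x)-V(\xi_j)|\le C\min\{1,\varepsilon r_j\}\le C(\varepsilon r_j)^\tau\le C\varepsilon^\tau(1+r_j)^{\tau}.
\]
Hence
\[
|(V-V(\xi_j))W_{\varepsilon,j}|\le C\varepsilon^\tau(1+r_j)^{-(N+2s)+\tau}\le C\varepsilon^\tau\rho_{\tau,\varepsilon,\xi}.
\]
No criticality of $\xi_j$ is needed for this bound.

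For the interaction estimates \eqref{eq:sep-tail-nonlinear-pointwise} and \eqref{eq:sep-tail-linearized-pointwise}, fix $d>0$ with the balls $B_{2d}(\xi_i)$ disjoint, and split $\mathbb R^N$ into the balls $B_d(\xi_i)$ and their complement. The key observation is that on any region away from $\xi_j$, the profile $W_{\varepsilon,j}$ is itself already of size $\varepsilon^\tau\rho_{\tau,\varepsilon,\xi}$:
\[
W_{\varepsilon,j}\le C(1+r_j)^{-\tau}\rho_{\tau,\varepsilon,\xi}\le C(\varepsilon/d)^\tau\rho_{\tau,\varepsilon,\xi}
\qquad\text{if } |x-\xi_j|\ge d.
\]
Consider first the region where every profile is far, namely outside $\bigcup_iB_d(\xi_i)$. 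There all profiles satisfy $W_{\varepsilon,i}\le C\varepsilon^{\tau}\rho_{\tau,\varepsilon,\xi}$ and are uniformly bounded. Together with $(\sum_i a_i)^p\le k^{p-1}\sum_i a_i^p$, this bounds both sides of \eqref{eq:sep-tail-nonlinear-pointwise} and \eqref{eq:sep-tail-linearized-pointwise} by $C\sum_iW_{\varepsilon,i}\le C\varepsilon^\tau\rho_{\tau,\varepsilon,\xi}$.

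It remains to treat a single ball $B_d(\xi_j)$. Set $a=\alpha_jW_{\varepsilon,j}$ and $b=\sum_{i\ne j}\alpha_iW_{\varepsilon,i}$, so that $b\le C\varepsilon^{N+2s}$ there. Also on this ball
\[
\rho_{\tau,\varepsilon,\xi}\ge c\,\varepsilon^{N+2s-\tau},
\]
so any quantity bounded by $C\varepsilon^{N+2s}$ is at most $C\varepsilon^\tau\rho_{\tau,\varepsilon,\xi}$. For \eqref{eq:sep-tail-nonlinear-pointwise}, the mean value theorem gives $|(a+b)^p-a^p|\le C(a+b)^{p-1}b\le Cb$. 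The remaining terms $\alpha_i^pW_{\varepsilon,i}^p$ with $i\ne j$ are also $O(\varepsilon^{N+2s})$.

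The main obstacle is \eqref{eq:sep-tail-linearized-pointwise} when $1<p<2$. Here the bound $b^{p-1}W_{\varepsilon,j}$ from Lemma~\ref{lem:algebraic-nonlinear-estimates} gives only a factor $\varepsilon^{(N+2s)(p-1)}$, which may be smaller than $\varepsilon^\tau$ fails to be when $p$ is close to $1$. I would instead use the concavity bound
\[
|(a+b)^{p-1}-a^{p-1}|\le (p-1)a^{p-2}b,
\]
valid for $a>0$. This yields
\[
|(a+b)^{p-1}-a^{p-1}|\,W_{\varepsilon,j}\le C\,b\,W_{\varepsilon,j}^{p-1}\le Cb\le C\varepsilon^{N+2s}.
\]
When $p\ge2$, the direct bound $(a^{p-2}+b^{p-2})b\,W_{\varepsilon,j}\le Cb$ suffices. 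Both bounds are uniform in $(\alpha,\xi)\in\overline{\mathcal D}_\varepsilon$, which completes the proposed argument.
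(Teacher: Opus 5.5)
Your argument is correct. For five of the six bounds it is the same termwise comparison of the decay in Lemma~\ref{lem:single-bubble-input} with the weight that the paper sketches. Your interpolation $\min\{1,\varepsilon r_j\}\le(\varepsilon r_j)^\tau$ is exactly the detail behind the paper's one-line proof of \eqref{eq:sep-tail-potential-pointwise}. Your ball/complement splitting makes explicit what the paper attributes to ``uniform boundedness''.

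The genuine difference is in \eqref{eq:sep-tail-linearized-pointwise} for $1<p<2$. The paper invokes the H\"older case of Lemma~\ref{lem:algebraic-nonlinear-estimates}, $|(a+b)^{p-1}-a^{p-1}|\le Cb^{p-1}$. At $x=\xi_j$ this gives only $b^{p-1}W_{\varepsilon,j}\sim\varepsilon^{(N+2s)(p-1)}$, while $\varepsilon^\tau\rho_{\tau,\varepsilon,\xi}(\xi_j)\sim\varepsilon^\tau$. That route therefore closes only under the unstated restriction $\tau\le(N+2s)(p-1)$. The restriction is harmless, since $\tau,\sigma,\delta_0$ may be shrunk, but it is needed.

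Your tangent-line bound $(p-1)a^{p-2}b$ absorbs the singular factor $a^{p-2}$ into the extra factor $W_{\varepsilon,j}=a/\alpha_j$. This gives $Ca^{p-1}b\le C\varepsilon^{N+2s}$ on $B_d(\xi_j)$ for every $p>1$, with no constraint on $\tau$. The paper's route is shorter but silently restricts the exponents; yours is slightly longer but valid for the full range of $p$. (The sentence in which you motivate this is garbled, but the point is right.)

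One step is only implicit and should be written out. Fix $j$ in \eqref{eq:sep-tail-linearized-pointwise} and consider the balls $B_d(\xi_\ell)$ with $\ell\ne j$. Your ``single ball'' computation does not apply there, because $a=\alpha_jW_{\varepsilon,j}$ is not the dominant profile. On these balls you must use the crude bound coming from your key observation: both terms are at most $CW_{\varepsilon,j}\le C(\varepsilon/d)^\tau\rho_{\tau,\varepsilon,\xi}$. The tangent-line bound would fail there. Indeed, near $\xi_\ell$ one has $a\sim\varepsilon^{N+2s}$ and $b\sim 1$, so $a^{p-1}b\sim\varepsilon^{(N+2s)(p-1)}$, which is again too large when $p$ is close to $1$.
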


\begin{proof}
Lemma~\ref{lem:single-bubble-input} gives both \eqref{eq:sep-tail-single-profile-pointwise} and \eqref{eq:sep-tail-potential-pointwise}.
Since the coefficients \(\alpha_i\) and the profiles \(W_{\varepsilon,i}\) are uniformly bounded, Lemma~\ref{lem:single-bubble-input} gives
\[
0
\le
W_{\varepsilon,\alpha,\xi}(x)^p
-
\sum_{j=1}^k\alpha_j^pW_{\varepsilon,j}(x)^p 
\le
C\varepsilon^\tau\rho_{\tau,\varepsilon,\xi}(x).
\]
This proves \eqref{eq:sep-tail-nonlinear-pointwise}. For \eqref{eq:sep-tail-linearized-pointwise}, apply Lemma~\ref{lem:algebraic-nonlinear-estimates}; this separates the H\"older case \(1<p<2\) from the Lipschitz case \(p\ge2\)

Estimate \eqref{eq:sep-tail-frozen-residual-pointwise} follows from \eqref{eq:sep-tail-single-profile-pointwise} and \eqref{eq:sep-tail-potential-pointwise}; Lemma~\ref{lem:single-bubble-input} gives \eqref{eq:sep-tail-frozen-derivative-pointwise} by the same argument.
\end{proof}

\allowdisplaybreaks


\begin{thebibliography}{99}


\bibitem{aftalion2004qualitative}
Amandine Aftalion and Filomena Pacella.
\newblock Qualitative properties of nodal solutions of semilinear elliptic
  equations in radially symmetric domains.
\newblock {\em Comptes Rendus Math\'ematique}, 339(5):339--344, 2004.

\bibitem{applebaum2009levy}
David Applebaum.
\newblock {\em L{\'e}vy Processes and Stochastic Calculus}.
\newblock Cambridge University Press, Cambridge, second edition, 2009.

\bibitem{bahri1992solutions}
Abbas Bahri and Pierre-Louis Lions.
\newblock Solutions of superlinear elliptic equations and their Morse indices.
\newblock {\em Communications on Pure and Applied Mathematics},
  45(9):1205--1215, 1992.

\bibitem{barrios2012some}
Bego{\~n}a Barrios, Eduardo Colorado, Arturo de~Pablo, and Urko S{\'a}nchez.
\newblock On some critical problems for the fractional Laplacian operator.
\newblock {\em Journal of Differential Equations}, 252(11):6133--6162, 2012.

\bibitem{2002Existence}
Peter~W. Bates and Junping Shi.
\newblock Existence and instability of spike layer solutions to singular
  perturbation problems.
\newblock {\em Journal of Functional Analysis}, 196(2):211--264, 2002.

\bibitem{brandle2013concave}
Christoph Br{\"a}ndle, Eduardo Colorado, Arturo de~Pablo, and Urko S{\'a}nchez.
\newblock A concave-convex elliptic problem involving the fractional
  Laplacian.
\newblock {\em Proceedings of the Royal Society of Edinburgh Section A:
  Mathematics}, 143(1):39--71, 2013.

\bibitem{cabre2014nonlinear}
Xavier Cabr{\'e} and Yannick Sire.
\newblock Nonlinear equations for fractional Laplacians {I}: Regularity,
  maximum principles, and Hamiltonian estimates.
\newblock {\em Annales de l'Institut Henri Poincar{\'e} C, Analyse Non
  Lin{\'e}aire}, 31(1):23--53, 2014.

\bibitem{Caffarelli08082007}
Luis Caffarelli and Luis Silvestre.
\newblock An extension problem related to the fractional Laplacian.
\newblock {\em Communications in Partial Differential Equations},
  32(7--9):1245--1260, 2007.

\bibitem{CaoHeinz2003}
Daomin Cao and Hans-Peter Heinz.
\newblock Uniqueness of positive multi-lump bound states of nonlinear
  Schr{\"o}dinger equations.
\newblock {\em Mathematische Zeitschrift}, 243:599--642, 2003.

\bibitem{CaoLiLuo2015}
Daomin Cao, Shuanglong Li, and Peng Luo.
\newblock Uniqueness of positive bound states with multi-bump for nonlinear
  Schr{\"o}dinger equations.
\newblock {\em Calculus of Variations and Partial Differential Equations},
  54(4):4037--4063, 2015.

\bibitem{CaoNoussairYan1998}
Daomin Cao, Ezzat~S. Noussair, and Shusen Yan.
\newblock Existence and uniqueness results on single-peaked solutions of a
  semilinear problem.
\newblock {\em Annales de l'Institut Henri Poincar{\'e} C, Analyse Non
  Lin{\'e}aire}, 15(1):73--111, 1998.

\bibitem{CassaniWang2023}
Daniele Cassani and Youjun Wang.
\newblock Asymptotic behavior of ground states and local uniqueness for fractional
  {S}chr\"odinger equations with nearly critical growth.
\newblock {\em Potential Analysis}, 59:1--39, 2023.

\bibitem{chen2020fractional}
Wenxiong Chen, Yan Li, and Pei Ma.
\newblock {\em The Fractional Laplacian}.
\newblock World Scientific, Hackensack, NJ, 2020.

\bibitem{choi2016qualitative}
Woocheol Choi, Seunghyeok Kim, and Ki-Ahm Lee.
\newblock Qualitative properties of multi-bubble solutions for nonlinear
  elliptic equations involving critical exponents.
\newblock {\em Advances in Mathematics}, 298:484--533, 2016.

\bibitem{dancer2004stableII}
E.~N. Dancer.
\newblock Stable and finite Morse index solutions on $\mathbb{R}^{n}$ or on
  bounded domains with small diffusion. II.
\newblock {\em Indiana University Mathematics Journal}, 53(1):97--108, 2004.

\bibitem{dancer2005stable}
E.~N. Dancer.
\newblock Stable and finite Morse index solutions on $\mathbb{R}^{n}$ or on
  bounded domains with small diffusion.
\newblock {\em Transactions of the American Mathematical Society},
  357(3):1225--1243, 2005.

\bibitem{MR3121716}
Juan D{\'a}vila, Manuel del Pino, and Juncheng Wei.
\newblock Concentrating standing waves for the fractional nonlinear
  Schr{\"o}dinger equation.
\newblock {\em Journal of Differential Equations}, 256(2):858--892, 2014.

\bibitem{DeMarchisGrossiIanniPacella2019JMPA}
Francesca De~Marchis, Massimo Grossi, Isabella Ianni, and Filomena Pacella.
\newblock Morse index and uniqueness of positive solutions of the
  Lane--Emden problem in planar domains.
\newblock {\em Journal de Math{\'e}matiques Pures et Appliqu{\'e}es},
  128:339--378, 2019.

\bibitem{DLY}
Yinbin Deng, Chang-Shou Lin, and Shusen Yan.
\newblock On the prescribed scalar curvature problem in $\mathbb{R}^N$: local
  uniqueness and periodicity.
\newblock {\em Journal de Math{\'e}matiques Pures et Appliqu{\'e}es},
  104(6):1013--1044, 2015.

\bibitem{YSX}
Yinbin Deng, Shuangjie Peng, and Xian Yang.
\newblock Existence and decays of solutions for fractional Schr{\"o}dinger
  equations with general potentials.
\newblock {\em Calculus of Variations and Partial Differential Equations},
  63:Article No.~128, 2024.

\bibitem{MR2944369}
Eleonora Di~Nezza, Giampiero Palatucci, and Enrico Valdinoci.
\newblock Hitchhiker's guide to the fractional Sobolev spaces.
\newblock {\em Bulletin des Sciences Math{\'e}matiques}, 136(5):521--573, 2012.

\bibitem{dipierro2012existence}
Serena Dipierro, Giampiero Palatucci, and Enrico Valdinoci.
\newblock Existence and symmetry results for a Schr{\"o}dinger type problem
  involving the fractional Laplacian.
\newblock {\em Le Matematiche}, 68(1):201--216, 2013.

\bibitem{DuarteSouto2019}
Ronaldo~C. Duarte and Marco A.~S. Souto.
\newblock Nonlocal Schr{\"o}dinger equations for integro-differential
  operators with measurable kernels.
\newblock {\em Topological Methods in Nonlinear Analysis}, 54(1):383--406,
  2019.

\bibitem{farina2007classification}
Alberto Farina.
\newblock On the classification of solutions of the Lane--Emden equation on
  unbounded domains of $\mathbb{R}^{N}$.
\newblock {\em Journal de Math\'ematiques Pures et Appliqu\'ees},
  87(5):537--561, 2007.

\bibitem{FinsterLottner2021Banach}
Felix Finster and Magdalena Lottner.
\newblock Banach manifold structure and infinite-dimensional analysis for
  causal fermion systems.
\newblock {\em Annals of Global Analysis and Geometry}, 60:313--354, 2021.

\bibitem{MR3070568}
Rupert~L. Frank and Enno Lenzmann.
\newblock Uniqueness of non-linear ground states for fractional Laplacians in
  $\mathbb{R}$.
\newblock {\em Acta Mathematica}, 210(2):261--318, 2013.

\bibitem{frank2016uniqueness}
Rupert~L. Frank, Enno Lenzmann, and Luis Silvestre.
\newblock Uniqueness of radial solutions for the fractional Laplacian.
\newblock {\em Communications on Pure and Applied Mathematics},
  69(9):1671--1726, 2016.

\bibitem{frank2008hardy}
Rupert~L. Frank, Elliott~H. Lieb, and Robert Seiringer.
\newblock Hardy--Lieb--Thirring inequalities for fractional Schr{\"o}dinger
  operators.
\newblock {\em Journal of the American Mathematical Society}, 21(4):925--950,
  2008.

\bibitem{gladiali2014morse}
Francesca Gladiali, Massimo Grossi, Hiroshi Ohtsuka, and Takashi Suzuki.
\newblock Morse indices of multiple blow-up solutions to the two-dimensional
  Gel'fand problem.
\newblock {\em Communications in Partial Differential Equations},
  39(11):2028--2063, 2014.

\bibitem{grossi2005eigenvalue}
Massimo Grossi and Filomena Pacella.
\newblock On an eigenvalue problem related to the critical exponent.
\newblock {\em Mathematische Zeitschrift}, 250(1):225--256, 2005.

\bibitem{grossi2007morse}
Massimo Grossi and Raffaella Servadei.
\newblock Morse index for solutions of the nonlinear Schr{\"o}dinger equation
  in a degenerate setting.
\newblock {\em Annali di Matematica Pura ed Applicata}, 186(3):433--453, 2007.

\bibitem{GuoLiLiuYang2026}
Yanyan Guo, Ying Li, Zhongyuan Liu, and Pingping Yang.
\newblock Existence, non-degeneracy and local uniqueness of multi-peak solutions
  to the fractional {S}chr\"odinger equation with nearly critical exponent in
  $\mathbb R^N$.
\newblock arXiv preprint arXiv:2603.19868, 2026.

\bibitem{guo2017local}
Yuxia Guo, Shuangjie Peng, and Shusen Yan.
\newblock Local uniqueness and periodicity induced by concentration.
\newblock {\em Proceedings of the London Mathematical Society},
  114(6):1005--1043, 2017.

\bibitem{ianni2025morse}
Isabella Ianni, Peng Luo, and Shusen Yan.
\newblock Morse index, topological degree and local uniqueness of multi-spikes
  solutions to the Lane--Emden problem in dimension two.
\newblock arXiv preprint arXiv:2506.12905, 2025.

\bibitem{kato1995perturbation}
Tosio Kato.
\newblock {\em Perturbation Theory for Linear Operators}.
\newblock Springer-Verlag, Berlin, second edition, 1995.

\bibitem{kwasnicki2017ten}
Mateusz Kwa{\'s}nicki.
\newblock Ten equivalent definitions of the fractional Laplace operator.
\newblock {\em Fractional Calculus and Applied Analysis}, 20(1):7--51, 2017.

\bibitem{laskin2002fractional}
Nick Laskin.
\newblock Fractional Schr{\"o}dinger equation.
\newblock {\em Physical Review E}, 66(5):056108, 2002.

\bibitem{LuoPanPeng2024}
Peng Luo, Kefan Pan, and Shuangjie Peng.
\newblock Morse index of concentrated solutions for the nonlinear
  Schr{\"o}dinger equation with a very degenerate potential.
\newblock {\em Calculus of Variations and Partial Differential Equations},
  63:Article No.~164, 2024.

\bibitem{McDuffWehrheim2024Polyfold}
Dusa McDuff and Katrin Wehrheim.
\newblock Polyfold fundamental classes and globally structured multivalued
  perturbations.
\newblock arXiv preprint arXiv:2406.15176, 2024.

\bibitem{radulescu2016variational}
Giovanni Molica~Bisci, Vicen{\c{t}}iu~D. R{\u{a}}dulescu, and Raffaella
  Servadei.
\newblock {\em Variational Methods for Nonlocal Fractional Problems}.
\newblock Cambridge University Press, Cambridge, 2016.

\bibitem{pacella2002symmetry}
Filomena Pacella.
\newblock Symmetry results for solutions of semilinear elliptic equations with
  convex nonlinearities.
\newblock {\em Journal of Functional Analysis}, 192(1):271--282, 2002.

\bibitem{pacella2007symmetry}
Filomena Pacella and Tobias Weth.
\newblock Symmetry of solutions to semilinear elliptic equations via Morse
  index.
\newblock {\em Proceedings of the American Mathematical Society},
  135(6):1753--1762, 2007.

\bibitem{reed1978iv}
Michael Reed and Barry Simon.
\newblock {\em Methods of Modern Mathematical Physics. IV: Analysis of
  Operators}.
\newblock Academic Press, New York, 1978.

\bibitem{reed1980methods}
Michael Reed and Barry Simon.
\newblock {\em Methods of Modern Mathematical Physics. I: Functional
  Analysis}.
\newblock Academic Press, New York, revised and enlarged edition, 1980.

\bibitem{RosOtonSerra2016Stable}
Xavier Ros-Oton and Joaquim Serra.
\newblock Regularity theory for general stable operators.
\newblock {\em Journal of Differential Equations}, 260(12):8675--8715, 2016.

\bibitem{silvestre2007regularity}
Luis Silvestre.
\newblock Regularity of the obstacle problem for a fractional power of the
  Laplace operator.
\newblock {\em Communications on Pure and Applied Mathematics}, 60(1):67--112,
  2007.

\bibitem{wei2010infinitely}
Juncheng Wei and Shusen Yan.
\newblock Infinitely many positive solutions for the nonlinear
  Schr{\"o}dinger equations in $\mathbb{R}^N$.
\newblock {\em Calculus of Variations and Partial Differential Equations},
  37(3--4):423--439, 2010.

\bibitem{WuZhang2024}
Yuanda Wu and Yimin Zhang.
\newblock Uniqueness and non-degeneracy of solutions for nonlinear fractional
  Schr{\"o}dinger equation with perturbation.
\newblock {\em Journal of Mathematical Physics}, 65(8):081512, 2024.

\bibitem{yang1998nodal}
Xue-Feng Yang.
\newblock Nodal sets and Morse indices of solutions of super-linear elliptic
  PDEs.
\newblock {\em Journal of Functional Analysis}, 160(1):223--253, 1998.

\end{thebibliography}
\end{document}